\documentclass[12pt]{amsart}
\usepackage{amsmath,amsthm,amsfonts,amssymb,latexsym,verbatim,bbm}
\input xy
\xyoption{all}
\usepackage{hyperref,multirow}
\usepackage[shortlabels]{enumitem}
\usepackage{young}
\usepackage{tikz, ifthen}
\usepackage[nomessages]{fp}
\usepackage{mathtools}
\usetikzlibrary{calc}
\usetikzlibrary{chains}
\usepackage{booktabs,caption}

\newcounter{x}
\newcounter{y}
\newcounter{z}

\newcommand*\cubecolors[1]{%
  \ifcase#1\relax
  \or\colorlet{cubecolor}{cyan}%
  \or\colorlet{cubecolor}{green}%
  \or\colorlet{cubecolor}{yellow}%
  \or\colorlet{cubecolor}{pink}%
  \or\colorlet{cubecolor}{purple}%
  \or\colorlet{cubecolor}{blue}%
  \else
    \colorlet{cubecolor}{white}%
  \fi
}
\newcommand\yaxis{180}
\newcommand\zaxis{-27}
\newcommand\xaxis{90}

\newcommand\topside[3]{
  \fill[fill=cubecolor, draw=black,shift={(\xaxis:#1)},shift={(\yaxis:#2)},
  shift={(\zaxis:#3)}] (0,0) -- (1,0) -- (0.5,0.25) --(-0.5,0.25)--(0,0);
}

\newcommand\leftside[3]{
  \fill[fill=cubecolor, draw=black,shift={(\xaxis:#1)},shift={(\yaxis:#2)},
  shift={(\zaxis:#3)}] (0,0) -- (0,-1) -- (-0.5,-0.75) --(-0.5,0.25)--(0,0);
}

\newcommand\rightside[3]{
  \fill[fill=cubecolor, draw=black,shift={(\xaxis:#1)},shift={(\yaxis:#2)},
  shift={(\zaxis:#3)}] (0,0) -- (1,0) -- (1,-1) --(0,-1)--(0,0);
}

\newcommand\cube[3]{
  \topside{#1}{#2}{#3} \leftside{#1}{#2}{#3} \rightside{#1}{#2}{#3}
}

\newcommand\planepartition[2]{
 \setcounter{x}{0}
 \foreach \a in {#2} {
    \addtocounter{x}{1}
    \setcounter{y}{-1}
    \cubecolors{\value{x}}
    \foreach \b in \a {
      \addtocounter{y}{1}
      \setcounter{z}{-1}
      \foreach \c in {0,...,\b} {
        \addtocounter{z}{1}
      \ifthenelse{\c=0}{\setcounter{z}{-1},\addtocounter{y}{0}}{
        \FPeval{\newz}{clip(0.55*\the\value{z})}
        \cube{\value{x}}{\value{y}}{\newz};
        \FPeval{\result}{clip(#1-\the\value{y}+2*\the\value{z})}
        \draw[draw=black,shift={(\xaxis:\value{x})},shift={(\yaxis:\value{y})},
  shift={(\zaxis:\newz)}] (0.5,-0.5) node {\textsf{\result}};}
      }
    }
  }
}

\newcommand\planepartitionD[2]{
 \setcounter{x}{0}
 \foreach \a in {#2} {
    \addtocounter{x}{1}
    \setcounter{y}{-1}
    \cubecolors{\value{x}}
    \foreach \b in \a {
      \addtocounter{y}{1}
      \setcounter{z}{-1}
      \foreach \c in {0,...,\b} {
        \addtocounter{z}{1}
      \ifthenelse{\c=0}{\setcounter{z}{-1},\addtocounter{y}{0}}{
        \FPeval{\newz}{clip(0.55*\the\value{z})}
        \cube{\value{x}}{\value{y}}{\newz};
        \FPeval{\result}{clip(#1-\the\value{y}-2*\the\value{z})}
        \draw[draw=black,shift={(\xaxis:\value{x})},shift={(\yaxis:\value{y})},
  shift={(\zaxis:\newz)}] (0.5,-0.5) node {\textsf{\result}};}
      }
    }
  }
}

\newcommand{\mlabel}[1]{\(s_{\mathrlap{#1}}\)}
\newcommand{\dnode}[2][chj]{\node[#1,label={below:\mlabel{#2}}] {};}
\newcommand{\dydots}{ \node[chj,draw=none,inner sep=1pt] {\dots};}
\newcommand{\dnodedub}[2][chj, join=by {<->, double}]{\node[#1,label={below:\mlabel{#2}}] {};}
\newcommand{\dnodebr}[1]{\node[chj,label={below right:\mlabel{#1}}] {};}

\tikzset{node distance=2em, ch/.style={circle,draw,on chain,inner sep=2pt},chj/.style={ch,join}, line width=1pt,baseline=-1ex}

\allowdisplaybreaks[2] \textwidth15.1cm \textheight22cm \headheight12pt \oddsidemargin.4cm
\evensidemargin.4cm \topmargin0cm \makeatletter


\theoremstyle{plain}
\newtheorem{theorem}{Theorem}[section]
\newtheorem{thm}[theorem]{Theorem}
\newtheorem{lemma}[theorem]{Lemma}
\newtheorem{proposition}[theorem]{Proposition}
\newtheorem{prop}[theorem]{Proposition}
\newtheorem{example}[theorem]{Example}
\newtheorem{definition}[theorem]{Definition}
\newtheorem{defn}[theorem]{Definition}

\newtheorem{corollary}[theorem]{Corollary}
\newtheorem{cor}[theorem]{Corollary}

\newtheorem{rmk}[theorem]{Remark}


\newcommand{\arr}{\rightarrow}



\newcommand{\Z}{\mathbb{Z}}

\newcommand{\mcD}{\mathcal{D}}

\newcommand{\mcG}{\mathcal{G}}

\newcommand{\mcI}{\mathcal{I}}

\newcommand{\mcH}{\mathcal{H}}


\DeclareMathOperator{\adj}{adj}






\DeclareMathOperator{\supp}{S}

\DeclareMathOperator{\flip}{flip}
\DeclareMathOperator{\bp}{bp}
\newcommand{\Cat}{\textsf{Cat}}
\newcommand{\cc}{\textsf{c}}
\newcommand{\SQ}{\sqrt{1-4t}}

\begin{document}

\title[Staircase diagrams and enumeration]{Staircase diagrams and enumeration
of smooth Schubert varieties}

\author{Edward Richmond}
\email{edward.richmond@okstate.edu}

\author{William Slofstra}
\email{weslofst@uwaterloo.ca}

\begin{abstract}
    We enumerate smooth and rationally smooth Schubert varieties in the
    classical finite types $A$, $B$, $C$, and $D$, extending Haiman's
    enumeration for type $A$. To do this enumeration, we introduce
    a notion of staircase diagrams on a graph. These combinatorial
    structures are collections of steps of irregular size, forming interconnected staircases
    over the given graph. Over a Dynkin-Coxeter graph, the set of
    ``nearly-maximally labelled'' staircase diagrams is in bijection with the
    set of Schubert varieties with a complete Billey-Postnikov (BP) decomposition.
    We can then use an earlier result of the authors showing that all
    finite-type rationally smooth Schubert varieties have a complete
    BP decomposition to finish the enumeration.

\end{abstract}
\maketitle
\section{Introduction}

Let $G$ be a simple Lie group over an algebraically closed field and fix a
Borel subgroup $B\subseteq G$. The Schubert varieties $X(w)$ in the flag
variety $G/B$ are indexed by the Weyl group $W$ of $G$. A natural question to
ask is: when is $X(w)$ (rationally) smooth? Many different answers have been
given to this question. In particular, the Lakshmibai-Sandhya theorem states
that a Schubert variety $X(w)$ of type $A$ is smooth if and only if the
permutation $w$ avoids $3412$ and $4231$. There is an analogous pattern
avoidance criteria for classical types due to Billey \cite{Bi98}, and a
root-system pattern avoidance criteria for all finite types due to
Billey-Postnikov \cite{BP05}. Other characterizations of (rationally) smooth
Schubert varieties include the regularity of the Bruhat graph \cite{Ca94},
triviality of the Kazhdan-Lusztig polynomials \cite{De77,KL79} and
palindromicity of the Poincar\'{e} polynomial \cite{Mc77}. A survey of these
results can be found in \cite{BL00}.

Although these criteria allow us to efficiently recognize (rationally) smooth
Schubert varieties, they do not allow us to enumerate such Schubert varieties.
In type $A$ the generating series for the number of smooth Schubert is known, and is
due to Haiman \cite{Ha92} \cite{Bo98}. An alternative (and in fact the first
published) derivation of this generating series is given in \cite{BMB07} by
Bousquet-M\'elou and Butler. In an earlier paper \cite{RS14}, we discuss the
possibility of listing all smooth and rationally smooth Schubert varieties
using Billey-Postnikov decompositions, an idea that goes back to \cite{BP05}.
The purpose of this paper is to complete this idea by enumerating smooth and
rationally smooth Schubert varieties in the finite classical types $A$, $B$,
$C$, and $D$. Specifically, define generating series
\begin{equation*}
    A(t):=\sum_{n=0}^{\infty} a_n\, t^n,\qquad B(t):=\sum_{n=0}^{\infty} b_n\, t^n,\qquad C(t):=\sum_{n=0}^{\infty} c_n\, t^n,
\end{equation*}
\begin{equation*}
    D(t):=\sum_{n=3}^{\infty} d_n\, t^n,\qquad \text{ and }BC(t):=\sum_{n=0}^{\infty} bc_n\, t^n,
\end{equation*}
where the coefficients $a_n,b_n,c_n,d_n$ denote the number of smooth Schubert
varieties of types $A_n, B_n, C_n$ and $D_n$ respectively, and $bc_n$ denotes
the number of rationally smooth Schubert varieties of type $B_n$. Since the
Weyl groups of type $B_n$ and $C_n$ are isomorphic, and $X(w)$ is rationally smooth
in type $B$ if and only if $X(w)$ is rationally smooth in type $C$, we refer to
this last case as ``type $BC$''. For simply-laced types $A$ and $D$, Peterson's
theorem states that a Schubert variety is rationally smooth if and only if it
is smooth \cite{CK03}, so these generating series cover all classes of smooth
and rationally smooth Schubert varieties in finite classical type. The
main result of this paper is:

\begin{theorem}\label{T:main_gen_series}
    Let $W(t) := \sum_n w_n\, t^n$ denote one of the above generating series,
    where $W = A$, $B$, $C$, $D$, or $BC$. Then
    \begin{equation*}
        W(t)=\frac{P_W(t)+Q_W(t)\SQ}{(1-t)^2(1-6t+8t^2-4t^3)}
    \end{equation*}
    where $P_W(t)$ and $Q_W(t)$ are polynomials given in Table \ref{TBL:polys}.
\end{theorem}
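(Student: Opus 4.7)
The plan is to reduce the entire enumeration to a combinatorial count of staircase diagrams on the Dynkin-Coxeter graphs of types $A_n, B_n, C_n, D_n$, and then to compute the relevant generating functions by exploiting the nearly-path structure of these graphs. By the bijection mentioned in the abstract between nearly-maximally labelled staircase diagrams and Schubert varieties admitting a complete BP decomposition, combined with the authors' earlier result that every finite-type rationally smooth Schubert variety admits such a decomposition, the numbers $a_n, b_n, c_n, d_n, bc_n$ are exactly the numbers of nearly-maximally labelled staircase diagrams on the corresponding Dynkin diagram. Hence it suffices to prove the claimed closed form for each of these five counting problems.

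First I would handle type $A$, whose Dynkin diagram is a path. A staircase diagram on a path decomposes canonically into maximal connected ``staircase blocks'' separated by unused nodes, so the generating function factors as a geometric-series-like expression built from a single ``block'' generating function $S(t)$ that counts nearly-maximally labelled individual staircases. The appearance of $\sqrt{1-4t}$ in the final answer strongly suggests that $S(t)$ itself satisfies a quadratic equation of Catalan type; in fact, counting labelled steps of irregular size on a segment of length $n$ should produce a Motzkin/Catalan-flavored recurrence. Once $S(t)$ is determined, summing over decompositions (with an extra factor $1/(1-t)$ or $1/(1-t)^2$ to account for the free placement of blocks and unused nodes along the path) gives a rational-in-$t$-and-$\sqrt{1-4t}$ expression, from which the type $A$ denominator $(1-t)^2(1-6t+8t^2-4t^3)$ and polynomials $P_A, Q_A$ can be read off. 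The type $A$ case also fixes the universal denominator, since in the other types the interior behaviour is the same and only a bounded boundary correction is needed.

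For types $B$, $C$, $D$, and $BC$, the Dynkin diagram differs from a path only by a decoration at one or both ends: a double bond (for $B$ and $C$, distinguished by the direction of the arrow and hence by which labellings are nearly-maximal), a fork (for $D$), or again the double bond but now with a weaker smoothness criterion (for $BC$, since rational smoothness is weaker than smoothness in non-simply-laced types). I would treat each modification as a boundary generating function $L_W(t)$ and/or $R_W(t)$ that replaces the leftmost and rightmost ``block'' of the type $A$ decomposition, so that
\[
W(t) \;=\; L_W(t)\cdot F(t)\cdot R_W(t),
\]
where $F(t)$ is the interior generating function coming from type $A$. Because the decorations at the ends of the Dynkin diagram are of bounded size and the restrictions imposed by the nearly-maximal labelling condition at a fork or double edge involve only finitely many node-labels, each boundary factor is a rational function (with the same Catalan algebraic extension $\sqrt{1-4t}$), and the product has exactly the claimed form, with polynomials $P_W, Q_W$ matching Table \ref{TBL:polys}.

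The main obstacle I expect is the careful case analysis of the \emph{nearly-maximally labelled} condition at the non-simply-laced and branching nodes. The interior enumeration is symmetric and reduces to a clean univariate block-decomposition, but near the double bond in types $B, C, BC$ the set of allowable labels on a staircase that occupies the last two nodes is asymmetric under swapping the direction of the arrow, and near the fork in type $D$ a staircase can branch, doubling the bookkeeping. Thus the hard step is not the derivation of $F(t)$ (which is a direct transfer-matrix or Catalan-style argument on a path) but rather producing and verifying the explicit boundary polynomials $L_W, R_W$ for each type; the distinction between $B$ and $BC$ in particular requires reworking the labelling constraint, and type $D$ requires handling the fork as an additional combinatorial join. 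Once these boundary contributions are computed, multiplying through and clearing denominators recovers the polynomials of Table \ref{TBL:polys}.
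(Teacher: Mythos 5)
Your overall architecture does match the paper's: reduce everything to counting staircase diagrams with rationally smooth nearly-maximal labellings via the bijection (Corollary \ref{C:ratsmoothbij} and Corollary \ref{C:simplylacedbij}), solve type $A$ by decomposing diagrams along the path, and treat the other types as modifications of the type $A$ count near the special end of the Dynkin diagram. But there are two genuine gaps. First, in type $A$ you never exhibit the combinatorial mechanism that produces the Catalan numbers; you infer that some Catalan-type quadratic ``should'' appear because $\SQ$ occurs in the answer, which is reverse-engineering rather than proof. The paper's actual mechanism is the decomposition of a fully supported diagram at its \emph{critical points} (vertices lying in exactly one block) into \emph{elementary} diagrams, the fact that elementary diagrams over a path are chains (Lemma \ref{L:chain_diagrams}), and the growth operator $\mathfrak{G}_p$ whose orbit sizes obey the Catalan-triangle recursion (Proposition \ref{P:Catalan_gen}); these feed the convolution $\overline{a}_n=\sum_k \overline{a}_k z_{n+1-k}$ giving $\overline{A}(t)=t^2/(2t-A_Z(t))$. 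Without identifying this decomposition, your block generating function $S(t)$ is not actually computed.

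Second, your justification for the boundary factors $L_W,R_W$ --- that the decorations are ``of bounded size'' and the labelling restrictions ``involve only finitely many node-labels'' --- is incorrect. In types $B$, $C$, $BC$ the unlabelled diagrams are identical to type $A$; what changes is the set of admissible labels on the block $B=[s_r,s_n]$ containing the end node, and that block has unbounded length. In type $C$ the non-maximal labellings $\lambda_2^k,\lambda_3^k$ carry a parameter $k$ with $1\le k<n$, and the number of admissible diagrams for each $k$ is $1+\overline{a}_k+2\sum_{\ell=1}^{k-2}\overline{a}_\ell$ (Proposition \ref{P:BC_label23_enum}), i.e.\ it re-involves the full type $A$ series rather than a finite correction; summing over $k$ is what creates the $1/(1-t)$ and $t^3/(1-t)^2$ factors in $\overline{C}(t)$. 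This step also rests on the classification of rationally smooth Grassmannian BP factors from the authors' earlier work, which you never invoke, and it is precisely what separates $B$ from $C$ from $BC$. Likewise in type $D$ the elementary diagram containing the fork need not be a chain and can span arbitrarily many vertices, forcing the three-way partition $Z_{D1}\sqcup Z_{D2}\sqcup Z_{D3}$ with separate Catalan recursions, and the resulting $D(t)$ has additive correction terms that a pure product ansatz $L_W(t)F(t)R_W(t)$ does not capture. So the steps you defer as bookkeeping are where the substance of the proof lies, and the reason you give for their tractability would not survive contact with the actual combinatorics.
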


\begin{table}[h]
    \centering
    \begin{tabular}{ccc}
    \toprule
        Type  & $P_W(t)$  & $Q_W(t)$  \\
        \midrule
        $A$& $(1-4t)(1-t)^3$              & $t(1-t)^2$     \\
        $B$& $(1-5t+5t^2)(1-t)^3 $        & $(2t-t^2)(1-t)^3$     \\
        $C$& $1-7t+15t^2-11t^3-2t^4+5t^5$ & $t-t^2-t^3+3t^4-t^5 $   \\
        $D$& $(-4t+19t^2+8t^3-30t^4+16t^5)(1-t)^2$   & $(4t-15t^2+11t^3-2t^5)(1-t)$   \\
        $BC$& $1-8t+23t^2-29t^3+14t^4$    & $2t-6t^2+7t^3-2t^4$  \\
    \bottomrule
    \end{tabular}
    \caption{Polynomials in Theorem \ref{T:main_gen_series}.}
    \label{TBL:polys}
\end{table}
Table \ref{TBL:count} gives the number of smooth and rationally smooth Schubert varieties in each type
for rank $n \leq 8$.
\begin{table}[h]
    \begin{tabular}{cccccc}
        \toprule
         & $a_n$  & $b_n$ & $c_n$ & $d_n$ & $bc_n$ \\
        \midrule
        $n=1$& 2      & 2     & 2     &       & 2 \\
        $n=2$& 6      & 7     & 7     &       & 8 \\
        $n=3$& 22     & 28    & 28    & 22    & 34 \\
        $n=4$& 88     & 116   & 114   & 108   & 142 \\
        $n=5$& 366    & 490   & 472   & 490   & 596 \\
        $n=6$& 1552   & 2094  & 1988  & 2164  & 2530 \\
        $n=7$& 6652   & 9014  & 8480  & 9474  & 10842 \\
        $n=8$& 28696  & 38988 & 36474 & 41374 & 46766 \\
        \midrule
    \end{tabular}
    \caption{Number of smooth and rationally smooth Schubert varieties in ranks $n \leq 8$.
    By convention, $A_1 = B_1 = C_1$, and $D_3 = A_3$.}
    \label{TBL:count}
\end{table}
One observation that can be made from Theorem \ref{T:main_gen_series} is that
\begin{equation*}
    A(t)+BC(t)=B(t)+C(t).
\end{equation*}
We give a geometric explanation for this fact in Remark \ref{R:BC_smooth}.

It is well known that the growth of the coefficients of a generating series is
controlled by the singularity of smallest modulus \cite[Theorem IV.7]{FS09}.
For each generating series $W(t)$ in Theorem \ref{T:main_gen_series}, the
smallest singularity is the root
\begin{equation*}
    \alpha:=\frac{1}{6}\left(4-\sqrt[3]{17+3\sqrt{33}}+\sqrt[3]{-17+3\sqrt{33}}\right)\approx 0.228155
\end{equation*}
of the polynomial $1-6t+8t^2-4t^3$ appearing in the denominator. Thus we get an
asymptotic formula for $w_n$ as an immediate corollary of Theorem
\ref{T:main_gen_series} and \cite[Theorem IV.10]{FS09}.
\begin{corollary}
    Let $W(t) = \sum w_n \, t^n$, where $W = A$, $B$, $C$, $D$, or $BC$.
    Then
    \begin{equation*}
        w_n\sim \frac{W_{\alpha}}{\alpha^{n+1}},
    \end{equation*}
    where $W_{\alpha}$ is a constant defined by
    \begin{equation*}
        W_{\alpha}:=\lim_{t\rightarrow \alpha}\,(\alpha-t)\, W(t).
    \end{equation*}
    Table \ref{TBL:constant} gives the approximate value of $W_{\alpha}$ in
    each type.
\end{corollary}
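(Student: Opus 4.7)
The plan is to apply the standard singularity-analysis transfer theorem \cite[Theorem IV.10]{FS09} directly to the closed form of $W(t)$ given by Theorem \ref{T:main_gen_series}. The only real work is to verify the hypotheses: that $\alpha$ is the unique singularity of $W(t)$ of smallest modulus, and that it is a simple pole.

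First I would locate all the singularities of $W(t)$. Since $W(t)$ is given by the formula
\begin{equation*}
    W(t)=\frac{P_W(t)+Q_W(t)\SQ}{(1-t)^2(1-6t+8t^2-4t^3)},
\end{equation*}
the candidate singularities are the three roots of $1-6t+8t^2-4t^3$, the point $t=1$ coming from $(1-t)^2$, and the branch point $t=1/4$ of $\SQ$. A direct numerical check shows that the polynomial $1-6t+8t^2-4t^3$ has one real root $\alpha\approx 0.228155$ and a complex-conjugate pair of roots of modulus strictly greater than $1/4$; in particular $|\alpha|<1/4<1$, so $\alpha$ is the unique singularity of smallest modulus.

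Next I would verify that $\alpha$ really is a pole of $W(t)$, not an artifact canceled by the numerator. Since $P_W(t)$ and $Q_W(t)$ have rational coefficients while $\sqrt{1-4\alpha}$ is irrational (as $1-4\alpha>0$ and $\alpha$ is irrational), $P_W(\alpha)+Q_W(\alpha)\sqrt{1-4\alpha}=0$ would force $P_W(\alpha)=Q_W(\alpha)=0$; inspection of Table \ref{TBL:polys} shows this fails for every $W\in\{A,B,C,D,BC\}$. Thus $W(t)$ has a genuine simple pole at $\alpha$, and the residue-style limit
\begin{equation*}
    W_{\alpha}=\lim_{t\to\alpha}(\alpha-t)W(t)
\end{equation*}
is a well-defined nonzero constant.

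With these facts in hand, the asymptotic extraction is automatic: the transfer theorem (or equivalently partial-fraction analysis) gives that a simple pole at $\alpha$ contributes $W_{\alpha}/\alpha^{n+1}$ to $[t^n]W(t)$, while all remaining singularities lie in $|t|>|\alpha|$ and therefore contribute at most $O(\rho^{-n})$ for some $\rho>\alpha$, which is absorbed into the $\sim$. The main obstacle, if any, is the bookkeeping check that $\alpha$ is in fact the smallest-modulus singularity, i.e.\ that the other two roots of $1-6t+8t^2-4t^3$ exceed $1/4$ in modulus; this is a short numerical computation using the cubic formula already displayed in the statement, together with the product of roots $1/4$ and the sum of roots $2$ obtained from the coefficients.
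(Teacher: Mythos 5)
Your proposal is correct and follows essentially the same route as the paper, which treats the corollary as immediate from Theorem \ref{T:main_gen_series} together with \cite[Theorem IV.10]{FS09} after observing that $\alpha$, the real root of $1-6t+8t^2-4t^3$, is the singularity of smallest modulus (indeed $\alpha < 1/4 < 1$ and the remaining two roots have modulus exceeding $1$, since their product is $\tfrac{1}{4\alpha} > 1$). The only quibble is in your non-cancellation step: since $P_W(\alpha)$ and $Q_W(\alpha)$ lie in $\Q(\alpha)$ rather than $\Q$, the vanishing of $P_W(\alpha)+Q_W(\alpha)\sqrt{1-4\alpha}$ is ruled out by $\sqrt{1-4\alpha}\notin\Q(\alpha)$ (or simply by a direct numerical evaluation), not merely by the irrationality of $\sqrt{1-4\alpha}$; the conclusion that $\alpha$ is a genuine simple pole stands.
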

In particular,
\begin{equation*}
    \lim_{n\rightarrow \infty}\frac{w_{n+1}}{w_n}=\alpha^{-1}\approx 4.382985,
\end{equation*}
meaning that the number of (rationally) smooth Schubert varieties $w_n$ grows
at the same rate for every finite classical Lie type.
Interestingly, a
similar ratio was observed for the number of smooth Schubert varieties in types
$E_6$, $E_7$, and $E_8$ by Carrell and Kuttler \cite{CK03}. Their observation
was one of the motivations for our investigation.

\begin{table}[h]
    \begin{tabular}{cccccc}
    \toprule
         & $A$  & $B$ & $C$ & $D$ & $BC$ \\
    \midrule
    $W_{\alpha}$& $0.045352$ & $0.062022$ & $0.057301$ & $0.067269$ & $0.073972$ \\
    \bottomrule
    \end{tabular}
    \caption{Initial constant for the asymptotic number of Schubert varieties by type.}
    \label{TBL:constant}
\end{table}

\subsection{A brief overview of staircase diagrams}
Haiman's original derivation of the generating series $A(t)$ uses a result of
Ryan \cite{Ry87}, which states that smooth Schubert varieties in type $A$ can
be expressed as iterated fibre bundles of Grassmannian flag varieties. Our
proof of Theorem \ref{T:main_gen_series} uses a similar iterated fibre bundle
structure on (rationally) smooth finite type Schubert varieties recently proved
by the authors \cite{RS14}. To study these fibre bundle structures
combinatorially, we introduce a new data structure called a \emph{staircase
diagram}, which lies over the Dynkin diagram of the Weyl group. To start with, we define staircase diagrams over an arbitrary graph. Informally, a staircase diagram is
a collection of connected ``blocks'' of vertices of a graph, where the blocks
are allowed to overlap each other, forming arrangements which resemble
staircases with steps of irregular length, as shown in the picture below (see
example \ref{Ex:typeA_11}):
\begin{equation*}
    \begin{tikzpicture}[scale=0.5]
        \planepartition{11}{
            {0,0,0,0,0,0,0,0,1,1,1},
            {0,1,1,0,0,0,0,1,1,1},
            {1,1,0,1,1,0,1,1,1},
            {0,0,0,0,1,1}}
    \end{tikzpicture}
\end{equation*}
These arrangements must satisfy a number of conditions---for instance, no block
can contain another, nor are ``vertical gaps'' allowed---so the following diagram
is \emph{not} a staircase diagram:
\begin{equation*}
    \begin{tikzpicture}[scale=0.5]
        \planepartition{5}{
            {0,1,1,1,0},
            {0,0,1,1,1},
            {1,1,1,0,0}}
    \end{tikzpicture}
\end{equation*}
The above examples are diagrams over a path; with more complicated graphs we
get more complicated behaviour, such as the example below (on the right) over a
star graph (on the left):
\begin{center}
\begin{tabular}{cc}
    \begin{tikzpicture}
        \begin{scope}[start chain]
            \dnode{2} \dnode{3} \dnode{4}
        \end{scope}
        \begin{scope}[start chain=br going above]
            \chainin(chain-2); \dnodebr{1}
        \end{scope}
    \end{tikzpicture}
    \hspace{1in}
    &
    \begin{tikzpicture}[scale=0.5]
        \planepartitionD{4}{ {0,2,0}, {0,1,1}, {1,1,0}}.
    \end{tikzpicture}
    \\
\end{tabular}
\end{center}
If the underlying graph $\Gamma$ is the Dynkin graph of a Weyl group $W$, then
we can label the blocks of a staircase diagram by elements of $W$, and ultimately
assign to each labelled staircase diagram $\mcD$ an element $\Lambda(\mcD) \in W$
such that the Schubert variety $X(\Lambda(\mcD))$ is an iterated fibre bundle.
Combinatorially, the (parabolic) fibre bundle structures on $X(\Lambda(\mcD))$
correspond to upwardly closed subdiagrams of $\mcD$.

As we discuss in Section \ref{SS:bp}, two of the main results of \cite{RS14} are
that (parabolic) fibre bundle structures on $X(w)$ correspond to
Billey-Postnikov decompositions of $w$ and that if $X(w)$ is rationally smooth, such fibre bundle structures always exist.  The main technical result of this paper, stated in
Theorem \ref{T:staircasebij}, is that there is a bijection between
``nearly-maximal labelled staircase diagrams'' over the Dynkin diagram of $W$,
and elements of $W$ with a complete Billey-Postnikov decomposition.  While we focus on finite type Weyl groups, this result applies to all Coxeter groups.

\subsection{Outline}
In the next section, we define staircase diagrams on a general graph. We
then spend the rest of the first part of the paper (sections 3-6) on
staircase diagrams over a Coxeter-Dynkin graph, proving that for finite-type
Dynkin diagrams, labelled staircase diagrams are in bijection with rationally
smooth Schubert varieties. In the second part of the paper (sections 7-10),
we enumerate staircase diagrams over a finite-type Dynkin diagram, proving
Theorem \ref{T:main_gen_series}.

\subsection{Acknowledgements}
The authors thank Sara Billey and Jim Carrell for helpful conversations, and
Sara Billey for providing data on the number of smooth and rationally smooth
Schubert varieties.  The first author would like to thank Anthony Kable for
some helpful discussions on asymptotics.  The second author thanks Mark Haiman
for helpful remarks. The pictures of staircase diagrams are based on TikZ code
for plane partitions by Jang Soo Kim.


\section{Staircase diagrams on graphs}

We start by introducing our main data structure. For this, we use some standard terminology concerning posets and graphs.  Specifically, if $(X,\preceq)$
is a poset, recall that $x' \in X$ covers $x \in X$ if $x' \succ  x$ and there is no
$y \in X$ with $x' \succ  y \succ  x$. A subset $Y \subset X$ is a chain if it is totally
ordered, and saturated if $x' \succ  y \succ  x$ for some $x', x \in Y$ implies that $y
\in Y$.  Given $A, B \subseteq X$, we say that $A \prec B$ if $a \prec b$ for all $a
\in A$, $b \in B$. When the partial order on $X$ is clear, we refer to the
poset $(X,\preceq)$ by $X$. Since we will eventually be working with
Coxeter-Dynkin diagrams, we use $S$ to denote the vertex set of a graph $\Gamma$,
which we fix for this section. Given $s, t \in S$, we write $s \adj t$ to mean
that $s$ is adjacent to $t$, or in other words that there is an edge between
$s$ and $t$ in $\Gamma$. We allow $\Gamma$ to have multiple edges between two
vertices, since Coxeter-Dynkin diagrams can have this property.  However, the
staircase diagrams we introduce in this section only depend on whether $s$ and
$t$ are adjacent, so edge multiplicities will not play a role until we start
working with Coxeter groups. Throughout the paper, we assume that $\Gamma$ does
not have any loops, or in other words that $s$ is never adjacent to
itself. We say that a subset $B \subset S$ is connected if the induced
subgraph with vertex set $B$ is connected, and that $B, B' \subset S$ are
adjacent if some element of $B$ is adjacent to some element of $B'$. Finally,
given a collection $\mcD \subseteq 2^S$ and a vertex $s \in S$, we let
\begin{equation*}
    \mcD_s:=\{B\in\mcD\ |\ s\in B\}.
\end{equation*}

We can now state the main definition of the paper:
\begin{definition}\label{D:staircase}
    Let $\mcD = (\mcD, \preceq)$ be a partially ordered subset of $2^S$ not
    containing the empty set. We say that $\mcD$ is a \emph{staircase diagram}
    if the following are true:
    \begin{enumerate}[(1)]
        \item Every $B\in\mcD$ is connected, and if $B$ covers $B'$ then $B\cup B'$ is connected.
        \item The subset $\mcD_s$ is a chain for every $s \in S$.
        \item If $s\adj t$, then $\mcD_s\cup \mcD_t$ is a chain, and $\mcD_s$ and $\mcD_t$
            are saturated subchains of $\mcD_s \cup \mcD_t$.
        \item If $B\in\mcD$, then there is some $s\in S$ (resp. $s'\in S$) such
            that $B$ is the minimum element of $\mcD_s$ (resp. maximum element of
            $\mcD_{s'}$).
    \end{enumerate}
\end{definition}
This definition is meant to formalize an arrangement of blocks sitting over a
graph, such that the blocks overlap each other in a particular way.  Note that
elements of the set $\mcD$ are called \emph{blocks}.  We now consider some
specific examples illustrating the different parts Definition
\ref{D:staircase}.

\begin{example}\label{Ex:typeA_11}
    The picture
    \begin{equation*}
        \begin{tikzpicture}[scale=0.5]
            \planepartition{11}{
                {0,0,0,0,0,0,0,0,1,1,1},
                {0,1,1,0,0,0,0,1,1,1},
                {1,1,0,1,1,0,1,1,1},
                {0,0,0,0,1,1}}
        \end{tikzpicture}
    \end{equation*}
    represents a staircase diagram $\mcD$ over a simple path with vertices
    $S = \{s_1,\ldots,s_{11}\}$, where $s_i$ is adjacent to $s_{i+1}$.  The elements of
    $\mcD$ correspond to connected blocks of uniform color in this diagram (for notational simplicity, we pictorially label $s_i$ by $i$), so
    \begin{equation*}
        \mcD = \left\{ \{s_1,s_2,s_3\}, \{s_2,s_3,s_4\}, \{s_3,s_4,s_5\}, \{s_6,s_7\}, \{s_7,s_8\}, \{s_9,s_{10}\}, \{s_{10},s_{11}\} \right\}.
    \end{equation*}
    The covering relations for $\mcD$ are given by the vertical adjacencies. In this case,
    $\mcD$ has covering relations
    \begin{align*}
        & \{s_1,s_2,s_3\} \prec \{s_2,s_3,s_4\} \prec \{s_3,s_4,s_5\} \prec \{s_6,s_7\},  \\
        & \{s_9,s_{10}\} \prec \{s_7,s_8\} \prec \{s_6,s_7\}, \text{ and }  \\
        & \{s_9,s_{10}\} \prec \{s_{10},s_{11}\}.
    \end{align*}
\end{example}

\begin{example}
    Let $\Gamma$ be the graph
        \begin{equation*}
    \begin{tikzpicture}
        \begin{scope}[start chain]
            \dnode{2} \dnode{3} \dnode{4}
        \end{scope}
        \begin{scope}[start chain=br going above]
            \chainin(chain-2); \dnodebr{1}
        \end{scope}
    \end{tikzpicture}
    \end{equation*}
    Then the staircase diagram $\mcD = \left\{ \{s_1,s_3,s_4\}, \{s_2,s_3,s_4\} \right\}$
    with $\{s_1,s_3,s_4\} \prec \{s_2,s_3,s_4\}$ is represented pictorially by
    \begin{equation*}
        \begin{tikzpicture}[scale=0.5]
            \planepartitionD{4}{ {1,2,0}, {1,1,1}}.
        \end{tikzpicture}
    \end{equation*}
    The diagram $\mcD = \left\{ \{s_2\}, \{s_4\}, \{s_1,s_3\}\right\}$ with covering relations
    $\{s_2\} \prec \{s_1,s_3\}$ and $\{s_4\} \prec \{s_1,s_3\}$ is represented by
    \begin{equation*}
        \begin{tikzpicture}[scale=0.5]
            \planepartitionD{4}{ {1,0,1}, {0,2,0}}
        \end{tikzpicture}.
    \end{equation*}
\end{example}
Part (1) of Definition \ref{D:staircase} states that the block of a
diagram must be a connected subset of the vertices, and that blocks can only
touch if they contain common or adjacent vertices. Part (2) of the definition
states that blocks with a common vertex must be comparable, or in other words
must be stacked one over the other.
\begin{example}
    Let $\mcD = \left\{\{s_1,s_2\}, \{s_2,s_3\} \right\}$. The partial order with
    no relations, which we might picture as
    \begin{equation*}
        \begin{tikzpicture}[scale=0.5]
            \planepartition{3}{ {0,1,1}}
        \end{tikzpicture} \quad \quad
        \begin{tikzpicture}[scale=0.5]
            \planepartition{3}{ {1,1}}
        \end{tikzpicture},
    \end{equation*}
    is not a staircase diagram. To get a valid staircase diagram, we must
    have either $\{s_1,s_2\} \prec \{s_2,s_3\}$ or $\{s_2,s_3\} \prec \{s_1,s_2\}$. These two
    partial orders correspond to the pictures
    \begin{equation*}
        \begin{tikzpicture}[scale=0.5]
            \planepartition{3}{ {0,1,1}, {1,1}}
        \end{tikzpicture}
        \qquad \text{ and } \qquad
        \begin{tikzpicture}[scale=0.5]
            \planepartition{3}{ {1,1}, {0,1,1}}
        \end{tikzpicture}
    \end{equation*}
    respectively.
\end{example}


The following example show violations of parts (3) and (4) of Definition \ref{D:staircase}:

\begin{example}
    Consider the diagrams:
    \begin{center}
    \begin{tabular}{cc}
        \begin{tikzpicture}[scale=0.5]
            \planepartitionD{5}{ {0,1,2},{1},{0,1,1,1}}
        \end{tikzpicture} \hspace{1in}
        &
        \begin{tikzpicture}[scale=0.5]
            \planepartition{3}{ {1,1},{1,1,1}}
        \end{tikzpicture}
    \end{tabular}
    \end{center}
    The first diagram violates part (3) of Definition \ref{D:staircase} since the chain
    $\mcD_{s_4}$ is not a saturated subchain of $\mcD_{s_4} \cup \mcD_{s_5}$.  The second diagram violates part (4) of Definition \ref{D:staircase} since the block $\{s_2,s_3\}$ is not maximal
    in $\mcD = \mcD_{s_2} = \mcD_{s_3}$.
\end{example}

Given $J \subset S$, we define
\begin{equation*}
    \mcD_J:=\{B\in\mcD\ |\ J\subseteq B\}.
\end{equation*}
We record some immediate consequences of the axioms which will be helpful in the sequel.
\begin{lemma}\label{L:staircaseprops}
    Let $\mcD$ be a staircase diagram. Then:
    \begin{enumerate}[(a)]
        \item $\mcD_J$ is a chain for every $J \subset S$.
        \item If $B, B' \in \mcD$, then $B \not\subseteq B'$.
        \item If $B, B' \in \mcD$ either contain a common element, or
            are adjacent, then $B$ and $B'$ are comparable in the partial order on $\mcD$.
    \end{enumerate}
\end{lemma}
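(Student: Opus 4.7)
The plan is that all three parts are direct consequences of the axioms in Definition \ref{D:staircase}, so only straightforward definition-chasing is required. Part (a), which I read as the statement for nonempty $J$ (since $\mcD$ itself is not a chain in general), is immediate from axiom (2): for any $s \in J$ one has $\mcD_J \subseteq \mcD_s$, and a subset of a chain is a chain. Part (c) splits into two cases. If $B, B' \in \mcD$ share a vertex $s$, then $B, B' \in \mcD_s$ and axiom (2) says $\mcD_s$ is a chain, so $B$ and $B'$ are comparable. If instead they are adjacent, with $s \in B$ and $t \in B'$ satisfying $s \adj t$, then $B \in \mcD_s$ and $B' \in \mcD_t$, and axiom (3) provides the chain $\mcD_s \cup \mcD_t$, again giving comparability.

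The only argument with any content is part (b). I would argue by contradiction: suppose $B, B' \in \mcD$ with $B \subsetneq B'$. Applying axiom (4) to $B$, I can pick $s, s' \in S$ such that $B$ is the minimum element of $\mcD_s$ and the maximum element of $\mcD_{s'}$. Since $B \in \mcD_s$ and $B \in \mcD_{s'}$, we have $s, s' \in B \subset B'$, so $B' \in \mcD_s \cap \mcD_{s'}$. As $B' \neq B$, minimality of $B$ in $\mcD_s$ forces $B \prec B'$, while maximality of $B$ in $\mcD_{s'}$ forces $B' \prec B$; these two conclusions are contradictory.

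The only subtle point --- and therefore, generously, the ``main obstacle'' --- is noticing that axiom (4) must be invoked on both sides for part (b). Either half alone is insufficient to rule out nested blocks: one needs $B$ to be an extremum on \emph{both} ends of the chains passing through it in order to derive the contradictory pair of inequalities. Beyond this observation, the proof is a short chase through the definitions.
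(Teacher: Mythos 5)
Your proof is correct and follows essentially the same route as the paper: part (a) via $\mcD_J\subseteq\mcD_s$, part (c) directly from axioms (2) and (3), and part (b) by invoking axiom (4) on both ends to get $s,s'$ with $B=\min(\mcD_s)=\max(\mcD_{s'})$, which the paper phrases as ``$B$ is the only block of $\mcD_{\{s,s'\}}$'' and you phrase as the contradictory pair $B\prec B'$ and $B'\prec B$. No gaps.
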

\begin{proof}
    $\mcD_J$ is the intersection of $\mcD_s$, $s \in J$, so part (a) follows from
    the fact that each $\mcD_s$ is a chain.

    For part (b), find $s$ and $s'$ such that $B$ is the maximal and minimal block
    of $\mcD_s$ and $\mcD_{s'}$ respectively. Then $B$ is the only block of
    $\mcD_{\{s,s'\}}$, so there cannot be another block $B' \in \mcD$ strictly containing $B$.

    Part (c) follows immediately from conditions (2) and (3) of the definition.
\end{proof}
Definition \ref{D:staircase} is symmetric with respect to reversing the partial order
$\preceq$ on $\mcD$, so we can make the following definition:
\begin{defn}
    If $\mcD$ is a staircase diagram, then $\flip(\mcD)$ is the staircase diagram
    with the reverse partial order.
\end{defn}

To get the pictorial diagram for $\flip(\mcD)$, we simply flip the diagram from
top to bottom.
\begin{example}
    If $\mcD$ is the diagram in Example \ref{Ex:typeA_11}, then $\flip(\mcD)$ is
    \begin{equation*}
        \begin{tikzpicture}[scale=0.5]
            \planepartition{11}{
                {0,0,0,0,1,1,0,0,0,0,0},
                {1,1,0,1,1,0,1,1,1,0,0},
                {0,1,1,0,0,0,0,1,1,1,0},
                {0,0,0,0,0,0,0,0,1,1,1}}
        \end{tikzpicture}
    \end{equation*}
\end{example}

We finish the section with the following simple definitions:
\begin{defn}
    The \emph{support} of a staircase diagram $\mcD$ is the set of vertices
    \begin{equation*}
        \supp(\mcD) := \bigcup_{B \in \mcD} B.
    \end{equation*}
    We say $\mcD$ is \emph{connected} if the support is a connected subset
    of the base graph. A subset $\mcD' \subset \mcD$ is a \emph{subdiagram} if
    $\mcD'$ is a saturated subset of $\mcD$.
\end{defn}
It is easy to see that a subdiagram of a staircase diagram with the induced
partial order is a staircase diagram in its own right.  Every staircase diagram
$\mcD$ is a union of connected subdiagrams supported on the connected
components of $\supp(\mcD)$.
\begin{example}
    The diagram
     \begin{equation*}
        \begin{tikzpicture}[scale=0.5]
            \planepartition{12}{
                {0,0,0,0,1,0,1,0,0,0,1,1},
                {1,1,0,1,0,0,0,1,0,1,1},
                {0,1,1,0,0,0,0,0,1,0,0}}
        \end{tikzpicture}
    \end{equation*}
    has two connected subdiagrams. The support of the diagram is
    $\{s_1,\ldots,s_6,s_8,\ldots,s_{12}\}$.
\end{example}


\section{Staircase diagrams on Coxeter systems}

A Coxeter group $W$ is a group generated by a finite set of simple generators
$S$, modulo relations $(st)^{m_{st}} = e$ for all $s,t \in S$, where $m_{st}$
is a collection of integers satisfying $m_{st} \geq 2$ for all $s\neq t$ and
$m_{ss} = 1$.   Let $\ell:W\arr\Z_{\geq 0}$ denote the length function and let
$\leq$ denote the Bruhat partial order on $W$.  The pair $(W,S)$ is called a
Coxeter system, and any system is uniquely determined by the multigraph with
vertex set $S$ and $m_{st}-2$ edges between $s,t \in S$. This graph is called
the Coxeter-Dynkin graph (or Dynkin diagram) of the system. A staircase diagram
on $(W,S)$ is simply a staircase diagram on the Coxeter-Dynkin graph.

Given a subset $J \subset S$, let $W_J$ be the parabolic subgroup generated
by $J$.  Let $W^J$ denote the set of minimal length left coset representatives of $W/W_J$ and
${}^J W$ be the set of minimal length right coset representatives of $W_J\backslash W$.  We say that
a subset $J \subseteq S$ is \emph{spherical} if the parabolic subgroup $W_J$ is
finite, in which case we let $u_J$ denote the unique maximal element of $W_J$.
\begin{defn}
    We say that a staircase diagram $\mcD$ is \emph{spherical} if every $B \in
    \mcD$ is spherical.
\end{defn}
Given $J$, every element $w \in W$ can be written uniquely as $w = vu$ for some
$v \in W^J$ and $u \in W_J$. This is called the \emph{parabolic decomposition} of $w$
with respect to $J$. We also let $D_L(w)$ and $D_R(w)$ denote the left and
right descent sets of $w$ respectively, and $S(w)$ be the support of $w$, or in
other words the set of simple reflections which appear in some reduced
expression for $w$. Given an element $w \in W$, we say that a parabolic
decomposition $w = vu$, $v \in W^J$, $u \in W_J$, is a \emph{Billey-Postnikov
(BP) decomposition} if $S(v) \cap J  \subseteq D_L(u)$.

Staircase diagrams can be used to describe iterated Billey-Postnikov
decompositions.  Specifically, given a staircase diagram $\mcD$ on a Coxeter
system $(W,S)$, define functions $J_R, J_L : \mcD \arr S$ by
\begin{align*}
    J_R(B) \cong J_R(B, \mcD) & := \left\{s \in B\ |\ B \neq \min\{\mcD_s\}\right\} \text{ and } \\
    J_L(B) \cong J_L(B, \mcD) & := \left\{s \in B\ |\ B \neq \max\{\mcD_s\}\right\} \\
          & \ = J_R(B, \flip(\mcD)).
\end{align*}
The factors of a Billey-Postnikov decomposition give rise to a labelling of the
staircase diagram:
\begin{defn}\label{D:labelling}
    Let $\mcD$ be a staircase diagram on a Coxeter system $(W,S)$, such that
    the sets $J_L(B)$ and $J_R(B)$ are spherical for all $B \in \mcD$. We
    say that a function
    \begin{equation*}
        \lambda : \mcD \mapsto W
    \end{equation*} is a \emph{labelling} of $\mcD$ if
    \begin{enumerate}[(1)]
        \item $J_R(B) \subseteq D_R(\lambda(B))$,
        \item $J_L(B) \subseteq D_L(\lambda(B))$, and
        \item $S(\lambda(B) u_{J_R(B)}) = S( u_{J_L(B)} \lambda(B)) = B$.
    \end{enumerate}
\end{defn}
Since the definition is symmetric, $w$ is a labelling of $\mcD$ if and only if
\begin{equation*}
    \lambda^{-1} : \flip(\mcD) \mapsto W
\end{equation*}
given by  $B \mapsto \lambda(B)^{-1}$ is a labelling of $\flip(\mcD)$. Furthermore, if $\lambda$ is a labelling of $\mcD$
and $\mcD'$ is a subdiagram, then the restriction $\lambda|_{\mcD'}$ of $\lambda$ to $\mcD'$
is a labelling of $\mcD'$.
\begin{defn}\label{D:maximal_labelling}
    If $\mcD$ is a spherical staircase diagram, then the function $\lambda : \mcD \mapsto W$
    given by $\lambda(B)=u_B$ is a labelling of $\mcD$. We call this the \emph{maximal labelling}.
\end{defn}

\begin{defn}\label{D:labelling2}
    Given a labelling $\lambda$ of a staircase diagram $\mcD$, let $\overline{\lambda}(B)
    := \lambda(B) u_{J_R(B)}$. We set
    \begin{equation}\label{E:labelprod}
        \Lambda(\mcD, \lambda) := \overline{\lambda}(B_n) \overline{\lambda}(B_{n-1}) \cdots \overline{\lambda}(B_1),
    \end{equation}
    where $B_1,\ldots,B_n$ is some linear extension of the poset $\mcD$.
    Usually $\lambda$ is clear, and we write $\Lambda(\mcD)$ in place of
    $\Lambda(\mcD,\lambda)$.
\end{defn}
If $\lambda$ is a labelling, and $B$ and $B'$ are incomparable in $\mcD$, then
$S(\overline{\lambda}(B)) = B$ and $S(\overline{\lambda}(B')) = B'$ are disjoint and
non-adjacent by Lemma \ref{L:staircaseprops} part(c).  In particular, $\overline{\lambda}(B)$ and
$\overline{\lambda}(B')$ commute and thus $\Lambda(\mcD)$ does not depend on the choice of
linear extension. Also,
\begin{equation*}
    J_R(B_i)=B_i \cap (B_{i-1} \cup \cdots \cup B_1)
\end{equation*}
for every $i=1,\ldots,n$, and by definition $\overline{\lambda}(B_i) \in
W^{J_R(B_i)}$, so the product in equation (\ref{E:labelprod}) is reduced in the sense that
\begin{equation*}
    \ell(\Lambda(\mcD))=\ell(\overline{\lambda}(B_n))+\cdots + \ell(\overline{\lambda}(B_1)).
\end{equation*}
Moreover, we have that $S(\Lambda(\mcD)) = \supp(\mcD)$.
\begin{example}\label{Ex:typeAcont}
    The permutation group on $n+1$ letters is the Coxeter group of type $A_n$. The Coxeter-Dynkin diagram of $A_n$ is the simple path of
    length $n$, with vertex set $s_1,\ldots,s_n$, where $s_i$ is the simple
    transposition $(i\ \ i+1)$.
    \begin{equation*}
        \begin{tikzpicture}[start chain]
            \dnode{1} \dnode{2} \dydots \dnode{n-1} \dnode{n}
        \end{tikzpicture}
    \end{equation*}
    The staircase diagram $\mcD$ in Example \ref{Ex:typeA_11} can be considered
    as a staircase diagram over the Coxeter group $A_{11}$.  This diagram has a linear extension
    \begin{equation*}
        \{s_1,s_2,s_3\}, \{s_2,s_3,s_4\}, \{s_9,s_{10}\}, \{s_3,s_4,s_5\}, \{s_7, s_8\}, \{s_{10}, s_{11}\},
            \{s_6, s_7\},
    \end{equation*}
    and
    \begin{align*}
        & J_R(\{s_2,s_3,s_4\}) = \{s_2,s_3\}, \quad J_R(\{s_3,s_4,s_5\}) = \{s_3,s_4\}, \\
        & J_R(\{s_6,s_7\}) = \{s_7\}, \quad J_R(\{s_{10},s_{11}\}) = \{s_{10}\}, \text{ and } \\
        & J_R(\{s_1,s_2,s_3\}) = J_R(\{s_7, s_8\}) = J_R(\{s_9, s_{10}\}) = \emptyset.
    \end{align*}
    If $\lambda : \mcD\rightarrow W$ is the maximal labelling, then
    \begin{align*}
        & \overline{\lambda}(\{s_2, s_3, s_4\}) = s_2 s_3 s_4, \quad \overline{\lambda}(\{s_3,s_4,s_5\}) = s_3 s_4 s_5, \\
        & \overline{\lambda}(\{s_6, s_7\}) = s_7 s_6, \quad \overline{\lambda}(\{s_{10}, s_{11}\}) = s_{10} s_{11}, \\
        & \overline{\lambda}(\{s_1, s_2, s_3\}) = s_1 s_2 s_3 s_1 s_2 s_1, \\
        & \overline{\lambda}(\{s_7, s_8\}) = s_7 s_8 s_7, \text{ and }\, \overline{\lambda}(\{s_9, s_{10}\}) = s_9 s_{10} s_9
    \end{align*}
    and
    \begin{equation*}
        \Lambda(\mcD) = (s_7 s_6) (s_{10} s_{11}) (s_7 s_8 s_7) (s_3 s_4 s_5) (s_9 s_{10} s_9) (s_2 s_3 s_4) (s_1 s_2 s_3 s_1 s_2 s_1).
    \end{equation*}
\end{example}

If $\mcD'$ is a subdiagram of $\mcD$, we use the convention that $\Lambda(\mcD') := \Lambda(\mcD',\lambda|_{\mcD'})$.
Also note that if $\mcD'$ is a lower order ideal of $\mcD$, then both $\mcD'$
and $\mcD \setminus \mcD'$ are subdiagrams of $\mcD$.
\begin{prop}\label{P:labelparabolic}
    Let $\lambda$ be a labelling of a staircase diagram $\mcD$. Given a lower order
    ideal $\mcD' \subset \mcD$, set $\mcD'' := \mcD \setminus \mcD'$. If
    $\Lambda(\mcD) = vu$ is the parabolic decomposition of $\Lambda(\mcD)$ with respect
    to $J = \supp(\mcD')$, then $u = \Lambda(\mcD')$ and $v = \Lambda(\mcD'')
    u_K$, where
    \begin{equation*}
        K := \{s \in \supp(\mcD'')\ |\ \min(\mcD''_s) \neq \min(\mcD_s) \}.
    \end{equation*}
    Furthermore, the support of $v$ is
    \begin{equation*}
        S(v) = \bigcup_{B \in \mcD''} B.
    \end{equation*}
\end{prop}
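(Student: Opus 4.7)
The plan is to choose a linear extension $B_1, \ldots, B_n$ of $\mcD$ such that $\mcD' = \{B_1, \ldots, B_k\}$ appears first, and then verify directly that the pair $(v,u) = (\Lambda(\mcD'')u_K,\, \Lambda(\mcD'))$ satisfies the defining properties of the parabolic decomposition. Since $\mcD'$ is a lower order ideal and each $\mcD_s$ is a chain, $\mcD'_s$ is an initial segment of $\mcD_s$ for every $s \in \supp(\mcD')$, so $J_R(B_i, \mcD) = J_R(B_i, \mcD')$ for $B_i \in \mcD'$; hence the product defining $\Lambda(\mcD)$ factors as
\begin{equation*}
\Lambda(\mcD) \;=\; X \cdot \Lambda(\mcD'), \qquad X := \overline{\lambda}_\mcD(B_n) \cdots \overline{\lambda}_\mcD(B_{k+1}),
\end{equation*}
with $\Lambda(\mcD') \in W_J$ because $\supp(\Lambda(\mcD')) = \supp(\mcD') = J$. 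By uniqueness of the parabolic decomposition, the proposition then reduces to showing $X = \Lambda(\mcD'')u_K$ together with $X \in W^J$.

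For each $B_i \in \mcD''$, set $K_{B_i} := \{s \in K : B(s) = B_i\}$ with $B(s) := \min \mcD''_s$; unpacking definitions gives $J_R(B_i, \mcD) = J_R(B_i, \mcD'') \sqcup K_{B_i}$ and $K = \bigsqcup_{i > k} K_{B_i}$. The main obstacle, which I view as the heart of the proof, is the following uniform non-adjacency claim: for every $B_i \in \mcD''$, the set $K_{B_i}$ is disjoint from and non-adjacent to both $J_R(B_i, \mcD'')$ and $B_j$ for every $B_j \in \mcD''$ with $j < i$. I would prove this with a single saturated-chain argument: suppose $s \in K_{B_i}$ is adjacent (or equal) to some $t$ lying in a block $B \in \mcD''$ with $B \prec B_i$ (the $J_R(B_i, \mcD'')$ case reduces to this by letting $B$ witness $t \in J_R(B_i, \mcD'')$). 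Since $s \in K$ there exists $B'' \in \mcD'_s$, and because $\mcD'$ is a lower order ideal and $\mcD_s \cup \mcD_t$ is a chain, one must have $B'' \prec B$ in this chain. But saturation of $\mcD_s$ inside $\mcD_s \cup \mcD_t$ then forces $B \in \mcD_s$, contradicting $B_i = \min \mcD''_s$. The remaining case, where $B_j$ is incomparable to $B_i$, is handled directly by Lemma \ref{L:staircaseprops}(c).

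With the non-adjacency claim in hand, the identity $X = \Lambda(\mcD'')u_K$ follows in two steps. First, non-adjacency of $K_{B_i}$ and $J_R(B_i, \mcD'')$ yields $W_{J_R(B_i, \mcD)} = W_{J_R(B_i, \mcD'')} \times W_{K_{B_i}}$ as an internal direct product, hence $u_{J_R(B_i, \mcD)} = u_{J_R(B_i, \mcD'')} u_{K_{B_i}}$ and $\overline{\lambda}_\mcD(B_i) = \overline{\lambda}_{\mcD''}(B_i) \cdot u_{K_{B_i}}$. Second, non-adjacency of $K_{B_i}$ with each $B_j$ for $j<i$ in $\mcD''$ implies $u_{K_{B_i}}$ commutes past every $\overline{\lambda}_{\mcD''}(B_j)$ and $u_{K_{B_j}}$ appearing to its right in $X$. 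Collecting all $u_{K_{B_i}}$'s at the right-hand end of $X$ produces $X = \Lambda(\mcD'') \cdot \prod_i u_{K_{B_i}}$, and pairwise disjointness/non-adjacency of the $K_{B_i}$'s identifies this product with $u_K$.

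To check $X \in W^J$, a length count using $\ell(\overline{\lambda}_\mcD(B_i)) = \ell(\overline{\lambda}_{\mcD''}(B_i)) - \ell(u_{K_{B_i}})$ and the reducedness of the products defining $X$ and $\Lambda(\mcD'')$ yields $\ell(\Lambda(\mcD'')) = \ell(X) + \ell(u_K)$, so $\Lambda(\mcD'') = X \cdot u_K$ is a reduced factorization. Because $u_K$ is the longest element of $W_K$, every $s \in K$ appears as the first letter of some reduced word for $u_K$; reducedness of $X \cdot u_K$ then forces $Xs > X$ and hence $D_R(X) \cap K = \emptyset$. Combined with $D_R(X) \subseteq S(X) \subseteq \supp(\mcD'')$ and $J \cap \supp(\mcD'') = K$, this gives $D_R(X) \cap J = \emptyset$, so $X \in W^J$. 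Finally, the support identity $S(v) = \bigcup_{B \in \mcD''} B$ follows from $S(\overline{\lambda}_\mcD(B_i)) = B_i$ (labelling condition (3)) and the reducedness of $X$.
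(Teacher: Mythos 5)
Your proof is correct and follows essentially the same route as the paper's: the same linear extension with $\mcD'$ first, the same key non-adjacency claim for the sets $K_{B_i}$ proved via the saturation condition in Definition \ref{D:staircase}(3), and the same commutation argument collecting the $u_{K_{B_i}}$ into $u_K$. The only notable difference is that you explicitly verify $X \in W^J$ via the length count and descent-set argument, a point the paper's proof asserts without detail; this is a welcome addition rather than a change of method.
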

\begin{proof}
    Since $\mcD'$ is a lower order ideal, we can find a linear extension
    $B_1,\ldots,B_n$ of $\mcD$ such that $\mcD' = \{ B_1,B_2,\ldots,B_i\}$ for
    some $i \leq n$. Since $\overline{\lambda}(B_j) \in W^{J_R(B_i)}$, we have that
    \begin{equation*}
        v = \overline{\lambda}(B_n) \cdots \overline{\lambda}(B_{i+1}) \text{ and }
        u = \overline{\lambda}(B_{i}) \cdots \overline{\lambda}(B_1).
    \end{equation*}
    Since $\mcD'$ is a lower order ideal, $J_R(B, \mcD) = J_R(B, \mcD')$ for
    all $B \in \mcD'$, so $\Lambda(\mcD') = \overline{\lambda}(B_{i}) \cdots
    \overline{\lambda}(B_1)$. Also, the support set $S(v)=\bigcup_{B \in \mcD''} B$.

    The calculation for $v$ is more difficult. Observe that $J_R(B,\mcD'')
    \subseteq J_R(\mcD)$ for all $B \in \mcD''$, and that
    \begin{equation*}
        K = \bigcup_{B \in \mcD''} J_R(B, \mcD) \setminus J_R(B, \mcD'').
    \end{equation*}
    For each $i < j \leq n$, set $K_j = J_R(B_j, \mcD) \setminus J_R(B_j,
    \mcD'')$ and suppose $s \in K_j$ for some $j > i$. By definition, $s
    \not\in B_l$ for $i < l < k$, but $s \in B_k$ for some $1 \leq k \leq i$.
    If $t \adj s$ then $\mcD_s$ is a saturated subset of the chain $\mcD_s \cup
    \mcD_t$, so $t \not\in B_l$ for any $k < l < j$. Consequently, if $t \in
    J_R(B_j, \mcD)$, then $t \not\in J_R(B_j, \mcD'')$.  This implies $K_j$ is disjoint and
    non-adjacent to the set
    \begin{equation*}
        B_{i+1} \cup \cdots \cup B_{j-1} \cup J_R(B_j, \mcD'').
    \end{equation*}
    In particular, the sets $K_j$, $i < j \leq n$ are disjoint and
    non-adjacent.  Thus
    \begin{align*}
        v & =  \overline{\lambda}(B_n) \cdots \overline{\lambda}(B_{i+1}) \\
          & = \lambda(B_n) u_{J_R(B_n, \mcD)} \cdots \lambda(B_{i+1}) u_{J_R(B_{i+1}, \mcD)} \\
          & = \lambda(B_n) u_{J_R(B_n, \mcD'')} u_{K_n} \cdots \lambda(B_{i+1}) u_{J_R(B_{i+1}, \mcD'')}
            u_{K_{i+1}} \\
          & = \Lambda(\mcD'') u_{K_n} u_{K_{n-1}} \cdots u_{K_{i+1}} = \Lambda(\mcD'') u_{K}.
    \end{align*}
\end{proof}
This leads to the main theorem of this section, which states that we can
determine the descent sets of $\Lambda(\mcD)$ using only information about $\mcD$ and
``local'' information about each $\lambda(B)$. If $\mcD$ has labelling $\lambda$, we
use the convention that $\Lambda(\flip(\mcD)) := \Lambda(\flip(\mcD), \lambda^{-1})$.
\begin{thm}\label{T:labelling}
    Let $\lambda$ be a labelling of a staircase diagram $\mcD$. Then:
    \begin{enumerate}[(a)]
        \item $\Lambda(\flip(\mcD)) = \Lambda(\mcD)^{-1}$.
        \item $D_R(\Lambda(\mcD)) = \left\{s \in S\ |\ \min(\mcD_s) \preceq \min(\mcD_t) \text{ for all }
            s \adj t \text{ and } s \in D_R\left(\lambda\left(\min(\mcD_s)\right)\right)
            \right\}$.
        \item $D_L(\Lambda(\mcD)) = \left\{s \in S\ |\ \max(\mcD_s) \succeq \max(\mcD_t) \text{ for all }
            s \adj t \text{ and } s \in D_L\left(\lambda\left(\max(\mcD_s)\right)\right)
            \right\}$.
    \end{enumerate}
\end{thm}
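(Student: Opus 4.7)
The plan is to prove (a) first, then (b), and derive (c) from (a) and (b). All three arguments proceed by induction on $|\mcD|$; the base case $|\mcD| \leq 1$ is immediate in each.

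\textbf{For (a),} pick a maximal block $B^* \in \mcD$, so $\mcD' := \mcD \setminus \{B^*\}$ is a lower order ideal of $\mcD$ and, since $\flip$ reverses the order, $\{B^*\}$ is a lower order ideal of $\flip(\mcD)$. Applying Proposition~\ref{P:labelparabolic} in $\mcD$ with ideal $\mcD'$ yields $\Lambda(\mcD) = \overline{\lambda}(B^*) \cdot \Lambda(\mcD')$, while applying it in $\flip(\mcD)$ with ideal $\{B^*\}$ yields $\Lambda(\flip(\mcD)) = \Lambda(\flip(\mcD')) \cdot u_{J_R(B^*, \mcD)} \cdot \lambda(B^*)^{-1}$; after inspection, the set $K$ of the proposition simplifies to $J_R(B^*, \mcD)$ in both applications. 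Combining with the inductive hypothesis $\Lambda(\flip(\mcD')) = \Lambda(\mcD')^{-1}$ gives $\Lambda(\flip(\mcD)) = \Lambda(\mcD')^{-1} u_{J_R(B^*)} \lambda(B^*)^{-1} = (\lambda(B^*) u_{J_R(B^*)} \Lambda(\mcD'))^{-1} = \Lambda(\mcD)^{-1}$.

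\textbf{For (b),} again remove a maximal block $B^*$ and use the parabolic decomposition $\Lambda(\mcD) = \overline{\lambda}(B^*) \cdot \Lambda(\mcD')$ with respect to $J := \supp(\mcD')$. When $s \in J$, the standard parabolic-descent rule gives $s \in D_R(\Lambda(\mcD)) \cap J = D_R(\Lambda(\mcD'))$; the inductive hypothesis applies, and the conditions for $\mcD'$ match those for $\mcD$ since $B^*$ is maximal (so $\min(\mcD_\bullet) = \min(\mcD'_\bullet)$ on the relevant vertices), while neighbor conditions involving $t \in B^* \setminus J$ are automatic by Lemma~\ref{L:staircaseprops}(c). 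When $s \in B^* \setminus J_R(B^*)$ with $\mcD_s = \{B^*\}$, the condition ``$\min(\mcD_s) \preceq \min(\mcD_t)$ for all $t \adj s$'' reduces, by maximality of $B^*$, to ``$s$ has no neighbor in $J$.'' If this holds, $s$ commutes strongly with both $\Lambda(\mcD')$ and $u_{J_R(B^*)}$, so $\Lambda(\mcD) \alpha_s = \lambda(B^*) \alpha_s$, giving the equivalence with $s \in D_R(\lambda(B^*))$.

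The main obstacle is the remaining sub-case: $s \notin J$ has some neighbor $t \in J$. Here one needs $\Lambda(\mcD) \alpha_s > 0$, i.e.\ $s \notin D_R(\Lambda(\mcD))$. The key observation is that $\overline{\lambda}(B^*) \in W_{B^*}$ preserves the coefficients of simple roots $\alpha_r$ for $r \notin B^*$, so it suffices to exhibit $r \in J \setminus B^*$ with strictly positive coefficient on $\alpha_r$ in $\Lambda(\mcD') \alpha_s$. Lemma~\ref{L:staircaseprops}(b) guarantees that any block $B' \in \mcD'$ containing $t$ has an element outside $B^*$, so such an $r$ exists within the connected component of $s$ in $J \cup \{s\}$. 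Making positivity rigorous requires a careful propagation argument tracking how reflections in a connected chain of blocks move positive simple-root coefficients outward through the Dynkin graph; I would encapsulate this in an auxiliary combinatorial lemma about $w\alpha_s$ for $w$ a staircase product with connected support.

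\textbf{Finally, for (c),} since $D_L(w) = D_R(w^{-1})$, part (a) gives $D_L(\Lambda(\mcD)) = D_R(\Lambda(\flip(\mcD)))$. Applying part (b) to $\flip(\mcD)$ and substituting $\min(\flip(\mcD)_s) = \max(\mcD_s)$ together with $D_R(\lambda(B)^{-1}) = D_L(\lambda(B))$ then yields the claimed formula for $D_L(\Lambda(\mcD))$ directly.
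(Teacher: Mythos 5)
Your part (a) reproduces the paper's argument essentially verbatim (peel off a maximal block $B^*$, apply Proposition~\ref{P:labelparabolic} to $\mcD'=\mcD\setminus\{B^*\}$ in $\mcD$ and to $\{B^*\}$ in $\flip(\mcD)$, note $K=J_R(B^*,\mcD)$, and induct), and deriving one of (b), (c) from the other via (a) is also what the paper does (it proves (c) and deduces (b); the direction is immaterial). Your case analysis for $s\in J=\supp(\mcD')$ (using $D_R(vu)\cap J=D_R(u)$ and checking that the defining conditions for $\mcD$ and $\mcD'$ agree) and for $s\in B^*\setminus J_R(B^*)$ with no neighbour in $J$ is sound.

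The genuine gap is the step you yourself flag and defer to an unproven ``auxiliary combinatorial lemma'': showing that if $s\in B^*\setminus J$ has a neighbour in $J$, then $s\notin D_R(\Lambda(\mcD))$. This is the entire content of the theorem beyond bookkeeping. The paper supplies exactly this ingredient as Lemma~\ref{L:adjacentpara} (quoted from \cite{RS14}): for a parabolic decomposition $w=vu$ and $s\in D_L(u)\setminus S(v)$, one has $s\in D_L(w)$ iff $s$ is not adjacent to $S(v)$. Crucially, to make that lemma applicable the paper does \emph{not} peel off a maximal block: for each fixed $s$ it takes the lower order ideal $\mcD'=\{B\preceq\max(\mcD_s)\}$, so that $s$ lies in the support of the parabolic factor $\Lambda(\mcD')\in W_J$ and $s\notin S(v)$; the lemma then applies verbatim, and a second application of Proposition~\ref{P:labelparabolic} to $\flip(\mcD')$ reduces $s\in D_L(\Lambda(\mcD'))$ to $s\in D_L(\lambda(\max(\mcD_s)))$. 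In your setup the problematic $s$ sits in the support of the quotient factor $\overline{\lambda}(B^*)$, so even granting Lemma~\ref{L:adjacentpara} it does not apply directly. Your positivity sketch is not yet a proof: the assertion that $\Lambda(\mcD')\alpha_s$ carries a strictly positive coefficient on some $\alpha_r$ with $r\in J\setminus B^*$ is precisely the nontrivial claim (for instance, if $s$ is adjacent only to vertices of $J_R(B^*)$, the coefficient must be propagated through a chain of blocks out of $B^*$ before $\overline{\lambda}(B^*)$ is applied, and this uses the connectivity and saturation axioms in an essential way, not just Lemma~\ref{L:staircaseprops}(b)). To repair the argument, either prove that propagation lemma or restructure around the ideal below $\max(\mcD_s)$ as the paper does.
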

To prove the theorem we need the following lemma:
\begin{lemma}[Lemma 5.4 of \cite{RS14}]\label{L:adjacentpara}
    If $w = vu$ is a parabolic decomposition, and $s \in D_L(u)\setminus S(v)$, then $s \in D_L(w)$ if and only if $s$ is not
    adjacent to any element of $S(v)$.
\end{lemma}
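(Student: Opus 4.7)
The plan is to prove both directions by determining whether $sv$ lies in $W^J$. For the ``if'' direction, assume $s$ is not adjacent to any element of $S(v)$. Then $s$ commutes with every simple reflection appearing in a reduced word for $v$, so $sv = vs$. Thus $sw = svu = vsu$, and since $s \in D_L(u) \subseteq J$ gives $\ell(su) = \ell(u) - 1$ with $su \in W_J$, the parabolic decomposition $sw = v \cdot (su)$ yields $\ell(sw) = \ell(v) + \ell(u) - 1 = \ell(w) - 1$, so $s \in D_L(w)$ as required.

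For the ``only if'' direction, I argue the contrapositive: assume $s$ is adjacent to some $r \in S(v)$, and aim to show $sv \in W^J$. From this it will follow that $sw = sv \cdot u$ is itself a parabolic decomposition, giving $\ell(sw) = \ell(sv) + \ell(u) = (\ell(v) + 1) + \ell(u) = \ell(w) + 1$ (using $s \notin S(v) \supseteq D_L(v)$), so $s \notin D_L(w)$. To verify $sv \in W^J$, I examine $t \in J$: the case $t \in D_R(v)$ is vacuous since $v \in W^J$, so we may assume $\ell(vt) = \ell(v) + 1$. Then $t \in D_R(sv)$ iff $\ell(svt) = \ell(v)$ iff $s \in D_L(vt)$, and since $D_L(vt) \subseteq S(vt) \subseteq S(v) \cup \{t\}$ and $s \notin S(v)$, this forces $t = s$. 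So the question reduces to showing $s \notin D_R(sv)$, or equivalently $\ell(svs) > \ell(v)$.

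The key obstacle is thus to show $\ell(svs) = \ell(v) + 2$ (rather than $\ell(v)$, the only other option since $\ell(svs) = \ell(sv) \pm 1 = (\ell(v) + 1) \pm 1$) in the case that $s$ is adjacent to some $r \in S(v)$. A direct computation with inversion sets---bijecting $\mathrm{Inv}(svs)$ with $\mathrm{Inv}(v)$ via the map $\alpha \mapsto s\alpha$ on positive roots different from $\alpha_s$, and tracking the contributions of $\alpha_s$ and $sv\alpha_s$---yields the formula $\ell(svs) = \ell(v) + 2 \cdot [v\alpha_s \neq \alpha_s]$ (where the bracket denotes a $0/1$ indicator). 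So it suffices to show $v\alpha_s \neq \alpha_s$. For this, I invoke a standard property of the geometric (Tits) representation of $(W,S)$: for any $v \in W_{S \setminus \{s\}}$, the positive root $v\alpha_s$ expands as $\alpha_s + \sum_{r \neq s} c_r \alpha_r$ with non-negative coefficients $c_r$, and $\{r : c_r > 0\}$ is exactly the set of simple reflections in the connected component of $s$ within the Dynkin subgraph induced by $S(v) \cup \{s\}$, excluding $s$ itself. Since by hypothesis $s$ is adjacent to some $r \in S(v)$, this component is non-trivial, so $v\alpha_s \neq \alpha_s$, which completes the argument.
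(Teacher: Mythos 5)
This lemma is quoted from \cite{RS14} and the paper gives no proof of it, so there is nothing internal to compare against; I am assessing your argument on its own terms. The overall structure is sound: the ``if'' direction via $sv=vs$ is correct, and the ``only if'' direction correctly reduces to showing $sv\in W^J$, then (after the case analysis on $t\in J$, in which the case $t\neq s$ is handled properly) to $\ell(svs)=\ell(v)+2$, and finally to $v\alpha_s\neq\alpha_s$; the formula $\ell(svs)=\ell(v)+2\cdot[v\alpha_s\neq\alpha_s]$ is also correct under the standing hypothesis $s\notin S(v)$. The gap is in the very last step: the ``standard property'' you invoke is false as stated. The set $\{s\}\cup\{r:c_r>0\}$ is always a \emph{connected} subset of $S(v)\cup\{s\}$ containing $s$, hence is contained in the connected component of $s$, but it need not equal that component. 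For instance, in type $A_3$ take $s=s_1$ and $v=s_2s_3$: then $v\alpha_1=s_2s_3\alpha_1=s_2\alpha_1=\alpha_1+\alpha_2$, so $c_{s_3}=0$ even though $s_3$ lies in the connected component of $s_1$ inside $S(v)\cup\{s_1\}=\{s_1,s_2,s_3\}$. Because the exact description fails, ``the component is non-trivial'' does not by itself give $v\alpha_s\neq\alpha_s$, and this is precisely the point the whole reduction hinges on.

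Fortunately the weaker statement you actually need is true: if $s\notin S(v)$ and $s$ is adjacent to some element of $S(v)$, then $v\alpha_s\neq\alpha_s$ (equivalently, $v$ does not commute with $s$). A short replacement argument: induct on $\ell(v)$, writing $v=v'r$ reduced with $r\in D_R(v)$. If $r$ is adjacent to $s$, then $r\alpha_s=\alpha_s+c\alpha_r$ with $c>0$, so $v\alpha_s=v'\alpha_s+c\,v'\alpha_r$; here $v'\alpha_s$ equals $\alpha_s$ plus a non-negative combination of the $\alpha_t$ with $t\neq s$, while $v'\alpha_r$ is a positive root (positive because $r\notin D_R(v')$) whose $\alpha_s$-coefficient is zero, so $v\alpha_s$ strictly dominates $\alpha_s$ coefficientwise and in particular $v\alpha_s\neq\alpha_s$. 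If $r$ is not adjacent to $s$, then $v\alpha_s=v'\alpha_s$ and $S(v')\supseteq S(v)\setminus\{r\}$ still contains a neighbour of $s$, so the inductive hypothesis applies. With this substitution your proof is complete.
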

\begin{proof}[Proof of Theorem \ref{T:labelling}]
    First we prove part (a) by induction on the number of sets in $\mcD$. If
    $\mcD = \{B\}$, then the proposition follows immediately from the definitions.
    Otherwise, take a maximal block $B_n \in \mcD$, and let $\mcD'$ be the
    lower order ideal $\mcD \setminus \{B_n\}$, so that $\Lambda(\mcD) = \lambda(B_n)
    u_{J_R(B_n,\mcD)} \Lambda(\mcD')$.  By induction, we have $\Lambda(\flip(\mcD')) =
    \Lambda(\mcD')^{-1}$. The set $\mcD'' = \{B_n\}$ is a lower order ideal of
    $\flip(\mcD)$, and
    \begin{equation*}
        K = \{ s \in \supp(\mcD')\ |\ \max(\mcD'_s) \neq \max(\mcD_s) \}
            = \supp(\mcD') \cap B_n = J_R(B_n, \mcD),
    \end{equation*}
    so
    \begin{align*}
        \Lambda(\mcD)^{-1} & = \Lambda(\mcD')^{-1} u_{K} \lambda(B_n)^{-1} = \Lambda(\flip(\mcD'))
            u_K \Lambda(\mcD'') = \Lambda(\flip(\mcD))
    \end{align*}
    by Proposition \ref{P:labelparabolic}. Thus part (a) follows by induction.

    Next, suppose we are given $s \in S$. Let $B_s = \max(\mcD_s)$ and define the lower order ideal
    $$\mcD':=\{B \in \mcD\ |\ B \preceq B_s\}$$ in $\mcD$.
    By Proposition \ref{P:labelparabolic}, $\Lambda(\mcD) = v \Lambda(\mcD')$ where $v
    \in W^J$ and $\Lambda(\mcD') \in W_J$.  By construction, $s \not\in S(v) =
    \bigcup_{B \not\in \mcD'} B$, so by Lemma \ref{L:adjacentpara} we have that
    $s \in D_L(\Lambda(\mcD))$ if and only if $s \in D_L(\Lambda(\mcD'))$ and $s$ is not
    adjacent to any element of $S(v)$. But $\{B_s\}$ is an lower order ideal of
    $\flip(\mcD')$, so once again by Proposition \ref{P:labelparabolic} we see
    that $s \in D_R(\Lambda(\flip(\mcD'))$ if and only if $s \in D_R(\lambda^{-1}(B_s))$.
    Since $\Lambda(\flip(\mcD')) = \Lambda(\mcD')^{-1}$, we conclude that $s \in D_L(\Lambda(\mcD'))$
    if and only if $s \in D_L(\Lambda(B_s))$. Finally, $s$ is adjacent to an element
    of $S(v)$ if and only if there is some $t \in S$ adjacent to $s$ with
    $\max(\mcD_t) \not\in \mcD'$. This latter condition holds if and only if
    $\max(\mcD_t) \succ  B_s$. We conclude that part (c) holds, and part (b)
    follows by combining parts (a) and (c).
\end{proof}
With Theorem \ref{T:labelling} we can make a connection between labelled staircase
diagrams and BP decompositions.
\begin{cor}\label{C:bpstaircase}
    Let $\mcD$ be a staircase diagram with a linear ordering $B_1,\ldots,B_n$,
    and let $\mcD^i$ be the subdiagram $\mcD^i := \{B_1,\ldots,B_{i-1}\}$,
    $i=2,\ldots,n$. If $\lambda$ is a labelling of $\mcD$, then
    \begin{equation*}
        \Lambda(\mcD^{i+1}) = \overline{\lambda}(B_i)\cdot \Lambda(\mcD^i)
    \end{equation*}
    is a BP decomposition with respect to $\supp(\mcD^i)$ for
    every $i=2,\ldots,n$.
\end{cor}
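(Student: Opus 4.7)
The plan is to identify the factorization $\Lambda(\mcD^{i+1}) = \overline{\lambda}(B_i)\cdot\Lambda(\mcD^i)$ as the parabolic decomposition with respect to $J := \supp(\mcD^i)$, and then verify the BP criterion $S(\overline{\lambda}(B_i))\cap J \subseteq D_L(\Lambda(\mcD^i))$ by applying Theorem \ref{T:labelling}(c) to the subdiagram $\mcD^i$.

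For the first part, because $B_1,\ldots,B_n$ is a linear extension, $\mcD^i$ is a lower order ideal of $\mcD^{i+1}$ whose complement is the singleton $\{B_i\}$. Feeding this data into Proposition \ref{P:labelparabolic} yields a parabolic decomposition with right factor $\Lambda(\mcD^i)$; a brief unpacking of the set $K$ appearing there gives $K = J_R(B_i)$, so the left factor is $\lambda(B_i) u_{J_R(B_i)} = \overline{\lambda}(B_i)$. Using $S(\overline{\lambda}(B_i)) = B_i$ from Definition \ref{D:labelling}(3), the intersection $S(\overline{\lambda}(B_i))\cap J$ reduces to $J_R(B_i)$, so the BP condition becomes the purely combinatorial containment $J_R(B_i) \subseteq D_L(\Lambda(\mcD^i))$.

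To verify this containment, I would fix $s \in J_R(B_i)$, set $B := \max(\mcD^i_s)$ (which exists because $J_R(B_i)$ supplies a block of $\mcD_s$ strictly below $B_i$, and such a block necessarily lies in the lower order ideal $\mcD^i$), and apply Theorem \ref{T:labelling}(c). Since $B_i \in \mcD_s$ lies strictly above $B$, we have $s \in J_L(B)$, so $s \in D_L(\lambda(B))$ by the labelling axioms, giving the ``local descent'' half of the criterion. The main obstacle is the other half: for every $t \adj s$ with $\mcD^i_t$ nonempty, one must show $\max(\mcD^i_t) \preceq B$. The argument combines two observations. First, because $\mcD^i$ is a lower order ideal and $B = \max(\mcD^i_s)$, the block $B_i$ is the immediate successor of $B$ in the chain $\mcD^{i+1}_s$. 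Second, saturation of $\mcD^{i+1}_s$ inside $\mcD^{i+1}_s \cup \mcD^{i+1}_t$---axiom (3) of Definition \ref{D:staircase}---prevents any element of $\mcD^{i+1}_t$ from lying strictly between $B$ and $B_i$, while no element of $\mcD^i$ can lie weakly above $B_i$ without contradicting the linear extension. A short case analysis then forces $\max(\mcD^i_t) \preceq B$, and Theorem \ref{T:labelling}(c) produces $s \in D_L(\Lambda(\mcD^i))$, completing the proof.
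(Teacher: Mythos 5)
Your proposal is correct and follows essentially the same route as the paper: both reduce the BP condition to the containment $J_R(B_i) \subseteq D_L(\Lambda(\mcD^i))$ (using $S(\overline{\lambda}(B_i)) = B_i$), and both verify it for each $s \in J_R(B_i)$ by taking the predecessor $\max(\mcD^i_s)$ of $B_i$ in the chain $\mcD_s$, invoking the saturation axiom to force $\max(\mcD^i_t) \preceq \max(\mcD^i_s)$ for $t \adj s$, and then applying Theorem \ref{T:labelling}(c) together with $s \in J_L(\max(\mcD^i_s)) \subseteq D_L(\lambda(\max(\mcD^i_s)))$. The only cosmetic difference is that you route the identification of the parabolic decomposition explicitly through Proposition \ref{P:labelparabolic}, which the paper leaves implicit.
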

\begin{proof}
    By definition, $$S(\overline{\lambda}(B_i)) \cap S(\mcD^i) = B_i \cap S(\mcD^i) =
    J_R(B_i).$$ So $\overline{\lambda}(B_i) \Lambda(\mcD^i)$ is a BP decomposition if
    and only if $J_R(B_i) \subseteq D_L(\Lambda(\mcD^i))$. Given $s \in J_R(B_i)$,
    let $B_j$ be the predecessor of $B_i$ in the chain $\mcD_s$.
    If $t \in \supp(\mcD^i)$ is adjacent to $s$, then $\mcD_s$ is a saturated
    subset of $\mcD_s \cup \mcD_t$, and consequently $\max(\mcD^i_t) \preceq B_j$.
    Hence Theorem \ref{T:labelling} implies that $s \in D_L(\Lambda(\mcD^i))$ if
    and only if $s \in D_L(\lambda(B_j))$. But $s \in J_L(B_j) \subseteq D_L(\lambda(B_j))$
    by definition, so we conclude that $\overline{\lambda}(B_i) \Lambda(\mcD^i)$ is a BP
    decomposition.
\end{proof}

It is convenient to make the following definitions:
\begin{align*}
    D_R(\mcD)&:= \left\{s\in S\ |\ \min(\mcD_s)\preceq\min(\mcD_t) \text{ for all }s\adj t\right\}. \\
    D_L(\mcD)&:= \left\{s\in S\ |\  \max(\mcD_s)\succeq\max(\mcD_t) \text{ for all }s\adj t\right\}. \\
    & \ =D_R(\flip(\mcD)).
\end{align*}
If $\mcD$ is spherical, then these are the right and left descent sets of $\Lambda(\mcD, \lambda)$,
where $\lambda$ is the maximal labelling of $\mcD$. For a general labelling, $s
\in D_L(\Lambda(\mcD))$ if and only if $s \in D_L(\mcD) \cap D_L(\lambda(\max(\mcD_s))$.
Similarly, with the right descent set we have $s \in D_R(\Lambda(\mcD))$ if and
only if $s \in D_R(\mcD) \cap D_R(\lambda(\min(\mcD_s))$.

\subsection{BP decompositions and the geometry of Schubert varieties}\label{SS:bp}

In the next three sections, we prove that if $G$ is a Lie group of finite type, then staircase diagrams with certain labellings are in bijection with rationally smooth Schubert varieties in $G/B$.  We illustrate this bijection with a motivating example connecting Corollary \ref{C:bpstaircase} to the geometry of Schubert varieties.  For any $J\subseteq S$, let $P_J$ denote the corresponding parabolic subgroup of $G$ and consider the natural projection between flag varieties
$$\pi:G/B\rightarrow G/P_J.$$
The projection induces a $P_J/B$ fibre bundle structure on $G/B$.  For any $w\in W$, the Schubert variety $X(w):=\overline{BwB}/B\subseteq G/B$.  If $w=vu$ is the parabolic decomposition of $w$ with respect to $J$, then the restriction of $\pi$ to $X(w)$ gives the projection
$$\pi:X(w)\rightarrow X^J(v):=\overline{BvP_J}/P_J.$$  The following theorem, proved in \cite{RS14}, is the main connection between the geometry of Schubert varieties and BP decompositions.

\begin{theorem}\label{T:bpgeom}
The parabolic decomposition $w=vu$ is a BP decomposition if and only if the map $\pi$ induces a $X(u)$-fibre bundle structure on $X(w)$.
\end{theorem}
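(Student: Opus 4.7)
My plan is to analyze $\pi|_{X(w)}\colon X(w)\to X^J(v)$ by computing its fibres over the $T$-fixed points of the base. Since $w=vu$ is a parabolic decomposition, length additivity $\ell(w)=\ell(v)+\ell(u)$ identifies the fibre over the open Schubert cell $BvP_J/P_J$ with $\overline{BuB}/B=X(u)\subseteq P_J/B$. Over a smaller $T$-fixed point $xP_J/P_J$ with $x\in W^J$, $x\le v$, a Bruhat-decomposition computation produces the fibre as $\overline{Bu^{(x)}B}/B$ for a well-defined element $u^{(x)}\in W_J$ with $u\le u^{(x)}$; in general $u^{(x)}\ne u$, so $\pi|_{X(w)}$ need not a priori be a fibre bundle.

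For the forward direction, I would assume the BP condition $S(v)\cap J\subseteq D_L(u)$. This is equivalent to the length-additive factorization $u=u_{S(v)\cap J}\,u^{\natural}$, and hence to the statement that $X(u)\subseteq P_J/B$ is stable under left multiplication by the parabolic subgroup $P_{S(v)\cap J}\subseteq P_J$. Using this extra invariance together with the $B$-equivariance of $\pi$, one shows that $u^{(x)}=u$ for every $x\in W^J$ with $x\le v$, so every fibre equals $X(u)$ set-theoretically. Since $G/B\to G/P_J$ is itself Zariski-locally trivial with fibre $P_J/B$, this fibrewise coincidence upgrades to local triviality of $\pi|_{X(w)}$ on each standard affine open of $X^J(v)$, yielding the claimed $X(u)$-fibre bundle structure.

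For the converse, assume $\pi|_{X(w)}$ is an $X(u)$-fibre bundle. Then every fibre has dimension $\ell(u)$, which forces $u^{(x)}=u$ for every $x\le v$, $x\in W^J$. Specializing to $x=vs'$ for $s'\in D_R(v)$ and descending in the Bruhat interval $[e,v]\cap W^J$, the explicit shuffle description of $u^{(x)}$ forces each simple reflection in $S(v)\cap J$ to be a left descent of $u$, which is exactly the BP condition. The main obstacle I anticipate is controlling the ``shuffle'' that describes $u^{(x)}$ in terms of $x$, $v$, and $u$: translating fibre equalities into concrete statements about reduced words demands careful use of the subword characterization of the Bruhat order, and this is the delicate technical step on which both directions hinge.
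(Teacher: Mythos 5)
First, a point of reference: the paper does not prove Theorem \ref{T:bpgeom} at all; it is imported from \cite{RS14}, where it sits inside a four-way equivalence (BP decomposition $\Leftrightarrow$ fibre bundle $\Leftrightarrow$ factorization of Poincar\'e polynomials $\Leftrightarrow$ $u$ is the maximal element of $W_J\cap[e,w]$). Your strategy of comparing fibres over the $T$-fixed points of $X^J(v)$ is the right geometric starting point, and the identification of the fibre over the open cell with $X(u)$ via length additivity is correct. But the sketch has genuine gaps precisely where the content lies. (1) You assert the fibre over $xP_J$ is a single Schubert variety $\overline{Bu^{(x)}B}/B$. That fibre is the union of cells indexed by $\{z\in W_J:\ xz\le w\}$, which is a lower order ideal of $W_J$; that this ideal has a unique maximal element is a nontrivial theorem (van den Hombergh's theorem when $x=e$) and must be argued for general $x$, not assumed. (2) In the forward direction the whole theorem reduces to the combinatorial claim that $xz\le w\Leftrightarrow z\le u$ for all $x\in W^J$ with $x\le v$ and all $z\in W_J$. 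You observe that BP makes $X(u)$ stable under $P_{S(v)\cap J}$ and then say ``one shows that $u^{(x)}=u$,'' but no mechanism is given for propagating that stability to fibres over points $xP_J$ with $x<v$; this is exactly where the BP hypothesis is consumed, and it needs a lifting/Deodhar-type argument. (3) ``Fibrewise coincidence upgrades to local triviality'' is not automatic: all fibres being abstractly isomorphic does not give Zariski-local triviality. One must use the explicit trivialization of $G/B\arr G/P_J$ over the opposite cells $xU^{-}P_J/P_J$ together with $B$-stability of $X(w)$ to see that $X(w)\cap\pi^{-1}(\mcU)$ is literally a product over each such open set.

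For the converse your instinct is sound but the execution defers the decisive step to an undefined ``shuffle description.'' The clean route, essentially the one in \cite{RS14}, needs only the fibre over $eP_J$: it equals $X(m)$ for $m$ the unique maximal element of $W_J\cap[e,w]$; since $u\le m$ always and the bundle hypothesis forces $\ell(u)=\dim X(m)=\ell(m)$, one gets $u=m$, and the equivalence of $u=m$ with $S(v)\cap J\subseteq D_L(u)$ is a separate combinatorial lemma that must be proved --- it is the heart of the theorem. (Alternatively, a fibre bundle forces the Poincar\'e polynomial of $X(w)$ to factor, and surjectivity of the always-injective multiplication map $(W^J\cap[e,v])\times[e,u]\arr[e,w]$ then yields BP.) You correctly flag that controlling $u^{(x)}$ is the delicate step, but the proposal does not resolve it, so as written it is an outline of a strategy rather than a proof.
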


\begin{example}\label{Ex:typeA_fibration}
Let $\mcD$ be the staircase diagram of type $A_4$ given by the following picture
  \begin{equation*}
        \begin{tikzpicture}[scale=0.5]
            \planepartition{4}{{1,0,1,1}, {0,1,1,0}}
        \end{tikzpicture}
    \end{equation*}
and consider linear extension $(\{s_1, s_2\}, \{s_4\}, \{s_2,s_3\})$ of $\mcD$.  If $\lambda:\mcD\rightarrow W$ is the maximal labelling, then
$$\Lambda(\mcD)=\overline{\lambda}(\{s_2,s_3\})\overline{\lambda}(\{s_4\})\overline{\lambda}(\{s_1,s_2\})=(s_2s_3)(s_4)(s_1s_2s_1).$$
Corollary \ref{C:bpstaircase} implies that $(s_2s_3)(s_4s_1s_2s_1)$ is a BP decomposition with respect to $J_1:=\{s_1,s_2,s_4\}$ and $(s_4)(s_1s_2s_1)$ is a BP decomposition with respect to $J_2:=\{s_1,s_2\}$.  Theorem \ref{T:bpgeom} implies that the fibre bundle structure on $G/B$
$$\xymatrix{
P_{J_2}/B\, \ar@{^{(}->}[r] &\, P_{J_1}/B\, \ar@{->>}[d] \ar@{^{(}->}[r] &\, G/B \ar@{->>}[d] \\
 & P_{J_1}/P_{J_2} & G/P_{J_1}}$$
induces the following fibre bundle structure on $X(\Lambda(\mcD))$:
$$\xymatrix{
X(s_1s_2s_1)\, \ar@{^{(}->}[r] &\, X(s_4s_1s_2s_1)\, \ar@{->>}[d] \ar@{^{(}->}[r] &\, X(\Lambda(\mcD)) \ar@{->>}[d] \\
 & X^{J_2}(s_4) & X^{J_1}(s_2s_3)}$$
Note that the Schubert varieties  $X(s_1s_2s_1), X^{J_2}(s_4), X^{J_1}(s_2s_3)$ are all smooth, and in fact are sub-Grassmannians. Hence the Schubert variety $X(\Lambda(\mcD))$ is an iterated fibre bundle of Grassmannian flag varieties, and in particular is smooth.
\end{example}

Singular Schubert varieties do not always have Billey-Postnikov decompositions
as in the example above. As shown in \cite{Ry87} (type $A$) and \cite{RS14}
(all finite types), if $X(w)$ is rationally smooth then $w$ always has a BP
decomposition. In the next three sections we use this fact to make a connection
between labelled staircase diagrams $(\mcD, \lambda)$ and rationally smooth
Schubert varieties $X(\Lambda(\mcD, \lambda))$.

\section{Staircase diagrams and complete Billey-Postnikov decompositions}

We say that a BP decomposition $w = vu$, $v \in W^J$, $u \in W_J$, is a
\emph{Grassmannian BP decomposition} if $|J| = |S(w)|-1$. A \emph{complete BP
decomposition} of an element $w \in W$ is a factorization $w = v_n \cdots v_1$
such that $v_i (v_{i-1} \cdots v_1)$ is a Grassmannian BP decomposition for
every $i=2,\ldots,n$.  We say $w\in W$ is \emph{maximal} if $w=u_{S(w)}$, the unique maximal element in $W_{S(w)}$.
\begin{defn}\label{D:nearlymax}
    A non-maximal element $w \in W$ is \emph{nearly-maximal} if $w$ has a
    Grassmannian BP decomposition $w = vu$ where $S(u)\subset S(v)$.  A
    labelling $\lambda$ of a staircase diagram $\mcD$ is \emph{nearly-maximal} if
    $\lambda(B)$ is either maximal or nearly-maximal for all $B \in \mcD$.
\end{defn}
Note that the maximal labelling of a staircase diagram defined in Definition
\ref{D:maximal_labelling} is nearly-maximal.
\begin{lemma}\label{L:nearlymax}
    An element $w \in W$ is either maximal or nearly-maximal if and only if $w$
    has a complete BP decomposition $w = v_n \cdots v_1$ with $S(v_{i-1})
    \subset S(v_i)$ for all $i=2,\ldots,n$.
\end{lemma}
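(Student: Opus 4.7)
My plan is to prove both directions of the biconditional separately. The forward direction rests on a rigidity claim about the inner factor of a nearly-maximal Grassmannian BP decomposition; the backward direction is then a direct unpacking of definitions.

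For the $(\Rightarrow)$ direction, I split into cases. If $w$ is maximal, I take the trivial factorization $w = v_1$ with $n=1$, for which the chain condition is vacuously satisfied. If $w$ is nearly-maximal, write $w = vu$ as the Grassmannian BP decomposition with $u \in W_J$ (where $|J| = |S(w)|-1$) and $S(u) \subseteq S(v)$. The key step is to show that $u$ is automatically maximal: combining $S(u) \subseteq S(v) \cap J$ (which follows from $S(u) \subseteq J$ together with $S(u) \subseteq S(v)$) with the BP condition $S(v) \cap J \subseteq D_L(u)$ and the general inclusion $D_L(u) \subseteq S(u)$, all three inclusions must be equalities, forcing $D_L(u) = S(u)$. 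This pins down $u$ as the longest element $u_{S(u)}$ of its support parabolic. Setting $v_2 = v$, $v_1 = u$, I obtain a complete BP decomposition $w = v_2 v_1$ with $S(v_1) \subsetneq S(v_2)$, where strictness follows because the unique simple reflection in $S(w) \setminus J$ lies in $S(v) \setminus S(u)$.

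For the $(\Leftarrow)$ direction, I handle $n = 1$ trivially (interpreting $w = v_1$ as the maximal case, with $v_1 = w = u_{S(w)}$) and focus on $n \geq 2$. Setting $u = v_{n-1} \cdots v_1$, the factorization $w = v_n u$ is a Grassmannian BP decomposition by the definition of complete BP decomposition. The nested support condition collapses the inner product's support: $S(u) = S(v_{n-1}) \cup \cdots \cup S(v_1) = S(v_{n-1})$, which is strictly contained in $S(v_n)$, so $w$ is nearly-maximal.

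The main obstacle is the maximality claim for $u$ in the forward direction, where the support constraint $S(u) \subseteq S(v)$ from the definition of nearly-maximal interacts rigidly with the BP descent condition. Once this squeezing argument is carried out, the remainder of both directions reduces to bookkeeping with supports and the definition of Grassmannian BP decomposition. A pleasant byproduct is that nearly-maximal elements admit a complete BP decomposition of length exactly two in this minimal form, with the inner factor always being a longest element of its support parabolic.
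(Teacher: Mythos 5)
Your proof is correct and takes essentially the same route as the paper: the squeeze $S(u)\subseteq S(v)\cap J\subseteq D_L(u)\subseteq S(u)$ forcing the inner factor to be the longest element of its support parabolic is exactly the content behind the paper's one-line observation that $v_{n-1}\cdots v_1$ must be the maximal element of $W_{S(v_{n-1})}$, and the rest of both directions is the same definitional bookkeeping the paper dismisses as clear. Your treatment of the $n=1$ case rests on the same implicit convention the paper uses, so there is nothing to object to there.
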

\begin{proof}
    If $w$ has a complete BP decomposition as stated, then $v_{n-1} \cdots v_1$
    must be the maximal element of $W_{S(v_{n-1})}$. The other direction is clear.
\end{proof}

\begin{prop}\label{P:para_diag_support}
    Let $\lambda:\mcD\arr W$ be an nearly-maximal labelling over the Dynkin-Coxeter graph of $(W,S)$, and
    suppose $\Lambda(\mcD) = vu$ is the parabolic decomposition with respect to
    a subset $J \subset S$. Then
    \begin{equation*}
        S(v)=\bigcup_{\substack{s\not\in J\\ B\succeq\min(\mcD_s)}} B.
    \end{equation*}
\end{prop}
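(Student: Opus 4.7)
The plan is to reduce the general case to Proposition~\ref{P:labelparabolic} by identifying the right lower order ideal, and then handle the residual parabolic decomposition using the nearly-maximal hypothesis on the labels.

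First, set $\mcD' := \{B \in \mcD : B \not\succeq \min(\mcD_s) \text{ for every } s \in \supp(\mcD) \setminus J\}$ and $\mcD'' := \mcD \setminus \mcD'$. A direct check shows that $\mcD'$ is a lower order ideal in $\mcD$, and is in fact the largest one whose support lies in $J$; correspondingly, $\mcD''$ is the collection of blocks appearing in the union on the right-hand side of the proposition, so that union equals $\supp(\mcD'')$. Setting $K := \supp(\mcD') \subseteq J$, Proposition~\ref{P:labelparabolic} applied to $\mcD'$ produces the parabolic decomposition $\Lambda(\mcD) = v_0 u_0$ with respect to $K$, in which $u_0 = \Lambda(\mcD')$, $v_0 = \Lambda(\mcD'') u_{K_0}$, and crucially $S(v_0) = \supp(\mcD'')$.

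Because $K \subseteq J$, we have $W^J \subseteq W^K$, so the parabolic decomposition of $\Lambda(\mcD)$ with respect to $J$ can be obtained by further decomposing $v_0$ itself with respect to $J$. Writing $v_0 = v \cdot u_1$ with $v \in W^J$ and $u_1 \in W_J$, uniqueness of parabolic decomposition yields $\Lambda(\mcD) = v \cdot (u_1 u_0)$ as the parabolic decomposition of $\Lambda(\mcD)$ with respect to $J$. The inclusion $S(v) \subseteq S(v_0) = \supp(\mcD'')$ is then immediate from the reduced factorization $v_0 = v u_1$, giving one direction of the required equality.

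For the reverse inclusion $S(v) \supseteq \supp(\mcD'')$, the nearly-maximal hypothesis enters through the following local claim: for any connected block $B$ with $\lambda(B)$ either maximal or nearly-maximal (so $S(\lambda(B)) = B$ in both cases), and any $J' \subseteq S$ with $J' \cap B \subsetneq B$, the $W^{J'}$-part of the parabolic decomposition of $\lambda(B)$ has support exactly $B$. In the maximal case $\lambda(B) = u_B$, this reduces to the standard fact that in an irreducible finite Coxeter group, $w_0^{J'}$ has full support whenever $J' \subsetneq S$. The nearly-maximal case, where $\lambda(B)$ factors as $v_B u_{J_B}$ with $v_B \in W^{J_B}$ of full support $B$, is handled by case analysis on whether $J' \cap B \subseteq J_B$, reducing each subcase to the maximal case on a smaller parabolic.

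To conclude the reverse inclusion, I would induct on $|\mcD''|$, peeling off minimal blocks of $\mcD''$ and combining the local claim with the induction hypothesis to show that each vertex of $\supp(\mcD'')$ appears in $S(v)$. The main obstacle is the careful bookkeeping at block interfaces: when a vertex $s$ lies in both a newly peeled block and a previously processed block, one must verify that $s$ remains in $S(v)$ rather than being absorbed into the $u$-factor. The staircase axiom that $\mcD_s \cup \mcD_t$ forms a saturated chain for adjacent $s, t$ is the key structural property that prevents such migration.
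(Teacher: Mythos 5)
Your setup agrees with the paper's: the upper order ideal $\mcI := \mcD''$ of blocks lying above some $\min(\mcD_s)$ with $s \notin J$ is the right object, and the inclusion $S(v) \subseteq \supp(\mcD'')$ via Proposition \ref{P:labelparabolic} is correct. The gap is in the reverse inclusion, which is essentially the entire content of the proposition. Your ``local claim'' concerns the $W^{J'}$-part of a single label $\lambda(B)$ under the hypothesis $J' \cap B \subsetneq B$, but a block $B \in \mcD''$ need not meet $S \setminus J$ at all --- it only needs to lie \emph{above} some $\min(\mcD_s)$ with $s \notin J$. (This is the typical situation in the application, Theorem \ref{T:bpset}, where $J = \supp(\mcD)\setminus\{s\}$ and $\mcI$ can be a long chain of blocks all contained in $J$.) For such a block, $\lambda(B) \in W_J$, its own $W^J$-part is trivial, and the hypothesis of your local claim fails; its vertices enter $S(v)$ only through interaction with the lower blocks of $\mcI$, which is exactly the ``bookkeeping at block interfaces'' you defer. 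Moreover, once the first block of $\mcI$ has been processed, the element being decomposed is $\overline{\lambda}(B')\cdot(\text{previous product})$ with the previous product no longer in $W_J$, so the $W^{J'}$-part of $\lambda(B')$ in isolation is not the relevant quantity for any subsequent block. The proposed induction therefore stalls at every non-minimal block of $\mcD''$.

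The paper's proof supplies the three ingredients your sketch is missing. First, a structural fact: every $B \in \mcI$ contains a seed vertex $s_B \in B \setminus J_R(B)$ that is either outside $J$ or adjacent to a block of $\mcI$ covered by $B$; proving $s_B \notin J_R(B)$ is where saturation of $\mcD_{s_B}$ inside $\mcD_{s_B} \cup \mcD_t$ is actually used. Second, the nearly-maximal hypothesis enters not through a parabolic-quotient support computation but through the extraction, via Lemma \ref{L:nearlymax}, of a path-like reduced subword $x_B = t_p \cdots t_1 \leq \lambda(B)$ with $t_1 = s_B$, enumerating all of $B$ with each letter adjacent to an earlier one; the condition $D_R(x_B) = \{s_B\}$ with $s_B \notin J_R(B)$ is what guarantees $x_B \leq \overline{\lambda}(B)$ rather than merely $x_B \leq \lambda(B)$. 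Third, one multiplies these subwords along a linear extension of $\mcI$ and applies Lemma \ref{L:paraextension} letter by letter: each new letter is either outside $J$ or adjacent to the support already accumulated, so the $W^J$-part of $x = x_{B_n} \cdots x_{B_k}$ has support all of $\supp(\mcI)$, and monotonicity of the projection $w \mapsto w^J$ with respect to Bruhat order transfers this lower bound to $v$. Without the first and third steps your argument does not close, and the second step is needed to make the local input precise (your local claim itself is also only asserted, and its nearly-maximal case is not immediate).
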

Proposition \ref{P:para_diag_support} differs from Proposition \ref{P:labelparabolic}
in that $J$ is not required to be the support of a subdiagram. We use the
following lemma for the proof.
\begin{lemma}\label{L:paraextension}
    Let $w\in W^J$ and $s\in S\setminus D_L(w)$, and write $sw=vu$ where $v\in
    W^J$ and $u\in W_J$.  Then $\ell(v)\geq \ell(w)$ and
    \begin{equation*}
        S(v)=
        \begin{cases}
            S(w)& \text{if $s \in J$ and $s$ commutes with $S(v)$}\\
            S(w)\cup \{s\}& \text{otherwise}.
        \end{cases}
    \end{equation*}
\end{lemma}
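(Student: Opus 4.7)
The plan is to first pin down the parabolic decomposition $sw = vu$ explicitly by showing $\ell(u) \le 1$, which immediately yields the length bound; the support formula will follow by a direct case analysis. The key point is that the hypotheses $w \in W^J$ and $s \notin D_L(w)$, combined with the lifting property of the Bruhat order, severely constrain how much $sw$ can be reduced on the right by elements of $J$.

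To establish $\ell(u) \le 1$, I will suppose $sw \notin W^J$ and pick some $s' \in D_R(sw) \cap J$. Since $w \in W^J$ we have $s' \notin D_R(w)$, so $s' \in D_R(sw) \setminus D_R(w)$. Applying the lifting property of Bruhat order to $w < sw$ (valid because $s \notin D_L(w)$) with right descent $s'$ gives $ws' \le sw$; since both sides have length $\ell(w)+1 = \ell(sw)$, equality holds and $sws' = w$. Any further right reduction by a $J$-element would then produce an element of $D_R(w) \cap J$, contradicting $w \in W^J$. This splits the analysis into two cases: (i) $sw \in W^J$, giving $u = e$, $v = sw$, $\ell(v) = \ell(w)+1$, and $S(v) = S(sw) = S(w) \cup \{s\}$; or (ii) $sw \notin W^J$ and $u = s'$, giving $v = sws' = w$, $\ell(v) = \ell(w)$, and $S(v) = S(w)$.

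For the dichotomy on $S(v)$, I will argue both directions. If $s \in J$ and $s$ commutes with every element of $S(v)$, then $s$ commutes with $S(w)$, so $sw = ws$; combined with $s \in J$ this falls into case (ii) with $u = s$, and hence $S(v) = S(w)$. Conversely, in case (ii) with $s \notin S(w)$, the identity $sw = ws'$ forces $\{s\} \cup S(w) = S(w) \cup \{s'\}$ (both equal $S(sw)$), so $s = s' \in J$; then $w$ commutes with $s$, and a standard fact (provable via the canonical reflection representation, in which $w \in W_{S\setminus\{s\}}$ fixes $\alpha_s$ only if every generator in $S(w)$ commutes with $s$) forces $s$ to commute with each element of $S(w) = S(v)$. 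All remaining situations fall into case (i), or into case (ii) with $s \in S(w)$; in both the formulas $S(v) = S(w)$ and $S(v) = S(w) \cup \{s\}$ give the same answer, so the lemma's second clause applies.

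The hard part will be the first step: recognizing that the lifting property collapses any right reduction of $sw$ by $J$-elements in a single stroke down to $w$, and then using $w \in W^J$ to block any continuation. The support dichotomy is then a short piece of bookkeeping, modulo the standard centralizer fact about simple reflections in Coxeter groups.
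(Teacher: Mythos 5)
Your proof is correct, and it reaches the same case analysis as the paper but by different tools. The paper's argument is three lines: since $s\notin D_L(w)$ we have $w<sw$, hence $w\leq v$ by order-preservation of the projection onto $W^J$, which already yields $\ell(v)\geq\ell(w)$ and $S(w)\subseteq S(v)$; the case $s\in S(w)$ is then immediate (both branches of the formula coincide), and the case $s\notin S(w)$ is delegated entirely to Lemma \ref{L:adjacentpara}. You instead pin down the parabolic decomposition explicitly: the lifting property shows either $u=e$ (so $v=sw$) or $u=s'\in J$ with $sws'=w$ (so $v=w$), which gives the length bound and reduces everything to bookkeeping; and in place of Lemma \ref{L:adjacentpara} you invoke the standard centralizer fact, proved in the reflection representation, that $w$ with $s\notin S(w)$ commutes with $s$ only if every letter of $S(w)$ does. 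That fact is precisely the special case $u=s$, $J=\{s\}$ of Lemma \ref{L:adjacentpara}, so in effect you re-derive the one instance of that lemma you need rather than citing it; what you gain is an explicit description of $v$ and $u$ in every case, at the cost of brevity. One sentence of yours is imprecise: in case (i) with $s\notin S(w)$ the two expressions $S(w)$ and $S(w)\cup\{s\}$ do \emph{not} give the same answer. This is harmless, since your first direction already shows case (i) never satisfies the hypothesis of the first clause, and there $S(v)=S(sw)=S(w)\cup\{s\}$, which is exactly what the second clause asserts; but you should state it that way rather than claiming the two formulas agree.
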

\begin{proof}
    Since $s \not\in D_L(w)$, $w < sw$ and hence $w \leq v$. If $s\in S(w)$, then the lemma is proved since $S(w)
    \subset S(v)$.  Otherwise, if $s \not\in S(w)$, then the lemma follows from Lemma \ref{L:adjacentpara}.
\end{proof}
\begin{proof}[Proof of Proposition \ref{P:para_diag_support}]
    The set of $B \succeq \min(\mcD_s)$ for $s \not\in J$ forms an upper order
    ideal $\mcI$ in $\mcD$. The minimal blocks of $\mcI$ are precisely the
    blocks $\min(\mcD_s)$ for some $s \not\in J$. If $B$ is not minimal,
    then $B$ must cover some block $B'$ of $\mcI$, and since $B \cup B'$ is
    connected, there is an element $s \in B \setminus B'$ which is adjacent to
    some element $t \in B'$. Of course $B$ still covers $B'$ in the chain
    $\mcD_s \cup \mcD_t$, and since $\mcD_s$ is a saturated subset of this
    chain, $B$ must be the unique minimal block of $\mcD_s$. We conclude that
    for every $B \in \mcI$, there is an $s \in B \setminus J_R(B)$ such that
    either $s \not\in J$ or $s$ is adjacent to a block of $\mcI$ covered
    by $B$.

    Now take a linear extension  $B_1,\ldots,B_n$ of $\mcD$ such that $\mcI =
    \{B_k,B_{k+1},\ldots, B_n\}$. By the previous paragraph, for every $i=k,\ldots,n$
    we can find an element $s_i \in B_i \setminus J_R(B_i)$ such that either
    $s_i \not\in J$ or $s_i$ is adjacent to one of $B_k, \cdots, B_{i-1}$.
    Since $\lambda(B_i)$ is either maximal or nearly-maximal, it is not hard to see
    (using, i.e., the complete BP decomposition in Lemma \ref{L:nearlymax}) that
    there is an element $x_i = t^i_{p_i} t^i_{p_i-1} \cdots t^i_1 \leq \lambda(B_i)$
    such that $t^i_{1},\ldots,t^i_{p_i}$ is an enumeration of $B_i$, $t^i_1
    = s_i$, and $t^i_j$ is adjacent to at least one of $t^i_1,\ldots,t^i_{j-1}$
    for all $j \geq 2$. Since $x_i \leq \lambda(B_i)$ and $D_R(x_i) = \{s_i\}$, we
    have that $x_i \leq \overline{\lambda}(B_i)$, and consequently
    \begin{equation*}
        x := x_n x_{n-1} \cdots x_k \leq \overline{\lambda}(B_n) \cdots \overline{\lambda}(B_k)
            \leq \Lambda(\mcD).
    \end{equation*}
    If we write $x = v' u'$ with $v' \in W^J$, $u' \in W_J$, then $v' \leq v$.
    But Lemma \ref{L:paraextension} implies that
    \begin{equation*}
        S(v') = \bigcup_{B \in \mcI} B
    \end{equation*}
    as desired.
\end{proof}

\begin{defn}\label{D:bpset}
    If $w \in W$, let
    \begin{equation*}
        \bp(w) := \{s \in S\ |\ \text{$w$ has a BP decomposition with respect to $S\setminus\{s\}$ } \}.
    \end{equation*}
\end{defn}

\begin{thm}\label{T:bpset}
    Let $\lambda$ be a nearly-maximal labelling of a staircase diagram $\mcD$. Then
    \begin{equation*}
        \bp(\Lambda(\mcD)) = \bigcup_{B \in \max(\mcD)} \bp(\lambda(B)) \setminus J_R(B),
    \end{equation*}
    where $\max(\mcD)$ is the set of maximal blocks of $\mcD$.
\end{thm}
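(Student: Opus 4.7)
The plan is to establish the set equality by proving both inclusions directly, relying on the parabolic decomposition analysis provided by Propositions \ref{P:labelparabolic} and \ref{P:para_diag_support}, Corollary \ref{C:bpstaircase}, and Theorem \ref{T:labelling}. Throughout, we use the convention (implicit in the paper) that an element $s \in \bp(w)$ should actually contribute to a non-trivial BP decomposition, so effectively $\bp(w) \subseteq S(w)$; for $s \notin S(w)$ the trivial decomposition $w = e \cdot w$ is ignored.

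For the inclusion $(\supseteq)$, suppose $B \in \max(\mcD)$ and $s \in \bp(\lambda(B)) \setminus J_R(B)$. Then $s \in S(\lambda(B)) \subseteq B$ and $s \notin J_R(B)$, so $B = \min(\mcD_s)$; combined with maximality of $B$ in $\mcD$, this forces $\mcD_s = \{B\}$, and hence $s \notin \supp(\mcD')$ where $\mcD' := \mcD \setminus \{B\}$. Let $J := S\setminus\{s\}$ and decompose $\overline{\lambda}(B) = v_1 u_1$ with $v_1 \in W^J$ and $u_1 \in W_J$. Using the reduced product $\lambda(B) = \overline{\lambda}(B)\, u_{J_R(B)}$ together with the fact that $u_{J_R(B)} \in W_J$ (since $s \notin J_R(B)$) and uniqueness of the parabolic decomposition, one identifies $\lambda(B) = v_1 \cdot (u_1 u_{J_R(B)})$ as the parabolic decomposition of $\lambda(B)$, so in particular $u_0 := u_1 u_{J_R(B)}$ is a reduced product. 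Since $s \notin \supp(\mcD')$, the product $\Lambda(\mcD) = v_1 (u_1 \Lambda(\mcD'))$ then has $v_1 \in W^J$ and $u_1 \Lambda(\mcD') \in W_J$, and uniqueness again identifies this as the parabolic decomposition of $\Lambda(\mcD)$ with respect to $J$. Corollary \ref{C:bpstaircase} yields $J_R(B) \subseteq D_L(\Lambda(\mcD'))$, so $\Lambda(\mcD') = u_{J_R(B)} w'$ reduced, giving the further reduced factorization $u_1\Lambda(\mcD') = u_0 \cdot w'$. Hence $D_L(u_0) \subseteq D_L(u_1 \Lambda(\mcD'))$, and the BP condition $S(v_1)\cap J \subseteq D_L(u_0)$ coming from $s \in \bp(\lambda(B))$ propagates to $S(v_1)\cap J \subseteq D_L(u_1 \Lambda(\mcD'))$, giving $s \in \bp(\Lambda(\mcD))$.

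For the inclusion $(\subseteq)$, suppose $s \in \bp(\Lambda(\mcD))$, so $s \in \supp(\mcD)$. Write $\Lambda(\mcD) = V U$ for the parabolic decomposition with respect to $J = S\setminus\{s\}$. By Proposition \ref{P:para_diag_support}, $S(V) = T_s := \bigcup_{B' \succeq \min(\mcD_s)} B'$, and the BP condition is $T_s \setminus \{s\} \subseteq D_L(U)$. The key step is to show that $\mcD_s = \{B\}$ for some $B \in \max(\mcD)$. Suppose for contradiction that either $\lvert \mcD_s\rvert \geq 2$ or $\min(\mcD_s)$ is not maximal in $\mcD$; in either case there is a block $B^* \succ \min(\mcD_s)$ in $\mcD$. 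Using axiom (4) of Definition \ref{D:staircase} applied repeatedly, together with Proposition \ref{P:labelparabolic} (to identify parabolic decompositions coming from lower order ideals) and Theorem \ref{T:labelling} (to compute $D_L(\Lambda(\mcD))$), one locates an element $t \in (B^* \cup \max(\mcD_s)) \setminus \{s\} \subseteq T_s \setminus \{s\}$ whose membership in $D_L(U)$ is obstructed by the fact that some block above $t$ (not contained in $T_s$) breaks the "max-witness" condition in Theorem \ref{T:labelling}(c); this contradicts the BP condition. Hence $\mcD_s = \{B\}$ for a maximal $B$. Setting $B$ equal to this block, one has $s \in B \setminus J_R(B)$. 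Finally, run the computation of the $(\supseteq)$ direction in reverse: the BP condition $S(V)\cap J \subseteq D_L(U)$ together with the reduced factorization $U = u_0 w'$ with $w' \in {}^{J_R(B)}W$ disjoint in support from $s$ forces $D_L(U) \cap S(V) \subseteq D_L(u_0)$, recovering the BP condition for $\lambda(B)$ and giving $s \in \bp(\lambda(B))$.

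The main obstacle is the critical singleton/maximality step in the $(\subseteq)$ direction. The difficulty is that, unlike the $(\supseteq)$ direction where the BP condition on $\lambda(B)$ directly passes to $\Lambda(\mcD)$ via reduced factorizations, the reverse argument must exclude the existence of any block strictly above $\min(\mcD_s)$ in $\mcD$. This exclusion does not follow from a local length computation alone; it requires exploiting axiom (4) to produce a witness element $t$ and then using the descent calculation in Theorem \ref{T:labelling} (combined with the parabolic description from Proposition \ref{P:labelparabolic}) to show that the BP condition for $\Lambda(\mcD)$ with respect to $S \setminus \{s\}$ cannot hold unless the "shape" of $\mcD$ above $\min(\mcD_s)$ is trivial.
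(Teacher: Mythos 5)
Your $(\supseteq)$ inclusion is correct and is essentially the paper's argument for that direction, just with the reduced-factorization bookkeeping made explicit; the observation that $\mcD_s=\{B\}$ and the propagation of the BP condition from $\lambda(B)=v_1u_0$ to $\Lambda(\mcD)=v_1(u_1\Lambda(\mcD'))$ both check out. The final step of your $(\subseteq)$ direction (recovering $s\in\bp(\lambda(B))$ once $\mcD_s=\{B\}$ with $B$ maximal is known) is also fine.

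The genuine gap is the step you yourself flag as the main obstacle: showing that $\mcD_s=\{B\}$ with $B$ maximal. What you offer there is not an argument but a description of one, and the mechanism you describe does not work. First, you propose to obstruct membership of a witness $t$ in $D_L(U)$ by invoking Theorem \ref{T:labelling}(c), but that theorem computes descent sets of $\Lambda$ of a (sub)diagram, not of the parabolic factor $U$ of $\Lambda(\mcD)$; there is no direct way to read off $D_L(U)$ from it. Second, the claimed obstruction ``some block above $t$ not contained in $T_s$'' cannot occur: $T_s$ is the support of the upper order ideal $\mcI$ generated by $\min(\mcD_s)$, so every block lying above a block of $\mcI$ is again in $\mcI$ and hence contained in $T_s$. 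The missing idea is the bridge the paper builds: the BP condition $S(V)\cap J\subseteq D_L(U)$ is equivalent to $U=u_{S(V)\cap J}\,u'$ with $u'\in{}^{S(V)\cap J}W_J$, and comparing with the left parabolic decomposition of $\Lambda(\mcD)$ with respect to $\supp(\mcI)$ identifies $\Lambda(\mcI,\lambda|_{\mcI})=V\,u_{S(V)\setminus\{s\}}$. This element is maximal or nearly-maximal and has $\supp(\mcI)\setminus\{s\}$ in its right descent set; one then applies Theorem \ref{T:labelling} \emph{to the subdiagram $\mcI$} to show that every non-minimal block $B'$ of $\mcI$ produces a $t\in B'\setminus J_R(B',\mcI)$ adjacent to a block covered by $B'$, which cannot lie in $D_R(\Lambda(\mcI))$. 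Even then one is not done: when $\mcI=\{B,B'\}$ the unique witness may legitimately equal $s$, and a separate contradiction is needed (namely $S(V)=B'$ would have to contain $\supp(\mcI)\supseteq B$, impossible since no block contains another). Your sketch does not anticipate this residual case, and without the identification of $V\,u_{S(V)\setminus\{s\}}$ with $\Lambda(\mcI)$ the descent computation you want to run has nothing to attach to.
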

\begin{proof}
    Suppose $s \in \bp(\Lambda(\mcD))$, and let $B = \min(\mcD_s)$. Let $\mcI$ be
    the upper order ideal of blocks $B' \succeq B$, and let $\Lambda(\mcD) = vu$ be the
    BP decomposition with respect to $J = \supp(\mcD) \setminus \{s\}$.
    By Proposition \ref{P:para_diag_support}, $S(v) = \supp(\mcI)$. Now
    $\flip(\mcI)$ is a lower order ideal in $\flip(\mcD)$, so by Theorem
    \ref{T:labelling} and Proposition \ref{P:labelparabolic}, $\Lambda(\mcD) =
    \Lambda(\mcI) v'$, $v' \in {}^{\supp(\mcI)} W$, is the left parabolic
    decomposition with respect to $\supp(\mcI)$, where $\Lambda(\mcI)$ comes
    from the restriction of $\lambda$ to $\mcI$. Since $\Lambda(\mcD) = vu$ is a BP
    decomposition, we have that $u = u_0 u'$, where $u_0$ is the maximal element of
    $W_{S(v) \cap J}$ and $u' \in {}^{S(v) \cap J} W_J \subset {}^{S(v)} W$.
    We conclude that $\Lambda(\mcI) = v u_0$ is either maximal or nearly-maximal.

    Now we claim that
    $\mcI = \{B\}$. By construction, $B$ is the unique minimal block of
    $\mcI$. Indeed, suppose there is some other block $B' \in \mcI$. As in the proof
    of Proposition \ref{P:para_diag_support}, there must be some $t \in B'
    \setminus J_R(B')$ which is adjacent to some $B''$ covered by $B'$.
    By Theorem \ref{T:labelling}, $t \not\in D_R(\Lambda(\mcI))$, so $\Lambda(\mcI)$
    must be nearly-maximal rather than maximal.  Hence $t=s$ is the unique element
    of $\supp(\mcI) \setminus D_R(\Lambda(\mcI))$. Since every element of $\mcI$
    greater than $B$ and $B'$ will decrease the size of the descent set, we must have $\mcI
    = \{B, B'\}$ and $\Lambda(\mcI) = \overline{\lambda}(B') \lambda(B)$, where
    $\overline{\lambda}(B') := \lambda(B') u_{J_R(B', \mcI)}$. By Corollary
    \ref{C:bpstaircase}, $\Lambda(\mcI) = \overline{\lambda}(B') \lambda(B)$ is
    the BP decomposition with respect to $J$, and consequently $v =
    \overline{\lambda}(B')$. But then $S(v) = B'$ contains $\supp(\mcI)$, a
    contradiction, so we must have $\mcI = \{B\}$ as claimed.

    So far we've shown that if $s \in \bp(\Lambda(\mcD))$, then $B = \min(\mcD_s)$ must be
    maximal in $\mcD$. The argument above also shows that $s$ belongs to
    $\bp(\lambda(B))$ where $\lambda(B) = v u_0$. For the converse, suppose $s
    \in B \setminus J_R(B)$ for some $B \in \max(\mcD)$. Once again, we know
    from applying Theorem \ref{T:labelling} and Proposition
    \ref{P:labelparabolic} that $\Lambda(\mcD) = \lambda(B) v'$, where $v' \in
    {}^{\supp(\mcI)} W$. If $s \in \bp(\lambda(B))$, then $\lambda(B)$ has a BP
    decomposition $\lambda(B) = x y$ where $x \in W^{B \setminus \{s\}}$ and $y
    \in W_{B \setminus \{s\}}$. Then $\Lambda(\mcD) = x (y v')$ is a BP
    decomposition with respect to $J$, proving the theorem.
\end{proof}
\begin{rmk}
    If $\lambda$ is a nearly-maximal labelling of $\mcD$, and $B \in \max(\mcD)$, then
    $\bp(\lambda(B)) \setminus J_R(B)$ is non-empty. Indeed, if $\lambda(B)$ is maximal then
    $$\bp(\lambda(B)) \setminus J_R(B) = B \setminus J_R(B)$$ is non-empty. If
    $\lambda(B)$ is nearly-maximal, then $$\bp(\lambda(B)) = B \setminus D_R(\lambda(B))$$ contains
    exactly one element, and is contained in $B \setminus J_R(\lambda(B))$.
\end{rmk}

\section{The bijection theorem}

We can now state and prove the main structural theorem of this paper:
\begin{thm}\label{T:staircasebij}
    Let $W$ be a Coxeter group. Then the map $\phi : (\mcD, \lambda) \mapsto \Lambda(\mcD)$
    defines a bijection between staircase diagrams $\mcD$ with a nearly-maximal
    labelling $\lambda$, and elements of $W$ with a complete BP decomposition.
\end{thm}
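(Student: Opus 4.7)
My plan is to prove well-definedness, injectivity, and surjectivity of $\phi$ together by induction on $\ell(w)$ (equivalently, on the size of $\mcD$), with the trivial base case $w = e$ corresponding to the empty diagram.

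For well-definedness, given $(\mcD, \lambda)$ with linear extension $B_1,\ldots,B_n$, Corollary \ref{C:bpstaircase} already shows that $\Lambda(\mcD) = \overline{\lambda}(B_n) \cdots \overline{\lambda}(B_1)$ arises from a chain of BP decompositions $\overline{\lambda}(B_i)\cdot \Lambda(\mcD^i)$. To upgrade this to a \emph{complete} BP decomposition I would refine each factor $\overline{\lambda}(B_i) = \lambda(B_i) u_{J_R(B_i)}$: both $u_{J_R(B_i)}$ (the maximum of its parabolic) and $\lambda(B_i)$ (maximal or nearly-maximal by hypothesis) admit complete BP decompositions with strictly nested supports via Lemma \ref{L:nearlymax}, and these refinements concatenate to extend the complete decomposition of $\Lambda(\mcD^i)$ one Grassmannian factor at a time.

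For the inverse map, given $w$ with complete BP decomposition $w = v_n \cdots v_1$, let $s$ be the unique element of $S(v_n) \setminus S(v_{n-1}\cdots v_1)$. Then $w = v_n \cdot (v_{n-1}\cdots v_1)$ is itself a Grassmannian BP decomposition with respect to $J := S\setminus\{s\}$, so by induction $v_{n-1}\cdots v_1 = \Lambda(\mcD', \lambda')$ for some nearly-maximally labelled staircase diagram $(\mcD', \lambda')$. I would extend $\mcD'$ to $\mcD := \mcD' \cup \{B\}$ with $B := S(v_n)$ adjoined as a new block sitting above every block of $\mcD'$ that meets $B$ or is adjacent to $B$ through the Dynkin graph, and label $\lambda(B) := v_n\, u_{J_R(B, \mcD)}$, where $J_R(B, \mcD) = B \cap \supp(\mcD')$. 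Injectivity follows from the same recursion: given any $s \in \bp(w)$, Theorem \ref{T:bpset} combined with Proposition \ref{P:para_diag_support} forces $\min(\mcD_s)$ to be a maximal block containing $s$, Theorem \ref{T:labelling} pins down its label, and removing this block reduces to a shorter instance.

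The main obstacle I expect is verifying that the augmented $\mcD$ in the inverse construction satisfies all four axioms of Definition \ref{D:staircase}; in particular the saturation condition (3) --- that $\mcD_s$ and $\mcD_t$ remain saturated subchains of $\mcD_s \cup \mcD_t$ for every $s \adj t$ --- requires careful bookkeeping near the newly placed block $B$. The essential input should be the BP condition $S(v_n) \cap J \subseteq D_L(v_{n-1}\cdots v_1)$ together with Theorem \ref{T:labelling}, which dictate exactly which blocks of $\mcD'$ lie in $D_L(\Lambda(\mcD'))$ and hence determine how $B$ is forced to stack above them. Once (3) is secured, checking that $\lambda(B) = v_n u_{J_R(B)}$ is nearly-maximal (or maximal when $J_R(B) = \emptyset$) will reduce to the observation that $v_n \in W^{J_R(B)}$ has support exactly $B$ and a unique right descent $s$ lying outside $J_R(B)$.
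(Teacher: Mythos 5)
Your overall route is the same as the paper's: well-definedness from Corollary \ref{C:bpstaircase} together with Lemma \ref{L:nearlymax}, injectivity by using $\bp(\Lambda(\mcD))$ and Theorem \ref{T:bpset} to identify and strip off a maximal block, and surjectivity by adjoining a new block supported on $S(v_n)$. The first two parts are essentially correct. However, the surjectivity construction has a genuine gap: you cannot in general take $\mcD := \mcD' \cup \{S(v_n)\}$. The problem is that $\mcD'$ may contain blocks $B'$ with $B' \subseteq S(v_n)$, and then the augmented collection violates the staircase axioms (no block may contain another, by Lemma \ref{L:staircaseprops}(b), which is forced by axiom (4) of Definition \ref{D:staircase}). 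A minimal example: in type $A_2$, take $\mcD' = \{\{s_1\}\}$ with $\lambda'(\{s_1\}) = s_1$ and $v_2 = s_1 s_2 \in W^{\{s_1\}}$, so that $w = v_2 \cdot s_1 = s_1 s_2 s_1$ is a Grassmannian BP decomposition. Your recipe produces $\{\{s_1\} \prec \{s_1,s_2\}\}$, which is not a staircase diagram; the correct object is the single block $\{s_1,s_2\}$ with the maximal label $u_{\{s_1,s_2\}} = s_1 s_2 s_1$.

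This is exactly what the paper's Lemma \ref{L:oneblockmore} is for, and it is the real technical content of the surjectivity step. One first shows that any block $B' \subseteq S(v_n)$ satisfies $B' \subseteq D_L(\Lambda(\mcD'))$ and hence is maximal in $\mcD'$, so that $\mcD^0 := \{B \in \mcD' \mid B \not\subseteq S(v_n)\}$ is a lower order ideal; one then forms $\widetilde{\mcD} = \mcD^0 \cup \{S(v_n)\}$, \emph{deleting} the contained blocks and absorbing them into the new label $\widetilde{\lambda}(S(v_n)) = v_n\, u_{S(v_n) \cap \supp(\mcD')}$ (note: the full old support, not $S(v_n) \cap \supp(\mcD^0) = J_R(S(v_n);\widetilde{\mcD})$). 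Verifying that the product telescopes to $v_n \Lambda(\mcD')$ then uses that the deleted blocks are labelled by $u_{B'}$ and that these factors commute and assemble into $u_{S(v_n)\cap\supp(\mcD')} u_{S(v_n)\cap\supp(\mcD^0)}$. Checking the saturation axiom (3) and that $\widetilde{\lambda}$ is nearly-maximal on the old blocks (whose $J_L$ grows) also requires the descent information from Theorem \ref{T:labelling}, as you anticipated; but without the deletion step the construction does not even produce a poset of the right type, so this is a missing idea rather than mere bookkeeping.
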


\begin{lemma}\label{L:oneblockmore}
    Suppose $\lambda$ is a nearly-maximal labelling of a staircase diagram $\mcD$ and
    $v$ is a Grassmannian element of $W^{\supp(\mcD)}$ such that $v \Lambda(\mcD)$ is
    a BP decomposition. Then
    \begin{enumerate}[(a)]
        \item $\mcD^0 := \{B \in \mcD\ |\ B \not\subseteq S(v)\}$ is a lower
            order ideal in $\mcD$,
        \item $\widetilde{\mcD} := \mcD^0 \cup \{S(v)\}$ is a staircase
            diagram with the additional covering relations $\max(\mcD^0_s)\prec S(v)$ for every $s \in \supp(\mcD^0)$ contained in or adjacent to $S(v)$ and
        \item the function $\widetilde{\lambda} : \widetilde{\mcD} \arr W$ defined by
            \begin{equation*}
                \widetilde{\lambda}(B) := \begin{cases} \lambda(B) & \text{if $B \in \mcD_0$} \\
                                                 v u_{S(v) \cap \supp(\mcD)} & \text{if $B = S(v)$}
                                    \end{cases}
            \end{equation*}
            is a nearly-maximal labelling of $\widetilde{\mcD}$, with
            $\Lambda(\widetilde{\mcD}, \widetilde{\lambda}) = v \Lambda(\mcD)$.
    \end{enumerate}
\end{lemma}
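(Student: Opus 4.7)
The plan is to analyze the set $B^* := S(v)$ carefully and then verify the three claims in sequence.  Since $v \in W^{\supp(\mcD)}$ is a Grassmannian element, $B^*$ contains a unique element $s_0 \notin \supp(\mcD)$ (the unique right descent of $v$), and $B^*$ is connected in the Dynkin graph by an induction on a reduced expression of $v$ ending in $s_0$: any letter commuting with everything to its right would yield a second right descent.  The BP hypothesis translates to $B^* \cap \supp(\mcD) \subseteq D_L(\Lambda(\mcD))$, and combined with Theorem~\ref{T:labelling}(c) this will be the main tool throughout.

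For part~(a), I argue equivalently that $\mcD \setminus \mcD^0 = \{B : B \subseteq B^*\}$ is an upper order ideal.  Given $B \subseteq B^*$ and a cover $B \prec B'$, connectedness of $B \cup B'$ supplies $s \in B$ and $t \in B'$ with $s = t$ or $s \adj t$.  Since $s \in B^* \cap \supp(\mcD) \subseteq D_L(\Lambda(\mcD))$, Theorem~\ref{T:labelling}(c) forces $\max(\mcD_s) \succeq \max(\mcD_{t'})$ for every $t' \adj s$; combined with axiom~(3) of Definition~\ref{D:staircase} (saturation of $\mcD_s$ in $\mcD_s \cup \mcD_{t'}$), walking along a connecting path inside $B'$ forces every vertex of $B'$ to lie in $B^*$.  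For part~(b), I check the four axioms of Definition~\ref{D:staircase} for $\widetilde\mcD$.  Axioms~(1), (2), (4) follow routinely from the connectedness of $B^*$ and from $s_0 \in B^* \setminus \supp(\mcD^0)$ witnessing $B^* = \min \widetilde\mcD_{s_0}$; axiom~(3) is the technical point, handled by a case analysis on the membership of $s, t$ in $B^*$ and in $\supp(\mcD^0)$, combined with part~(a).

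For part~(c), I first verify that $\widetilde\lambda$ is a labelling, and that $\widetilde\lambda(B^*) = v\,u_{B^* \cap \supp(\mcD)}$ is either maximal or nearly-maximal in $W_{B^*}$: the factorization $v \cdot u_{B^* \cap \supp(\mcD)}$ is a Grassmannian BP decomposition of $\widetilde\lambda(B^*)$ whose left factor has full support $B^*$ and whose right factor is supported on $B^* \cap \supp(\mcD) \subsetneq B^*$ (strict since $s_0 \in B^* \setminus \supp(\mcD)$).  Taking $B^*$ as the unique maximal block of $\widetilde\mcD$ yields
\begin{equation*}
    \Lambda(\widetilde\mcD) = v\,u_{B^* \cap \supp(\mcD)}\,u_{J_R(B^*, \widetilde\mcD)}\,\Lambda(\mcD^0),
\end{equation*}
while Proposition~\ref{P:labelparabolic} applied to the lower order ideal $\mcD^0 \subseteq \mcD$ gives $\Lambda(\mcD) = \Lambda(\mcD \setminus \mcD^0)\,u_K\,\Lambda(\mcD^0)$ for the associated $K$.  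The equality $\Lambda(\widetilde\mcD) = v \Lambda(\mcD)$ then reduces to the identity
\begin{equation*}
    \Lambda(\mcD \setminus \mcD^0)\,u_K = u_{B^* \cap \supp(\mcD)}\,u_{J_R(B^*, \widetilde\mcD)},
\end{equation*}
which I will establish by induction on $|\mcD \setminus \mcD^0|$, applying Corollary~\ref{C:bpstaircase} inside the absorbed subdiagram (all of whose blocks sit inside $B^*$).  The principal obstacle is this last identity: it requires careful bookkeeping of which vertices of $B^* \cap \supp(\mcD)$ appear in $\supp(\mcD \setminus \mcD^0)$ versus $\supp(\mcD^0)$, and crucially uses the nearly-maximal hypothesis on $\lambda$ to collapse the product of the absorbed block labels to the long parabolic element $u_{B^* \cap \supp(\mcD)}$.
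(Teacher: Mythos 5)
Your overall architecture matches the paper's: isolate $B^* = S(v)$ as a new connected block with a single new vertex $s_0$, check the staircase axioms for $\widetilde{\mcD}$, and reduce $\Lambda(\widetilde{\mcD}) = v\Lambda(\mcD)$ to an identity about the absorbed blocks. Parts (b) and (c) are sketched along essentially the same lines as the paper (which proves your final identity by a direct commutation argument rather than induction, using that the absorbed blocks are pairwise non-adjacent and that their union, together with the relevant $J_R$'s, exhausts the connected components of $S(v)\cap\supp(\mcD)$).

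The gap is in part (a). You try to show that $\{B \in \mcD : B \subseteq S(v)\}$ is closed under covers by ``propagating'' membership in $B^*$ along a path from $B$ into $B'$. But the BP hypothesis only gives the containment $S(v)\cap\supp(\mcD) \subseteq D_L(\Lambda(\mcD))$, not equality; knowing that a vertex $t \in B'$ satisfies the descent and ordering conditions of Theorem~\ref{T:labelling}(c) tells you nothing about whether $t$ lies in the support of $v$, so the step ``walking along a connecting path inside $B'$ forces every vertex of $B'$ to lie in $B^*$'' is a non sequitur. What is actually true --- and what the paper proves --- is the stronger statement that a block $B\subseteq S(v)$ admits no cover at all, i.e.\ is maximal in $\mcD$. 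The proof needs axiom (4) of Definition~\ref{D:staircase}: there is a vertex $s'$ with $B = \max(\mcD_{s'})$, so $B\setminus J_L(B)\neq\emptyset$; then, if $J_L(B)\neq\emptyset$, or if $B$ is covered by some $B'$ disjoint from $B$, connectivity of $B$ (resp.\ of $B\cup B'$) produces $s\in B$ with $\max(\mcD_s)=B$ adjacent to some $t$ with $\max(\mcD_t)\succ B$, contradicting $s\in D_L(\Lambda(\mcD))\subseteq D_L(\mcD)$. This maximality is not cosmetic: your part (c) also needs it, since it is what forces $\lambda(B)=u_B$ for every absorbed block and makes those labels commute, which is how the product collapses to $u_{S(v)\cap\supp(\mcD)}$. (Separately, when verifying that $\widetilde{\lambda}$ is still a labelling you must re-check $S(u_{J_L(B;\widetilde{\mcD})}\lambda(B))=B$ for old blocks whose $J_L$ has grown; the paper does this with Lemma~\ref{L:paraextension}, and your sketch does not address it.)
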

\begin{proof}
    Let $B\in\mcD$.  Since $B$ is connected, if $J_L(B)$ is non-empty then there must be some $s
    \in B \setminus J_L(B)$ which is adjacent to some $t \in J_L(B)$. Since
    $\max \mcD_s = B \preceq \max \mcD_t$, the element $s \not\in D_L(\mcD)$.
    Similarly, if $J_L(B) = \emptyset$ but $B$ is covered by a block $B' \in
    \mcD$, then there is some $s \in B \setminus B'$ which is adjacent to
    some $t \in B'$, and again $s \not\in D_L(\mcD)$. We conclude that if $B
    \subseteq D_L(\mcD)$ then $B$ must be a maximal element of $\mcD$.
    Thus if $B \subset S(v)$, then $B \subset D_L(\Lambda(\mcD)) \subset D_L(\mcD)$,
    and $\mcD^0$ is a lower order ideal of $\mcD$.  This proves part (a).

    \smallskip

    For part (b), since $\mcD^0$ is a subdiagram of $\mcD$, we only need
    to check the conditions of Definition \ref{D:staircase} on $S(v)$. If $S(v)$ covers $B \in \mcD^0$ then
    $S(v) \cup B$ is connected by construction. Similarly if $s \in S(v)
    \cap \supp(\mcD^0)$ then $S(v) \succeq \max(\mcD^0_s)$, and if $s \adj t$
    then $S(v) \succeq \max(\mcD^0_t)$, so $\widetilde{\mcD}_s$ and
    $\widetilde{\mcD}_s \cup \widetilde{\mcD}_t$ are both chains. By
    Corollary \ref{C:bpstaircase}, the element $s\in D_L(\Lambda(\mcD^0))$, so $\max(\mcD^0_s \cup \mcD^0_t) = \max(\mcD^0_s)$, and
    consequently $\widetilde{\mcD}_s$ is a saturated subset of
    $\widetilde{\mcD}_s \cup \widetilde{\mcD}_t$. The chain
    $\widetilde{\mcD}_t$ is also a saturated subset of $\widetilde{\mcD}_s \cup
    \widetilde{\mcD}_t$, since if $t \not\in S(v)$ then $\widetilde{\mcD}_t =
    \mcD^0_t$. Finally, if $S(v) \setminus \supp(\mcD) = \{s_0\}$ then
    $S(v)$ is both the maximal and minimal block of $\widetilde{\mcD}_{s_0}$.
    If $B \in \mcD^0$ then $J_R(B; \widetilde{\mcD}) = J_R(B; \mcD) \subsetneq
    B$, while $J_L(B; \widetilde{\mcD}) = J_L(B; \mcD) \cup (B \cap S(v))$. If
    $J_L(B; \mcD) = \emptyset$, then $J_L(B; \widetilde{\mcD}) \subsetneq B$ by
    the definition of $\mcD_0$. If $J_L(B; \mcD)$ is non-empty, then as in the
    first paragraph we can find an element $t$ of $B \setminus J_L(B; \mcD)$
    which is not in $D_L(\mcD)$, and hence $t$ will be contained in $B
    \setminus J_L(B; \widetilde{\mcD})$. We conclude that $\widetilde{\mcD}$
    is a staircase diagram, proving part (b).

    \smallskip

    For part (c), we observe that $$J_R(S(v); \widetilde{\mcD}) = S(v) \cap
    \supp(\mcD^0) \subseteq S(v) \cap \supp(\mcD),$$ while $J_L(S(v);
    \widetilde{\mcD}) = \emptyset$.  Hence $J_R(S(v)) \subset D_R(\widetilde{\lambda}(S(v)))$
    and $J_L(S(v)) \subset D_L(\widetilde{\lambda}(S(v)))$.  Similarly,
    $$S(\widetilde{\lambda}(S(v)) u_{J_R(S(v))}) = S(u_{J_L(S(v))}
    \widetilde{\lambda}(S(v))) = S(v),$$ and since $v$ is Grassmannian,
    $\widetilde{\lambda}(S(v))$ is nearly-maximal by construction.

    Now if $B \in \mcD^0$, then as stated before, $J_R(B; \widetilde{\mcD}) = J_R(B;
    \mcD)$ which implies $J_R(B; \widetilde{\mcD}) \subseteq D_R(\widetilde{\lambda}(B))$ and
    $S(\widetilde{\lambda}(B) u_{J_R(B)}) = B$. Suppose $s \in J_R(B;
    \widetilde{\mcD}) \setminus J_R(B; \mcD)$. This occurs if and only if $B =
    \max \mcD_s$ and $s \in S(v)$. The latter condition implies that $s\in D_L(\Lambda(\mcD))$, which in turn implies that $s \in D_L(\lambda(B))$.
    We conclude that $J_L(B; \widetilde{\mcD}) \subseteq D_L(\lambda(B))$. It remains
    to show that $S(u_{J_L(B; \widetilde{\mcD})} \lambda(B)) = B$. This clearly
    occurs if $\lambda(B)$ is maximal, so suppose $\lambda(B)$ is nearly-maximal. Let $$K = D_R(\lambda(B)) = B \setminus \{s_0\},$$ and decompose $\lambda(B) = x u_{K}$, where $x \in W^{K}$. Because $x$ is Grassmannian and $S(x) = B$,
    we can pick an element $s_k \in B \setminus J_R(B; \widetilde{\mcD})$
    and then find a sequence $s_{k}, \ldots, s_{1}, s_0$ in $B$ such that
    $s_{i}$ is adjacent to $s_{i-1}$ for all $i$, and $x' = s_{k} \cdots s_0
    \leq x$. Since $x' \in W^{B \setminus \{s_k\}}$ and $s_0$ is adjacent to
    each connected component of $K$, we can find (using, i.e., Lemma
    \ref{L:paraextension}) an element $x'' \in W^{B \setminus \{s_k\}}$ with
    $x'' \leq x u_{K}$ and $S(x'') = B$. This implies that $S(u_{J_L(B;
    \widetilde{\mcD})} \lambda(B)) = B$ as desired, and we conclude that $\widetilde{\lambda}$
    is a nearly-maximal labelling of $\widetilde{\mcD}$.

    To finish the proof, we revisit the blocks $B \in \mcD \setminus \mcD_0$.
    All such blocks are contained in $D_L(\Lambda(\mcD))$ and are maximal in $\mcD$.
    As a result, $\lambda(B) = u_B$ for all $B \in \mcD \setminus \mcD_0$, and
    furthermore all these elements commute. Also, all the elements of
    $\mcD \setminus \mcD_0$ appear among the connected components $J_1,\ldots,
    J_k$ of $S(v) \cap \supp(\mcD)$. If a component $J_i$ does not belong to
    $\mcD \setminus \mcD^0$, then necessarily $J_i \subset \supp(\mcD^0)$.
    We conclude that
    \begin{align*}
        v \Lambda(\mcD) & = v \left( \prod_{B \in \mcD \setminus \mcD_0} u_B u_{J_R(B; \mcD)} \right)
           \Lambda(\mcD^0)
         =v \left( \prod_{B \in \mcD \setminus \mcD_0} u_B u_{B \cap \supp(\mcD^0)} \right) \Lambda(\mcD^0) \\
        & =v \left( \prod_{i=1}^k u_{J_i} u_{J_i \cap \supp(\mcD^0)} \right) \Lambda(\mcD^0)
         = v \cdot u_{S(v) \cap \supp(\mcD)}\cdot  u_{S(v) \cap \supp(\mcD^0)} \cdot \Lambda(\mcD^0) \\
        & = \widetilde{\lambda}(S(v))\cdot  u_{S(v) \cap \supp(\mcD^0)}\cdot  \Lambda(\mcD^0) = \Lambda(\widetilde{\mcD}).
    \end{align*}
\end{proof}
Using Lemma \ref{L:oneblockmore} and the results of the previous sections,
we can now prove the main theorem.
\begin{proof}[Proof of Theorem \ref{T:staircasebij}]
    By Corollary \ref{C:bpstaircase} and Lemma \ref{L:nearlymax}, if $\lambda$
    is a nearly-maximal labelling of $\mcD$ then $\Lambda(\mcD)$ has a complete
    BP decomposition.

    Now let $\lambda_i$ be a nearly-maximal labelling of a staircase diagram $\mcD_i$,
    where $i=1,2$ such that $\Lambda(\mcD_1) = \Lambda(\mcD_2)$. Choose some element
    $s \in \bp(\Lambda(\mcD_1))$, and let $\Lambda(\mcD_1) = vu$ be the BP decomposition with respect to $J = S \setminus \{s\}$. As in the proof
    of Theorem \ref{T:bpset}, $\Lambda(\mcD_1)$ must have left-parabolic
    decomposition $\Lambda(\mcD_1) = \lambda_1(B) v'$ with respect to $B = S(v)$, where $B$
    is the unique maximal block of $\mcD$ containing $s$. Let $\mcD_1' =
    \mcD_1 \setminus \{B\}$. By Proposition \ref{P:labelparabolic}, $S(v') =
    \supp(\mcD_1')$. But $s\in \bp(\Lambda(\mcD_2))$, so $B$ is a
    maximal block of $\mcD_2$, and $\lambda_1(B) = \lambda_2(B)$. If $\mcD_2' = \mcD_2
    \setminus \{B\}$, then $J_R(B, \mcD_i) = B \cap \supp(\mcD_i') = B \cap
    S(\lambda_i(B) \Lambda(\mcD_i))$ is independent of $i$, so $\overline{\lambda}_1(B)
    = \overline{\lambda}_2(B)$ and hence $\Lambda(\mcD_1') = \Lambda(\mcD_2')$. By induction on $|\mcD|$, the map $\phi$ must be injective.

    Finally we use induction on $|S(w)|$ to show that $\phi$ is
    surjective. Suppose $x \in W$ has a complete BP decomposition $x = v_n
    \cdots v_1$, and let $(\mcD, \lambda)$ be the nearly-maximal labelled staircase
    diagram such that $\Lambda(\mcD) = v_{n-1} \cdots v_1$. Then by Lemma \ref{L:oneblockmore},
    there is a staircase diagram $\widetilde{\mcD}$ with a nearly-maximal
    labelling $\widetilde{\lambda}$ such that $\Lambda(\widetilde{\mcD}) = x$.
\end{proof}

\section{Staircase diagrams and rationally smooth elements}

In this section we combine Theorem \ref{T:staircasebij} with the previously
mentioned existence theorems for Billey-Postnikov decompositions to get
bijections between certain labelled staircase diagrams and rationally smooth
elements.  We say $w\in W$ is \emph{rationally smooth} if the Bruhat interval
$[e,w]$ is rank symmetric with respect to length. By the Carrell-Peterson
theorem, this condition is equivalent to the corresponding Schubert variety
$X(w)$ being rationally smooth \cite{Ca94}.

\begin{defn}
    A labelling $\lambda:\mcD\arr W$ is \emph{rationally smooth}
    if and only if $\lambda(B)$ is rationally smooth for all $B \in \mcD$.
\end{defn}

\begin{thm}\label{T:ratsmoothbij}
    Let $\lambda:\mcD\arr W$ be a labelling of a staircase diagram $\mcD$. Then $\Lambda(\mcD)$
    is rationally smooth if and only if $\lambda:\mcD\arr W$ is rationally smooth.
\end{thm}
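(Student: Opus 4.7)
The plan is to induct on $|\mcD|$, using Corollary \ref{C:bpstaircase} to peel off the top block of a linear extension as a BP factor and translating the resulting BP decomposition into a fibre bundle via Theorem \ref{T:bpgeom}. Throughout I use the standard fact that rational smoothness passes through a fibre bundle in both directions: the total space is rationally smooth if and only if both the base and the fibre are.

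The base case $|\mcD|=1$, $\mcD = \{B\}$, is immediate, since $J_R(B) = \emptyset$ forces $\Lambda(\mcD) = \overline{\lambda}(B) = \lambda(B)$. For the inductive step, fix a linear extension $B_1,\ldots,B_n$ of $\mcD$ and set $\mcD' = \mcD\setminus\{B_n\}$, $J = \supp(\mcD')$. By Corollary \ref{C:bpstaircase}, $\Lambda(\mcD) = \overline{\lambda}(B_n)\cdot \Lambda(\mcD')$ is a BP decomposition with respect to $J$, so Theorem \ref{T:bpgeom} gives a fibre bundle $X(\Lambda(\mcD)) \arr X^J(\overline{\lambda}(B_n))$ with fibre $X(\Lambda(\mcD'))$. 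Consequently $\Lambda(\mcD)$ is rationally smooth iff both $X^J(\overline{\lambda}(B_n))$ and $\Lambda(\mcD')$ are. The induction hypothesis, applied to the restriction $\lambda|_{\mcD'}$, handles the fibre factor, reducing matters to the equivalence ``$X^J(\overline{\lambda}(B_n))$ rationally smooth iff $\lambda(B_n)$ rationally smooth.''

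To establish this last equivalence, I compose two fibre-bundle arguments. First, $\lambda(B_n) = \overline{\lambda}(B_n)\cdot u_{J_R(B_n)}$ is itself a BP decomposition, since $D_L(u_{J_R(B_n)}) = J_R(B_n)$ automatically contains $S(\overline{\lambda}(B_n))\cap J_R(B_n)$. Theorem \ref{T:bpgeom} then yields a fibre bundle $X(\lambda(B_n)) \arr X^{J_R(B_n)}(\overline{\lambda}(B_n))$ with smooth fibre $P_{J_R(B_n)}/B$, so $\lambda(B_n)$ is rationally smooth iff $X^{J_R(B_n)}(\overline{\lambda}(B_n))$ is. Second, I claim that the projection $G/P_{J_R(B_n)} \arr G/P_J$ restricts to an isomorphism $X^{J_R(B_n)}(\overline{\lambda}(B_n)) \iso X^J(\overline{\lambda}(B_n))$: since $S(\overline{\lambda}(B_n)) = B_n$ and $J\cap B_n = J_R(B_n)$, for every $x\leq \overline{\lambda}(B_n)$ one has $x \in W_{B_n}$ and therefore $D_R(x)\cap J = D_R(x)\cap J_R(B_n)$, so the same subset of $W_{B_n}$ indexes the Schubert cells of both varieties, with matching cell dimensions. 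Chaining the two equivalences closes the induction.

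The main obstacle is justifying the isomorphism $X^{J_R(B_n)}(\overline{\lambda}(B_n)) \iso X^J(\overline{\lambda}(B_n))$; the cell-level match is immediate from the descent calculation, but upgrading this to an honest isomorphism of projective varieties requires a brief geometric argument about the fibres of $G/P_{J_R(B_n)} \arr G/P_J$ over the cells in question. Everything else is a routine application of the BP/fibre-bundle correspondence already established.
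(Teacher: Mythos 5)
Your proposal follows the same inductive skeleton as the paper---peel off a maximal block $B$ via Corollary \ref{C:bpstaircase} so that $\Lambda(\mcD)=\overline{\lambda}(B)\Lambda(\mcD')$ is a BP decomposition, then reduce to $\mcD'$ and to the single label $\lambda(B)$---but you justify the key multiplicativity step differently. The paper simply cites the combinatorial result \cite[Theorem 3.3 and Remark 4.5]{RS14}, which says directly that for a BP decomposition $w=vu$ with respect to $J$, $w$ is rationally smooth if and only if $u$ and $vu_{S(v)\cap J}$ are; applied here this gives ``$\Lambda(\mcD)$ rationally smooth iff $\Lambda(\mcD')$ and $\lambda(B)=\overline{\lambda}(B)u_{J_R(B)}$ are,'' and the induction closes in one line. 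You instead route through geometry: Theorem \ref{T:bpgeom}, Carrell--Peterson, the locality of rational smoothness for Zariski-locally trivial bundles, a second fibre bundle $X(\lambda(B))\arr X^{J_R(B)}(\overline{\lambda}(B))$ with smooth fibre $P_{J_R(B)}/B$, and the identification $X^{J_R(B)}(\overline{\lambda}(B))\iso X^{J}(\overline{\lambda}(B))$. All of these facts are true (the last isomorphism follows, e.g., from properness, bijectivity, birationality on the open cell, and normality of Schubert varieties via Zariski's main theorem), so your argument is correct for Weyl groups; but note two trade-offs. First, the theorem is stated for an arbitrary Coxeter group with the combinatorial (palindromic Bruhat interval) definition of rational smoothness, and the paper's cited result is purely combinatorial (a factorization of Poincar\'e polynomials), so it covers cases with no flag variety; your geometric route does not. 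Second, your ``both directions through a fibre bundle'' step genuinely needs local triviality and the locality of rational smoothness---it cannot be replaced by the Poincar\'e polynomial factorization alone, since palindromicity of a product of polynomials with nonnegative coefficients does not imply palindromicity of the factors. With those caveats, and the acknowledged-but-standard isomorphism filled in, your proof is sound for the finite-type setting the paper actually uses.
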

\begin{proof}
    By Corollary \ref{C:bpstaircase}, $\Lambda(\mcD) = \overline{\lambda}(B) \Lambda(\mcD_0)$
    is a BP decomposition, where $B$ is some maximal block of $\mcD$ and
    $\mcD_0 = \mcD \setminus \{B\}$. By \cite[Theorem 3.3 and Remark 4.5]{RS14}, $\Lambda(\mcD)$ is
    rationally smooth if and only if $\Lambda(\mcD_0)$ and $\lambda(B) = \overline{\lambda}(B)
    u_{J_R(B)}$ are rationally smooth, so the theorem follows by induction.
\end{proof}

If $W$ is a finite Weyl group, then every rationally smooth element has a
complete BP decomposition \cite[Theorem 3.6]{RS14}, so we get the following:
\begin{cor}\label{C:ratsmoothbij}
    If $W$ is a finite Weyl group, then there is a bijection between rationally
    smooth elements of $W$, and staircase diagrams over $W$ with a rationally
    smooth nearly-maximal labelling.
\end{cor}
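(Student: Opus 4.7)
The plan is to obtain the corollary by restricting the bijection $\phi$ of Theorem~\ref{T:staircasebij} to the subclass of labellings for which every block label is rationally smooth, and then invoking the cited existence result from \cite{RS14} to see that the image of this restriction is exactly the set of rationally smooth elements of $W$.

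More precisely, I would first recall that Theorem~\ref{T:staircasebij} gives a bijection
\begin{equation*}
    \phi : (\mcD,\lambda) \longmapsto \Lambda(\mcD)
\end{equation*}
between staircase diagrams over $W$ with a nearly-maximal labelling and elements of $W$ admitting a complete BP decomposition. Theorem~\ref{T:ratsmoothbij} tells us that under $\phi$, the rationally smooth nearly-maximal labellings are sent to rationally smooth elements, and conversely, if $\Lambda(\mcD)$ is rationally smooth then each label $\lambda(B)$ is rationally smooth. So $\phi$ restricts to an injection from rationally smooth nearly-maximal labelled diagrams into the set of rationally smooth elements with a complete BP decomposition.

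Next I would invoke the hypothesis that $W$ is a finite Weyl group: by \cite[Theorem~3.6]{RS14}, every rationally smooth $w \in W$ has a complete BP decomposition. Thus the set of rationally smooth elements coincides with the intersection of the rationally smooth elements with those admitting a complete BP decomposition, so every rationally smooth $w$ lies in the image of $\phi$. Combining this with Theorem~\ref{T:ratsmoothbij}, the unique $(\mcD,\lambda)$ with $\phi(\mcD,\lambda)=w$ must itself be rationally smooth, so the restriction is also surjective onto the set of rationally smooth elements.

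There is no serious technical obstacle here; the corollary is essentially an assembly of results already established in the paper together with the one external input \cite[Theorem~3.6]{RS14}. The only thing to be careful about is that Theorem~\ref{T:ratsmoothbij} is stated as an ``if and only if'', so rational smoothness of the labelling is both preserved and reflected by $\phi$, which is what makes the restricted map a genuine bijection. The finiteness hypothesis on $W$ enters only through the use of \cite[Theorem~3.6]{RS14} to guarantee surjectivity; without this hypothesis one would only obtain a bijection with those rationally smooth elements that additionally admit a complete BP decomposition.
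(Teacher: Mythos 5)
Your proposal is correct and follows essentially the same route as the paper: the corollary is obtained by restricting the bijection of Theorem~\ref{T:staircasebij} using the equivalence in Theorem~\ref{T:ratsmoothbij}, with surjectivity onto all rationally smooth elements supplied by \cite[Theorem~3.6]{RS14} for finite Weyl groups. Your remark that the ``if and only if'' in Theorem~\ref{T:ratsmoothbij} is what makes the restriction a genuine bijection is exactly the point the paper relies on implicitly.
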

Similar results hold for the affine Weyl group of type $\widetilde{A}$ \cite{BC12}
and right-angled and long-braid Coxeter groups \cite{RS12}.


If $w = vu_J$ with $J=D_R(w)$ is nearly-maximal and rationally smooth, then the
associated Grassmannian elements $v \in W^J$ are completely listed in
\cite[Theorem 3.8]{RS14}, and hence Corollary \ref{C:ratsmoothbij} is quite
concrete.  In particular, if $W$ is simply-laced, then rationally smoothness is
equivalent to smoothness, and the maximal elements are the only rationally
smooth nearly-maximal elements.  Thus:
\begin{cor}\label{C:simplylacedbij}
    If $W$ is a simply-laced finite type Weyl group, then there is a bijection
    between staircase diagrams over the Dynkin diagram of $W$ and smooth elements of $W$.
\end{cor}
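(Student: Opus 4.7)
The plan is to deduce this corollary directly from Corollary \ref{C:ratsmoothbij} by pinning down what a rationally smooth nearly-maximal labelling can look like in the simply-laced case. First, by Peterson's theorem (as recalled in the introduction for types $A$ and $D$), in a simply-laced finite type Weyl group an element is smooth if and only if it is rationally smooth, so the rationally smooth elements of $W$ are precisely the smooth elements of $W$.

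Next, invoke Corollary \ref{C:ratsmoothbij}: the map $\phi : (\mcD, \lambda) \mapsto \Lambda(\mcD)$ gives a bijection between pairs $(\mcD, \lambda)$ consisting of a staircase diagram over $W$ together with a rationally smooth nearly-maximal labelling, and rationally smooth elements of $W$. Since $W$ is finite, every subset $B \subseteq S$ is spherical, so for any staircase diagram $\mcD$ the maximal labelling $\lambda(B) := u_B$ of Definition \ref{D:maximal_labelling} is available. Since each $u_B$ is the longest element of the parabolic $W_B$, the corresponding Schubert variety is a full flag variety and hence smooth, so the maximal labelling is in particular a rationally smooth labelling.

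The key step is then to show that this is the \emph{only} rationally smooth nearly-maximal labelling available in the simply-laced setting. As noted in the paragraph preceding the corollary, the classification in \cite[Theorem 3.8]{RS14} implies that in a simply-laced finite type Weyl group, the only nearly-maximal rationally smooth elements are the maximal elements themselves. Applying this to each $W_B$ (which is also simply-laced of finite type), we see that any rationally smooth nearly-maximal labelling $\lambda$ of $\mcD$ must satisfy $\lambda(B) = u_B$ for every $B \in \mcD$; that is, $\lambda$ must coincide with the maximal labelling. In particular the labelling is uniquely determined by $\mcD$.

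Combining these observations, the map $\mcD \mapsto \Lambda(\mcD, \lambda_{\max})$, where $\lambda_{\max}$ is the maximal labelling, defines a bijection between staircase diagrams over the Dynkin diagram of $W$ and rationally smooth elements of $W$, which by Peterson's theorem coincide with the smooth elements. The only real content beyond Corollary \ref{C:ratsmoothbij} is the rigidity of the labelling in the simply-laced case, and this is exactly what \cite[Theorem 3.8]{RS14} supplies; I do not expect any additional obstacle.
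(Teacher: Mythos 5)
Your proposal is correct and follows essentially the same route as the paper: combine Corollary \ref{C:ratsmoothbij} with Peterson's theorem and the fact (from \cite[Theorem 3.8]{RS14}) that in simply-laced finite type the only rationally smooth nearly-maximal elements are the maximal ones, so the labelling is forced to be the maximal labelling and the data reduces to the bare staircase diagram. The paper states this in the two sentences immediately preceding the corollary, and your write-up just spells out the same argument in more detail.
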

For non-simply-laced types, all nearly-maximal, rationally smooth elements
are almost-maximal in the following sense \cite[Corollary 5.10]{RS14}:
\begin{defn}\label{D:almostmaxlabelling}
    An element $w \in W$ is almost-maximal if both $w$ and $w^{-1}$ are
    nearly-maximal.
\end{defn}
Thus we can replace nearly-maximal with almost-maximal in Corollary
\ref{C:ratsmoothbij}. In any Coxeter group, $w \in W$ is rationally smooth
if and only if $w^{-1}$ is rationally smooth. If $w = \Lambda(\mcD)$, then $w^{-1}
= \Lambda(\flip(\mcD))$ by Theorem \ref{T:labelling}, so $\flip(\mcD)$ is the
labelled staircase diagram associated to $w^{-1}$.

\begin{rmk}
    In \cite{RS14}, an element $w \in W^J$ is said to be almost-maximal
    relative to $J$ if $w u_J$ is almost-maximal in the above sense. We
    can say that an element $x$ has a complete almost-maximal BP decomposition
    if $x = v_n \cdots v_1$, where $v_i (v_{i-1} \cdots v_1)$ is a BP decomposition
    and $v_i$ is either maximal or almost-maximal relative to $J_i = S(v_{i-1}
    \cdots v_1)$ for all $i$. Then Theorems \ref{T:staircasebij} and \ref{T:bpset}
    give a bijection between almost-maximally labelled staircase diagrams and
    elements with a complete almost-maximal BP decomposition, holding for any
    Coxeter group. One interesting consequence is that $w$ has a complete
    almost-maximal BP decomposition if and only if $w^{-1}$ has a complete
    almost-maximal BP decomposition.
\end{rmk}


\section{Staircase diagrams and Catalan numbers}\label{S:staircasecatalan}


The rest of this paper is concerned with enumerating staircase diagrams (and
labelled staircase diagrams) for the classical finite-type Coxeter groups. Before
going through each case individually, we look at what the classical types have
in common.  We begin with a few important definitions on staircase diagrams.

\begin{definition}
    We say $s\in S$ is \emph{critical point} of $\mcD$ if $|\mcD_s|=1$.  The collection of critical points is called the critical set of $\mcD$.
\end{definition}
\begin{example}
    In Example \ref{Ex:typeA_11}, the critical set of $\mcD$ is $\{s_1,s_6,s_8,s_9,s_{11}\}$.
\end{example}

The following lemma is immediate from part (4) of Definition \ref{D:staircase}.
\begin{lemma}\label{L:extreme_crits}
    If $B\in \mcD$ is an extremal block, then $B$ contains a critical point of $\mcD$.
\end{lemma}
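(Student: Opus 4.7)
The plan is to invoke part (4) of Definition \ref{D:staircase} together with part (2), exploiting the chain structure of $\mcD_s$. Part (4) guarantees that for every block $B \in \mcD$ there exist $s, s' \in S$ such that $B = \min(\mcD_s)$ and $B = \max(\mcD_{s'})$, so each block is distinguished in some chain $\mcD_r$. The goal is to show that when $B$ is extremal in the global poset $\mcD$, such a distinguishing chain must in fact be a singleton.

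By the symmetry of the flip operation (Definition of $\flip(\mcD)$), it suffices to treat the case where $B$ is a minimal block of $\mcD$; the maximal case then follows by applying the minimal case to $\flip(\mcD)$ and observing that critical points are preserved under reversing the partial order. So assume $B$ is minimal in $\mcD$. By part (4) of Definition \ref{D:staircase}, choose $s \in S$ such that $B = \max(\mcD_s)$, and note that $s \in B$ in particular.

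The key step is to show $|\mcD_s| = 1$. Suppose for contradiction that $|\mcD_s| \geq 2$. Since $\mcD_s$ is a chain by part (2) of Definition \ref{D:staircase}, and $B$ is its maximum, there exists another block $B' \in \mcD_s$ with $B' \prec B$. But this contradicts the minimality of $B$ in $\mcD$. Therefore $|\mcD_s| = 1$, so $s$ is a critical point of $\mcD$ lying in $B$, completing the proof. There is really no obstacle here; the lemma is essentially a direct bookkeeping consequence of combining parts (2) and (4) of Definition \ref{D:staircase}.
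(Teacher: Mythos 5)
Your proof is correct and is exactly the argument the paper has in mind: the paper simply states that the lemma ``is immediate from part (4) of Definition \ref{D:staircase}'' and gives no further details, and your write-up supplies precisely those details (part (4) produces $s$ with $B=\max(\mcD_s)$, and any second element of the chain $\mcD_s$ would lie strictly below $B$, contradicting minimality). No issues.
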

\begin{definition}
    We say $\mcD$ is \emph{elementary diagram} if the critical set of $\mcD$ is contained in the leaves of the support $\supp(\mcD)$.
\end{definition}

The fundamental principle we use to enumerate staircase diagrams is to first
decompose a diagram into elementary diagrams along critical points.  For
example, we have

\begin{equation*}
\begin{tikzpicture}[level distance=20mm,
level 1/.style={sibling distance=35mm}]
    \node {\begin{tikzpicture}[scale=0.4]
            \planepartition{9}{
                {1,1,0,0,0,1,1,1,0},
                {0,1,1,0,1,1,0,1,1},
                {0,0,1,1,1}}
            \end{tikzpicture}}
        child{ node{
        \begin{tikzpicture}[scale=0.4]
            \planepartition{3}{
                {1,1,0},
                {0,1,1}}
        \end{tikzpicture}}}
        child{ node{
        \begin{tikzpicture}[scale=0.4]
            \planepartition{7}{
                {0,0,0,1,1},
                {0,0,1,1,0},
                {0,1,1,0}}
        \end{tikzpicture}}}
        child{ node {
        \begin{tikzpicture}[scale=0.4]
            \planepartition{9}{
                {1,1,0,0},
                {0,1,1,0},
                {0,0,1,1}}
        \end{tikzpicture}}};
\end{tikzpicture}
\end{equation*}

For this reason, we focus on elementary diagrams.  We begin by considering a
particular family of graphs and the relationship between their elementary
diagrams. Suppose we have a fixed graph $\Gamma$ with distinguished vertex $s\in S$, and vertex
set $S$ of size $q$. Define the graph $\Gamma_{q+p}$ to be the graph where we
attach a line graph of $p$ vertices to the vertex $s$ and let $S_n$ denote the
set of vertices in $\Gamma_n$.  In particular, $\Gamma_q=\Gamma$ and for any
$n\geq q$, $\Gamma_n$ is a graph with $n$ vertices.  Set $s_q = s$, and for $n
> q$ let $s_n$ denote the new leaf in the graph $\Gamma_n$. Since we often work
with trees, it is convenient to make the following definition:
\begin{defn}
    If $\Gamma$ is a tree, and $t,r \in S$, then $[t,r] \subseteq S$ will denote
    the vertices of the unique path connecting $t$ and $r$, with endpoints
    included.
\end{defn}
Next, define $Z_\Gamma(n)$ to be the set of fully supported elementary diagrams
on the graph $\Gamma_n$. Define
\begin{equation*}
    Z_{\Gamma}^+(n):=\{\mcD\in Z_\Gamma(n)\ |\ \text{$s_n$ is contained in a maximal block}\}.
\end{equation*}
Note that the maximal block containing $s_n$ is unique since $\mcD_{s_n}$ is a
chain.  If $\mcD\in Z_{\Gamma}^+(n)$, let $B_{\mcD}:=\max(\mcD_{s_n})$.  The
following is an algorithm for constructing elementary diagrams in
$Z_\Gamma^+(n+1)$ from elementary diagrams in $Z_\Gamma^+(n)$.

\begin{definition}\label{D:Cat_gen}
    Let $\mcD\in Z_\Gamma^+(n)$ and let $P(B_{\mcD})$ denote the collection of connected, proper, and nonempty subsets of $B_{\mcD}$ containing $s_n$.  Define the set of staircase diagrams $\mathfrak{G}_p(\mcD)\subseteq Z_\Gamma^+(n+p)$ as follows:

    First, if $s_n$ is not a critical point of $\mcD$, then let $\mcD\cup\{s_{n+1}\}$, with the additional covering relation $B_{\mcD}\prec\{s_{n+1}\}$, be the single elementary diagram in the set $\mathfrak{G}_1(\mcD)$.

    Otherwise, if $s_n$ is a critical point of $\mcD$, then define
    \begin{equation*}
        \mathfrak{G}_1(\mcD):=\{\mcD^0\}\cup \bigcup_{B'\in P(B_{\mcD})}\{\mcD^{B'}\}
    \end{equation*}
        where $\mcD^0:=\{B^0\ |\ B\in\mcD\}$ with the same covering relations as $\mcD$ and
    $$B^0:=
        \begin{cases}
            B & \text{if}\ B\notin \mcD_{s_{n-1}}\cup\{B_{\mcD}\}\\
            B\cup\{s_n\} & \text{if}\  B\in \mcD_{s_{n-1}}\setminus\{B_{\mcD}\}\\
            B\cup\{s_{n+1}\}& \text{if}\ B=B_{\mcD}
        \end{cases},$$
    and for $B'\in P(B_{\mcD})$ we let
    \begin{equation*}
        \mcD^{B'}:=\mcD\cup\{B'\cup\{s_{n+1}\}\}
    \end{equation*}
    with the additional covering relation $B_{\mcD}\prec B'\cup\{s_{n+1}\}$.  Recursively define
    $$\mathfrak{G}_{p+1}(\mcD):=\bigcup_{\mcG\in \mathfrak{G}_{p}(\mcD)}\ \mathfrak{G}_{1}(\mcG).$$
\end{definition}

\begin{example}\label{Ex:Catalan_genD}
    Let $\Gamma$ be the Dynkin graph of type $D_4$ with $S=\{s_1,s_2,s_3,s_4\}$ and fix $s=s_4.$  Then $\Gamma_n$ is the Dynkin graph of type $D_n$.
    \begin{equation*}
    \begin{tikzpicture}
        \begin{scope}[start chain]
            \dnode{2} \dnode{3} \node[chj,draw=none] {\dots}; \dnode{n-1} \dnode{n}
        \end{scope}
        \begin{scope}[start chain=br going above]
            \chainin(chain-2); \dnodebr{1}
        \end{scope}
    \end{tikzpicture}
\end{equation*}
    Let $\mcD=([s_1,s_4]\prec[s_2,s_4])\in Z^+_\Gamma(4)$, then $\mathfrak{G}_p(\mcD)$ for $p=1,2,3,4$ are given by the following diagrams:

\begin{tikzpicture}[level distance=23mm,
level 1/.style={sibling distance=10mm},
level 2/.style={sibling distance=10mm},
level 3/.style={sibling distance=75mm},
level 4/.style={sibling distance=30mm}]
    \node {
        \begin{tikzpicture}[scale=0.35]
            \planepartitionD{4}{
                {1,2,0},
                {1,1,1}}
        \end{tikzpicture}}
        child { node {
            \begin{tikzpicture}[scale=0.35]
            \planepartitionD{5}{
                {0,1,2,0},
                {0,1,1,1},
                {1,0,0,0}}
            \end{tikzpicture}}
            child { node {
                \begin{tikzpicture}[scale=0.35]
                \planepartitionD{6}{
                    {0,1,1,2,0},
                    {0,1,1,1,1},
                    {1,1,0,0,0}}
                \end{tikzpicture}}
                child { node {
                    \begin{tikzpicture}[scale=0.35]
                    \planepartitionD{7}{
                        {0,1,1,1,2,0},
                        {0,1,1,1,1,1},
                        {1,1,1,0,0,0}}
                    \end{tikzpicture}}
                    child { node {
                        \begin{tikzpicture}[scale=0.35]
                        \planepartitionD{8}{
                            {0,1,1,1,1,2,0},
                            {0,1,1,1,1,1,1},
                            {1,1,1,1,0,0,0}}
                        \end{tikzpicture}}}
                    child { node {
                        \begin{tikzpicture}[scale=0.35]
                        \planepartitionD{8}{
                            {0,0,1,1,1,2,0},
                            {0,0,1,1,1,1,1},
                            {0,1,1,1,0,0,0},
                            {1,1,1,0,0,0,0}}
                        \end{tikzpicture}}}
                    child { node {
                        \begin{tikzpicture}[scale=0.35]
                        \planepartitionD{8}{
                            {0,0,1,1,1,2,0},
                            {0,0,1,1,1,1,1},
                            {0,1,1,1,0,0,0},
                            {1,1,0,0,0,0,0}}
                        \end{tikzpicture}}}}
                child { node {
                    \begin{tikzpicture}[scale=0.35]
                    \planepartitionD{7}{
                        {0,0,1,1,2,0},
                        {0,0,1,1,1,1},
                        {0,1,1,0,0,0},
                        {1,1,0,0,0,0}}
                    \end{tikzpicture}}
                    child { node {
                        \begin{tikzpicture}[scale=0.35]
                        \planepartitionD{8}{
                            {0,0,0,1,1,2,0},
                            {0,0,0,1,1,1,1},
                            {0,1,1,1,0,0,0},
                            {1,1,1,0,0,0,0}}
                        \end{tikzpicture}}}
                    child { node {
                        \begin{tikzpicture}[scale=0.35]
                        \planepartitionD{8}{
                            {0,0,0,1,1,2,0},
                            {0,0,0,1,1,1,1},
                            {0,0,1,1,0,0,0},
                            {0,1,1,0,0,0,0},
                            {1,1,0,0,0,0,0}}
                        \end{tikzpicture}}}}}};
\end{tikzpicture}
\end{example}

It is easy to see that if $\mcD\in Z_{\Gamma}^+(n)$, then $\mathfrak{G}_p(\mcD)\subseteq Z_{\Gamma}^+(n+p)$.


\begin{lemma}\label{L:Catalan_disjoint}
If $\mcD,\mcG\in Z^+_\Gamma(n)$ and $\mcD\neq \mcG$, then $\mathfrak{G}_p(\mcD)\cap\mathfrak{G}_p(\mcG)=\emptyset$ for all $p>0$.
\end{lemma}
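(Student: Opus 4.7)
The plan is to prove the lemma by induction on $p$. For the inductive step, the recursion $\mathfrak{G}_{p+1}(\mcD) = \bigcup_{\mcE \in \mathfrak{G}_p(\mcD)} \mathfrak{G}_1(\mcE)$ reduces everything to the base case: if some $\mcF$ lay in $\mathfrak{G}_{p+1}(\mcD) \cap \mathfrak{G}_{p+1}(\mcG)$ with $\mcD \neq \mcG$, then $\mcF$ would belong to $\mathfrak{G}_1(\mcE_1) \cap \mathfrak{G}_1(\mcE_2)$ for some $\mcE_1 \in \mathfrak{G}_p(\mcD)$ and $\mcE_2 \in \mathfrak{G}_p(\mcG)$; the base case would force $\mcE_1 = \mcE_2$, contradicting the inductive hypothesis applied to $\mcD \neq \mcG$. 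So the heart of the argument is proving $\mathfrak{G}_1(\mcD) \cap \mathfrak{G}_1(\mcG) = \emptyset$ whenever $\mcD \neq \mcG$ both lie in $Z_\Gamma^+(n)$.

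For the base case, given $\mcE \in \mathfrak{G}_1(\mcD)$, I would show that $\mcD$ is uniquely recoverable from $\mcE$ alone. Set $B_{\mcE} := \max(\mcE_{s_{n+1}})$, which is well-defined since $\mcE \in Z_\Gamma^+(n+1)$. The three constructions in Definition \ref{D:Cat_gen} should be distinguished by intrinsic features of $\mcE$: Case 1 is identified by $B_{\mcE} = \{s_{n+1}\}$, and then $\mcD = \mcE \setminus \{B_{\mcE}\}$; if $|B_{\mcE}| > 1$, then Case 2b is identified by $\mcE \setminus \{B_{\mcE}\}$ itself lying in $Z_\Gamma^+(n)$ with $s_n$ critical, and again $\mcD = \mcE \setminus \{B_{\mcE}\}$; otherwise Case 2a holds and $\mcD$ is recovered from $\mcE$ by replacing $B_{\mcE}$ with $B_{\mcE} \setminus \{s_{n+1}\}$ and removing $s_n$ from every other block of $\mcE_{s_n}$.

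The main obstacle will be verifying that Cases 2a and 2b are mutually exclusive under this criterion---specifically, that if $\mcE$ arises via Case 2a from some $\mcD$, then $\mcE \setminus \{B_{\mcE}\}$ fails to lie in $Z_\Gamma^+(n)$. This uses the elementarity of $\mcD$. Since $s_{n-1}$ has $s_n$ as a neighbour in $\supp(\mcD) = \Gamma_n$ together with at least one further neighbour, $s_{n-1}$ is not a leaf of $\supp(\mcD)$, so elementarity forces $|\mcD_{s_{n-1}}| \geq 2$, and hence $|\mcD_{s_{n-1}} \setminus \{B_{\mcD}\}| \geq 1$. If this cardinality is $\geq 2$, then $s_n$ sits in multiple extended blocks of $\mcE \setminus \{B_{\mcE}\}$ and so is non-critical, ruling out the Case 2b criterion. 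If the cardinality equals $1$, then necessarily $s_{n-1} \in B_{\mcD}$ (otherwise $s_{n-1}$ would itself be critical and non-leaf in $\mcD$), and removing $B_{\mcE}$ leaves $s_{n-1}$ in only the lone extended block---making $s_{n-1}$ a critical non-leaf of $\mcE \setminus \{B_{\mcE}\}$, again violating elementarity. Either way $\mcE \setminus \{B_{\mcE}\} \notin Z_\Gamma^+(n)$, so the criterion cleanly separates the Case 2a and Case 2b constructions.
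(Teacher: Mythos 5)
Your proposal is correct and takes essentially the same route as the paper: reduce to the case $p=1$ via the recursive definition of $\mathfrak{G}_{p}$, and resolve the only nontrivial collision (an output of the form $\mcD^0$ coinciding with one of the form $\mcG^{B'}$) by the same dichotomy on $|\mcD_{s_{n-1}}|$, where one branch makes $s_n$ non-critical in the putative second source and the other produces a critical non-leaf $s_{n-1}$, violating elementarity. Your reorganization as a ``unique recovery'' argument is only a cosmetic difference from the paper's direct case analysis.
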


\begin{proof}
It suffices to show the sets $\mathfrak{G}_1(\mcD)$ and $\mathfrak{G}_1(\mcG)$ are disjoint if $\mcD\neq \mcG$.  First, if $s_n$ is not a critical point of either $\mcD$ or $\mcG$, then the lemma is true.  If $B_{\mcD}=\{s_n\}$, then again the lemma is true.  Assume that $|B_{\mcD}|\geq 2$ and suppose $\mcD^0=\mcG^{B'}$ for some $B'\in P(B_{\mcD})$.  If $|\mcD_{s_{n-1}}|=2$, then $\mcG$ is not elementary since $\mcG_{s_{n-1}}=1$.  If  $|\mcD_{s_{n-1}}|>2$, then $|\mcD^0_{s_n}\setminus\mcD^0_{s_{n+1}}|\geq 2$.  But then $s_n$ is not a critical point of $\mcG$.  Hence the sets $\mathfrak{G}_1(\mcD)$ and $\mathfrak{G}_1(\mcG)$ are disjoint.
\end{proof}

Lemma \ref{L:Catalan_disjoint} implies that the size of the set $\mathfrak{G}_p(\mcD)$ grows predictably.  Let $\cc_n:=\frac{1}{n+1}\binom{2n}{n}$ denote the $n$-th Catalan number.

\begin{proposition}\label{P:Catalan_gen}
    Let $\mcD\in Z_\Gamma^+(n)$ with $n\geq 3$.  If $s_n$ is not a critical point of $\mcD,$ then
    \begin{enumerate}
        \item[(1)] $|\mathfrak{G}_p(\mcD)|=\cc_{p-1}$.
    \end{enumerate}
    Otherwise, let $\mcG,\mcD,\mcH\in Z_D^+(n)$ such that $|B_{\mcG}|=1,|B_{\mcD}|=2$ and $|B_{\mcH}|=3$.   If $s_n$ is a critical point of $\mcG,\mcD,\mcH$ then the following are true:
    \begin{enumerate}
    \item[(2)] $|\mathfrak{G}_p(\mcG)|=\cc_{p}.$
    \item[(3)] $|\mathfrak{G}_p(\mcD)|=\cc_{p+1}.$
    \item[(4)] $|\mathfrak{G}_p(\mcD)\cup\mathfrak{G}_p(\mcH)|=\cc_{p+2}$
    \end{enumerate}
\end{proposition}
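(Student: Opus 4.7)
The plan is to reduce all four claims to a single recursion for the counts
\begin{equation*}
    N_k(p) := |\mathfrak{G}_p(\mcE)|,
\end{equation*}
defined for any $\mcE \in Z_\Gamma^+(m)$ such that $s_m$ is critical, $|B_\mcE| = k$, and $B_\mcE$ is a subpath of the attached tail (so the same will hold for all subsequent blocks), with the convention $N_k(0) = 1$. I would then evaluate $N_k(p)$ using the Catalan generating function $C(x) = \sum_n \cc_n x^n$, which satisfies $C - 1 = xC^2$. By inspection of Definition \ref{D:Cat_gen}, every element of $\mathfrak{G}_1(\mcE)$ has $s_{m+1}$ as a critical point lying in its unique maximal block; hence, with $B_\mcE = \{s_{m-k+1},\ldots,s_m\}$, the set $P(B_\mcE)$ is the totally ordered collection of sub-paths $\{s_{m-j+1},\ldots,s_m\}$ for $1 \leq j \leq k-1$, so that $\mathfrak{G}_1(\mcE) = \{\mcE^0\} \cup \{\mcE^{B'} : B' \in P(B_\mcE)\}$ has $k$ elements. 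In $\mcE^0$ the maximal block at $s_{m+1}$ has size $k+1$, while in $\mcE^{B'}$ it has size $|B'| + 1 \in \{2,\ldots,k\}$, each size occurring once. Combined with Lemma \ref{L:Catalan_disjoint}, this yields
\begin{equation*}
    N_k(p) = \sum_{j=2}^{k+1} N_j(p-1), \qquad p \geq 1.
\end{equation*}

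I would then prove $N_k(p) = [x^p]\, C(x)^k$ for every $k \geq 1$ by induction on $p$. The base $p = 0$ is immediate. For the inductive step, the finite geometric sum together with $C - 1 = xC^2$ gives
\begin{equation*}
    \sum_{j=2}^{k+1} [x^{p-1}]\, C^j = [x^{p-1}]\, \frac{C^2(C^k - 1)}{C - 1} = [x^{p-1}]\, \frac{C^k - 1}{x} = [x^p]\, C^k
\end{equation*}
for $p \geq 1$ (the constant $-1$ contributes nothing in positive degree). Cases (2) and (3) are then immediate: $N_1(p) = [x^p]\, C = \cc_p$, and $N_2(p) = [x^p]\, C^2 = [x^{p+1}](C - 1) = \cc_{p+1}$. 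For (4), disjointness from Lemma \ref{L:Catalan_disjoint} gives $|\mathfrak{G}_p(\mcD) \cup \mathfrak{G}_p(\mcH)| = N_2(p) + N_3(p) = [x^p](C^2 + C^3)$, and the identity $C^2 - 1 = (C-1)(C+1) = xC^2(C+1) = x(C^2 + C^3)$ converts this to $[x^{p+1}](C^2 - 1) = \cc_{p+2}$. For (1), when $s_n$ is not critical in $\mcD$, Definition \ref{D:Cat_gen} says $\mathfrak{G}_1(\mcD)$ contains the single diagram obtained by adjoining the singleton block $\{s_{n+1}\}$ above $B_\mcD$, in which $s_{n+1}$ is critical with $|B| = 1$; therefore $|\mathfrak{G}_p(\mcD)| = N_1(p-1) = \cc_{p-1}$.

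The main technical obstacle I anticipate is verifying that the local structure at the growing tip really drives the recursion. One has to confirm that each $\mcE^0$ and $\mcE^{B'}$ satisfies all the axioms of Definition \ref{D:staircase}, that elementarity is preserved so the output lies in $Z_\Gamma^+(m+1)$, that $s_{m+1}$ is indeed a critical point in a maximal block of the claimed size, and that the blocks remain subpaths of the tail throughout the iteration so that $|P(B)| = |B| - 1$ at every stage. It is at this point that the hypotheses $n \geq 3$ and $|B_\mcD|, |B_\mcH| \leq 3$ play their role; once this bookkeeping is dispatched, the remainder of the proof is a short formal manipulation of $C(x)$.
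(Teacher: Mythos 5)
Your proof is correct and rests on the same combinatorial heart as the paper's --- namely that, by Lemma \ref{L:Catalan_disjoint} and the definition of $\mathfrak{G}_1$, a diagram whose tip is critical with top block of size $k$ produces disjointly one diagram with top block of size $k+1$ and one each of sizes $2,\dots,k$ --- but you organize and solve the resulting recursion in a genuinely different way. The paper fixes the initial diagram of part (3), tracks the refined counts $\cc_{p,k}$ of diagrams in $\mathfrak{G}_p(\mcD)$ whose top block has size $k$, recognizes the defining recursion of Catalan's triangle, and then obtains parts (1), (2) and (4) from part (3) by separate identifications (for (4), a bijection between $\mathfrak{G}_p(\mcD)\cup\mathfrak{G}_p(\mcH)$ and $\mathfrak{G}_{p+1}(\mcD)$ via the two elements of $\mathfrak{G}_1(\mcD)$). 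You instead index the total count $N_k(p)$ by the size $k$ of the \emph{initial} top block, derive $N_k(p)=\sum_{j=2}^{k+1}N_j(p-1)$, and solve it in closed form as $N_k(p)=[x^p]\,C(x)^k$ from $C-1=xC^2$, after which all four parts drop out of one formula with no further bijections; your route buys uniformity and an explicit answer (the $k$-fold Catalan convolutions), while the paper's avoids generating-function manipulation by quoting the Catalan triangle. One caution about the bookkeeping you defer: the top blocks do \emph{not} literally remain subpaths of the tail under iteration --- in type $D$ the $\mcD^0$ branch applied to a block such as $\{s_1,s_3,s_4\}$ yields $\{s_1,s_3,s_4,s_5\}$, and so on --- but the only property your recursion needs is $|P(B)|=|B|-1$, which does persist: the new tip is always a leaf of its block, and the block is a path except possibly for one extra vertex at the far end, which a connected \emph{proper} subset containing the tip can never reach. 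With that adjustment your argument is complete and covers exactly the cases in which the proposition is applied.
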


\begin{proof}Parts (1) and (2) follow directly from part (3).  Note that $B_{\mcD}=\{s_{n-1},s_n\}$ and hence if $\mcD'\in\mathfrak{G}_p(\mcD),$ then $B_{\mcD'}$ is a connected interval contained in $[s_n,s_{n+p}]$.  Let $\cc_{p,k}$ denote the number of staircase diagrams $\mcD'\in \mathfrak{G}_p(\mcD)$ for which $B_{\mcD'}$ is an interval of size $k$.
It is easy see from Definition \ref{D:Cat_gen} that $\cc_{p,p+2}=1$ for all $p\geq 0$ and that $\cc_{p,p+k}=0$ for all $k\geq 3$.  Lemma \ref{L:Catalan_disjoint} implies the recursion
    \begin{equation}\label{Eq:Catalan_triangle}
        \cc_{p+1,k}=\sum_{i={k-1}}^{p+2} \cc_{p,i}.
    \end{equation}
    Equation \eqref{Eq:Catalan_triangle} is satisfied by Catalan's triangle and hence
    \begin{equation*}
        |\mathfrak{G}_p(\mcD)|=\sum_{k=2}^{p+2} \cc_{p,k}=\cc_{p+1}.
    \end{equation*}
    To prove part (4), note that $\mathfrak{G}_1(\mcD)$ contains exactly two elementary diagrams $\mcD',\mcD''$ where $|B_{\mcD'}|=2$ and $|B_{\mcD''}|=3$.  Hence we can identify $\mcD'\mapsto\mcD$ and $\mcD''\mapsto\mcH$.  This induces a bijection between the sets $\mathfrak{G}_p(\mcD)\cup\mathfrak{G}_p(\mcH)$ and $\mathfrak{G}_{p+1}(\mcD)$.  Part (4) now follows from part (3).
\end{proof}

In Example \ref{Ex:Catalan_genD}, we have that $|\mathfrak{G}_3(\mcD)|=\cc_2=2$ and $|\mathfrak{G}_4(\mcD)|=\cc_3=5$.  We now describe the image of $\mathfrak{G}_1$ when applied to all elements of $Z_\Gamma^+(n).$

\begin{lemma}\label{L:Catalan_almostsurjective}
    The set
    $$\bigcup_{\mcD'\in Z_\Gamma^+(n)}\mathfrak{G}_1(\mcD')$$
    is set of all $\mcD\in Z_G^+(n+1)$ that satisfy the following:
    \begin{enumerate}
    \item $s_{n+1}$ is a critical point of $\mcD$.
    \item If $\mcD_{s_{n+1}}\subseteq \mcD_{s_n}$ and $|\mcD_{s_n}|>2$, then $\min(\mcD_{s_n})=\min(\mcD_r)$ for some $r\neq s_n$.
    \end{enumerate}
\end{lemma}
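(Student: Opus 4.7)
The plan is to prove the two inclusions by reverse-engineering each subcase of Definition \ref{D:Cat_gen}.

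For the $\subseteq$ inclusion, suppose $\mcD \in \mathfrak{G}_1(\mcD')$ for some $\mcD' \in Z^+_\Gamma(n)$. I would verify (1) and (2) by checking each of the three subcases. Condition (1) is immediate: $s_{n+1}$ appears in a unique block of $\mcD$ in every subcase---either $\{s_{n+1}\}$, or $B_{\mcD'} \cup \{s_{n+1}\}$, or $B' \cup \{s_{n+1}\}$. For condition (2), the hypothesis $\mcD_{s_{n+1}} \subseteq \mcD_{s_n}$ fails in the singleton subcase (since $s_n \notin \{s_{n+1}\}$), and the hypothesis $|\mcD_{s_n}| > 2$ fails in each $\mcD^{B'}$ subcase (where $|\mcD_{s_n}| = 2$). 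The only nontrivial case is $\mcD^0$ with $|\mcD_{s_n}| > 2$: here $\min(\mcD_{s_n}) = \min(\mcD'_{s_{n-1}}) \cup \{s_n\}$, and applying part (4) of Definition \ref{D:staircase} to the block $\min(\mcD'_{s_{n-1}}) \in \mcD'$ provides a vertex $r \in \min(\mcD'_{s_{n-1}})$ with $\min(\mcD'_r) = \min(\mcD'_{s_{n-1}})$. Since $s_n$ is critical in $\mcD'$ with unique block $B_{\mcD'} \neq \min(\mcD'_{s_{n-1}})$, we have $s_n \notin \min(\mcD'_{s_{n-1}})$ and hence $r \neq s_n$, giving $\min(\mcD_r) = \min(\mcD_{s_n})$.

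For the $\supseteq$ inclusion, given $\mcD \in Z^+_\Gamma(n+1)$ satisfying (1) and (2), let $B_*$ denote the unique block of $\mcD$ containing $s_{n+1}$. I would construct the candidate $\mcD'$ based on the structure of $B_*$. If $B_* = \{s_{n+1}\}$, take $\mcD' := \mcD \setminus \{B_*\}$; since $s_n$ is not a leaf of $\Gamma_{n+1}$ and $\mcD$ is elementary, $s_n$ is not critical in $\mcD$ or $\mcD'$, and $\mcD$ arises via the first subcase of $\mathfrak{G}_1(\mcD')$. If $|B_*| \geq 2$, then $s_n \in B_*$ by connectedness. I would attempt to invert the $\mcD^{B'}$ construction (taking $\mcD' := \mcD \setminus \{B_*\}$ with $B' := B_* \setminus \{s_{n+1}\} \in P(B_{\mcD'})$ where $B_{\mcD'} = \min(\mcD_{s_n})$) or the $\mcD^0$ construction (removing $s_{n+1}$ from $B_*$ and removing $s_n$ from every other block of $\mcD_{s_n}$). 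Lemma \ref{L:Catalan_disjoint} guarantees that at most one of these reconstructions can produce a $\mcD'$ in $Z^+_\Gamma(n)$ containing $\mcD$ in its image, and conditions (1) and (2) ensure that at least one of them does.

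The main technical obstacle lies in verifying that the reconstructed $\mcD'$ in the $\mcD^0$ inversion is genuinely a valid staircase in $Z^+_\Gamma(n)$, satisfying all four axioms of Definition \ref{D:staircase}, being elementary and fully supported on $\Gamma_n$, and having $s_n$ in a globally maximal block (now critical in $\mcD'$). Condition (2) of the lemma is exactly what is needed here: when $|\mcD_{s_n}| > 2$, the vertex $r \neq s_n$ provided by condition (2) serves as the source of the minimal block in the reconstructed $\mcD'_{s_{n-1}}$ after the removal of $s_n$, preserving axiom (4) of Definition \ref{D:staircase}. When $|\mcD_{s_n}| = 2$, axiom (4) is automatic and matches the case $|\mcD'_{s_{n-1}}| = 2$ of the forward $\mcD^0$ construction. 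I would also need to check that every $C \in \mcD_{s_n} \setminus \{B_*\}$ contains $s_{n-1}$; this follows since $C$ is connected, contains $s_n$ but not $s_{n+1}$, and $s_{n+1}$ is $s_n$'s only other neighbor in $\Gamma_{n+1}$. Once these structural checks are in place, the inversions are easily seen to be inverses of the forward constructions, completing the proof.
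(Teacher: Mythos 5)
Your proposal is correct and follows essentially the same route as the paper: the two candidate inverses you describe are exactly the paper's $\widetilde{\mcD}$ (delete the block containing $s_{n+1}$) and $\overline{\mcD}$ (strip $s_{n+1}$ from $B_{\mcD}$ and $s_n$ from the other blocks of $\mcD_{s_n}$), and you invoke condition (2) at precisely the same point, namely to preserve axiom (4) of Definition \ref{D:staircase} under the shrinking operation when $|\mcD_{s_n}|>2$. Your forward direction is in fact slightly more explicit than the paper's, which only notes that $s_{n+1}$ is critical by definition and leaves the verification of condition (2) implicit.
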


\begin{proof}
First, if $\mcD'\in Z_\Gamma^+(n)$ and $\mcD\in \mathfrak{G}_1(\mcD')$, then $s_{n+1}$ is critical point of $\mcD$ by definition.  Conversely, suppose $\mcD\in Z_\Gamma^+(n+1)$ with $s_{n+1}$ a critical point. Define the staircase diagrams
$$\widetilde\mcD:=\mcD\setminus\{B_{\mcD}\}$$
and
\begin{equation*}
    \overline{\mcD}:=\{\overline{B}\ |\ B\in\mcD\}
\end{equation*}
    where $\overline{\mcD}$ has the same covering relations as $\mcD$ and
    $$\overline{B}:=
        \begin{cases}
            B & \text{if}\ B\notin \mcD_{s_n}\cup\{B_{\mcD}\}\\
            B\setminus\{s_n\} & \text{if}\  B\in \mcD_{s_n}\setminus\{B_{\mcD}\}\\
            B\setminus\{s_{n+1}\}& \text{if}\ B=B_{\mcD}
        \end{cases}$$
It suffices to show that either $\widetilde\mcD$ or $\overline{\mcD}$ belongs to $Z_\Gamma^+(n).$  If $B_{\mcD}=\{s_{n+1}\}$, then $\widetilde\mcD\in Z_\Gamma^+(n)$ and hence we can assume $|B_{\mcD}|\geq 2$.  If $|\mcD_{s_n}|=2$, then $s_n$ is a critical point of $\widetilde\mcD$.  If $\widetilde\mcD\in Z_\Gamma^+(n)$, then we are done.  Otherwise, $\widetilde\mcD$ is not elementary and hence $s_{n-1}$ is also a critical point of $\widetilde\mcD$.  But this implies that $\overline{\mcD}\in Z_\Gamma^+(n)$.  Lastly, suppose $|\mcD_{s_n}|>2$.  Note that $\widetilde\mcD\in Z_\Gamma^+(n)$, but $\mcD\notin \mathfrak{G}_1(\widetilde\mcD)$ since $|\widetilde\mcD_{s_n}|\geq 2$.  Now $\overline{\mcD}$ is a valid staircase diagram since $\min(\mcD_{s_n})=\min(\mcD_r)$ for some $r\neq s_n$ and hence $\overline{\mcD}\in Z_\Gamma^+(n)$.  Moreover, $\mcD\in \mathfrak{G}_1(\overline{\mcD})$ which completes the proof.
\end{proof}

We end this section with two additional lemmas which apply only when $\Gamma$
is a tree. First, we can replace part (2) of Definition \ref{D:staircase} with
the following stronger condition:
\begin{lemma}\label{L:staircase_trees}
    Let $\mcD$ be a staircase diagram over a tree graph $\Gamma$.  Then $\mcD_s$ is a saturated chain for every $s\in S$.
\end{lemma}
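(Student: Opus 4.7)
The plan is to use that $\mcD_s$ is already a chain by axiom (2) of Definition \ref{D:staircase}, and to prove saturation by contradiction. Given $B, B' \in \mcD_s$ with $B \prec B'$ and any $B'' \in \mcD$ satisfying $B \prec B'' \prec B'$, I would extend to a saturated covering chain $B = C_0 \prec C_1 \prec \cdots \prec C_k = B'$ in $\mcD$ that passes through $B''$, so that it suffices to show $s \in C_i$ for every $i$. Assume for contradiction that some $C_m$ omits $s$, and pick indices $i < l$ so that $s \in C_i \cap C_l$ while $s \notin C_m$ for $i < m < l$.

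Iterating axiom (1) shows that $M := C_{i+1} \cup \cdots \cup C_{l-1}$ is a connected subtree of $\Gamma$, and by choice $s \notin M$. Because $\Gamma$ is a tree, $M$ has a unique vertex $r$ closest to $s$; let $s = v_0, v_1, \ldots, v_p = r$ be the unique $s$-to-$r$ path in $\Gamma$, and set $u := v_{p-1}$ (with the convention $u = s$ when $p = 1$). By minimality of $d(s,r)$, the vertices $v_0, \ldots, v_{p-1}$ all avoid $M$, so in particular $u \notin M$ and $u \adj r$. The crux of the proof is to show $u \in C_i \cap C_l$: choosing any $j_0$ with $r \in C_{j_0}$, the union $C_i \cup C_{i+1} \cup \cdots \cup C_{j_0}$ is a connected subtree containing both $s$ and $r$ (by iterated axiom (1)), and since $\Gamma$ is a tree, the unique $s$-to-$r$ path lies inside this subtree; as its interior vertices avoid $M \supseteq C_{i+1} \cup \cdots \cup C_{j_0}$, they must lie in $C_i$. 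A symmetric argument using $C_{j_0} \cup \cdots \cup C_l$ places them in $C_l$, so $u \in C_i \cap C_l$.

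Finally, axiom (3) applied to the edge $u \adj r$ gives that $\mcD_u \cup \mcD_r$ is a chain with $\mcD_u$ saturated in it; since $C_i, C_l \in \mcD_u$ and $C_{j_0} \in \mcD_r$ with $C_i \prec C_{j_0} \prec C_l$, saturation forces $C_{j_0} \in \mcD_u$, i.e., $u \in C_{j_0} \subseteq M$, contradicting $u \notin M$. I expect the main obstacle to be pinning down the bridging edge $u \adj r$ with $u \in C_i \cap C_l$: this is where the tree hypothesis is genuinely needed, because in a graph with cycles the $s$-to-$r$ path could detour through $M$ and the interior vertices would no longer be forced into $C_i$ or $C_l$. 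Once that edge is secured, axiom (3) closes the argument almost mechanically.
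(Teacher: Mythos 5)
Your proof is correct, but it closes the argument quite differently from the paper. Both arguments begin the same way: reduce to a saturated covering chain whose two endpoints contain $s$ while the interior blocks do not, and use axiom (1) to see that the union $M$ of the interior blocks is connected. At that point the paper simply asserts that the connectivity of the whole union $C_i \cup M \cup C_l$ forces a cycle in $\Gamma$, contradicting the tree hypothesis; axiom (3) is never invoked. You instead locate the vertex $r \in M$ nearest to $s$, show that its neighbour $u$ on the $s$--$r$ geodesic lies in both $C_i$ and $C_l$ (because each of the connected unions $C_i \cup \cdots \cup C_{j_0}$ and $C_{j_0} \cup \cdots \cup C_l$ must contain the unique $s$--$r$ path, whose vertices short of $r$ avoid $M$), and then apply axiom (3) to the edge $u \adj r$ to force $u$ into the interior block $C_{j_0} \subseteq M$, a contradiction. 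Your route is longer but buys genuine rigor: the paper's one-line cycle claim is not self-evident, since a connected subgraph of a tree is acyclic and the two ``routes'' from $s$ into $M$ through $C_i$ and through $C_l$ could a priori enter $M$ at the same place. Indeed, on a path graph one can build a chain of connected blocks satisfying axioms (1) and (2) whose union is connected yet where $\mcD_s$ fails to be saturated, so some appeal to axiom (3) (or (4)) is unavoidable; your proof correctly identifies axiom (3) as the essential ingredient and makes explicit exactly where the tree hypothesis enters.
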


\begin{proof}
    By part (2) Definition \ref{D:staircase} we have that $\mcD_s$ is a chain.  Suppose $\mcD_s$ is not a saturated chain.  Then there exists a saturated chain $B_0\prec\cdots\prec B_m$ in $\mcD$ where $B_0, B_m\in \mcD_s$ and $B_i\notin \mcD_s$ for $1\leq i< m$.  By part (1) of Definition \ref{D:staircase}, the set $B_0\cup\cdots\cup B_m$ forms a connected subgraph of $\Gamma$.   Thus $\Gamma$ has cycle which is a contradiction.
\end{proof}

Define $$Z_{\Gamma}^-(n):\{\mcD\in Z_\Gamma(n)\ |\ \text{$s_n$ is contained in a minimal block}\}.$$
It is easy to see that the sets $Z_\Gamma^{\pm}(n)$ are in natural bijection by $\mcD\mapsto \flip(\mcD)$.  Hence, any enumerative properties of $Z_\Gamma^+(n)$ apply to $Z_\Gamma^-(n)$.  If we further suppose that the set $Z_\Gamma^+(n)$ consists only of chains, then we have the following lemma.

\begin{lemma}\label{L:elementarychains_disjoint}
Let $\Gamma$ be a tree graph and suppose every $\mcD\in Z_\Gamma^+(n)$ is a chain.  Then the intersection $$Z_\Gamma^+(n)\cap Z_\Gamma^-(n)=\{\mcD\in Z_\Gamma^+(n)\ |\ \mcD=\mcD_{s_n}\}.$$
\end{lemma}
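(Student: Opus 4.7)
The plan is a direct set-theoretic argument using the chain hypothesis together with Lemma \ref{L:staircase_trees}. The containment $\supseteq$ is immediate: if $\mcD = \mcD_{s_n}$, then every block of $\mcD$ contains $s_n$; since $\mcD$ is a chain by assumption, it has a unique maximal and a unique minimal block, each of which contains $s_n$, so $\mcD$ lies in both $Z_\Gamma^+(n)$ and $Z_\Gamma^-(n)$.

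For the harder containment $\subseteq$, I would start with $\mcD \in Z_\Gamma^+(n) \cap Z_\Gamma^-(n)$. The chain hypothesis forces $\mcD$ to have a unique maximal block $B_{\max}$ and a unique minimal block $B_{\min}$. By the definitions of $Z_\Gamma^+(n)$ and $Z_\Gamma^-(n)$, these are the respective maximum and minimum elements of $\mcD_{s_n}$; in particular both $B_{\min}$ and $B_{\max}$ belong to $\mcD_{s_n}$. Here the key input is Lemma \ref{L:staircase_trees}: since $\Gamma$ is a tree, $\mcD_{s_n}$ is actually a \emph{saturated} subchain of $\mcD$. Combined with the fact that $\mcD$ itself is a chain with $B_{\min}$ and $B_{\max}$ as its endpoints, saturation forces every block of $\mcD$ to lie in $\mcD_{s_n}$, yielding $\mcD = \mcD_{s_n}$.

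The only mild obstacle is to make sure one really does invoke the tree hypothesis; without Lemma \ref{L:staircase_trees}, the chain $\mcD_{s_n}$ would only be known to be a subchain of $\mcD$, and there could \emph{a priori} exist blocks $B$ with $B_{\min} \prec B \prec B_{\max}$ in $\mcD$ that miss $s_n$. The tree condition is precisely what rules this out, since any chain of blocks in $\mcD$ between two blocks containing $s_n$ would otherwise produce a cycle in $\Gamma$ via the connectedness axiom (1) of Definition \ref{D:staircase}. Once saturation is in hand, the conclusion is immediate, so no separate induction or case analysis on the number of blocks is needed.
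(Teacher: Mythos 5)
Your proof is correct and follows essentially the same route as the paper's: both identify the unique extremal blocks of the chain $\mcD$, observe that each contains $s_n$, and then invoke Lemma \ref{L:staircase_trees} to conclude that the saturated subchain $\mcD_{s_n}$ must be all of $\mcD$. Your extra discussion of the reverse containment and of why the tree hypothesis is indispensable is sound elaboration of what the paper leaves implicit.
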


\begin{proof}
If $\mcD$ is a chain, we can write $\mcD=(B_1\prec\cdots \prec B_m)$.  If $\mcD\in Z_\Gamma^+(n)\cap Z_\Gamma^-(n)$, then $s_n\in B_1\cap B_m$.  Lemma \ref{L:staircase_trees} implies $\mcD_{s_n}$ is a saturated chain in $\mcD$.  Hence $\mcD=\mcD_{s_n}$.
\end{proof}

If $\Gamma$ is not a tree, then both Lemma \ref{L:staircase_trees} and
\ref{L:elementarychains_disjoint} are false. Lemma
\ref{L:elementarychains_disjoint} is also false without the assumption that
$\mcD$ is a chain. For the simply-laced finite type groups, the Coxeter-Dynkin
graph $\Gamma$ is a tree, and this will be crucial to proof of Theorem
\ref{T:main_gen_series} in the next three sections.

\section{Staircase diagrams of type A}\label{S:typeA}

Corollary \ref{C:simplylacedbij} implies that the number of smooth Schubert
varieties of type $A_n$ is precisely the number of staircase diagrams over the
Dynkin graph of type $A_n$.  Let $\Gamma$ be the Dynkin graph of type $A_1$
with $s=s_1$.  In the notation of the previous section, $\Gamma_n$ is the
Dynkin graph of type $A_n$, pictured below:
\begin{equation*}
    \begin{tikzpicture}[start chain]
        \dnode{1} \dnode{2} \dydots \dnode{n-1} \dnode{n}
    \end{tikzpicture}
\end{equation*}

If $\mcD$ is a staircase diagram of type $A_n$, then each $B\in \mcD$ has a
distinct left and right endpoint (if $B,B'\in\mcD$ share a common
endpoint then either $B$ contains $B'$, or vice-versa). The vertices of largest
and smallest index in $\mcD$ will be critical points of $\mcD$.

\begin{lemma}\label{L:chain_diagrams}
    Let $\mcD$ be a staircase diagram of type $A_n$ with full support.  If $\mcD$ is an elementary diagram, then $\mcD$ is an chain.
\end{lemma}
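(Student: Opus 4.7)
My plan is to prove the lemma by strong induction on $n$, after first recording a structural fact specific to type $A$: every block $B\in\mcD$ is an interval $[s_p,s_q]$, and by Lemma~\ref{L:staircaseprops}(b) distinct blocks cannot share a left endpoint (otherwise one would be contained in the other), and likewise cannot share a right endpoint.  In particular $s_1$ lies in a unique block $B^*$ and $s_n$ in a unique block $B^{**}$, so both $s_1$ and $s_n$ are automatically critical points of $\mcD$.  Combined with the elementary hypothesis (critical points must lie in the leaves of the support, which are $\{s_1,s_n\}$ under full support), this pins down the critical set as exactly $\{s_1,s_n\}$ for $n\geq 2$, so that $|\mcD_{s_i}|\geq 2$ for every interior vertex $s_i$.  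The base case $n=1$ is immediate, and for the inductive step, if $B^*=B^{**}$ then this common block equals $[s_1,s_n]$ and Lemma~\ref{L:staircaseprops}(b) forces $\mcD=\{B^*\}$; otherwise write $B^*=[s_1,s_a]$ and $B^{**}=[s_b,s_n]$ with $a<n$ and $b>1$.

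The core strategy is to prove inductively on $i$ that the partial union $\mcC_i:=\mcD_{s_1}\cup\cdots\cup\mcD_{s_i}$ is a chain, so that $\mcC_n=\mcD$ yields the conclusion.  The base $\mcC_1=\{B^*\}$ is trivial.  The inductive step uses axiom~(3) of Definition~\ref{D:staircase}, which guarantees that $\mcD_{s_{i-1}}\cup\mcD_{s_i}$ is already a chain; this directly handles comparability between any two blocks of $\mcC_i$ that both lie in $\mcD_{s_{i-1}}\cup\mcD_{s_i}$, as well as blocks in $\mcC_{i-1}$ that already meet $s_{i-1}$.  The main technical obstacle will be the case where a block $B\in\mcC_{i-1}$ has right endpoint $s_r$ with $r<i-1$ (so $B\notin\mcD_{s_{i-1}}$) while a new block $B'\in\mcD_{s_i}\setminus\mcD_{s_{i-1}}$ has left endpoint $s_i$: then $B\cup B'$ is disconnected, so axiom~(3) gives no direct comparability.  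To handle this case, I plan to combine Lemma~\ref{L:staircase_trees} (which applies since type $A$ is a tree, and asserts that each $\mcD_s$ is a saturated chain in $\mcD$) with the full-support hypothesis.  The gap vertices $s_{r+1},\ldots,s_{i-1}$ are each covered by some block of $\mcC_{i-1}$, and by the inductive hypothesis these blocks sit together with $B$ in a single chain; saturation of $\mcD_{s_{i-1}}$ in $\mcD$ then forces any block of $\mcD_{s_{i-1}}$ comparable to $B'$ via axiom~(3) to be comparable with $B$ through this chain, yielding the required comparability between $B$ and $B'$ and completing the inductive step.
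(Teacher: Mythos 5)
Your setup is fine: blocks over a path are intervals, distinct blocks cannot share an endpoint by Lemma~\ref{L:staircaseprops}(b), so $s_1$ and $s_n$ are always critical under full support, and the elementary hypothesis then forces $|\mcD_{s_j}|\geq 2$ at every interior vertex. You also correctly isolate the one genuinely hard case of your induction. But your resolution of that case does not go through as written. What your cited ingredients actually yield is the following: $\mcD_{s_{i-1}}$ is a saturated chain in $\mcD$ with at least two elements (this is where elementariness enters), and each of $B$ and $B'$ is comparable to every element of $\mcD_{s_{i-1}}$ (via the inductive hypothesis for $B$, via axiom~(3) for $B'$) while belonging to neither; hence each of $B$, $B'$ lies either entirely below $\min(\mcD_{s_{i-1}})$ or entirely above $\max(\mcD_{s_{i-1}})$. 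If they land on opposite sides of this chain, transitivity through it gives $B\prec B'$ or $B'\prec B$. If they land on the \emph{same} side, nothing follows: comparability to a common chain is not transitive, and the sentence ``yielding the required comparability between $B$ and $B'$'' is precisely the assertion that still needs proof. Ruling out the same-side configuration requires a further argument --- for instance, one can show that $B\succ\max(\mcD_{s_{i-1}})$ with $B$ ending strictly left of $s_{i-1}$ already forces a violation of the saturation clause of axiom~(3) at an edge near the right endpoint of $B$ (a block of $\mcD_{s_{i-1}}$ gets wedged strictly between $B$ and a block sharing a vertex with $B$) --- but no such argument appears in your proposal, and this is where all the difficulty of the lemma is concentrated.

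For comparison, the paper proves the lemma in two lines by a completely different route: elementariness and full support put the critical set inside $\{s_1,s_n\}$, Lemma~\ref{L:extreme_crits} says every maximal (resp.\ minimal) block contains a critical point, and a critical point lies in exactly one block, so $\mcD$ has a unique maximal and a unique minimal block and is therefore a chain. If you want to keep your left-to-right induction, you will either need to supply the missing same-side argument or strengthen the inductive hypothesis (e.g., track where $\max(\mcC_{i-1})$ and $\min(\mcC_{i-1})$ sit relative to $\mcD_{s_{i-1}}$); otherwise the extremal-block count above is the efficient path.
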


\begin{proof}
Since there are at most two critical points, $\mcD$ has a unique maximal and minimal block by Lemma \ref{L:extreme_crits}.  Thus $\mcD$ is a chain.
\end{proof}
Let $Z_A(n):=Z_\Gamma(n)$ denote the set of fully supported elementary diagrams
of type $A_n$, and set $Z_A^{\pm}(n):=Z_{\Gamma}^{\pm}(n)$.  Lemma
\ref{L:chain_diagrams} implies that $$Z_A(n)=Z_A^+(n)\cup Z_A^-(n).$$
Furthermore,  if $n\geq 3$, then Lemma \ref{L:elementarychains_disjoint}
implies that $Z_A^+(n)\cap Z_A^-(n)=\emptyset$ and hence $|Z_A(n)|=2|Z^+_A(n)|$.
Let
\begin{equation*}
    \Cat(t):=\sum_{n=0}^{\infty}\cc_n\, t^n=\frac{1-\sqrt{1-4t}}{2t}
\end{equation*}
denote the generating series of Catalan numbers, and set
\begin{equation*}
    \displaystyle A_Z(t):=\sum_{n=1}^{\infty} z_n\, t^n
\end{equation*}
where $z_n:=|Z_A(n)|$.


\begin{proposition}\label{P:chain_A_genseries}
    For $n=1,2$, we have $z_1=1$ and $z_2=3.$  If $n\geq 3$, then
    $z_n=2\cc_{n-2}$. Consequently
    $$A_Z(t)=t+t^2+2t^2\Cat(t).$$
\end{proposition}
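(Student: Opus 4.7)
The plan is to handle $n=1,2$ by direct enumeration and, for $n \geq 3$, to use the $\flip$ symmetry to reduce to counting $Z_A^+(n)$, then identify this set with the iterated Catalan construction $\mathfrak{G}_{n-3}(\mcD_0)$, where $\mcD_0 := \{[s_1,s_2]\prec[s_2,s_3]\}$ is the unique element of $Z_A^+(3)$.

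For $n=1$ the only candidate is $\{\{s_1\}\}$. For $n=2$ I would enumerate all fully-supported elementary diagrams on $A_2$: the single block $\{\{s_1,s_2\}\}$ together with the two chains $\{\{s_1\}\prec\{s_2\}\}$ and $\{\{s_2\}\prec\{s_1\}\}$; since both vertices of $A_2$ are leaves, all three are elementary, giving $z_2=3$.

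For $n \geq 3$, I would first apply Lemmas \ref{L:chain_diagrams} and \ref{L:extreme_crits}: each $\mcD \in Z_A(n)$ is a chain whose extremal blocks each contain a critical point, and since critical points must lie in the leaves $\{s_1,s_n\}$, the extremal blocks contain $s_1$ and $s_n$ respectively. Hence $Z_A(n) = Z_A^+(n) \cup Z_A^-(n)$. For disjointness, Lemma \ref{L:elementarychains_disjoint} reduces the intersection to diagrams with $\mcD = \mcD_{s_n}$; Lemma \ref{L:staircaseprops}(b) and full support then force $\mcD = \{[s_1,s_n]\}$, which has $s_2$ critical but internal, contradicting elementariness for $n \geq 3$. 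Since $\flip$ exchanges $Z_A^+(n)$ and $Z_A^-(n)$, we obtain $z_n = 2|Z_A^+(n)|$.

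To prove $|Z_A^+(n)| = \cc_{n-2}$ for $n \geq 3$, I would show by induction that $Z_A^+(n) = \mathfrak{G}_{n-3}(\mcD_0)$; since $|B_{\mcD_0}|=2$ and $s_3$ is critical in $\mcD_0$, Proposition \ref{P:Catalan_gen}(3) then gives the count $\cc_{n-2}$. The inductive step combines Lemma \ref{L:Catalan_disjoint} with Lemma \ref{L:Catalan_almostsurjective}, so it suffices to verify that every $\mcD \in Z_A^+(n+1)$ satisfies conditions (1) and (2) of the latter. Condition (1), that $s_{n+1}$ is critical, is immediate because $s_{n+1}$ is a leaf and any two blocks containing it would nest, contradicting Lemma \ref{L:staircaseprops}(b). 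The main obstacle is condition (2): after writing the chain as $B_i=[l_i,r_i]$ with $l_i, r_i$ strictly increasing (a consequence of Lemma \ref{L:staircase_trees} together with elementariness, which rules out ``backtracking'' of endpoints), I would set $B_j = \min(\mcD_{s_n})$ and observe that every element of $B_j \setminus B_{j-1} = [\max(l_j, r_{j-1}+1), r_j]$ has $B_j$ as its minimum block; were this set equal to $\{s_n\}$, then $r_j=n$ and strict monotonicity of right endpoints bounded above by $n+1$ would force $|\mcD_{s_n}|\leq 2$, contradicting the hypothesis. Once condition (2) is verified the induction closes, and the generating series follows from $A_Z(t) = t + 3t^2 + 2\sum_{n\geq 3}\cc_{n-2}t^n = t + 3t^2 + 2t^2(\Cat(t)-1) = t + t^2 + 2t^2\Cat(t)$.
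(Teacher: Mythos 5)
Your proof is correct and follows essentially the same route as the paper: reduce to counting $Z_A^+(n)$ via Lemmas \ref{L:chain_diagrams} and \ref{L:elementarychains_disjoint}, identify $Z_A^+(n)$ with $\mathfrak{G}_{n-3}$ of the unique element of $Z_A^+(3)$ via Lemmas \ref{L:Catalan_disjoint} and \ref{L:Catalan_almostsurjective}, and apply Proposition \ref{P:Catalan_gen}. If anything, your explicit verification of condition (2) of Lemma \ref{L:Catalan_almostsurjective} is more careful than the paper's remark that it is ``vacuously true'' (its hypothesis can actually hold once $n\geq 5$; what is true, as you show, is that the conclusion is then automatic), and your citation of part (3) of Proposition \ref{P:Catalan_gen} is the correct one since $|B_{\mcG}|=2$.
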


\begin{proof}
    It easy to check that $z_1=1$, $z_2=3$ and $z_3=2$.  In particular, $Z_A^+(3)$ contains the single staircase diagram $$\mcG:=\{[s_1,s_2]\prec[s_2,s_3]\}.$$  For $n\geq 3$, we will prove $z_n=2\cc_{n-2}$ by showing $Z_A^+(n)=\mathfrak{G}_{n-3}(\mcG)$ and applying Proposition \ref{P:Catalan_gen} part (2).  Every $\mcD\in Z_A^+(n)$ satisfies part (1) of Lemma \ref{L:Catalan_almostsurjective} and part (2) of the lemma is vacuously true.  This implies
    $$Z_A^+(n+1)=\bigcup_{\mcD\in Z_A^+(n)}\mathfrak{G}_1(\mcD)$$ for $n\geq 3$ and $Z_A^+(n)=\mathfrak{G}_{n-3}(\mcG)$.  By Proposition \ref{P:Catalan_gen} part (2),
    \begin{equation*}
        A_Z(t)=t+3t^2+2\sum_{n=1}^\infty \cc_n\, t^{n+2}=t+t^2+2t^2\sum_{n=0}^\infty \cc_n\, t^n.
    \end{equation*}
\end{proof}
Define the generating series
\begin{equation}\label{Eq:genseries_Afull}
    \displaystyle \overline{A}(t):=\sum_{n=1}^\infty \overline{a}_n\, t^n
\end{equation}
where $\overline{a}_n$ denotes the number of staircase diagrams of type $A_n$ with full support.
\begin{proposition}\label{P:typeA_genseries_full}
    The generating series $\displaystyle\overline{A}(t)=\frac{t^2}{2t-A_Z(t)}$.
\end{proposition}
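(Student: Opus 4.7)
The plan is to prove the equivalent recurrence
\begin{equation*}
\overline{a}_n = \sum_{k=2}^n z_k\, \overline{a}_{n-k+1} \qquad (n \geq 2), \qquad \overline{a}_1 = 1,
\end{equation*}
obtained by extracting coefficients from the identity $(2t - A_Z(t))\overline{A}(t) = t^2$. I would establish this recurrence through a bijection between fully supported staircase diagrams $\mcD$ of type $A_n$ (with $n \geq 2$) and pairs $(\mcD^L, \mcD^R)$, where $\mcD^L \in Z_A(k)$ is a fully supported elementary diagram on $[s_1, s_k]$ for some $k \in \{2, \ldots, n\}$, and $\mcD^R$ is a fully supported diagram on $[s_k, s_n]$.

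The first observation is that $s_1$ is always a critical point of any fully supported diagram on $A_n$: since $s_1$ is a leaf, each block containing $s_1$ is an interval of the form $[s_1, s_b]$, and Lemma \ref{L:staircaseprops}(b) rules out two such intervals coexisting in $\mcD$. By symmetry $s_n$ is also critical, so for $n \geq 2$ there is always a smallest critical point $s_k$ with $k \in \{2, \ldots, n\}$. Given $\mcD$, I would then define $\mcD^L$ on $[s_1, s_k]$ by collecting all blocks of $\mcD$ contained in $[s_1, s_k]$ and appending the truncated block $B_{s_k} \cap [s_1, s_k]$ if the unique block $B_{s_k} \in \mcD_{s_k}$ extends past $s_k$; the right piece $\mcD^R$ is defined symmetrically on $[s_k, s_n]$. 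The piece $\mcD^L$ is elementary because, by the minimality of $k$, every $s_i$ with $1 < i < k$ satisfies $|\mcD_{s_i}| \geq 2$, and since at most one block in $\mcD_{s_i}$ can coincide with $B_{s_k}$ (which is preserved in $\mcD^L$ after possible truncation), we still have $|\mcD^L_{s_i}| \geq 2$, forcing all critical points of $\mcD^L$ to lie on its leaves $s_1$ and $s_k$.

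The inverse map glues a pair $(\mcD^L, \mcD^R)$ by fusing the two blocks at $s_k$---namely the unique block $B^L_{s_k}$ of $\mcD^L$ containing $s_k$ and the unique block $B^R_{s_k}$ of $\mcD^R$ containing $s_k$---into a single block $B^L_{s_k} \cup B^R_{s_k}$, with the partial order on the result being the transitive closure of the orders inherited from the two pieces. The main obstacle I expect is the careful verification of the four axioms of Definition \ref{D:staircase} for the glued diagram, particularly axioms (3) and (4) at the fusion vertex $s_k$: one must confirm that the chains $\mcD_{s_{k-1}} \cup \mcD_{s_k}$ and $\mcD_{s_k} \cup \mcD_{s_{k+1}}$ remain totally ordered with $\mcD_{s_{k-1}}$ and $\mcD_{s_{k+1}}$ as saturated subchains, and that the fused block is extremal in $\mcD_{s_k}$. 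One also needs to confirm that $s_k$ remains the smallest critical point greater than $1$ in the glued diagram, which is what makes the decomposition canonical and the two maps genuine inverses.
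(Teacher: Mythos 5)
Your proposal is correct and is essentially the paper's own argument, just reflected left-to-right: the paper cuts at the second-\emph{largest} critical point so that the elementary piece sits on $[s_k,s_n]$ and the fully supported piece on $[s_1,s_k]$, yielding the identical convolution $\overline{a}_n=\sum_k \overline{a}_k\, z_{n+1-k}$ and the same generating-series manipulation. The verification details you flag (that the truncated piece is elementary because no interior vertex of it is critical, and that the gluing/splitting maps are inverse) are exactly the points the paper handles via Lemma \ref{L:chain_diagrams}.
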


\begin{proof}
    Let $\mcD$ be a staircase diagram of type $A_n$ with full support.  Let $s_k,s_n$ be the critical points of $\mcD$ with the two largest indices.  We can write $\mcD$ as the union of two staircase diagrams $\mcD',\mcD''$ where we intersect each block of $\mcD$ with $[s_1,s_k]$ and $[s_k,s_n]$ respectively.  For example, intersecting $\mcD$ below with $[s_1,s_7]$ and $[s_7,s_{10}]$ gives the following diagrams $\mcD'$ and $\mcD''$.

\begin{equation*}
\begin{tikzpicture}[level distance=25mm,
level 1/.style={sibling distance=75mm}]
    \node {\begin{tikzpicture}[scale=0.4]
            \planepartition{10}{
                {0,0,0,0,0,0,1,1,1,0},
                {1,1,0,0,0,1,1,1,0,1},
                {0,1,1,0,1,1,1},
                {0,0,1,1,1}}
            \end{tikzpicture}}
        child{ node{
        \begin{tikzpicture}[scale=0.4]
            \planepartition{7}{
                {0,0,0,1,1,1,0},
                {0,0,1,1,1,0,1},
                {0,1,1,1},
                {1,1,0,0}}
        \end{tikzpicture}}}
        child{ node {
        \begin{tikzpicture}[scale=0.4]
            \planepartition{10}{
                {0,0,0,0},
                {1,1,0,0},
                {0,1,1,0},
                {0,0,1,1}}
        \end{tikzpicture}}};
\end{tikzpicture}
\end{equation*}

    It is easy to see that $\mcD'$ is a staircase diagram of type $A_{n-k}$ with full support.  Furthermore,  $\mcD''$ is an elementary diagram with support $[s_k,s_n]$ by Lemma \ref{L:chain_diagrams}.  Hence
    \begin{equation*}
        \overline{a}_n=\sum_{k=1}^{n-1} \overline{a}_k z_{n+1-k}.
    \end{equation*}
    This implies
    \begin{align*}
        t^2&=\sum_{n=1}^\infty \left(\overline{a}_n -\sum_{k=1}^{n-1} \overline{a}_k z_{n+1-k}\right)t^{n+1}
        =2t\sum_{n=1}^\infty \overline{a}_n t^{n} -\sum_{n=1}^\infty\left(\sum_{k=1}^{n} \overline{a}_k z_{n+1-k}\right) t^{n+1}\\
        &=\overline{A}(t)(2t -A_Z(t))
    \end{align*}
\end{proof}
Define the generating series
\begin{equation}\label{Eq:genseries_A}
    A(t):=\sum_{n=0}^\infty a_n\, t^n.
\end{equation}
where $a_n$ denotes the total number of staircase diagrams of type $A_n$.  Here we set $a_0:=1$.

\begin{theorem}\label{T:typeA_genseries}
    The generating series $\displaystyle A(t)=\frac{1+\overline{A}(t)}{1-t-t\overline{A}(t)}$.
\end{theorem}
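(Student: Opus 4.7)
The plan is to decompose an arbitrary staircase diagram on $A_n$ along the connected components of its support and translate this structural decomposition into the claimed rational identity for $A(t)$.

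First I would show that if $\mcD$ is a staircase diagram of type $A_n$ with $\supp(\mcD)$ having connected components $I_1,\ldots,I_k$ (each an interval of the path $A_n$), then $\mcD$ is the disjoint union, as posets, of its restrictions $\mcD^{(i)}$ to the subgraphs $I_i$. The key observation is axiom (1) of Definition \ref{D:staircase}: every covering relation $B \prec B'$ forces $B \cup B'$ to be connected, and by chaining covers, any two comparable blocks must lie in a common connected component of $\supp(\mcD)$. Hence blocks in different $I_i$'s are incomparable, and each $\mcD^{(i)}$ is a fully-supported staircase diagram on a subpath isomorphic to $A_{|I_i|}$. Moreover, consecutive components $I_i, I_{i+1}$ cannot be adjacent in the Dynkin path (else they would merge), so they must be separated by at least one \emph{gap vertex} $s_j \notin \supp(\mcD)$; the gaps at the two extreme ends of $\{s_1,\ldots,s_n\}$ may be empty.

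Second I would encode this structural decomposition by generating functions. Assign each fully-supported support block the weight $\overline{A}(t)$, each possibly-empty end-gap the weight $1/(1-t)$, and each mandatory-nonempty middle-gap the weight $t/(1-t)$. Summing over the number $k$ of support components (with $k = 0$ corresponding to the empty diagram, counted once for each $n$),
\begin{equation*}
A(t) \;=\; \frac{1}{1-t} \;+\; \sum_{k=1}^{\infty}\frac{1}{1-t}\cdot \overline{A}(t)\cdot \left(\frac{t\,\overline{A}(t)}{1-t}\right)^{k-1}\cdot \frac{1}{1-t}.
\end{equation*}

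Finally I would simplify: the geometric series in $k$ evaluates to $\overline{A}(t)/\bigl((1-t)(1-t-t\overline{A}(t))\bigr)$, and adding $1/(1-t)$ and combining over a common denominator yields numerator
\begin{equation*}
1 - t - t\,\overline{A}(t) + \overline{A}(t) \;=\; (1-t)\bigl(1+\overline{A}(t)\bigr),
\end{equation*}
after which a factor of $(1-t)$ cancels and we obtain the asserted formula. The only delicate point is the first step---confirming that distinct support components carry no poset relations---once that is established, the remainder is a routine convolution argument parallel in spirit to the proof of Proposition \ref{P:typeA_genseries_full}.
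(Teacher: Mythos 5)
Your proof is correct and follows essentially the same route as the paper: both decompose a staircase diagram of type $A_n$ into fully supported connected subdiagrams on the connected components of $\supp(\mcD)$ (a fact the paper records right after defining subdiagrams) and then perform the resulting convolution of $\overline{A}(t)$ with the gap vertices. The only difference is cosmetic bookkeeping --- you weight end-gaps by $1/(1-t)$ and interior gaps by $t/(1-t)$, while the paper interleaves single gap vertices with possibly-empty component slots to get $\sum_{n\ge 1}(1+\overline{A}(t))^{n}t^{n-1}$; the two sums simplify to the same rational expression.
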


\begin{proof}
    Every staircase diagram is a disjoint union of staircase diagrams with connected support.  Hence
    \begin{equation*}
        A(t)=\sum_{n=1}^\infty (1+\overline{A}(t))^{n} t^{n-1}.
    \end{equation*}
\end{proof}

\section{Labelled staircase diagrams of type BC}\label{S:typeBC}

Let $\Gamma_n$ be the Dynkin graph of type $B_n$ or $C_n$.  Since Definition \ref{D:staircase} gives no consideration to double edges, staircase diagrams over $B_n$ or $C_n$ correspond precisely with staircase diagrams over $A_n$.

\begin{equation*}
    \begin{tikzpicture}[start chain]
        \dnode{1} \dnode{2} \dydots \dnode{n-1} \dnodedub{n}
    \end{tikzpicture}
\end{equation*}

In this section, we study and enumerate rationally smooth, almost-maximal labelled staircase diagrams of type $B_n$ and $C_n$.  As mentioned before, the Weyl groups of type $B$ and $C$ are isomorphic;we write $W$ for either Coxeter group.
While staircase diagrams only depend on the underlying graph with single edges, labellings depend on the corresponding Coxeter group. Hence edge labels of the Dynkin graph will play an important role.   The rationally smooth, almost maximal labellings of type $BC_n$ come in three types.

Let $\mcD$ be a staircase diagram of type $A_n$ and define the map $\lambda_1:\mcD\rightarrow W$ by

\begin{equation*}
\lambda_1(B):=
\begin{cases}
    u_B & \text{if $s_n\notin B$}\\
    s_r\cdots s_n u_{B\setminus\{s_n\}} & \text{if $s_n\in B$ where $B=[s_r,s_n]$}.
\end{cases}
\end{equation*}

The following lemma is from \cite[Proposition 5.4]{RS14} and \cite[Lemma 5.7]{RS14}.

\begin{lemma}\label{L:BC1_prop}
Suppose $s_n\in B$.  Then the following are true:
\begin{enumerate}
    \item $D_R(\lambda_1(B))=D_L(\lambda_1(B))=B\setminus\{s_n\}$
    \item $\supp(\lambda_1(B)u_J)=\supp(u_J\lambda_1(B))=B$ for any $J\subseteq B\setminus\{s_n\}$.
\end{enumerate}
\end{lemma}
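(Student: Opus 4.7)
The plan is to analyze the element $\lambda_1(B) = v \cdot u_J$ directly, where $v := s_r s_{r+1} \cdots s_n$ and $J := B \setminus \{s_n\} = [s_r, s_{n-1}]$. First I would verify that this is a length-additive parabolic factorization by checking $v \in W^J$: for each $s_i \in J$, slide $s_i$ past the commuting generators $s_n, s_{n-1}, \ldots, s_{i+2}$ and apply the type-$A$ braid relation $s_{i+1} s_i s_{i+1} = s_i s_{i+1} s_i$ to see that $v s_i$ has a reduced expression of length $\ell(v)+1$, so $s_i \notin D_R(v)$. This immediately gives $J \subseteq D_R(\lambda_1(B))$, since $D_R(u_J) = J$ and the factorization is reduced. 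Since $\supp(\lambda_1(B)) \subseteq B$, we also have $D_R(\lambda_1(B)) \subseteq B$, so only $s_n$ remains to be ruled out.

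To show $s_n \notin D_R(\lambda_1(B))$, I would argue that the word $v \cdot u_J \cdot s_n$ is reduced. The only generator in $u_J$ that fails to commute with $s_n$ is $s_{n-1}$; extracting the subword involving $s_{n-1}$ and $s_n$, no simplification is possible because in type $BC_n$ the braid relation $s_{n-1} s_n s_{n-1} s_n = s_n s_{n-1} s_n s_{n-1}$ has length four. For $D_L(\lambda_1(B))$, I would exhibit that $\lambda_1(B)$ is in fact an involution: using the conjugation formula $u_J s_i u_J^{-1} = s_{n+r-i-1}$ for $s_i \in J$, together with a short reshuffle of $u_J (s_r \cdots s_n) u_J$, one can show $(v u_J)^{-1} = u_J v^{-1} = v u_J$. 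Then $D_L = D_R$ follows immediately from part (1). (Alternatively, one could run a direct argument symmetric to the right-descent computation on $\lambda_1(B)^{-1} = u_J \cdot s_n s_{n-1} \cdots s_r$.)

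For part (2), both $\lambda_1(B)$ and $u_{J'}$ lie in $W_B$ (since $J' \subseteq B \setminus \{s_n\} \subset B$), so the supports of $\lambda_1(B) u_{J'}$ and $u_{J'} \lambda_1(B)$ are automatically contained in $B$. For the reverse containment I would use part (1): because $J' \subseteq B \setminus \{s_n\} = D_R(\lambda_1(B)) = D_L(\lambda_1(B))$, multiplication by $u_{J'}$ on either side is length-subtractive at every step. Applying Lemma \ref{L:paraextension} (or iteratively inverting it one generator at a time along a reduced word for $u_{J'}$), the support of the product equals the support of $\lambda_1(B)$, namely $B$, since each step removes a letter from $D_L$ or $D_R$ but cannot remove the generator $s_n$, which has a unique occurrence coming from the Grassmannian factor $v$.

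The main obstacle I expect is the reduced-word verification that $s_n$ is not a right descent of $\lambda_1(B)$; this is where the type-$BC$ structure is actually used (as opposed to the type-$A$ structure on $J$). As the excerpt notes, this computation is carried out in \cite[Proposition 5.4 and Lemma 5.7]{RS14}, and the cleanest writeup simply references those results.
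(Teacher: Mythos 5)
The paper offers no proof of this lemma to compare against: it is imported wholesale from \cite[Proposition 5.4]{RS14} and \cite[Lemma 5.7]{RS14}, so your closing instinct---that the cleanest writeup is a citation---is exactly what the authors do. Your direct argument does isolate the right structural facts: writing $\lambda_1(B)=v\,u_{B\setminus\{s_n\}}$ with $v=s_r\cdots s_n$ a Grassmannian element of $W^{B\setminus\{s_n\}}$ gives $B\setminus\{s_n\}\subseteq D_R(\lambda_1(B))$ at once, and the involution property $\lambda_1(B)^{-1}=\lambda_1(B)$ (equivalently $u_{B\setminus\{s_n\}}\,v\,u_{B\setminus\{s_n\}}=v^{-1}$, which is correct) converts the right-descent computation into the left one.

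Two steps need repair, though. First, the verification that $s_i\notin D_R(v)$ by ``slide $s_i$ past the commuting generators, then apply the type-$A$ braid relation'' fails precisely at $i=n-1$: there is nothing to slide past, $s_{n-1}$ does not commute with $s_n$, and the relation joining them is the length-four braid relation rather than $s_ns_{n-1}s_n=s_{n-1}s_ns_{n-1}$---so the one case where the $BC$ structure actually enters is the case the argument does not cover. Even for $i\le n-2$, the braid move only rewrites $vs_i$ as $s_{i+1}v$, which certifies that two words have the same length but not that either is reduced; a root computation such as $v(\alpha_i)>0$ (and $(vu_{B\setminus\{s_n\}})(\alpha_n)>0$ to exclude $s_n$) is still required. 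Second, in part (2) your support argument only tracks the single occurrence of $s_n$, which shows $s_n\in \supp(\lambda_1(B)u_J)$ but says nothing about the generators in $B\setminus(J\cup\{s_n\})$. The clean fix is that $\lambda_1(B)u_J=v\cdot\bigl(u_{B\setminus\{s_n\}}u_J\bigr)$ is length-additive with second factor in $W_{B\setminus\{s_n\}}$, so $\supp(\lambda_1(B)u_J)\supseteq S(v)=B$; the left-hand identity then follows from the involution property together with $S(x)=S(x^{-1})$. Finally, note that both the statement and your proof tacitly assume $|B|\ge 2$: when $B=\{s_n\}$ one has $\lambda_1(B)=s_n$ and $D_R(\lambda_1(B))=\{s_n\}\neq\emptyset$, although this degenerate case is harmless where the lemma is applied.
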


\begin{lemma}\label{L:BC1_label}
    If $\mcD$ is a staircase diagram of type $A_n$, then the map $\lambda_1:\mcD\rightarrow W$ is a labelling of $\mcD$.
\end{lemma}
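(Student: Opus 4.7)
The plan is to verify the three axioms of Definition \ref{D:labelling} by splitting into cases according to whether $s_n \in B$, after making a preliminary structural observation about the chain $\mcD_{s_n}$ in type $A_n$.

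First I would show that in any staircase diagram of type $A_n$, the chain $\mcD_{s_n}$ contains at most one block. Every block of $\mcD$ is a connected (hence interval) subset of $\{s_1,\ldots,s_n\}$, and $s_n$ is a leaf of the Dynkin graph, so any block containing $s_n$ must have the form $[s_r,s_n]$ for some $r$. If two such blocks coexisted in $\mcD$, one would contain the other, contradicting Lemma \ref{L:staircaseprops}(b). Thus whenever $s_n \in B$ we have $\mcD_{s_n} = \{B\}$, so $B$ is simultaneously the minimum and the maximum of $\mcD_{s_n}$, which forces $s_n \notin J_R(B) \cup J_L(B)$.

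Next I would handle the two cases. If $s_n \notin B$, then $\lambda_1(B) = u_B$ is the maximal labelling of $B$, and all three conditions of Definition \ref{D:labelling} hold by Definition \ref{D:maximal_labelling}. If $s_n \in B$, write $B = [s_r,s_n]$ and invoke Lemma \ref{L:BC1_prop}. Part (1) of that lemma gives $D_R(\lambda_1(B)) = D_L(\lambda_1(B)) = B \setminus \{s_n\}$; since the preliminary observation ensures $J_R(B), J_L(B) \subseteq B \setminus \{s_n\}$, this yields conditions (1) and (2) of Definition \ref{D:labelling}. For condition (3), part (2) of Lemma \ref{L:BC1_prop} applied to $J = J_R(B)$ and $J = J_L(B)$ (both contained in $B \setminus \{s_n\}$) gives $S(\lambda_1(B) u_{J_R(B)}) = S(u_{J_L(B)} \lambda_1(B)) = B$, completing the verification.

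The only conceptual obstacle is the preliminary observation: one must check that the special vertex $s_n$ never forces itself into $J_R(B)$ or $J_L(B)$, since the custom labelling $\lambda_1$ fails to have $s_n$ as a descent. This is precisely what the interval structure of type $A$ blocks, combined with Lemma \ref{L:staircaseprops}(b), guarantees, after which the result is a mechanical application of Lemma \ref{L:BC1_prop}.
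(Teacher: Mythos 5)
Your proposal is correct and follows essentially the same route as the paper: reduce to the case $s_n \in B$, observe that $s_n$ is a critical point (so $J_R(B)$ and $J_L(B)$ lie in $B\setminus\{s_n\}$), and then apply Lemma \ref{L:BC1_prop}. The only cosmetic difference is that you justify the criticality of $s_n$ directly via the interval structure of blocks and Lemma \ref{L:staircaseprops}(b), whereas the paper cites Lemma \ref{L:chain_diagrams} and the surrounding discussion for the same fact.
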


\begin{proof}
    If $s_n\notin\supp(\mcD)$, then $\lambda_1$ is the maximal labelling of $\mcD$.  If $s_n\in B$, then Lemma \ref{L:chain_diagrams} implies $s_n$ is a critical point of $\mcD$ and hence $J_R(B)$ and $J_L(B)$ are contained in $B\setminus\{s_n\}$.  Hence Lemma \ref{L:BC1_prop} implies $\lambda_1$ satisfies parts (1) and (2) of Definition \ref{D:labelling}. Part (3) also follows directly from Lemma \ref{L:BC1_prop}, completing the proof.
\end{proof}

Let $\mcD$ be a staircase diagram of type $A_n$ with full support and let $B_0$ denote the unique block in $\mcD_{s_n}$.  Recall the definition of maximal labelling from Definition \ref{D:maximal_labelling}.  It is easy to see that $\lambda_1$ is the maximal labelling of $\mcD$ if and only if $B_0=\{s_n\}$.  For $n\geq 2$, let $\overline{BC}_1(n)$ denote set of non-maximal $\lambda_1$-labelled staircase diagrams of type $A_n$ with full support.


\begin{proposition}\label{P:BC_label1_enum}
$|\overline{BC}_1(n)|=\overline{a}_n-2\overline{a}_{n-1}$ where $\overline{a}_n$, defined in Equation \eqref{Eq:genseries_Afull}, denotes the number of fully supported staircase diagrams of type $A_n$.
\end{proposition}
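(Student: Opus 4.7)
The plan is to count $|\overline{BC}_1(n)|$ by subtracting, from the total $\overline{a}_n$, the number $N$ of fully supported type $A_n$ staircase diagrams on which $\lambda_1$ coincides with the maximal labelling, and then showing $N = 2\overline{a}_{n-1}$ via a $2$-to-$1$ map. The first step is straightforward: since $s_n$ is a leaf of the Dynkin graph, every block containing $s_n$ is an interval of the form $[s_r,s_n]$, and by Lemma \ref{L:staircaseprops}(b) no such block contains another, so $\mcD_{s_n}$ is a singleton $\{B_0\}$ in any fully supported diagram. By the definition of $\lambda_1$, the equality $\lambda_1(B_0)=u_{B_0}$ holds exactly when $B_0=\{s_n\}$, so $\lambda_1$ is non-maximal iff $B_0\neq\{s_n\}$, which gives $|\overline{BC}_1(n)| = \overline{a}_n - N$.

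Next I would set up the map $\Phi: \mcD \mapsto \mcD \setminus \{B_0\}$, defined on the set of fully supported type $A_n$ diagrams with $B_0=\{s_n\}$. Since $B_0$ is the unique block containing $s_n$, the image is a staircase subdiagram of type $A_{n-1}$ with support $\{s_1,\ldots,s_{n-1}\}$, so $\Phi$ lands in the set counted by $\overline{a}_{n-1}$. The heart of the proof is to show $\Phi$ is exactly $2$-to-$1$; once this is done, $N=2\overline{a}_{n-1}$ and the formula follows.

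For the fiber analysis, fix a fully supported $\mcD'$ of type $A_{n-1}$; by full support $\mcD'_{s_{n-1}}$ is a nonempty chain, say with minimum $B_1^{(n-1)}$ and maximum $B_k^{(n-1)}$. Each block of $\mcD'_{s_{n-1)}}$ is adjacent to $B_0=\{s_n\}$ in the graph-theoretic sense, so by Lemma \ref{L:staircaseprops}(c) it must be comparable to $B_0$ in any extension. The saturated-subchain condition of Definition \ref{D:staircase}(3), applied to $s_{n-1}\adj s_n$, then forces $B_0$ to lie either above every element of $\mcD'_{s_{n-1}}$ or below every element; these are the two candidate positions. I would then verify that each candidate determines the full partial order uniquely: any cover of or by $B_0$ in the extension must satisfy the connectedness axiom (1), and since $B_0=\{s_n\}$ the only blocks that can share a vertex with or be adjacent to $B_0$ are those in $\mcD_{s_{n-1}}$. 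Hence $B_0$ has a single cover relation (with $B_k^{(n-1)}$ or $B_1^{(n-1)}$), and all other comparabilities follow by transitivity, with $B_0$ incomparable to everything outside the $\preceq$-interval determined by $B_k^{(n-1)}$ (resp.\ $B_1^{(n-1)}$).

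Finally I would check that both candidate extensions are genuine staircase diagrams. The only axiom requiring real verification is (3) for $s_{n-1}\adj s_n$: the augmented chain $\mcD_{s_{n-1}}\cup\{B_0\}$ is a chain with $\mcD_{s_{n-1}}$ saturated by construction, since $B_0$ is strictly at the extreme. The remaining axioms follow immediately because the diagram outside $B_0$ is unchanged, and $B_0$ is trivially extremal in $\mcD_{s_n}$ for axiom (4). The main subtlety to handle carefully is the uniqueness part of the fiber analysis: in principle $B_0$ could be declared comparable to blocks of $\mcD'$ disjoint from and non-adjacent to $\{s_{n-1}\}$, but the connectedness requirement on cover relations rules out any such extra relations, so the choice is genuinely binary.
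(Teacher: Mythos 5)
Your proposal is correct and follows essentially the same route as the paper: identify non-maximality of $\lambda_1$ with $B_0\neq\{s_n\}$, and count the complementary diagrams as $2\overline{a}_{n-1}$ by deleting the block $\{s_n\}$ to land on a fully supported type $A_{n-1}$ diagram, with the factor $2$ coming from the two admissible positions of $\{s_n\}$ relative to the chain $\mcD_{s_{n-1}}$. The paper phrases this via the observation that $s_{n-1}$ is a critical point (so $\mcD_{s_{n-1}}$ is a single block $B_1$ and the choice is $B_1\prec B_0$ versus $B_0\prec B_1$), while you carry out the same fiber analysis slightly more generally; the extra care you take with saturation and with verifying the staircase axioms is consistent with, not divergent from, the paper's argument.
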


\begin{proof}
Let $B_0$ denote the unique block in $\mcD_{s_n}$.  If $\lambda_1$ is maximal, then $B_0=\{s_n\}$.  This implies that $\mcD\setminus\{B_0\}$ is a fully supported staircase diagram of type $A_{n-1}$ and hence $s_{n-1}$ is critical point of $\mcD$.  If $B_1$ denotes the unique block in $\mcD_{s_{n-1}}$, then we either have $B_1\prec B_0$ or $B_0\prec B_1$.  Hence, if $n\geq 2$, then there are $\overline{a}_n-2\overline{a}_{n-1}$ fully supported staircase diagrams of type $A_n$ where $\lambda_1$ is non-maximal.
\end{proof}

We consider the next two types of labellings together.  For any $B=[s_r,s_n]$ and $r\leq k<n$, we consider two special Weyl groups elements of $W$.  Define
$$u_B(k):=s_{k+1} s_{k+2}\cdots s_n s_{n-1}\cdots s_r u_{B\setminus\{s_r\}}$$
and
$$u'_B(k):=u^{B\setminus\{s_r,s_k+1\}}_{B\setminus\{s_r\}}s_rs_{r+1}\cdots s_k u_{B\setminus\{s_k\}},$$
where $u^{B\setminus\{s_r,s_{k+1}\}}_{B\setminus\{s_r\}}$ denotes the longest element of $W_{B\setminus\{s_r\}}\cap W^{B\setminus\{s_r,s_k+1\}}$.

Let $\mcD$ be a staircase diagram of type $A_n$ and define the maps $\lambda^k_2:\mcD\rightarrow W$ and $\lambda^k_3:\mcD\rightarrow W$ by

\begin{equation*}
\lambda^k_2(B):=
\begin{cases}
    u_B & \text{if $s_n\notin B$}\\
    u_B(k) & \text{if $s_n\in B$}
\end{cases}
\qquad\text{and}\qquad
\lambda^k_3(B):=
\begin{cases}
    u_B & \text{if $s_n\notin B$}\\
    u'_B(k) & \text{if $s_n\in B$}.
\end{cases}
\end{equation*}
where $B=[s_r,s_n]$ with $r\leq k< n$ (so in particular, $|B|\geq 2$).  Unlike the labelling $\lambda_1$, the maps $\lambda^k_2, \lambda^k_3$ are not always valid labellings of $\mcD$.  The following lemma is from \cite[Proposition 5.4]{RS14} and \cite[Lemma 5.6]{RS14}.

\begin{lemma}\label{L:BC2_prop}
Let $B=[s_r,s_n]$ with $r\leq k< n$.  Then the following are true:
\begin{enumerate}
    \item $D_R(u_B(k))=D_L(u'_B(k))=B\setminus\{s_r\}$
    \item $D_L(u_B(k))=D_R(u'_B(k))=B\setminus\{s_k\}$
    \item $\supp(u_B(k)u_J)=\supp(u_J u'_B(k))=B$ for any $J\subseteq B\setminus\{s_r\}$
    \item $\supp(u'_B(k)u_J)=\supp(u_J u_B(k))=B$ for any $J\subseteq B\setminus\{s_k\}$.
\end{enumerate}
\end{lemma}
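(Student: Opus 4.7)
The plan is to verify the four assertions by direct manipulation of reduced expressions in the Coxeter group $W_B$ of type $BC_{n-r+1}$. The main work is to establish that the displayed formulas are reduced expressions, after which the descent-set and support claims follow fairly mechanically from the parabolic factorizations already visible in the definitions.

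First, I would show that $p := s_{k+1} s_{k+2} \cdots s_n s_{n-1} \cdots s_r$ is a reduced expression representing an element of $W^{B \setminus \{s_r\}}$, by induction on $n-r$: one checks that no braid relation shortens it (the non-simply-laced relation $s_{n-1} s_n s_{n-1} s_n = s_n s_{n-1} s_n s_{n-1}$ does not apply because $s_n$ appears only once), and that right-multiplying by any $s_i \in B \setminus \{s_r\}$ strictly increases length. Once $p \in W^{B \setminus \{s_r\}}$ is established, the factorization $u_B(k) = p \cdot u_{B \setminus \{s_r\}}$ is automatically length-additive since $u_{B \setminus \{s_r\}}$ is the longest element of its parabolic; concatenating a reduced expression for $u_{B \setminus \{s_r\}}$ then yields a reduced word for $u_B(k)$. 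An analogous argument writes $u'_B(k) = q \cdot u_{B \setminus \{s_k\}}$ as a length-additive product, with first factor $q \in W^{B \setminus \{s_k\}}$.

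For parts (1) and (2), the right-descent identities $D_R(u_B(k)) = B \setminus \{s_r\}$ and $D_R(u'_B(k)) = B \setminus \{s_k\}$ are immediate from the parabolic decompositions: right-multiplying the longest element of a standard parabolic by one of its generators decreases length, while doing so by a generator outside it increases length. For the left descents, I would use braid moves to produce, for each $s_j \in B \setminus \{s_k\}$, a reduced expression for $u_B(k)$ beginning with $s_j$; then verify $s_k \notin D_L(u_B(k))$ by a length count showing $\ell(s_k \cdot u_B(k)) = \ell(u_B(k)) + 1$, using that prepending $s_k$ to the reduced word $p \cdot u_{B \setminus \{s_r\}}$ does not admit any braid simplification. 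The left-descent identity for $u'_B(k)$ follows by the mirror argument.

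For parts (3) and (4), the reduced expression $p \cdot u_{B \setminus \{s_r\}}$ contains every simple reflection in $B$, so $\supp(u_B(k)) = B$. For the product $u_B(k) \cdot u_J$ with $J \subseteq B \setminus \{s_r\}$, the element lies in $W_B$, so its support is contained in $B$; the reverse inclusion follows because $s_r$ appears uniquely in the prefix $p$ and nowhere in $u_{B \setminus \{s_r\}} \cdot u_J \in W_{B \setminus \{s_r\}}$, while each $s_i \in B \setminus \{s_r\}$ lies in the support of $u_{B \setminus \{s_r\}} \cdot u_J$ because $u_{B \setminus \{s_r\}}$ is longest and multiplication by $u_J$ with $J \subsetneq B \setminus \{s_r\}$ cannot strip a generator from the support. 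The remaining three identities follow from symmetric arguments applied to $u'_B(k)$. The main obstacle is the delicate left-descent computation in the non-simply-laced setting, particularly the verification that $s_k$ is not a left descent of $u_B(k)$, which requires ruling out all possible braid simplifications near the double-edge end of the Dynkin diagram.
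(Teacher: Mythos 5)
First, a point of comparison: the paper does not prove this lemma at all --- it is quoted verbatim from \cite[Proposition 5.4]{RS14} and \cite[Lemma 5.6]{RS14}, so your attempt is necessarily a reconstruction rather than a parallel of an argument in this paper. Your overall strategy (exhibit $u_B(k)=p\cdot u_{B\setminus\{s_r\}}$ with $p\in W^{B\setminus\{s_r\}}$ as a length-additive product and read off descents and supports) is the right one, and several pieces go through: $p$ is indeed reduced and lies in $W^{B\setminus\{s_r\}}$, the inclusion $B\setminus\{s_r\}\subseteq D_R(u_B(k))$ is immediate from $J\subseteq D_R(w)$ whenever $w=w'u_J$ is length-additive, and the support claims in (3)--(4) are actually easier than you make them: since the word for $p$ already contains every generator of $B$ and the relevant products are length-additive, $S(u_B(k)u_J)\supseteq S(p)=B$ with no further argument. (Your sub-claim that $u_{B\setminus\{s_r\}}u_J$ has full support in $B\setminus\{s_r\}$ is false when $J=B\setminus\{s_r\}$, which the lemma allows, so it is fortunate that it is not needed.)

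There are, however, two genuine gaps. First, to get \emph{equality} in (1) you must show $s_r\notin D_R(u_B(k))$, and your justification --- ``right-multiplying by a generator outside the parabolic increases length'' --- applies to the longest element $u_{B\setminus\{s_r\}}$ itself, not to the product $p\,u_{B\setminus\{s_r\}}$; in general an element of the form $pu_J$ with $p\in W^J$ can perfectly well have descents outside $J$ (take $p$ maximal in $W^J$, which gives $u_B$). You need an actual argument here, e.g.\ that $D_R(w)=B$ would force $w=u_B$, ruled out by $\ell(u_B(k))<\ell(u_B)$. Second, the left-descent computation in (2) --- which you correctly identify as the crux --- is not carried out: ``prepending $s_k$ admits no braid simplification'' is not a valid reducedness test, since by the word property one must rule out cancellation after \emph{arbitrary sequences} of braid moves, and producing a reduced word beginning with each $s_j\in B\setminus\{s_k\}$ is $|B|-1$ separate unperformed computations. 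Moreover, the ``mirror argument'' for $u'_B(k)$ is not automatic, since $u'_B(k)$ is defined by a different formula (involving $u^{B\setminus\{s_r,s_{k+1}\}}_{B\setminus\{s_r\}}$) and is not visibly $u_B(k)^{-1}$. All of these difficulties evaporate if you work in the signed-permutation model of the type $B$ Weyl group: one computes directly that $u_B(k)=[-1,\dots,-j,-(j+2),\dots,-m,-(j+1)]$ in one-line notation (with $m=n-r+1$, $j=n-k-1$), from which both descent sets, the length, and the behaviour under multiplication by $u_J$ are read off by the standard inversion formulas. As written, the proposal is a plan whose hardest step is deferred rather than a proof.
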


Clearly if $s_n\notin\supp(\mcD)$, then $\lambda_2^k,\lambda_3^k$ are both maximal labellings of $\mcD$.  If $s_n\in\supp(\mcD)$, then $|\mcD_{s_n}|=1$ and we have the following lemma.

\begin{lemma}\label{L:BC2_label}
    Let $\mcD$ be a staircase diagram of type $A_n$ with $s_n\in\supp(\mcD)$ and let $B=[s_r,s_n]\in \mcD_{s_n}$.   Then $\lambda_2^k$ is a labelling of $\mcD$ if and only if $J_R(B)=\emptyset$ and $J_L(B)\subseteq [s_r,s_{k-1}]$.

    Moreover, $\lambda_2^k$ is a labelling of $\mcD$ if and only if $\lambda_3^k$ is a labelling of $\flip(\mcD)$.
\end{lemma}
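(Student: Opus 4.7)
The plan is to verify the three conditions of Definition \ref{D:labelling} for $\lambda_2^k$ directly using Lemma \ref{L:BC2_prop}, and then translate them into the stated combinatorial conditions on $J_R(B)$ and $J_L(B)$. First I observe that on every block $B' \in \mcD$ with $B' \neq B$ (which by assumption does not contain $s_n$), $\lambda_2^k$ coincides with the maximal labelling $u_{B'}$, so the three conditions of Definition \ref{D:labelling} are automatic on these blocks. Thus the question reduces to checking the conditions for the single block $B = [s_r, s_n]$ with $\lambda_2^k(B) = u_B(k)$. By parts (1) and (2) of Lemma \ref{L:BC2_prop}, conditions (1) and (2) of Definition \ref{D:labelling} become $J_R(B) \subseteq B \setminus \{s_r\}$ and $J_L(B) \subseteq B \setminus \{s_k\}$ respectively, after which the support condition (3) will follow from parts (3) and (4) of the same lemma.

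The key structural step is the observation that any block $B' \in \mcD$ with $B' \ne B$ and $B' \cap B \neq \emptyset$ must be an interval $[s_a, s_b]$ with $a < r$ and $b < n$. Indeed, on the type $A_n$ Dynkin path every block is an interval; the fact that distinct blocks cannot contain one another (Lemma \ref{L:staircaseprops}(b)), combined with $s_n$ belonging only to $B$ (since $|\mcD_{s_n}|=1$), forces $B'$ to start strictly to the left of $s_r$ and stop strictly before $s_n$. In particular $s_r \in B'$. Consequently, if any $s_i$ with $r \le i < n$ lies in $J_R(B)$ via a block $B' \prec B$, then $s_r \in B'$ too, so $s_r \in J_R(B)$; condition (1) is then violated since $s_r \notin D_R(u_B(k))$. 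Hence $J_R(B) = \emptyset$. The analogous argument for $J_L(B)$ shows that if $s_i \in J_L(B)$ with $k \le i < n$, then a witnessing block $B' \succ B$ also contains $s_k$, so $s_k \in J_L(B)$, contradicting condition (2); this yields $J_L(B) \subseteq [s_r, s_{k-1}]$. Conversely, if $J_R(B) = \emptyset$ and $J_L(B) \subseteq [s_r, s_{k-1}] \subseteq B \setminus \{s_k\}$, then conditions (1) and (2) are immediate and condition (3) follows directly from Lemma \ref{L:BC2_prop}.

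For the ``Moreover'' assertion, the plan is to exploit the explicit symmetry between $u_B(k)$ and $u'_B(k)$ built into Lemma \ref{L:BC2_prop}: parts (1)--(2) show that the left and right descent sets of the two elements are swapped, while parts (3)--(4) show that the two-sided support conditions swap correspondingly. Since $\flip$ interchanges $J_R(B, \mcD)$ with $J_L(B, \flip(\mcD))$ and vice versa (and both labellings restrict to the maximal labelling away from $B$), the three labelling conditions for $\lambda_3^k$ on $\flip(\mcD)$ at $B$ match exactly those for $\lambda_2^k$ on $\mcD$ at $B$. The main obstacle is the structural claim about the shape of blocks comparable to $B$; once it is in hand, the rest is a direct unwinding of Definition \ref{D:labelling} against Lemma \ref{L:BC2_prop}.
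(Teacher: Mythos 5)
Your proof is correct and follows essentially the same route as the paper: both reduce to the single block $B$, read conditions (1)--(2) of Definition \ref{D:labelling} off the descent sets in Lemma \ref{L:BC2_prop}, and use the fact that any block of a type $A$ diagram meeting $B=[s_r,s_n]$ must contain $s_r$ (and contains $s_k$ whenever it witnesses some $s_i\in J_L(B)$ with $i\geq k$) to convert $s_r\notin J_R(B)$ and $s_k\notin J_L(B)$ into $J_R(B)=\emptyset$ and $J_L(B)\subseteq[s_r,s_{k-1}]$, with condition (3) and the \emph{flip} statement coming from parts (3)--(4) and the built-in symmetry of that lemma. Your interval argument is in fact spelled out more explicitly than the paper's terse assertion that $B$ is minimal in $\mcD_{s_r}$ and hence in $\mcD$.
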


\begin{proof}
    The fact that $\lambda_2^k$ is a labelling of $\mcD$ if and only if $\lambda_3^k$ is a labelling of $\flip(\mcD)$ is a direct consequence of Lemma \ref{L:BC2_prop}.  Suppose $\lambda_2^k$ is a labelling of $\mcD$.  Then $J_R(B)\subseteq D_R(\lambda^k_2(B))$ and $J_L(B)\subseteq D_L(\lambda^k_2(B))$ for $B=[s_r,s_n]$.  Lemma \ref{L:BC2_prop} implies that $B$ is minimal in $\mcD_{s_r}$ and thus $B$ is minimal in $\mcD$.  Hence $J_R(B)=\emptyset$.  Conversely, if $J_R(B)=\emptyset$, then the condition that $J_R(B)\subseteq D_R(\lambda^k_2(B))$ is trivial.  A similar argument gives that $J_L(B)\subseteq [s_r,s_{k-1}]$.  Finally, Lemma \ref{L:BC2_prop} implies that part (3) of Definition \ref{D:staircase} is always satisfied.  This completes the proof.
\end{proof}

\begin{lemma}\label{L:BC2_intersection}
    Let $B=[s_r,s_n]$ with $r\leq k< n$.  Then $u_B(k)=u'_B(k')$ if and only if $k=k'=r$.
\end{lemma}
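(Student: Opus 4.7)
The plan is to handle the forward direction by comparing descent sets via Lemma \ref{L:BC2_prop}, and to verify the reverse direction by a direct reduced-expression identity.

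For the forward direction, suppose $u_B(k) = u'_B(k')$. By parts (1) and (2) of Lemma \ref{L:BC2_prop},
\[
B\setminus\{s_r\} = D_R(u_B(k)) = D_R(u'_B(k')) = B\setminus\{s_{k'}\}
\]
forces $k' = r$, and
\[
B\setminus\{s_k\} = D_L(u_B(k)) = D_L(u'_B(k')) = B\setminus\{s_r\}
\]
forces $k = r$. This reduces the problem to checking that $u_B(r) = u'_B(r)$.

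For the converse, I would use the following reduced-word identity inside the type-$B_{n-r}$ (equivalently $C_{n-r}$) parabolic subgroup $W_{B\setminus\{s_r\}}$ generated by $s_{r+1},\ldots,s_n$: the longest minimal coset representative for the quotient by the parabolic $W_{B\setminus\{s_r,s_{r+1}\}} = \langle s_{r+2},\ldots,s_n\rangle$ satisfies
\[
u^{B\setminus\{s_r,s_{r+1}\}}_{B\setminus\{s_r\}} = s_{r+1}s_{r+2}\cdots s_{n-1}s_n s_{n-1}\cdots s_{r+1},
\]
an expression of length $2(n-r)-1$. Granting this, when $k=r$ the product $s_{r+1}\cdots s_n s_{n-1}\cdots s_r$ in the definition of $u_B(r)$ factors as $u^{B\setminus\{s_r,s_{r+1}\}}_{B\setminus\{s_r\}}\cdot s_r$; simultaneously the chain $s_rs_{r+1}\cdots s_k$ appearing in $u'_B(r)$ collapses to $s_r$ and $u_{B\setminus\{s_k\}} = u_{B\setminus\{s_r\}}$, so both elements equal
\[
u^{B\setminus\{s_r,s_{r+1}\}}_{B\setminus\{s_r\}}\cdot s_r\cdot u_{B\setminus\{s_r\}}.
\]

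The only nontrivial ingredient is the reduced-word identity, which is the standard ``longest end-node coset representative'' in type $B$: for a type-$B_m$ Weyl group with simple reflections $t_1,\ldots,t_m$ (double edge between $t_{m-1}$ and $t_m$), the longest minimal coset representative of the quotient by $\langle t_2,\ldots,t_m\rangle$ is $t_1 t_2\cdots t_m t_{m-1}\cdots t_1$, of length $2m-1$; applying this to the parabolic generated by $s_{r+1},\ldots,s_n$ yields the identity. This can be verified by a length count together with the observation that the given word lies in $W^{\langle s_{r+2},\ldots,s_n\rangle}$ because no right multiplication by $s_i$ with $i\geq r+2$ shortens it, which is the expected main (but standard) obstacle.
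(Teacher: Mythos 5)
Your proof is correct and follows essentially the same route as the paper: the forward direction via the descent-set computations of Lemma \ref{L:BC2_prop}, and the converse via the explicit reduced word $u^{B\setminus\{s_r,s_{r+1}\}}_{B\setminus\{s_r\}}=s_{r+1}\cdots s_n s_{n-1}\cdots s_{r+1}$, which is exactly the identity the paper invokes. You simply spell out the intermediate steps (the common value $u^{B\setminus\{s_r,s_{r+1}\}}_{B\setminus\{s_r\}}\, s_r\, u_{B\setminus\{s_r\}}$ and the length count justifying the reduced word) that the paper leaves implicit.
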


\begin{proof}
If either $k$ or $k'$ is not equal to $r$, then $u_B(k)\neq u'_B(k')$ by Lemma \ref{L:BC2_prop}.  If $k=k'=r$, then $$u^{B\setminus\{s_r,s_{r+1}\}}_{B\setminus\{s_r\}}=s_{r+1}s_{r+2}\cdots s_ns_{n-1}\cdots s_{r+1}$$ and hence $u_B(r)= u'_B(r)$.
\end{proof}

If $B=[s_r,s_n]\in \mcD_{s_n}$, then Lemma \ref{L:BC2_intersection} implies that $\Lambda(\mcD,\lambda_2^k)=\Lambda(\mcD,\lambda_3^{k'})$ if and only if $k=k'=r$.  Let $\overline{BC}^k_2(n)$ denote set of labelled staircase diagrams of type $A_n$ with full support where the labelling is given by either $\lambda_2^k$ or $\lambda_3^k$.

\begin{proposition}\label{P:BC_label23_enum}
$|\overline{BC}^1_2(n)|=1$, and if $k\geq 2$ then
$$\displaystyle |\overline{BC}^k_2(n)|=1+\overline{a}_{k}+2\sum_{\ell=1}^{k-2} \overline{a}_{\ell}.$$
\end{proposition}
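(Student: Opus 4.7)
My plan is to establish a bijection between $\lambda_2^k$- and $\lambda_3^k$-labelled diagrams of type $A_n$ and two classes of fully-supported staircases of type $A_k$, and then exploit a key identity that the union of the two classes is \emph{all} of $\overline{a}_k$ fully-supported $A_k$-staircases.

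The base case $k = 1$ is immediate: $r \leq k < n$ forces $r = 1$, Lemma \ref{L:BC2_intersection} gives $\lambda_2^1(B) = \lambda_3^1(B)$ since $r = k$, and Lemma \ref{L:staircaseprops}(b) combined with $\mcD' \subseteq [s_1, s_0] = \emptyset$ forces $\mcD = \{B = [s_1, s_n]\}$, producing the unique labelled diagram.

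For $k \geq 2$, given a $\lambda_2^k$-labelled diagram $\mcD$ with distinguished block $B = [s_r, s_n]$, I form a collapsed diagram $\widehat\mcD$ on $[s_1, s_k]$ by replacing $B$ with $\widehat B := [s_r, s_k]$ and keeping the same poset. Lemma \ref{L:BC2_label} guarantees $\mcD' \subseteq [s_1, s_{k-1}]$, so $\widehat\mcD$ is a valid fully-supported staircase of type $A_k$ in which $s_k$ is critical and $J_R(\widehat B) = \emptyset$, and the assignment is a bijection onto this class $X_k$. Via $\flip$ and the second half of Lemma \ref{L:BC2_label}, $\lambda_3^k$-labelled diagrams are in bijection with the analogous class $Y_k$ defined by $J_L(\widehat B) = \emptyset$, and $|X_k| = |Y_k|$. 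Lemma \ref{L:BC2_intersection} moreover shows that $(\mcD, \lambda_2^k)$ and $(\mcD, \lambda_3^k)$ represent the same labelled diagram exactly when $r = k$, that is when $\widehat B = \{s_k\}$.

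The key step, and the main obstacle, is the identity $|X_k \cup Y_k| = \overline{a}_k$. Lemma \ref{L:staircaseprops}(b) makes $s_k$ a critical point in any fully-supported $A_k$-staircase (the blocks containing $s_k$ form a containment chain), so $\widehat B = [s_r, s_k]$ is uniquely determined. Assume for contradiction that both $J_R(\widehat B)$ and $J_L(\widehat B)$ are non-empty, and pick blocks $C \prec \widehat B \prec C'$ each meeting a vertex of $\widehat B$. Lemma \ref{L:staircaseprops}(b) prevents either from being contained in $\widehat B$, forcing both to extend left of $s_r$ and hence to contain $s_{r-1}$, so $C, C' \in \widehat\mcD_{s_{r-1}}$; but then $\widehat B \in \widehat\mcD_{s_r} \setminus \widehat\mcD_{s_{r-1}}$ lies strictly between $C$ and $C'$ in the chain $\widehat\mcD_{s_{r-1}} \cup \widehat\mcD_{s_r}$, contradicting the saturation clause of axiom (3) in Definition \ref{D:staircase}. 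The edge case $r = 1$ follows immediately from (b) alone. With the identity in hand, I count $|X_k \cap Y_k|$: these are diagrams in which $\widehat B$ is the only block on each of its vertices, so $\widehat\mcD \setminus \{\widehat B\}$ is a fully-supported staircase on $[s_1, s_{r-1}]$ with $\widehat B$ placed above or below $\widehat\mcD_{s_{r-1}}$ by axiom (3) saturation, yielding $|X_k \cap Y_k| = 1 + 2\sum_{\ell=1}^{k-1}\overline{a}_\ell$. The subfamily with $r = k$, where by Lemma \ref{L:BC2_intersection} the two labellings coincide as functions, has size $2\overline{a}_{k-1}$, so inclusion-exclusion yields
\begin{equation*}
|\overline{BC}_2^k(n)| = |X_k| + |Y_k| - 2\overline{a}_{k-1} = \overline{a}_k + |X_k \cap Y_k| - 2\overline{a}_{k-1} = 1 + \overline{a}_k + 2\sum_{\ell=1}^{k-2}\overline{a}_\ell.
\end{equation*}
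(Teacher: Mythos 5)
Your proof is correct and follows essentially the same route as the paper: collapse the block containing $s_n$ to $[s_r,s_k]$ to identify labelled diagrams with fully supported type $A_k$ staircase diagrams, split into the two classes with $J_R=\emptyset$ (your $X_k$, the paper's $F_k^-$) and $J_L=\emptyset$ (your $Y_k$, the paper's $F_k^+$), apply inclusion--exclusion, and subtract the $2\overline{a}_{k-1}$ diagrams on which $\lambda_2^k=\lambda_3^k$. The only difference is that you explicitly verify the identity $X_k\cup Y_k=\overline{A}(k)$ via the saturation axiom, a step the paper's proof uses implicitly.
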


\begin{proof}
    First, if $k=1$ then $\mcD=\{[s_1,s_n]\}$ is the unique unlabelled staircase diagram in $\overline{BC}^1_2(n)$.  By Lemma \ref{L:BC2_intersection}, the labelling $\lambda_2^1=\lambda_3^{1}$, and hence $|\overline{BC}^1_2(n)|=1$.   For $k\geq 2$, let $\mcD\in\overline{BC}^k_2(n)$ with $B\in\mcD_{s_n}$.  Lemma \ref{L:BC2_label} implies that $s_k,\ldots, s_n\in B$ are critical points of $\mcD$.  Hence $\mcD$ can be identified with a fully supported staircase diagram of type $A_k$ by replacing the block $B$ with $B\setminus[s_{k+1},s_n]$ in $\mcD$.  Let $\overline{A}(k)$ denote the set of fully supported (unlabelled) staircase diagrams of type $A_k$ and define the sets
    \begin{align*}
    F_{k}^+&:=\{\mcD\in \overline{A}(k)\ |\ \text{$s_{k}\in B$ with $J_L(B)=\emptyset$}\}\\
    F_{k}^-&:=\{\mcD\in \overline{A}(k)\ |\ \text{$s_{k}\in B$ with $J_R(B)=\emptyset$}\}
    \end{align*}
    Lemma \ref{L:BC2_label} implies the number of $\lambda_2^k$-labelled staircase diagrams in $BC^k_2(n)$ is $|F_{k}^-|$.  Similarly the number of $\lambda_3^k$-labelled staircase diagrams in $BC^k_2(n)$ is $|F_{k}^+|$.  Hence
    $$|F_{k}^+|+|F_{k}^-|=|A(k)|+|F_k^+\cap F_k^-|=\overline{a}_{k}+|F_k^+\cap F_k^-|.$$
    If $\mcD\in F_k^+\cap F_k^-$ with $s_{k}\in B$, then every element of $B$ is a critical point.  If $B=[s_{\ell},s_{k}]$, then $\mcD$ can be identified with a fully supported staircase diagram of type $A_{\ell-1}$ by removing $B$ from $\mcD$.   If $\ell\geq 2$, let $B'$ denote the unique block in $\mcD_{\ell-1}$.  Then either $B'\prec B$ or $B\prec B'$.  This gives
    $$|F_k^+\cap F_k^-|=1+2\sum_{\ell=1}^{k-1} \overline{a}_{\ell}.$$
    Lemma \ref{L:BC2_intersection} implies that the labelling $\lambda_2^k=\lambda_3^k$ on $\mcD\in\overline{BC}^k_2(n)$ if and only if $B=[s_k,s_n]\in\mcD_{s_n}$.  Since there are $2\overline{a}_{k-1}$ staircase diagrams in $\overline{BC}^k_2(n)$ that satisfy this condition, we have
    $$|\overline{BC}^k_2(n)|=|F_{k}^+|+|F_{k}^-|-2\overline{a}_{k-1}=1+\overline{a}_{k}+2\sum_{\ell=1}^{k-2} \overline{a}_{\ell}.$$
\end{proof}

If $w\in W$, let $X^B(w)$ and $X^C(w)$ denote the corresponding Schubert variety of type $B_n$ and $C_n$ respectively.  Let $\lambda_0:\mcD\rightarrow W$ denote the maximal labelling.  The geometric significance of labellings $\lambda_0, \lambda_1, \lambda_2^k$ and $\lambda_3^k$ is given by the following proposition, which follows from \cite[Theorem 3.8]{RS14} and Corollary \ref{C:ratsmoothbij}.

\begin{proposition}\label{P:BC_StaircaseSchubert}
    Let $W$ denote the Weyl group of type $B_n$ or $C_n$ and let $\mcD$ be a staircase diagram of type $A_n$.  Then $\lambda:\mcD\arr W$ is an almost-maximal, rationally smooth labelling if and only if $\lambda$ equals one of $\lambda_0, \lambda_1, \lambda_2^k$ or $\lambda_3^k$.  Moreover, if $w\in W$, then following are true:
    \begin{enumerate}
        \item The Schubert variety $X^B(w)$ is smooth if and only if $w=\Lambda(\mcD,\lambda_0)$, or $w=\Lambda(\mcD,\lambda_1)$ for some staircase diagram $\mcD$ of type $A_n.$
        \item The Schubert variety $X^C(w)$ is smooth if and only if $w=\Lambda(\mcD,\lambda_0)$, or $w=\Lambda(\mcD,\lambda_2^k)$ or $\Lambda(\mcD,\lambda_3^k)$ for some staircase diagram $\mcD$ of type $A_n$ with $1\leq k< n.$
        \item The Schubert varieties $X^B(w),X^C(w)$ are rationally smooth if and only if at least one of $X^B(w)$ or $X^C(w)$ is smooth.
    \end{enumerate}
\end{proposition}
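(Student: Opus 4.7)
The plan is to combine Corollary \ref{C:ratsmoothbij} (bijection with rationally smooth elements), the remark that in non-simply-laced finite type all nearly-maximal rationally smooth elements are in fact almost-maximal, and the explicit classification of rationally smooth Grassmannian BP factors from \cite[Theorem 3.8]{RS14}. The underlying graph of a type $A_n$ staircase diagram is the Dynkin graph of $B_n$ (resp.\ $C_n$), and the only vertex incident to the double edge is $s_n$. For any block $B \in \mcD$ not containing $s_n$, the corresponding parabolic is simply-laced, so Corollary \ref{C:simplylacedbij} forces $\lambda(B) = u_B$, i.e.\ the $\lambda_0$-value. Thus the only freedom in an almost-maximal rationally smooth labelling lies on the unique block $B \in \mcD_{s_n}$, if any, which by Lemma \ref{L:chain_diagrams} is well defined.

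Next, I would match almost-maximal rationally smooth Grassmannian elements of $W^{J_R(B)}$ for $B=[s_r,s_n]$ to the three forms $\lambda_1(B)$, $u_B(k)$ and $u'_B(k)$. By \cite[Theorem 3.8]{RS14}, these are precisely the rationally smooth almost-maximal Grassmannian representatives in type $BC$. Lemmas \ref{L:BC1_label} and \ref{L:BC2_label} already verify that these choices yield valid labellings of $\mcD$. Conversely, any rationally smooth almost-maximal labelling of $\mcD$ must assign $u_B$, $\lambda_1(B)$, $u_B(k)$, or $u'_B(k)$ to the block containing $s_n$, proving the first statement. Note that Lemma \ref{L:BC2_label} forces $J_R(B)=\emptyset$ (resp.\ $J_L(B)=\emptyset$) when $\lambda_2^k$ (resp.\ $\lambda_3^k$) is used, matching the Grassmannian hypothesis of \cite[Theorem 3.8]{RS14}.

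For parts (2) and (3), I would iterate Theorem \ref{T:bpgeom}: a complete BP decomposition of $\Lambda(\mcD,\lambda)$ expresses $X(\Lambda(\mcD,\lambda))$ as an iterated fibre bundle whose fibres are Schubert varieties $X(\overline{\lambda}(B))$ for $B \in \mcD$. Since a fibre bundle is smooth if and only if base and fibre are smooth, $X(\Lambda(\mcD,\lambda))$ is smooth if and only if each $X(\overline{\lambda}(B))$ is smooth. For $B$ not containing $s_n$, the factor lives in a type $A$ Levi, hence is always smooth (Peterson). For the block containing $s_n$, one uses the explicit description in \cite[Theorem 3.8]{RS14} of which almost-maximal elements give smooth Schubert varieties in types $B$ and $C$: in type $B$ the smooth choices are $u_B$ and $\lambda_1(B)$, while in type $C$ the smooth choices are $u_B$, $u_B(k)$ and $u'_B(k)$. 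This gives exactly (1) and (2).

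For part (3), the argument is essentially the observation, also in \cite[Theorem 3.8]{RS14}, that a Grassmannian almost-maximal element is rationally smooth in type $B_n$ if and only if it is rationally smooth in type $C_n$ (as the Weyl groups are isomorphic and rational smoothness is intrinsic to $W$). Combined with the fibre-bundle reduction, this yields $X^B(w)$ and $X^C(w)$ rationally smooth simultaneously, and then the union of the lists in (1) and (2) exhausts the smooth cases, so one of $X^B(w),X^C(w)$ must be smooth. The main obstacle will be cleanly packaging the case analysis at the block containing $s_n$: one must keep careful track of which of the four candidate elements are smooth versus merely rationally smooth in each of types $B$ and $C$, and verify that the Grassmannian-side constraints $J_R(B)=\emptyset$ or $J_L(B)=\emptyset$ coming from Lemma \ref{L:BC2_label} match exactly the admissibility conditions of \cite[Theorem 3.8]{RS14}.
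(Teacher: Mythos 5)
Your proposal is correct and follows essentially the same route as the paper, which states this proposition without a written proof beyond the observation that it follows from \cite[Theorem 3.8]{RS14} and Corollary \ref{C:ratsmoothbij}. Your expansion---reducing to the unique block containing $s_n$ (all other blocks being simply-laced, hence maximally labelled), matching that block's label against the classification in \cite[Theorem 3.8]{RS14}, and using the iterated fibre bundle structure of Theorem \ref{T:bpgeom} for parts (1)--(3)---is exactly the intended argument.
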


Define the generating series
\begin{equation}
\overline{B}(t):=\sum_{n=1}^{\infty} \overline{b}_n\, t_n\qquad\text{and}\qquad \overline{C}(t):=\sum_{n=1}^{\infty} \overline{c}_n\, t_n,
\end{equation}
where $\overline{b}_n$ and $\overline{c}_n$ denote the number of fully supported smooth Schubert varieties of types $B_n$ and $C_n$ respectively.
We also define
\begin{equation}
\overline{BC}(t):=\sum_{n=1}^{\infty} \overline{bc}_n\, t_n,
\end{equation}
where $\overline{bc}_n$ denotes the number of fully supported rationally smooth Schubert varieties of either types $B_n$ or $C_n$.

\begin{proposition}\label{P:typeBC_genseries_full}
The above generating series satisfy the following identities:
\begin{align}
\overline{B}(t)&=(2-2t)\overline{A}(t)-t \label{Eq:genseries_Bfull},\\
\overline{C}(t)&=\frac{\overline{A}(t)}{1-t}+\frac{t^3(1+2\overline{A}(t))}{(1-t)^2}\label{Eq:genseries_Cfull}, \text{ and }\\
\overline{BC}(t)&=\overline{B}(t)+\overline{C}(t)-\overline{A}(t).\label{Eq:genseries_BCfull}
\end{align}
\end{proposition}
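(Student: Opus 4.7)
The plan is to deduce the three identities by combining Proposition \ref{P:BC_StaircaseSchubert} (which characterizes smooth and rationally smooth Schubert varieties in types $B$ and $C$ via the four families of labellings $\lambda_0,\lambda_1,\lambda_2^k,\lambda_3^k$) with the explicit enumerations in Propositions \ref{P:BC_label1_enum} and \ref{P:BC_label23_enum}. Throughout, Theorem \ref{T:staircasebij} guarantees that distinct labelled staircase diagrams produce distinct Weyl group elements, so no further identifications are needed beyond checking when two labellings happen to coincide.

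First, for the type $B$ identity I would partition the fully supported smooth Schubert varieties in rank $n$ into those coming from the maximal labelling $\lambda_0$ (contributing $\overline{a}_n$) and those coming from a genuinely non-maximal $\lambda_1$ labelling (contributing $\overline{a}_n - 2\overline{a}_{n-1}$ when $n \geq 2$, by Proposition \ref{P:BC_label1_enum}). For $n = 1$ the two labellings coincide and give a single diagram, so $\overline{b}_1 = 1$ while $\overline{b}_n = 2\overline{a}_n - 2\overline{a}_{n-1}$ for $n \geq 2$. Translating this into generating series, using $\overline{a}_1 = 1$, produces $\overline{B}(t) = (2-2t)\overline{A}(t) - t$.

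Next, for the type $C$ identity Proposition \ref{P:BC_StaircaseSchubert}(2) gives $\overline{c}_n = \overline{a}_n + \sum_{k=1}^{n-1} f_k$, where $f_k := |\overline{BC}^k_2(n)|$ is independent of $n$ and no further identifications with $\lambda_0$ arise since the descent sets of $\lambda_2^k(B)$ and $\lambda_3^k(B)$ in Lemma \ref{L:BC2_prop} differ from that of $u_B$. Setting $F(t) := \sum_{k \geq 1} f_k t^k$ and interchanging the order of summation gives $\overline{C}(t) - \overline{A}(t) = \frac{t}{1-t} F(t)$. Substituting $f_1 = 1$ and $f_k = 1 + \overline{a}_k + 2\sum_{\ell=1}^{k-2}\overline{a}_\ell$ for $k\geq 2$ from Proposition \ref{P:BC_label23_enum}, and summing the geometric series $\sum_{\ell \geq 1}\overline{a}_\ell \sum_{k \geq \ell+2} t^k$, yields
\begin{equation*}
F(t) = \frac{t^2}{1-t} + \overline{A}(t) \cdot \frac{1-t+2t^2}{1-t},
\end{equation*}
and back-substitution produces the stated rational form for $\overline{C}(t)$.

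Finally, for the type $BC$ identity Proposition \ref{P:BC_StaircaseSchubert}(3) shows that $\overline{bc}_n$ counts Weyl group elements $w$ whose Schubert variety is smooth in type $B$ or in type $C$, so inclusion-exclusion gives $\overline{bc}_n = \overline{b}_n + \overline{c}_n - N_n$, where $N_n$ counts elements smooth in both types. Via Theorem \ref{T:staircasebij} and a descent-set comparison using Lemmas \ref{L:BC1_prop} and \ref{L:BC2_prop}, I would verify that a $\lambda_1$-labelling of a block $B$ with $|B| \geq 2$ can never coincide with a $\lambda_2^k$- or $\lambda_3^k$-labelling (their right descent sets are $B\setminus\{s_n\}$ versus $B\setminus\{s_r\}$ or $B\setminus\{s_k\}$), while $\lambda_1$ agrees with $\lambda_0$ precisely when the block containing $s_n$ is a singleton. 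Hence the intersection of admissible labellings reduces to $\{\lambda_0\}$ and $N_n = \overline{a}_n$, giving the identity. The main obstacle is the generating-series manipulation in the type $C$ case: reorganizing the double sum from Proposition \ref{P:BC_label23_enum} into the compact rational form requires care, though once the order of summation is inverted the remaining steps are mechanical.
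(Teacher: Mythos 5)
Your proposal is correct and follows essentially the same route as the paper: both deduce the three identities from Proposition \ref{P:BC_StaircaseSchubert} together with the counts in Propositions \ref{P:BC_label1_enum} and \ref{P:BC_label23_enum}, and both obtain the $BC$ identity by inclusion--exclusion with the overlap being exactly the maximally labelled diagrams. Your packaging of the type $C$ computation through $F(t)=\sum_k f_k t^k$ and $\overline{C}(t)-\overline{A}(t)=\tfrac{t}{1-t}F(t)$ is just a reordering of the paper's coefficient-level triple sum, and your series algebra checks out against the stated rational form.
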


\begin{proof}
    Proposition \ref{P:BC_StaircaseSchubert} implies that $X^B(w)$ is smooth if and only if $w=\Lambda(\mcD,\lambda_0)$ or $w=\Lambda(\mcD,\Lambda_1)$ for some staircase diagram $\mcD$ of type $A_n.$  By Proposition \ref{P:BC_label1_enum}, we have
        $$b_n=\overline{a}_n + |\overline{BC}_1(n)|=\overline{a}_n+(\overline{a}_n-2\overline{a}_{n-1})$$
    for all $n \geq 2$.  Hence
        $$\overline{B}(t)=\overline{A}(t)+(\overline{A}(t)-t-2t\overline{A}(t))=(2-2t)\overline{A}(t)-t,$$
    which proves Equation \eqref{Eq:genseries_Bfull}.  For type $C_n$, Proposition \ref{P:BC_StaircaseSchubert} implies that $X^C_w$ is smooth if and only if $w=\Lambda(\mcD,\lambda_0)$, or $w=\Lambda(\mcD,\lambda_2^k)$ or $\Lambda(\mcD,\lambda_3^k)$ for some staircase diagram $\mcD$ of type $A_n$ and $1\leq k< n.$  Proposition \ref{P:BC_label23_enum} implies that
    \begin{align*}
        \overline{c}_n&=\overline{a}_n + \sum_{k=1}^{n-1}|\overline{BC}_2(n)|\\
        &= \overline{a}_n + 1+ \sum_{k=2}^{n-1} \left(1+\overline{a}_{k}+2\sum_{\ell=1}^{k-2} \overline{a}_{\ell}\right)\\
        &= \left(\sum_{k=1}^n \overline{a}_k\right) + (n-2) + 2\left(\sum_{k=1}^{n-3}\sum_{\ell=1}^{k} \overline{a}_{\ell}\right).
    \end{align*}
    This gives
    \begin{align*}
        \overline{C}(t)&= \left(\sum_{n=1}^{\infty}\sum_{k=1}^n \overline{a}_k\, t^n\right) + \left(\sum_{n=2}^{\infty}(n-2)\, t^n\right) + 2 \left(\sum_{n=2}^{\infty} \sum_{k=1}^{n-3}\sum_{\ell=1}^{k} \overline{a}_{\ell}\, t^n\right).\\
        &= \frac{\overline{A}(t)}{1-t} + \frac{t^3}{(1-t)^2} + \frac{2t^3\overline{A}(t)}{(1-t)^2}
    \end{align*}
    which implies Equation \eqref{Eq:genseries_Cfull}.  Finally, Equation \eqref{Eq:genseries_BCfull} follows directly from part (3) of Proposition \ref{P:BC_StaircaseSchubert}.
\end{proof}

Recall from the introduction that
\begin{equation}
    B(t):=\sum_{n=1}^{\infty} b_n\, t_n\qquad\text{and}\qquad C(t):=\sum_{n=1}^{\infty} c_n\, t_n,
\end{equation}
where $b_n$ and $c_n$ denote the number of smooth Schubert varieties of types $B_n$ and $C_n$ respectively, and that
\begin{equation}
BC(t):=\sum_{n=1}^{\infty} bc_n\, t_n,
\end{equation}
where $bc_n$ denotes the number of rationally smooth Schubert varieties of either types $B_n$ or $C_n$. Set $b_0=c_0=bc_0=1$, and let $A(t)$ continue to denote the generating series for staircase
diagrams of type $A$.

\begin{theorem}\label{T:typeBC_genseries}
    The above generating series satisfy the following identities:
    \begin{align}
        B(t)&=(1+tA(t))(1+\overline{B}(t)), \label{Eq:genseries_B}\\
        C(t)&=(1+tA(t))(1+\overline{C}(t)), \text{ and}\label{Eq:genseries_C}\\
        BC(t)&=(1+tA(t))(1+\overline{BC}(t)).\label{Eq:genseries_BC}
    \end{align}
\end{theorem}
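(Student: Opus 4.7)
All three identities have essentially the same proof, so the plan focuses on Equation \eqref{Eq:genseries_B} and indicates how to adapt for \eqref{Eq:genseries_C} and \eqref{Eq:genseries_BC}. The approach is to interpret $b_n$ combinatorially via Proposition \ref{P:BC_StaircaseSchubert}(1) as counting pairs $(\mcD, \lambda)$, where $\mcD$ is a staircase diagram of type $A_n$ and $\lambda \in \{\lambda_0, \lambda_1\}$ is a valid labelling producing a smooth Schubert variety, and then to decompose each such pair according to the connected component of $\supp(\mcD)$ containing the end vertex $s_n$.

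The crucial observation is that $\lambda_0$ and $\lambda_1$ both assign the maximal element $u_B$ to every block $B$ with $s_n \notin B$. Since $s_n$ is a leaf of the type $A_n$ Dynkin path, at most one connected component of $\supp(\mcD)$ contains $s_n$; when it does, its support is an interval $[s_{n-k+1}, s_n]$ with $1 \le k \le n$, and the remaining components must lie in $\{s_1, \ldots, s_{n-k-1}\}$, separated from the $s_n$-component by the forced gap vertex $s_{n-k}$ (no such separation is needed when $k = n$). I would then verify that the restriction map $(\mcD, \lambda) \mapsto \bigl((\mcD^{(R)}, \lambda|_{\mcD^{(R)}}),\, \mcD^{(L)}\bigr)$ is a bijection between smooth labelled diagrams of type $B_n$ and pairs consisting of a fully supported smooth labelled diagram of type $B_k$ (for some $0 \le k \le n$, where $k = 0$ signifies no $s_n$-component) together with an unlabelled type $A_{n-k-1}$ staircase diagram (absent when $k = n$, since in that case the $B_k$-part fills all of $\{s_1,\ldots,s_n\}$).

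Translating to generating series, the $k = 0$ summands contribute $\sum_{n \ge 1} a_{n-1} t^n = tA(t)$, the $k = n$ summands contribute $\overline{B}(t)$, and the intermediate $1 \le k \le n-1$ summands contribute $t A(t) \overline{B}(t)$ (where the extra factor of $t$ records the separating gap at $s_{n-k}$). Adding $b_0 = 1$ and factoring yields
\begin{equation*}
B(t) = 1 + tA(t) + \overline{B}(t) + t A(t) \overline{B}(t) = (1 + tA(t))(1 + \overline{B}(t)),
\end{equation*}
which is \eqref{Eq:genseries_B}. Equations \eqref{Eq:genseries_C} and \eqref{Eq:genseries_BC} follow from the identical argument using parts (2) and (3) of Proposition \ref{P:BC_StaircaseSchubert}, since $\lambda_2^k$ and $\lambda_3^k$ also coincide with $u_B$ on blocks not containing $s_n$, so exactly the same decomposition applies.

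The main technical step will be verifying that restriction and gluing really define inverse bijections at the level of labelled diagrams: one must check that restricting a valid type $B_n$ labelling to the $s_n$-component yields a valid fully supported type $B_k$ labelling, and conversely that an arbitrary fully supported $B_k$ labelling glues with any type $A_{n-k-1}$ staircase diagram along the gap $s_{n-k}$ to give a valid $B_n$ labelling. Both directions should follow from the local character of Definition \ref{D:labelling} together with the fact that blocks in the two pieces share no vertex and are not adjacent in the Dynkin diagram, so the conditions defining a staircase diagram and a labelling are each imposed independently on the two halves.
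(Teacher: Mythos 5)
Your proposal is correct and follows essentially the same route as the paper: both arguments decompose a smooth element of type $B_n$ (resp.\ $C_n$, $BC_n$) according to the connected component of the support containing $s_n$ — equivalently, the largest-index vertex missing from the support — yielding the four contributions $1$, $tA(t)$, $\overline{B}(t)$, and $tA(t)\overline{B}(t)$, with the key observation that all the relevant labellings restrict to the maximal labelling away from $s_n$, so the leftover piece is an unlabelled type $A$ diagram. The paper's proof is just a terser version of the same case analysis.
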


\begin{proof}
The proof is the same for each of $B(t), C(t)$ and $BC(t)$, so we focus on the series $B(t)$.  Suppose that $w\in W$ corresponds to a smooth Schubert variety of type $B_n.$  First, if $\supp(w)=[s_1,s_n]$, then by Proposition \ref{P:typeBC_genseries_full}, the generating series for these elements is $\overline{B}(t).$  Now suppose that $\supp(w)\neq [s_1,s_n]$ and let $k$ denote the largest index for which $s_k\notin \supp(w).$  If $k=n$, then the generating series for elements of this type is $tA(t)$.  If $k<n$, then $w$ corresponds to a staircase diagram of type $A_{k-1}$ and a labelled staircase diagram enumerated by $\overline{B}(t)$.  Hence the generating series for these elements is $tA(t)\overline{B}(t)$.  Thus
$$B(t)=1+\overline{B}(t)+tA(t)+ tA(t)\overline{B}(t).$$  The proof is the same for $C(t)$ and $BC(t)$, where we replace $\overline{B}(t)$ by $\overline{C}(t)$ and $\overline{BC}(t)$ respectively.
\end{proof}


\begin{rmk}\label{R:BC_smooth}
Proposition \ref{P:BC_StaircaseSchubert} implies that the generating series for the number of $w\in W$ for which $X^B(w)$ and $X^C(w)$ are both smooth is precisely $A(t)$.  In particular, $X^B(w)$ and $X^C(w)$ are both smooth if and only if $w=\Lambda(\mcD,\lambda_0)$ for some staircase diagram of type $A$.  In this case, both of these Schubert varieties decompose as (possibly different) iterated fiber bundles of Grassmannian flag varieties.
\end{rmk}

\section{Staircase diagrams of type D}\label{S:typeD}

As in the type $A$ case, Corollary \ref{C:simplylacedbij} implies that the number of smooth Schubert varieties of type $D_n$ is precisely the number of staircase diagrams over the Dynkin graph of type $D_n$.  Let $\Gamma$ be the Dynkin graph of type $D_3$ with vertices $S=\{s_1, s_2, s_3\}$, and set $s=s_3$. In the notation of Section \ref{S:staircasecatalan}, $\Gamma_n$ is the Dynkin graph of type $D_n$, pictured below:


\begin{equation*}
    \begin{tikzpicture}
        \begin{scope}[start chain]
            \dnode{2} \dnode{3} \node[chj,draw=none] {\dots}; \dnode{n-1} \dnode{n}
        \end{scope}
        \begin{scope}[start chain=br going above]
            \chainin(chain-2); \dnodebr{1}
        \end{scope}
    \end{tikzpicture}
\end{equation*}

When analyzing elementary staircase diagrams of type $D$, it is convenient to treat $s_3$ as ``leaf" in the case where $n=3$.  Under this convention, all Dynkin graphs of type $D$ have three leaves.  Let $Z_D(n):=Z_\Gamma(n)$ denote the fully supported elementary staircase diagrams of type $D_n$.  Unlike in the type $A$ case, elementary staircase diagrams of type $D$ are not necessarily chains.  For example, we have the following elementary staircase diagram of type $D_8$:
\begin{equation*}
    \begin{tikzpicture}[scale=0.4]
        \planepartitionD{8}{
            {1,1,0,0,0,0,0},
            {0,1,1,0,1,2,0},
            {0,0,1,1,1,1,0},
            {0,0,0,1,1,1,1}}
    \end{tikzpicture}
\end{equation*}

While elementary staircase diagrams of type $D$ may not be chains, they do have similar combinatorial structure to those of type $A$.  The next lemma follows immediately from property (4) of Definition \ref{D:staircase}.

\begin{lemma}\label{L:D_crit_pt_bound}
    If $\mcD\in Z_D(n)$, then $\mcD$ has at least 2 critical points with $|\mcD_{s_i}|\leq 2$ for $i=1,2,n$.  Furthermore, if $s_n$ is not a critical point, then $\mcD$ is either $([s_2,s_n]\prec [s_1,s_n])$ or $([s_1,s_n]\prec [s_2,s_n])$.
\end{lemma}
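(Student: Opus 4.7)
My plan is to treat parts (a) and (b) by a structural analysis of blocks containing each leaf of $D_n$. For the existence of two critical points in part (a): since the Dynkin graph of $D_n$ with $n \geq 3$ has non-leaf vertices, elementarity forces $|\mcD_s| \geq 2$ for each non-leaf $s$, so $\mcD$ contains at least two blocks. By Lemma \ref{L:extreme_crits}, the distinct maximum and minimum blocks of $\mcD$ each contain a critical point, and those two critical points must themselves be distinct, since a shared critical point would lie in two distinct blocks, contradicting $|\mcD_s| = 1$.

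For the bound $|\mcD_{s_i}| \leq 2$ with $s_i \in \{s_1, s_2, s_n\}$, I would invoke Lemma \ref{L:staircaseprops}(b) to reduce the question to bounding the maximum antichain (under inclusion) among connected subsets of $D_n$ containing the leaf $s_i$. For $s_1$ (symmetrically $s_2$), deleting $s_1$ from $D_n$ leaves a path, so every connected subset containing $s_1$ is either $\{s_1\}$ or of the form $\{s_1\} \cup [s_a, s_b]$ with $a \in \{2,3\}$ and $b \in \{3, \ldots, n\}$; because $a$ takes only two values and subsets sharing the same $a$ are totally ordered by $b$, any antichain has at most two elements. For $s_n$, the analogous parameterization is a pair $(m, T)$ with $m \in \{3, \ldots, n\}$ the left extent on the main path and $T \subseteq \{s_1, s_2\}$ the branching (non-empty only when $m = 3$); again the maximum antichain has size $2$, realized uniquely by $\{[s_1, s_n], [s_2, s_n]\}$.

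For part (b), part (a) combined with $s_n$ not being critical gives $|\mcD_{s_n}| = 2$, and uniqueness of the $2$-element antichain forces $\mcD_{s_n} = \{[s_1, s_n], [s_2, s_n]\}$. To rule out any additional block, I would suppose $B \in \mcD$ with $s_n \notin B$: Lemma \ref{L:staircaseprops}(b) applied to $B$ against $[s_1, s_n]$ and $[s_2, s_n]$ forces $s_1, s_2 \in B$, and connectedness then gives $B = \{s_1, s_2\} \cup [s_3, s_k]$ for some $k < n$ (already impossible for $n = 3$, since $\{s_1, s_2\}$ is disconnected in $D_3$). Since $D_n$ is a tree, Lemma \ref{L:staircase_trees} makes $\mcD_{s_1} = \{B, [s_1, s_n]\}$ and $\mcD_{s_2} = \{B, [s_2, s_n]\}$ saturated chains in $\mcD$, while the three blocks $B, [s_1, s_n], [s_2, s_n]$ all lie in the chain $\mcD_{s_3}$. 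A short case analysis on the possible orderings shows that every placement of $B$ either violates the saturatedness of $\mcD_{s_1}$ or $\mcD_{s_2}$, or forces one of $[s_1, s_n], [s_2, s_n]$ to be extremal in no $\mcD_r$, violating property~(4) of Definition~\ref{D:staircase}. This final case analysis is the main obstacle; the antichain bound in part (a) is a straightforward consequence of the tree structure of $D_n$ once the relevant connected subsets are enumerated.
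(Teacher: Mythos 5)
Your proof is correct in substance, but be aware that the paper does not actually supply a proof of this lemma: it only asserts that the statement ``follows immediately from property (4) of Definition \ref{D:staircase}.'' So your argument is a genuine filling-in rather than a reproduction of the paper's reasoning. Your route --- bounding $|\mcD_{s_i}|$ by the maximum inclusion-antichain among connected subsets of $D_n$ containing the leaf $s_i$ (legitimate by Lemma \ref{L:staircaseprops}(b)), and then, for part (b), combining the saturation of $\mcD_{s_1}$ and $\mcD_{s_2}$ from Lemma \ref{L:staircase_trees} with property (4) to exclude a third block --- does go through. In particular, the unique two-element antichain of connected sets containing $s_n$ is $\{[s_1,s_n],[s_2,s_n]\}$, which is exactly what part (b) requires, and your final case analysis closes: if $B\supseteq\{s_1,s_2,s_3\}$ is not the middle element of the three, saturation of $\mcD_{s_1}$ or $\mcD_{s_2}$ fails; if $[s_1,s_n]\prec B\prec[s_2,s_n]$, then $[s_1,s_n]$ is the maximum of no $\mcD_r$ (it lies below $B$ in $\mcD_{s_1}$, below $[s_2,s_n]$ in $\mcD_{s_n}$, and below both in $\mcD_r$ for $r\in\{s_3,\ldots,s_{n-1}\}$), violating property (4). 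Note also that $|\mcD_{s_1}|\leq 2$ rules out two such blocks $B$, so the exclusion of one block really does finish the argument.

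Two small repairs are needed in part (a). First, under the paper's stated convention $s_3$ is treated as a leaf when $n=3$, so $D_3$ has no non-leaf vertices and your opening sentence does not force $|\mcD|\geq 2$ there; indeed $\mcD=\{\{s_1,s_2,s_3\}\}$ is a legitimate element of $Z_D(3)$ with a single block. It has three critical points, so the conclusion survives, but the case should be recorded separately. Second, a finite poset with at least two elements need not possess ``the'' maximum and ``the'' minimum block; the correct statement is that when $|\mcD|\geq 2$ there are at least two distinct extremal (maximal or minimal) blocks --- otherwise every block would lie both above and below a single extremal block and hence equal it --- and each contains a critical point by Lemma \ref{L:extreme_crits}, with distinct blocks yielding distinct critical points since $|\mcD_s|=1$ pins $s$ to a single block. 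Both fixes are routine.
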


\begin{example}\label{Ex:typeD_6}
    We illustrate Lemma \ref{L:D_crit_pt_bound} by three examples in $Z_D(6)$.  The critical sets are $\{s_1,s_6\}$, $\{s_1,s_2\}$ and $\{s_1,s_2,s_6\}$ respectively.
    \begin{equation*}
        \begin{tikzpicture}[scale=0.4]
            \planepartitionD{6}{
               {0,0,1,2,1},
                {0,1,1,1,1},
                {1,1,1,1,0}}
        \end{tikzpicture}\quad, \quad
        \begin{tikzpicture}[scale=0.4]
            \planepartitionD{6}{
                {1,1,1,2,0},
                {1,1,1,1,1}}
        \end{tikzpicture}\quad, \quad
        \begin{tikzpicture}[scale=0.4]
            \planepartitionD{6}{
                {0,0,1,2,0},
                {1,1,1,1,0},
                {0,1,1,1,1}}
        \end{tikzpicture}
    \end{equation*}
\end{example}

Let $Z_D^{\pm}(n):=Z_{\Gamma}^{\pm}(n)$ and note that $$Z_D^+(n)\cup Z_D^-(n)\subset Z_D(n),$$ although equality doesn't hold as in the type $A$ case (see Example \ref{Ex:typeD_6}).  To enumerate $Z_D(n)$, we partition

\begin{equation}\label{Eq:typeD_partition}
Z_D(n)=Z_{D1}(n)\sqcup Z_{D2}(n)\sqcup Z_{D3}(n)
\end{equation} where

\begin{align*}
Z_{D1}(n)&:=\{\mcD\in Z_D(n)\ |\ \mcD_{s_1}\cap\mcD_{s_2}\neq\emptyset \} \text{ and}\\
Z_{D2}(n)\sqcup Z_{D3}(n)&:=\{\mcD\in Z_D(n)\ |\ \mcD_{s_1}\cap\mcD_{s_2}=\emptyset\}.
\end{align*}

Lemma \ref{L:D_crit_pt_bound} implies that if $\mcD\in  Z_{D2}(n)\sqcup Z_{D3}(n)$, then $\mcD$ has unique blocks $B_0'$ and $B_0''$ in $\mcD_{s_1}$ and $\mcD_{s_2}$ respectively.  To distinguish the sets $Z_{D2}(n)$ and $Z_{D3}(n)$, we consider the relationship between these two blocks.
Define \begin{align*}
Z_{D2}(n)&:=\{\mcD\in Z_{D}(n)\ |\ \text{$\{B_0',B_0''\}$ is saturated} \} \text{ and}\\
Z_{D3}(n)&:=\{\mcD\in Z_{D}(n)\ |\ \text{$\{B_0',B_0''\}$ is not saturated} \}
\end{align*}

Examples of staircase diagrams in each of $Z_{D1}(n)$, $Z_{D2}(n)$, and $Z_{D3}(n)$ are given in Example \ref{Ex:typeD_6}.  We enumerate the sets $Z_{D1}(n)$, $Z_{D2}(n)$, and $Z_{D3}(n)$ separately.

\subsection{Enumerating $Z_{D1}(n)$:}
Suppose that $\mcD\in Z_{D1}(n)$, so that $\mcD$ has an element containing both $s_1$ and $s_2$.  Lemma \ref{L:D_crit_pt_bound} implies that $\mcD_{s_1}\cap\mcD_{s_2}$ contains a unique block $B_0\in\mcD$.
\begin{lemma}\label{L:D_case1_chain}
    Let $\mcD\in Z_{D1}(n)$.  Then $B_0$ is an extremal block of $\mcD$.  Moreover $\mcD$ is a chain.
\end{lemma}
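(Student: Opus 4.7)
I begin with the structural observation that, since $B_0$ is connected and contains both $s_1$ and $s_2$ (each adjacent only to $s_3$ in the Dynkin graph of $D_n$), $B_0$ must also contain $s_3$, so $B_0 = \{s_1, s_2, s_3, \ldots, s_k\}$ for some $3 \le k \le n$. If $k = n$, Lemma~\ref{L:staircaseprops}(b) forces $\mcD = \{B_0\}$ and both conclusions are trivial; I will assume $k < n$.

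For extremality, I argue by contradiction: if $B_0$ is neither minimal nor maximal, extend to a maximal $B_{\max} \succ B_0$ and a minimal $B_{\min} \prec B_0$, both distinct from $B_0$ and from each other. Lemma~\ref{L:extreme_crits} supplies a critical point of $\mcD$ in each of $B_{\max}$ and $B_{\min}$, and elementarity forces every such critical point into the leaf set $\{s_1, s_2, s_n\}$. The critical point cannot be $s_1$ or $s_2$: if, say, $s_1$ were critical, then $\mcD_{s_1} = \{B_0\}$, forcing $B_{\max} = B_0$ and contradicting $B_{\max} \succ B_0$. Hence both $B_{\max}$ and $B_{\min}$ contain $s_n$ as a critical point, so $|\mcD_{s_n}|=1$, giving $B_{\max} = B_{\min}$, a contradiction.

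For the chain claim I may assume (after applying $\flip$ if necessary) that $B_0$ is the minimum of $\mcD$. The first task is to rule out the case $|\mcD_{s_1}| = |\mcD_{s_2}| = 2$: if both hold, letting $B^{(1)} \in \mcD_{s_1} \setminus \{B_0\}$ and $B^{(2)} \in \mcD_{s_2} \setminus \{B_0\}$, connectedness and the uniqueness of $B_0$ in $\mcD_{s_1} \cap \mcD_{s_2}$ force $B^{(1)}, B^{(2)} \in \mcD_{s_3}$ with $B^{(1)} \not\ni s_2$ and $B^{(2)} \not\ni s_1$; ordering them in the chain $\mcD_{s_3}$ (say $B^{(1)} \prec B^{(2)}$) then places $B^{(1)}$ strictly between the two elements of $\mcD_{s_2}$ inside $\mcD_{s_2} \cup \mcD_{s_3}$, violating the saturation required by axiom~(3). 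So WLOG $\mcD_{s_2} = \{B_0\}$.

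The plan is then to reduce to the type $A$ case. Deleting $s_2$ from the Dynkin diagram of $D_n$ leaves the path $\Gamma' = s_1{-}s_3{-}s_4{-}\cdots{-}s_n$ of type $A_{n-1}$; since $s_2$ appears only in $B_0$, replacing $B_0$ by $B_0 \setminus \{s_2\}$ (after first merging $B_0$ with its unique cover in $\mcD_{s_1}$ when $|\mcD_{s_1}|=2$) produces a candidate staircase diagram $\mcD^\star$ on $\Gamma'$. I would then verify that $\mcD^\star$ is fully supported and elementary on $\Gamma'$, so Lemma~\ref{L:chain_diagrams} gives that $\mcD^\star$ is a chain, from which $\mcD$ inherits the chain property. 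The main obstacle will be the case $|\mcD_{s_1}|=2$: saturation of $\mcD_{s_1}$ in $\mcD_{s_1}\cup\mcD_{s_3}$ forces the extra block $B^{(1)}$ to cover $B_0$ immediately and to properly contain $B_0 \setminus \{s_2\}$, so the naive replacement violates Lemma~\ref{L:staircaseprops}(b); the merging step has to be carried out carefully to preserve full support and elementarity on $\Gamma'$, and in particular to avoid creating new critical points at non-leaf vertices of $\Gamma'$.
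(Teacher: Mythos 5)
Your extremality argument is correct and complete, and it is in fact a cleaner route than the paper's: the paper obtains extremality of $B_0$ only as a byproduct of its chain analysis, whereas you get it directly from Lemma~\ref{L:extreme_crits} together with the observation that a critical leaf lying in a block forces that block to be the unique element of $\mcD$ at that leaf. Your preliminary reductions for the chain claim (ruling out $|\mcD_{s_1}|=|\mcD_{s_2}|=2$ via the saturation axiom, so that without loss of generality $\mcD_{s_2}=\{B_0\}$) are also sound, and match the paper's normalization that one of $s_1,s_2$ is a critical point.

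The gap is in the reduction to type $A$, and it is exactly the obstacle you flag but do not resolve. Deleting $s_2$ from $B_0$ and passing to the path on $\{s_1,s_3,\ldots,s_n\}$ fails when $|\mcD_{s_1}|=2$: the second block $B^{(1)}\in\mcD_{s_1}$ and $B_0\setminus\{s_2\}$ are then both intervals of that path with left endpoint $s_1$, so one contains the other, violating Lemma~\ref{L:staircaseprops}(b). Your proposed fix of merging $B_0$ with $B^{(1)}$ does not work as stated: take $\mcD=\bigl(\{s_1,s_2,s_3\}\prec\{s_1,s_3,s_4\}\prec\{s_4,s_5\}\bigr)$ in $D_5$, a valid element of $Z_{D1}(5)$ with $B_0$ minimal and $\mcD_{s_1}=\mcD_{s_3}=\{B_0,B^{(1)}\}$; after merging, $s_3$ becomes a critical point at an interior vertex of the path, the resulting diagram is not elementary, and Lemma~\ref{L:chain_diagrams} cannot be invoked. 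Moreover, even where the merged diagram is an elementary chain, collapsing two blocks into one means chain-ness does not automatically transfer back to $\mcD$. The paper sidesteps all of this by deleting vertices rather than merging blocks: in your normalization, delete $s_2$ from $B_0$, and when $s_1$ is not critical also delete $s_1$ from $B^{(1)}$; the two modified blocks are then non-nested precisely because $B^{(1)}\not\subseteq B_0$ in $\mcD$, and a further case split on $|\mcD_{s_3}|$ handles elementarity. Some version of this extra bookkeeping is genuinely required, so as written your plan does not yet establish that $\mcD$ is a chain.
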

\begin{proof}
      Since $B_0$ is connected, we necessarily have $s_3\in B_0$.  Without loss of generality, assume that $s_1$ is a critical point of $\mcD$.  Define $\mcD'$ by replacing $B_0$ with $B_0\setminus\{s_1\}$. If $s_2$ is a critical point of $\mcD$, then $\mcD'$ is a elementary staircase diagram of type $A$ with critical set $\{s_2,s_n\}$.  By Lemma \ref{L:chain_diagrams}, $\mcD'$ is a chain and hence $\mcD$ must also be a chain.  Since $s_{2}\in B_0$, we must have that $B_0$ is extremal in $\mcD$.  If $s_2$ is not a critical point of $\mcD$, then let $B_1$ denote the unique block in $\mcD_{s_2}\setminus \{B_0\}$.  Define $\mcD''$ by replacing $B_1$ with $B_1\setminus\{s_2\}$ in $\mcD'$.  If $|\mcD_{s_3}|=2$, then $\mcD''$ is an elementary chain of type $A$ and the lemma follows by the previous argument.  Lastly, if $|\mcD_{s_3}|>2$ and $s_2$ is not a critical point of $\mcD$, then $\mcD\setminus\{B_0\}$ is an elementary chain of type $A$.  Since $\mcD_{s_2}$ is saturated, either $B_0$ or $B_1$ is extremal in $\mcD$. If $B_0$ is extremal, then we are done.  If $B_1$ is extremal, then $B_1$ contains a critical point by Lemma \ref{L:extreme_crits}, implying that $s_n\in B_1$.  Since $B_1$ is connected, we have $B_1=[s_2,s_n]$ and thus $\mcD=\{B_0,B_1\}$.  Since $\mcD$ has only two elements, $B_0$ is again extremal.  This completes the proof.
\end{proof}

Lemma \ref{L:D_case1_chain} implies that $Z_{D1}(n)\subset Z_D^+(n)\cup Z_D^-(n)$.  In particular, if $\mcD\in Z_{D1}(n)\cap Z_D^+(n)$, then $B_0$ must be the unique minimal block.  If $n\geq 4$, then Lemmas \ref{L:elementarychains_disjoint} and \ref{L:D_case1_chain} imply that

\begin{equation}\label{Eq:typeD2.1}
|Z_{D1}(n)|=2|Z_{D1}(n)\cap Z_D^+(n)|.
\end{equation}

\begin{proposition}\label{P:D_case1}
    $|Z_{D1}(3)|=1$, and if $n\geq 4$ then $|Z_{D1}(n)|=4\cc_{n-2}-2\cc_{n-3}$.

\end{proposition}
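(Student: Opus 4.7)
For $n=3$, connectedness and full support force $\mcD$ to be the single-block diagram $\{\{s_1,s_2,s_3\}\}$, giving $|Z_{D1}(3)|=1$. For $n\geq 4$, by Lemma \ref{L:D_case1_chain} every $\mcD\in Z_{D1}(n)$ is a chain with extremal block $B_0=\mcD_{s_1}\cap\mcD_{s_2}$. The plan is first to show $\mcD\in Z_D^+(n)\cup Z_D^-(n)$, using Lemma \ref{L:D_crit_pt_bound} and the saturation of $\mcD_{s_1}$ in $\mcD_{s_1}\cup\mcD_{s_3}$ to argue that if $B_0=\min(\mcD)$ but $s_n$ lies below $\max(\mcD)$, then some intermediate block yields a saturation violation. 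Combined with Lemma \ref{L:elementarychains_disjoint}---the only potential element of $Z_D^+(n)\cap Z_D^-(n)$ would have $B_0=S_n$, which fails elementarity for $n\geq 4$ since $s_3$ would be an interior critical point---this gives $|Z_{D1}(n)|=2|Z_{D1}(n)\cap Z_D^+(n)|$, reducing the problem to showing $|Z_{D1}(n)\cap Z_D^+(n)|=2\cc_{n-2}-\cc_{n-3}$.

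Writing the chain as $B_0\prec B_1\prec\cdots\prec B_m$ with $B_0=\{s_1,s_2\}\cup[s_3,s_k]$, the bounds $|\mcD_{s_1}|,|\mcD_{s_2}|\leq 2$ together with saturation imply that only $B_1$ can meet $\{s_1,s_2\}$, and never contains both; moreover, Lemma \ref{L:staircaseprops}(b) forces any such $B_1$ to have the form $\{s_1\}\cup[s_3,s_l]$ (or the symmetric $\{s_2\}\cup[s_3,s_l]$) with $l>k$. This partitions $Z_{D1}(n)\cap Z_D^+(n)$ into three disjoint subsets: \emph{Type~0} (no $B_i$ with $i\geq 1$ meets $\{s_1,s_2\}$), \emph{Type~1} ($B_1\ni s_1$), and \emph{Type~2} (the mirror image under $s_1\leftrightarrow s_2$).

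The heart of the proof consists of two bijections to type-$A$ elementary chains. The first, $\phi_0\colon\text{Type~0}\to Z_A^+(n-1)$ on the path $s_2,\ldots,s_n$, deletes $s_1$:
\begin{equation*}
\phi_0(\{B_0,B_1,\ldots,B_m\})=\{[s_2,s_k],B_1,\ldots,B_m\}.
\end{equation*}
The second, $\phi_1\colon\text{Type~0}\cup\text{Type~1}\to Z_A^+(n)$ on $s_1,\ldots,s_n$, splits or merges at the base of the chain:
\begin{equation*}
\phi_1(\mcD)=\begin{cases}\{[s_1,s_2],[s_2,s_k],B_1,\ldots,B_m\} & \text{if }\mcD\in\text{Type~0},\\ \{[s_1,s_k],[s_2,s_l],B_2,\ldots,B_m\} & \text{if }\mcD\in\text{Type~1}.\end{cases}
\end{equation*}
The key elementarity check is that $s_1$ becomes a critical leaf while $s_2$, which is critical in $\mcD$, now appears in two A-type blocks and is no longer critical; the inequality $l>k$ in Type~1 is precisely what prevents $[s_1,s_k]$ and $[s_2,s_l]$ from being nested. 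The inverse recovers the type from the minimal A-type block $B_0^A=[s_1,s_j]$: $j=2$ corresponds to Type~0 with $k$ read from $B_1^A=[s_2,s_k]$, while $j\geq 3$ corresponds to Type~1 with $k=j$ and $l$ read from $B_1^A=[s_2,s_l]$. The symmetric construction yields the bijection $\text{Type~0}\cup\text{Type~2}\to Z_A^+(n)$.

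Since $|Z_A^+(m)|=\cc_{m-2}$ for $m\geq 3$ by Proposition \ref{P:chain_A_genseries}, the bijections give $|\text{Type~0}|=\cc_{n-3}$ and $|\text{Type~0}\cup\text{Type~1}|=|\text{Type~0}\cup\text{Type~2}|=\cc_{n-2}$. Inclusion--exclusion produces $|Z_{D1}(n)\cap Z_D^+(n)|=2\cc_{n-2}-\cc_{n-3}$, and doubling gives the claimed $|Z_{D1}(n)|=4\cc_{n-2}-2\cc_{n-3}$. The main obstacle will be the bookkeeping for $\phi_1$: verifying that containment, connectedness, saturation, and elementarity all transfer cleanly between the D-type and A-type axioms through the merging and splitting of $B_0$ and $B_1$.
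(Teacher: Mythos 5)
Your argument is correct and reaches the right count, but it takes a genuinely different route from the paper. The paper also reduces to $|Z_{D1}(n)\cap Z_D^+(n)|$ via the flip involution, but then identifies the three rank-$4$ seeds $\mcG$, $\mcG'$, $\mcG''$, invokes Lemma \ref{L:Catalan_almostsurjective} to show that $Z_{D1}(n)\cap Z_D^+(n)=\mathfrak{G}_{n-4}(\mcG)\cup\mathfrak{G}_{n-4}(\mcG')\cup\mathfrak{G}_{n-4}(\mcG'')$, and counts with the Catalan-triangle recursion of Proposition \ref{P:Catalan_gen}(3),(4); this has the virtue of reusing exactly the machinery that handles $Z_{D2}$ and $Z_{D3}$ later. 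You instead classify the chains by how the fork vertices $s_1,s_2$ are distributed among blocks (your Types $0$, $1$, $2$) and build explicit bijections onto $Z_A^+(n-1)$ and $Z_A^+(n)$, finishing by inclusion--exclusion with $|Z_A^+(m)|=\cc_{m-2}$; this is more self-contained and makes transparent why the answer is a $\Z$-combination of type-$A$ Catalan counts. Two small points you should tighten: (i) for the inverse of $\phi_1$ to be defined on all of $Z_A^+(n)$ you must rule out that the minimal block of an elementary chain is the singleton $\{s_1\}$ (if it were, saturation of $\mcE_{s_2}$ plus non-containment would force $|\mcE_{s_2}|=1$, contradicting elementarity since $s_2$ is interior in type $A_n$ --- the same argument shows the minimal block is $[s_2,s_k]$ with $k\geq 3$ in the $\phi_0$ inverse); and (ii) the cleanest way to see $Z_{D1}(n)\subseteq Z_D^+(n)\cup Z_D^-(n)$ is not a ``saturation violation'' but Lemma \ref{L:extreme_crits}: the extremal block opposite $B_0$ must contain a critical leaf, and that leaf cannot be $s_1$ or $s_2$ (else it would force that block to equal $B_0$), so it is $s_n$. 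With those details filled in, the proof is complete.
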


\begin{proof}
     First, note that $Z_{D1}(3)$ contains the single staircase diagram $\{\{s_1,s_2,s_3\}\}$ and hence $|Z_{D1}(3)|=1$.   By Equation \eqref{Eq:typeD2.1}, it suffices to show that $|Z_{D1}(n)\cap Z_D^+(n)|=2c_{n-1}-c_{n-2}$.  Recall that if $\mcD\in Z_D^+(n)$, then $B_{\mcD}\in\mcD$ denotes the unique block containing $s_n$.  The set $Z_{D1}(4)\cap Z_D^+(4)$ has three staircase diagrams.  In particular, we have $$\mcG:=\{\{s_1,s_2,s_3\}\prec[s_3,s_4]\},$$ which has $|B_{\mcG}|=2$, and the two diagrams $$\mcG':=\{\{s_1,s_2,s_3\}\prec[s_1,s_4]\}\quad\text{and}\quad\mcG'':=\{\{s_1,s_2,s_3\}\prec[s_2,s_4]\},$$
    which have $|B_{\mcG'}|=|B_{\mcG''}|=3$.  Since parts (1) and (2) of Lemma \ref{L:Catalan_almostsurjective} are true for all $\mcD\in Z_{D1}(n)\cap Z_D^+(n)$, we have
    $$Z_{D1}(n)\cap Z_D^+(n)=\mathfrak{G}_{n-4}(\mcG)\cup\mathfrak{G}_{n-4}(\mcG')\cup\mathfrak{G}_{n-4}(\mcG'').$$
    Proposition \ref{P:Catalan_gen} parts (3) and (4) imply that
    \begin{align*}
        |Z_{D1}(n)\cap Z_D^+(n)|&=|\mathfrak{G}_{n-4}(\mcG)\cup\mathfrak{G}_{n-4}(\mcG')\cup\mathfrak{G}_{n-4}(\mcG'')|\\
        &=|\mathfrak{G}_{n-4}(\mcG)\cup\mathfrak{G}_{n-4}(\mcG')|+|\mathfrak{G}_{n-4}(\mcG)\cup\mathfrak{G}_{n-4}(\mcG'')|-|\mathfrak{G}_{n-4}(\mcG)|\\
        &=2\cc_{n-2}-\cc_{n-3}
    \end{align*}
\end{proof}

\subsection{Enumerating $Z_{D2}(n)$:}  Recall that if $\mcD\in  Z_{D2}(n)$, then $\mcD$ has unique blocks $B_0'$ and $B_0''$ in $\mcD_{s_1}$ and $\mcD_{s_2}$ respectively.  Since either $B_0'\prec B_0''$ or $B_0''\prec B_0'$, we can define the set
\begin{align*}
Z^{\circ}_{D2}(n)&:=\{\mcD\in Z_{D2}(n)\ |\ B_0'\prec B_0''\}
\end{align*}
It is easy to see that $|Z_{D2}(n)|=2|Z^{\circ}_{D2}(n)|$ and hence we focus on enumerating $Z_{D2}^{\circ}(n)$.

\begin{lemma}\label{L:D_case2.1_chain}
    If $\mcD\in Z^{\circ}_{D2}(n)$, then $\mcD$ is a chain.
\end{lemma}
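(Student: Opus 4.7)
The plan is to reduce to Lemma \ref{L:D_case1_chain} by merging the two distinguished blocks into one. Given $\mcD \in Z^{\circ}_{D2}(n)$, I would set $B^{\star} := B_0' \cup B_0''$ and form
\[
\mcD^{\star} := (\mcD \setminus \{B_0', B_0''\}) \cup \{B^{\star}\},
\]
with the partial order inherited from $\mcD$ by placing $B^{\star}$ in the two consecutive positions previously held by the covering pair $B_0' \prec B_0''$. The first step is to verify that $\mcD^{\star}$ is a staircase diagram over $\Gamma_n$: connectedness of $B^{\star}$ is immediate from the covering condition (part (1) of Definition \ref{D:staircase}); the chain and saturation properties (parts (2) and (3)) are inherited from $\mcD$ after collapsing a consecutive cover inside each chain $\mcD_s$; and the min/max property (4) for $B^{\star}$ holds via $s_1$ and $s_2$, since $\mcD^{\star}_{s_1} = \mcD^{\star}_{s_2} = \{B^{\star}\}$. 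The non-containment statement of Lemma \ref{L:staircaseprops}(b) for $B^{\star}$ follows from the observation that any connected $B \subseteq B^{\star} \setminus \{s_1, s_2\}$ is a sub-interval of the path $\{s_3, \ldots, s_n\}$, hence already lies in whichever of $B_0'$ or $B_0''$ reaches further into $\{s_4, \ldots, s_n\}$, contradicting Lemma \ref{L:staircaseprops}(b) in $\mcD$.

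Next, I would show $\mcD^{\star} \in Z_{D1}(n)$. Full support is immediate, and $B^{\star}$ contains both $s_1$ and $s_2$. The merge decreases $|\mcD_s|$ only at vertices $s \in B_0' \cap B_0''$, so the only non-leaf vertex that could become critical in $\mcD^{\star}$ is $s_3$, and only in the exceptional case where $s_3 \in B_0' \cap B_0''$ and $|\mcD_{s_3}| = 2$. I would handle this case directly: if $\mcD_{s_3} = \{B_0', B_0''\}$ then every other block of $\mcD$ is supported in $\{s_4, \ldots, s_n\}$, and the saturation of $\mcD_{s_3}$ inside $\mcD_{s_3} \cup \mcD_{s_4}$ prohibits any block strictly between $B_0'$ and $B_0''$ in the ambient order. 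The resulting sub-collection on $\{s_4, \ldots, s_n\}$ (together with the portions of $B_0'$ and $B_0''$ reaching into that sub-path) is then a fully-supported elementary type $A$ staircase diagram, hence a chain by Lemma \ref{L:chain_diagrams}; concatenating with the cover $B_0' \prec B_0''$ shows $\mcD$ itself is a chain in this case.

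In the generic case $\mcD^{\star}$ is elementary and belongs to $Z_{D1}(n)$, so Lemma \ref{L:D_case1_chain} yields that $\mcD^{\star}$ is a chain. Since splitting $B^{\star}$ back into the covering pair $B_0' \prec B_0''$ at the same position preserves the total order, $\mcD$ itself is a chain. The main obstacle I anticipate is the exceptional case analysis when the merge produces an interior critical point at $s_3$: there one must argue chain-ness directly from the structure of the remaining blocks on the sub-path $\{s_4, \ldots, s_n\}$ together with the forced saturation conditions across $s_3$ and $s_4$, as well as carefully handle the small bookkeeping when $B_0'$ or $B_0''$ is a singleton so that $s_3 \in B^{\star}$ arises from only one of the two original blocks.
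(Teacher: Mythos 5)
Your reduction hinges on the claim that after merging $B_0'$ and $B_0''$ into $B^{\star}$ the only non-leaf vertex that can become critical is $s_3$, and this claim is false; it breaks the argument at its central step. The count $|\mcD_s|$ drops by one at every vertex of $B_0'\cap B_0''$, and this intersection is an interval $[s_3,s_j]$ that can be arbitrarily long. Concretely, take $n=5$ and
\[
\mcD=\bigl(\{s_1,s_3,s_4\}\prec\{s_2,s_3,s_4\}\prec\{s_5\}\bigr).
\]
This is a fully supported elementary diagram with critical set $\{s_1,s_2,s_5\}$ and with $\{B_0',B_0''\}$ saturated, so $\mcD\in Z^{\circ}_{D2}(5)$; but $\mcD^{\star}=\bigl(\{s_1,s_2,s_3,s_4\}\prec\{s_5\}\bigr)$ has $|\mcD^{\star}_{s_3}|=|\mcD^{\star}_{s_4}|=1$, so $s_4$ is an interior critical point, $\mcD^{\star}$ is not elementary, $\mcD^{\star}\notin Z_{D1}(5)$, and Lemma \ref{L:D_case1_chain} cannot be invoked. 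Your ``exceptional case'' patch is tailored to $s_3$ alone and does not survive either: when $B_0'\cap B_0''$ reaches past $s_3$, the traces of $B_0'$ and $B_0''$ on $[s_4,s_n]$ are nested (or equal) intervals based at $s_4$, so the restricted collection violates Lemma \ref{L:staircaseprops}(b) and is not a staircase diagram; and even after discarding one trace the result can have interior critical points, so Lemma \ref{L:chain_diagrams} is not available. Since $B_0'\cap B_0''\neq\emptyset$ is the typical situation for these diagrams, this is not an edge case but the heart of the lemma.

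The paper's proof avoids merging altogether. After disposing of the case where $s_n$ is not critical via Lemma \ref{L:D_crit_pt_bound}, it observes that the critical points of $\mcD$ lie in the three blocks $B_0'$, $B_0''$, and the unique block $B_{\mcD}$ containing $s_n$; assuming without loss of generality that $B_0'\setminus\{s_1\}\subset B_0''$, it deletes $B_0'$ and uses saturation of $\{B_0',B_0''\}$ to check that all critical points of $\mcD\setminus\{B_0'\}$ lie in just two blocks, whence by Lemma \ref{L:extreme_crits} that diagram has a unique maximal and a unique minimal block and is a chain; reinserting $B_0'$ immediately below $B_0''$ then preserves being a chain. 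If you want to salvage your approach, you would need a separate direct argument for the case $\mcD_{s_i}=\{B_0',B_0''\}$ at interior vertices $s_i$, at which point deleting the smaller of the two blocks is simpler than merging them.
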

\begin{proof}
    If $s_n$ is not a critical point of $\mcD$ then we are done by Lemma \ref{L:D_crit_pt_bound}.  Otherwise, let $B_{\mcD}$ denote the unique block in $\mcD_{s_n}$.  The critical points of $\mcD$ are contained in $B_0',B_0''$, and $B_{\mcD}$.  Without loss of generality suppose that $B_0'\setminus\{s_1\}\subset B_0''$ and let $\mcD'=\mcD\setminus\{B_0'\}$.  Since $\{B_0',B_0''\}$ is saturated, the critical points of $\mcD'$ are contained in $B_0''$ and $B_{\mcD}$.  Since all extremal blocks contain critical points, $\mcD'$ is a chain and hence $\mcD$ is a chain.
\end{proof}
Several staircase diagrams in $Z^{\circ}_{D2}(n)$ are illustrated in Example \ref{Ex:Catalan_genD}.  Lemma \ref{L:D_case2.1_chain} implies that $Z^{\circ}_{D2}(n)\subset Z^+_D(n)\cup Z^-_D(n)$.  Note that the intersection $$Z^{\circ}_{D2}(n)\cap Z^+_D(n)\cap Z^-_D(n)$$ contains the single staircase diagram $\{[s_1,s_n]\prec[s_2,s_n]\}$ and hence
$$|Z^{\circ}_{D2}(n)|=2|Z^{\circ}_{D2}(n)\cap Z^+_D(n)|-1.$$

\begin{proposition}\label{P:D_case2.1}
    $|Z^{\circ}_{D2}(3)|=3$, and if $n\geq 4$ then $\displaystyle |Z^{\circ}_{D2}(n)|=1+2\sum_{k=0}^{n-2}\cc_k$.
\end{proposition}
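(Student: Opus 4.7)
For $n=3$ we enumerate $Z^{\circ}_{D2}(3)$ directly. Any $\mcD = \{B_0' \prec B_0''\}$ in $Z^{\circ}_{D2}(3)$ requires $s_1 \in B_0'$, $s_2 \in B_0''$, and by part (1) of Definition \ref{D:staircase} the union $B_0' \cup B_0''$ must be connected in $D_3$. Since the only path from $s_1$ to $s_2$ in the $D_3$ graph passes through $s_3$, the candidate $\{\{s_1\} \prec \{s_2\}\}$ is excluded. The remaining three possibilities
\begin{equation*}
\{\{s_1,s_3\}\prec\{s_2,s_3\}\},\quad \{\{s_1\}\prec\{s_2,s_3\}\},\quad \{\{s_1,s_3\}\prec\{s_2\}\}
\end{equation*}
are each readily verified to be valid staircase diagrams, giving $|Z^{\circ}_{D2}(3)|=3$.

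For $n \geq 4$, the identity $|Z^{\circ}_{D2}(n)| = 2|Z^{\circ}_{D2}(n) \cap Z^+_D(n)|-1$ from the paragraph preceding the proposition reduces the problem to showing that $|Z^{\circ}_{D2}(n) \cap Z^+_D(n)| = 1 + \sum_{k=0}^{n-2}\cc_k$. The key structural observation will be that every $\mcD \in Z^{\circ}_{D2}(n) \cap Z^+_D(n)$ satisfies $B_0' \setminus \{s_1\} \subseteq B_0''$; hence $B_0'' = [s_2, s_j]$ for some $3 \leq j \leq n$, and $B_0' = \{s_1\} \cup X$ where $X$ is an initial subinterval of $[s_3, s_j]$ (possibly empty). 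To prove this inclusion I argue by contradiction: let $i$ be the smallest index with $s_i \in B_0' \setminus B_0''$. Then $s_i$ must lie in some later block $B_m$ with $m \geq 1$, either because $s_i$ is an interior non-leaf of $D_n$ forced to be non-critical by elementarity, or because $s_i = s_n$ and $\mcD \in Z^+_D(n)$ requires $s_n$ to sit in the maximal block. In the chain order $B_0' \prec B_0'' \prec B_1 \prec \cdots$, the block $B_0''$ then lies strictly between $B_0'$ and $B_m$, both of which belong to $\mcD_{s_i}$, while $B_0'' \in \mcD_{s_{i-1}} \setminus \mcD_{s_i}$ with $s_{i-1} \in B_0''$ adjacent to $s_i$; this violates the saturation requirement of part (3) of Definition \ref{D:staircase}.

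The count is completed by stratifying $Z^{\circ}_{D2}(n) \cap Z^+_D(n)$. By Lemma \ref{L:D_crit_pt_bound}, the unique element in which $s_n$ is not critical is the "special" diagram $\{[s_1,s_n] \prec [s_2,s_n]\}$, accounting for the $+1$ in the formula. The remaining diagrams, all of which have $s_n$ critical, are partitioned by the value of $j$: for each $3 \leq j \leq n$, the $j$-stratum consists of those diagrams with $B_0'' = [s_2, s_j]$. Each $j$-stratum can be generated by applying $\mathfrak{G}_{n-j}$ to an appropriate seed in $Z^+_D(j)$ whose maximal block is $[s_2, s_j]$ with $s_j$ a critical point inside a block of size $2$ (allowing for the one extra non-special diagram in the $j=n$ stratum, namely $B_0' = \{s_1, s_3, \ldots, s_{n-1}\}$, $B_0'' = [s_2, s_n]$). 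Proposition \ref{P:Catalan_gen}(1),(3) then attaches a Catalan number $\cc_k$ to each stratum, and summing over $j$ yields $\sum_{k=0}^{n-2}\cc_k$. The main obstacle is identifying the correct seed for each $j$ and verifying disjointness of the resulting Catalan orbits, with particular care at the boundary $j=3$ (near the branch point of the $D_n$ graph, where $s_3$ is not a leaf for $n \geq 4$, which also explains why the general Catalan pattern does not apply at $n=3$) and at $j=n$.
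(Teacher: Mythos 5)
Your base case $n=3$ and your use of the reduction $|Z^{\circ}_{D2}(n)|=2|Z^{\circ}_{D2}(n)\cap Z^+_D(n)|-1$ match the paper, and your structural observation that every $\mcD\in Z^{\circ}_{D2}(n)\cap Z^+_D(n)$ has $B_0'\setminus\{s_1\}\subseteq B_0''$ is correct. But your argument for it is incomplete: you assume the chain begins $B_0'\prec B_0''\prec B_1\prec\cdots$ and that the second block containing the offending vertex $s_i$ lies \emph{above} $B_0''$. Neither is automatic for elements of $Z^{\circ}_{D2}(n)$ --- for instance $(\{s_3,s_4\}\prec\{s_1,s_3\}\prec\{s_2\})$ is a valid element of $Z^{\circ}_{D2}(4)$ in which the second block containing $s_4$ sits \emph{below} $B_0'$ and no saturation violation occurs; it is only the $Z^+_D(n)$ hypothesis (forcing $s_n$ into the maximal block, hence forcing $B_0'$ to be the minimum) that excludes this configuration, and that step is missing.

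The more serious gap is in the count itself. Your stratification by the right endpoint $j$ of $B_0''=[s_2,s_j]$ is not compatible with the operator $\mathfrak{G}_1$: when $B_0''$ is the block containing the last vertex, $\mathfrak{G}_1$ produces one child in which $B_0''$ grows to $[s_2,s_{j+1}]$, so the $\mathfrak{G}$-orbits of any seeds cut \emph{across} your strata rather than filling them. Concretely, for $n=4$ the $j=3$ stratum is $\{\,\{s_1\}\prec[s_2,s_3]\prec[s_3,s_4],\ [s_1,s_3]\prec[s_2,s_3]\prec[s_3,s_4],\ [s_1,s_3]\prec[s_2,s_3]\prec\{s_4\}\,\}$ of size $3$, which is not a Catalan number, so no application of Proposition \ref{P:Catalan_gen} can attach a single $\cc_k$ to it. The paper's proof organizes the set differently: it exhibits the explicit seeds $\mcG'=\{\{s_1\}\prec[s_2,s_3]\}$, $\mcG''=\{[s_1,s_3]\prec[s_2,s_3]\prec[s_3,s_4]\}$, and $\mcG_k=\{[s_1,s_k]\prec[s_2,s_k]\}$ for $3\le k\le n$, shows via Lemma \ref{L:Catalan_almostsurjective} that their $\mathfrak{G}$-orbits exhaust $Z^{\circ}_{D2}(n)\cap Z^+_D(n)$ (disjointness being Lemma \ref{L:Catalan_disjoint}), and reads off $\cc_{n-2}+\cc_{n-3}+\bigl(1+\sum_{k=0}^{n-4}\cc_k\bigr)$ from Proposition \ref{P:Catalan_gen}. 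Identifying these seeds and proving the exhaustion/disjointness is precisely what you defer as ``the main obstacle,'' so the essential content of the proof is not present in your proposal.
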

\begin{proof}
    We will show that $Z^{\circ}_{D2}(n)\cap Z^+_D(n)$ is generated by applying $\mathfrak{G}_p$ to particular staircase diagrams.  Consider the diagrams
    $$\mcG':=\{\{s_1\}\prec[s_2,s_3]\}\in Z^{\circ}_{D2}(3)\cap Z^+_D(3)$$
    and
    $$\mcG'':=\{[s_1,s_3]\prec[s_2,s_3]\prec[s_3,s_4]\}\in Z^{\circ}_{D2}(4)\cap Z^+_D(4).$$
    For any $n\geq 3$ we also define $$\mcG_n:=\{[s_1,s_n]\prec[s_2,s_n]\}\in Z^{\circ}_{D2}(n)\cap Z^+_D(n).$$  Note that $\mcG_4$ is given in Example \ref{Ex:Catalan_genD}.  Observe that $Z^{\circ}_{D2}(3)\cap Z^+_D(3)$ contains only the diagrams $\mcG'$ and $\mcG_3$, so $|Z^{\circ}_{D2}(3)|=3$.  For $n\geq 4$, the diagram $\mcG_n$ is the only element in $Z^{\circ}_{D2}(n)\cap Z^+_D(n)$ for which $s_n$ is not a critical point.  Lemma \ref{L:Catalan_almostsurjective} implies that
    $$\mathfrak{G}_{n-3}(\mcG')\cup\mathfrak{G}_{n-2}(\mcG'')\cup\bigcup_{k=3}^{n}\mathfrak{G}_{n-k}(\mcG_k)= Z^{\circ}_{D2}(n)\cap Z^+_D(n).$$
    If $\mcD=(B_1\prec\cdots\prec B_m)\in Z^{\circ}_{D2}(n)\cap Z^+_D(n)$ with $|B_m|=1$, then $\mcD$ is the unique element of $\mathfrak{G}_{1}(\mcG_{n-1})$.  Proposition \ref{P:Catalan_gen} now implies that
    $$|Z^{\circ}_{D2}(n)\cap Z^+_D(n)|=\cc_{n-2}+\cc_{n-3}+\left(1+\sum_{k=0}^{n-4}\cc_k\right)=1+\sum_{k=0}^{n-2}\cc_k.$$
\end{proof}


\subsection{Enumerating $Z_{D3}(n)$}

Before enumerating $Z_{D3}(n)$, we look take a closer look at the relationship between $Z_{D2}(n)$ and $Z_{D3}(n)$.

\begin{lemma}\label{L:D_case2_saturation}
    If $\mcD\in  Z_{D2}(n)\sqcup Z_{D3}(n)$ and $n\geq 4$, then $B_0'$ and $B_0''$ are comparable.  Furthermore, one of the following is true:
    \begin{enumerate}
        \item The set $\{B_0',B_0''\}$ forms a saturated chain in $\mcD$ and hence $\mcD\in Z_{D2}(n)$.
        \item There is a unique block $B_0\in\mcD_{s_3}$ such that either $\{B_0'\prec B_0\prec B_0''\}$ or $\{B_0'\succ B_0\succ B_0''\}$ form a saturated chain in $\mcD$.
    \end{enumerate}
\end{lemma}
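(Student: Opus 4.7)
The plan is to prove comparability of $B_0'$ and $B_0''$ first, and then analyze the blocks lying strictly between them in $\mcD$. For comparability, I would observe that since $B_0'$ is connected and $s_1$ has only $s_3$ as a neighbor in the Dynkin graph of $D_n$, either $B_0' = \{s_1\}$ or $s_3 \in B_0'$, and symmetrically for $B_0''$. The corner case $B_0' = \{s_1\}, B_0'' = \{s_2\}$ can be excluded using elementariness: any other block in $\mcD_{s_3}$ must be a connected subset containing $s_3$ but avoiding $s_1, s_2$ (since $\mcD_{s_1} = \{B_0'\}$ and $\mcD_{s_2} = \{B_0''\}$), hence an interval $[s_3, s_k]$, and distinct such intervals would be nested, violating Lemma~\ref{L:staircaseprops}(b). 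This forces $|\mcD_{s_3}| \leq 1$ and makes $s_3$ critical, contradicting elementariness since $s_3$ is the branching vertex and thus not a leaf for $n \geq 4$. In the remaining cases, without loss of generality $s_3 \in B_0'$, placing $B_0' \in \mcD_{s_3}$; comparability of $B_0', B_0''$ then follows from the chain $\mcD_{s_3}$ if $s_3 \in B_0''$, and from the chain $\mcD_{s_2} \cup \mcD_{s_3}$ if $B_0'' = \{s_2\}$ (the only remaining possibility when $s_3 \notin B_0''$).

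Assuming $B_0' \prec B_0''$ after flipping if necessary, let $B \in \mcD$ satisfy $B_0' \prec B \prec B_0''$. The central claim is that $s_3 \in B$, so $B$ lies in $\mcD_{s_3}$. Since $s_1 \in B$ would force $B \in \mcD_{s_1} = \{B_0'\}$ and symmetrically $s_2 \notin B$, the block $B$ is a connected subset of $\{s_3, s_4, \ldots, s_n\}$. To see $s_3 \in B$, I argue by contradiction: if $s_3 \notin B$, then $B = [s_a, s_b]$ with $a \geq 4$. Taking a maximal chain $B_0' = C_0 \prec C_1 \prec \cdots \prec C_p = B$ in the Hasse diagram of $\mcD$, axiom~(1) of Definition~\ref{D:staircase} forces each $C_i \cup C_{i+1}$ to be connected; at the first step $j$ where $s_3 \in C_j$ but $s_3 \notin C_{j+1}$, the same arguments that excluded $s_1, s_2$ from $B$ apply to $C_{j+1}$ as well (since $C_{j+1} \succ B_0'$ and $C_{j+1} \prec B_0''$), so the connection between $C_j$ and $C_{j+1}$ must occur at some vertex $s_m$ with $m \geq 4$, placing $C_{j+1} \in \mcD_{s_m}$. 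When $m = 4$, this contradicts the saturation of $\mcD_{s_3}$ inside the chain $\mcD_{s_3} \cup \mcD_{s_4}$ guaranteed by axiom~(3), since $B_0'$ together with either $B_0''$ (when $s_3 \in B_0''$) or the maximum element of $\mcD_{s_3}$ (when $B_0'' = \{s_2\}$) lies in $\mcD_{s_3}$ and sandwiches $C_{j+1}$. Once $B \in \mcD_{s_3}$ is established, uniqueness of the intermediate block follows immediately: two distinct intermediates in $\mcD_{s_3}$ would again be nested intervals $[s_3, s_k]$, contradicting Lemma~\ref{L:staircaseprops}(b).

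The main obstacle is extending the saturation argument to $m \geq 5$, where $\mcD_{s_3} \cup \mcD_{s_4}$ does not directly witness the contradiction. The natural remedy is to induct on the largest index $k$ with $s_k \in B_0' \cap B_0''$ (or $s_k \in B_0'$ in the asymmetric case $B_0'' = \{s_2\}$): elementariness forces each non-leaf vertex $s_i$ with $3 \leq i \leq k$ to lie in at least two blocks of $\mcD$, and axiom~(3) then lets one transport the saturation violation up to the chain $\mcD_{s_k} \cup \mcD_{s_{k+1}}$. Carefully handling the asymmetry between $B_0'$ and $B_0''$ when they extend to different depths along $s_4, s_5, \ldots$, and tracking the auxiliary blocks that must sit in $\mcD_{s_i}$ for each intermediate $i$ to preserve full support and elementariness, is where the bulk of the technical bookkeeping will lie.
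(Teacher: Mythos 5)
Your first paragraph (comparability, the exclusion of the corner case $B_0'=\{s_1\}$, $B_0''=\{s_2\}$, and the observation that any intermediate block avoids $s_1,s_2$) is sound and consistent with the paper. The genuine gap is exactly where you flag it yourself: your saturation argument only eliminates an intermediate block $C_{j+1}$ when $s_4\in C_{j+1}$, because axiom (3) is a purely local statement about $\mcD_{s_3}\cup\mcD_{s_4}$. For an offending block $C_{j+1}=[s_a,s_b]$ with $a\geq 5$ (which forces $[s_3,s_{a-1}]\subseteq C_j$), your argument produces no contradiction, and the induction you sketch is not carried out; it is also organized around the wrong quantity (the largest index in $B_0'\cap B_0''$, rather than the position of the offending block relative to the blocks of $\mcD_{s_3}$ that sandwich it), so it is not clear it would close. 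As written, the proof is incomplete.

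The missing ingredient is the paper's Lemma \ref{L:staircase_trees}: over a tree graph, each $\mcD_s$ is a saturated chain in all of $\mcD$, not merely inside $\mcD_s\cup\mcD_t$ for $t$ adjacent to $s$. Its proof is a short cycle argument using axiom (1), with no induction. With it the lemma collapses: since $n\geq 4$ and $\mcD$ is elementary, $s_3$ is not a critical point, and at most one block of $\mcD_{s_3}$ avoids both $s_1$ and $s_2$ (two such would be nested intervals $[s_3,s_k]$, violating Lemma \ref{L:staircaseprops}(b)), so $|\mcD_{s_3}|\in\{2,3\}$. If $B_0',B_0''\in\mcD_{s_3}$, global saturation of $\mcD_{s_3}$ says every block strictly between them lies in $\mcD_{s_3}$, hence is the unique interval $[s_3,s_k]$ if any exists. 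If instead, say, $B_0''=\{s_2\}$, then any block covering or covered by $B_0''$ must contain $s_3$, so $\mcD_{s_2}\cup\mcD_{s_3}$ is saturated and the same conclusion follows. I recommend proving the tree-saturation statement first and discarding the chain-walking argument entirely.
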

\begin{proof}
    Note that $|\mcD_{s_3}|=2$ or $3$ since $n\geq 4$.  If $|\mcD_{s_3}|=3$, then $B_0', B_0''\in\mcD_{s_3}$.  Lemma \ref{L:staircase_trees} implies that $\mcD_{s_3}$ is a saturated chain and the lemma follows.  If $|\mcD_{s_3}|=2$, then without loss of generality assume that $B_0'\in \mcD_{s_3}$ and $B_0''\notin\mcD_{s_3}$.  Then $B_0''=\{s_2\}$ is only comparable to elements of $\mcD_{s_3}$.  Since $\mcD_{s_2}\cup \mcD_{s_3}$ is a chain and $\mcD_{s_3}$ is saturated, we have that $\mcD_{s_2}\cup \mcD_{s_3}$ is also saturated.  This proves the lemma.
\end{proof}

If $n=3$, then Lemma \ref{L:D_case2_saturation} may fail. In particular, $Z_{D3}(3)$ contains staircase diagrams where the blocks $B_0'$ and $B_0''$ are not comparable. However, it is easy to check that Lemma \ref{L:D_case2_saturation} only fails for the diagrams where $B_0'=\{s_1\},B_0''=\{s_2\},B_0=\{s_3\}$ and $B_0$ is extremal.  Define the set
\begin{align*}
Z^{\circ}_{D3}(n)&:=\{\mcD\in Z_{D3}(n)\ |\ B_0'\prec B_0''\}
\end{align*}

Lemma \ref{L:D_case2_saturation} implies that $|Z_{D3}(n)|=2|Z_{D3}^{\circ}(n)|$ when $n\geq 4$ and $|Z_{D3}(3)|=2+2|Z_{D3}^{\circ}(3)|$.  Hence we focus on enumerating $Z_{D3}^{\circ}(n)$.  Note that staircase diagrams in $Z^{\circ}_{D3}(n)$ are not necessarily chains.  Moreover, if $\mcD\in Z_{D3}(n)$ is a chain, then $s_n$ may not be contained in an extremal block (See Example \ref{Ex:typeD_6}).  These issues are resolved by the next two lemmas.  Recall that $B_0',B_0''$ denote the unique blocks in $\mcD_{s_1},\mcD_{s_2}$ respectively and that $B_0'\prec B_0''$.

\begin{lemma}\label{L:D_case2.2_chain}
    Let $\mcD\in Z^{\circ}_{D3}(n)$.
    \begin{enumerate}
        \item If $B_0'\setminus\{s_1\}\subset B_0''$, then $\mcD\setminus\{B_0'\}$ is a chain.
        \item If $B_0''\setminus\{s_2\}\subset B_0'$, then $\mcD\setminus\{B_0''\}$ is a chain.
    \end{enumerate}
\end{lemma}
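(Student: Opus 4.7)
The plan is to delete $B_0'$ (respectively $B_0''$) from $\mcD$ and prove that the remaining collection is a fully supported elementary staircase diagram over a type $A_{n-1}$ Dynkin subgraph, at which point Lemma \ref{L:chain_diagrams} finishes the argument. I will write out part (1); part (2) follows by the symmetric argument interchanging the roles of $s_1, B_0'$ with $s_2, B_0''$, which is legitimate because the $D_n$ Dynkin graph is invariant under the swap $s_1 \leftrightarrow s_2$.

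Set $\mcD^* := \mcD \setminus \{B_0'\}$. Because $B_0'$ is the unique block containing $s_1$, removing it strips $s_1$ from the support, and the hypothesis $B_0' \setminus \{s_1\} \subseteq B_0''$ ensures every other vertex of $B_0'$ is still covered by $B_0''$, so $\supp(\mcD^*) = \{s_2, s_3, \ldots, s_n\}$, whose induced subgraph is a path of type $A_{n-1}$. Conditions (1)--(3) of Definition \ref{D:staircase} for $\mcD^*$ over this subgraph are inherited directly from $\mcD$. Condition (4) is also preserved: for any $B \in \mcD^*$ we have $s_1 \notin B$, so a vertex $s \in B$ witnessing condition (4) for $\mcD$ must satisfy $s \neq s_1$; either $B_0' \notin \mcD_s$ and nothing changes, or $B_0' \in \mcD_s$ and $B$ lies below (or above) $B_0'$ in the chain $\mcD_s$, so $B$ remains extremal in $\mcD^*_s$.

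The heart of the argument is to show that $\mcD^*$ is elementary, meaning that no interior vertex $s_i$ with $3 \leq i \leq n-1$ is a critical point. Elementarity of $\mcD$ over $D_n$ gives $|\mcD_{s_i}| \geq 2$ for such $s_i$. If $s_i \notin B_0'$, then $\mcD^*_{s_i} = \mcD_{s_i}$ and we are done. If $s_i \in B_0'$, then by hypothesis $s_i \in B_0''$ as well; Lemma \ref{L:D_case2_saturation} provides an intermediate block $B_0$ with $B_0' \prec B_0 \prec B_0''$, and Lemma \ref{L:staircase_trees} (applicable since $D_n$ is a tree) tells us that $\mcD_{s_i}$ is a saturated chain. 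Since $B_0', B_0'' \in \mcD_{s_i}$ sandwich $B_0$, saturation forces $B_0 \in \mcD_{s_i}$, yielding $\{B_0', B_0, B_0''\} \subseteq \mcD_{s_i}$. Consequently $|\mcD^*_{s_i}| \geq 2$, and $s_i$ is not critical in $\mcD^*$.

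With $\mcD^*$ certified as a fully supported elementary staircase diagram on the type $A_{n-1}$ subgraph, Lemma \ref{L:chain_diagrams} yields immediately that $\mcD^*$ is a chain, proving part (1); part (2) is handled by the symmetric argument. The main obstacle is the elementarity verification, which requires combining the containment hypothesis with the saturation of $\mcD_{s_i}$ coming from the tree structure of type $D$, together with the intermediate block $B_0$ guaranteed by the definition of $Z_{D3}$ via Lemma \ref{L:D_case2_saturation}.
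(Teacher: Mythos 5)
Your proof is correct and follows essentially the same route as the paper: delete $B_0'$, observe that what remains is a fully supported elementary staircase diagram over the type $A_{n-1}$ path $[s_2,s_n]$, and invoke Lemma \ref{L:chain_diagrams}. Your elementarity check via the intermediate block $B_0$ of Lemma \ref{L:D_case2_saturation} together with saturation of $\mcD_{s_i}$ (Lemma \ref{L:staircase_trees}) is just a more detailed rendering of the paper's containment $B_0'\setminus\{s_1\}\subset B_0''\setminus\{s_2\}\subset B_0$, which yields the same conclusion that no interior vertex becomes critical.
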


\begin{proof}
    It suffices to prove part (1).  Lemma \ref{L:D_case2_saturation} implies that there is a unique block $B_0\in\mcD_{s_3}$ between $B_0'$ and $B_0''$.  Since $$B_0'\setminus\{s_1\}\subset B_0''\setminus\{s_2\}\subset B_0,$$  the staircase diagram $\mcD\setminus\{B_0'\}$ has critical set $\{s_2,s_n\}$ and hence is a chain by Lemma \ref{L:chain_diagrams}.
\end{proof}

\begin{lemma}\label{L:D_case2.2_base_case}
     Let $\mcD\in Z^{\circ}_{D3}(n)$.  Then the following are equivalent:
     \begin{enumerate}
     \item $\mcD\setminus\{B_0'\}$ and $\mcD\setminus\{B_0''\}$ are both chains.
     \item $s_n$ is not contained in an extremal block of $\mcD$.
     \item $\mcD=(B_0'\prec B_0\prec B_0'')$ with $s_n\in B_0$.
     \end{enumerate}
\end{lemma}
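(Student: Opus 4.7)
My plan is to prove the three-way equivalence by establishing $(3)\Rightarrow(1)$, $(3)\Rightarrow(2)$, $(1)\Rightarrow(3)$, and $(2)\Rightarrow(3)$.  The implications from (3) are essentially immediate: $\mcD=(B_0'\prec B_0\prec B_0'')$ remains a chain after removing either endpoint, giving (1).  For (2), I would verify that neither extremal block contains $s_n$; this follows from connectedness in the tree $D_n$, since if $s_n\in B_0'$ then $B_0'\supseteq [s_1,s_n]=\{s_1,s_3,s_4,\ldots,s_n\}$, and because $B_0\subseteq\{s_3,\ldots,s_n\}$ (as $B_0$ avoids $s_1,s_2$ by uniqueness of $B_0'$ and $B_0''$), we would have $B_0\subseteq B_0'$, contradicting Lemma \ref{L:staircaseprops}(b).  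The case $n=3$ is handled separately by a direct enumeration showing $Z^{\circ}_{D3}(3)=\emptyset$.

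For $(1)\Rightarrow(3)$, I would first observe that $\mcD$ itself is a chain: two blocks in $\mcD\setminus\{B_0'\}$ are comparable by hypothesis, and any pair involving $B_0'$ is handled by the chain $\mcD\setminus\{B_0''\}$ together with $B_0'\prec B_0''$.  Writing $\mcD=(B_1\prec\cdots\prec B_m)$, saturation of $B_0'\prec B_0\prec B_0''$ forces these three to be consecutive, say $B_0'=B_i$, $B_0=B_{i+1}$, $B_0''=B_{i+2}$.  I would then show $m=i+2$ (the bound $i=1$ is symmetric via $\flip$) by assuming $m>i+2$ and deriving a contradiction.  The extremal block $B_m$ contains a critical point by Lemma \ref{L:extreme_crits}, necessarily $s_n$ since $|\mcD_{s_1}|=|\mcD_{s_2}|=1$.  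Since $\mcD\in Z_{D3}$ excludes the two-block configuration of Lemma \ref{L:D_crit_pt_bound}, $s_n$ is critical and $B_m=[s_k,s_n]$ is the unique block containing $s_n$, with $k\geq 3$.  A case analysis on $k$ and on the right endpoint $s_l$ of $B_0''=[s_2,s_l]$ yields contradictions from some combination of: Lemma \ref{L:staircaseprops}(b) forcing $B_0\not\subseteq B_m$ and $B_0\not\subseteq B_0''$; the saturated subchain axiom of Definition \ref{D:staircase}(3) applied to $\mcD_{s_k}$ or $\mcD_{s_{l+1}}$ (which contains $B_0, B_m$ but not the intervening $B_0''$, breaking saturation); and the connectivity of $B_m\cup B_0''$.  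Once $\mcD=\{B_0',B_0,B_0''\}$ is established, $s_n\in B_0$ follows from full support and the connectedness argument used for $(3)\Rightarrow(2)$.

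For $(2)\Rightarrow(3)$, Lemma \ref{L:extreme_crits} and the exclusion of $s_n$ force the extremal blocks of $\mcD$ to lie in $\{B_0',B_0''\}$, so $B_0'$ is the unique minimum and $B_0''$ the unique maximum.  Any extra block $B\in\mcD\setminus\{B_0',B_0,B_0''\}$ would lie strictly between $B_0'$ and $B_0''$ and be incomparable to $B_0$ (by saturation of $B_0'\prec B_0\prec B_0''$), hence disjoint from and non-adjacent to $B_0$ by Lemma \ref{L:staircaseprops}(c).  Writing $B_0=[s_3,s_p]$ with $p\leq n$, this disjointness forces $B\subseteq\{s_{p+2},\ldots,s_n\}$.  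I would then show that the vertex $s_{p+1}$ (if $p<n$) lies in no block of $\mcD$: the connectedness argument from $(3)\Rightarrow(2)$ prevents $B_0'$ or $B_0''$ from extending to $s_{p+1}$ without containing $B_0$, and any other extra block is barred by the same disjointness constraint.  This contradicts full support, forcing $p=n$ and $\mcD=\{B_0',B_0,B_0''\}$ with $s_n\in B_0$.

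The main obstacle is the case analysis in $(1)\Rightarrow(3)$ and $(2)\Rightarrow(3)$ ruling out extra blocks in $\mcD$.  The contradictions arise from a tight interplay between the saturated subchain axiom of Definition \ref{D:staircase}(3), the non-containment property of Lemma \ref{L:staircaseprops}(b), and the restriction that critical points of an elementary diagram of type $D$ lie in the leaves $\{s_1,s_2,s_n\}$.
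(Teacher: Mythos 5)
Your $(2)\Rightarrow(3)$ argument is complete and correct, and it is a genuinely different (and arguably more transparent) route than the paper's: the paper instead invokes Lemma \ref{L:D_case2.2_chain} to assume $\mcD\setminus\{B_0''\}$ is a chain and then uses saturation of $\mcD_{s_3}$ to force $B_0=B_{\mcD}$, whereas you locate the extremal blocks at $B_0'$ and $B_0''$, use saturation of $B_0'\prec B_0\prec B_0''$ plus Lemma \ref{L:staircaseprops}(c) to push any extra block past $s_{p+1}$, and contradict full support. That part I would accept as written.

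There are, however, two genuine problems. First, the claim that $Z^{\circ}_{D3}(3)=\emptyset$ is false: Proposition \ref{P:D_case2.2} gives $|Z^{\circ}_{D3}(3)|=1$, the unique element being $(\{s_1\}\prec\{s_3\}\prec\{s_2\})$. This happens to be harmless here, since your general connectedness argument for $(3)\Rightarrow(2)$ applies verbatim when $n=3$, but the separate ``direct enumeration'' you propose would have produced the wrong set. Second, and more seriously, the entire content of your $(1)\Rightarrow(3)$ direction is deferred to ``a case analysis on $k$ and on the right endpoint $s_l$ of $B_0''$'' that is asserted to ``yield contradictions from some combination of'' three tools. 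That case analysis is precisely the hard part of the route you chose, and it is not carried out: for an extra block $B_m=[s_k,s_n]$ sitting above $B_0''=[s_2,s_l]$ one really does have to split on how $k$ compares with $l$ and with the right endpoint of $B_0$, and the contradiction comes from a different axiom in different cases (containment in some, failure of the saturation condition of Definition \ref{D:staircase}(3) at an interior vertex in others), so naming the ingredients does not substitute for exhibiting the contradiction. Note that this whole branch is avoidable: the paper closes the cycle by proving $(1)\Rightarrow(2)$ directly in two lines --- extremal blocks contain critical points, which lie in $\{s_1,s_2,s_n\}$; if the block containing $s_n$ were minimal in $\mcD$ then $\mcD\setminus\{B_0''\}$ would have two minimal elements, and if it were maximal then $\mcD\setminus\{B_0'\}$ would have two maximal elements --- after which your own $(2)\Rightarrow(3)$ finishes the proof. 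I would recommend replacing your direct $(1)\Rightarrow(3)$ with that short implication.
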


\begin{proof}
Clearly (3) $\Rightarrow$ (1).  We will show that (1) $\Rightarrow$ (2) $\Rightarrow$ (3).  Since $\mcD\in Z^{\circ}_{D3}(n)$, we have $\mcD_{s_3}=(B_0'\prec B_0\prec B_0'')$.  Let $B_{\mcD}$ denote the unique block in $\mcD_{s_n}$.  By part (1), $B_0'$ is a minimum and $B_0''$ is a maximum.  If $B_{\mcD}$ is a minimum, then $\mcD\setminus\{B_0''\}$ has two minimums.  If $B_{\mcD}$ is a maximum, then $\mcD\setminus\{B_0'\}$ has two maximums.  Thus $B_{\mcD}$ cannot be extremal in $\mcD$. This proves (1) $\Rightarrow$ (2).

To prove (2) $\Rightarrow$ (3), it suffices by Lemma \ref{L:D_case2.2_chain} to assume that $\mcD\setminus\{B_0''\}$ is a chain.  Then $B_{\mcD}$ is maximal in $\mcD\setminus\{B_0''\}$ and $B_0'\prec B_0\prec B_{\mcD}$.  By part (2), $B_{\mcD}$ is not maximal in $\mcD$ and hence $B_{\mcD}\prec B_0''$ in $\mcD$.  But Lemma \ref{L:staircase_trees} implies $\mcD_{s_3}$ is saturated.  Hence $B_0=B_{\mcD}$ and $\mcD=\mcD_{s_3}$.
\end{proof}

One consequence of Lemmas \ref{L:D_case2.2_chain} and \ref{L:D_case2.2_base_case} is that there is a unique elementary diagram in $Z_{D3}(n)$ satisfying $B_0'\setminus\{s_1\}= B_0''\setminus\{s_2\}$.  Specifically, we define $$\mcH_n:=([s_1,s_{n-1}]\prec[s_3,s_n]\prec[s_2,s_{n-1}])$$ to be this unique element. For example, $\mcH_6$ is given by
\begin{equation*}
    \begin{tikzpicture}[scale=0.35]
        \planepartitionD{6}{
            {0,1,1,2,0},
            {1,1,1,1,0},
            {0,1,1,1,1}}
    \end{tikzpicture}
\end{equation*}

\begin{lemma}\label{L:D_case2.2_maptoA}
If $\mcD\in Z_{D3}(n)\setminus\{\mcH_n\}$, then either $\mcD\setminus\{B_0'\}$ or $\mcD\setminus\{B_0''\}$ is an elementary chain of type $A_{n-1}$, but not both.
\end{lemma}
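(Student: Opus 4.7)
My plan is to parametrize the three lowest blocks of the chain $\mcD_{s_3}$ and case-split on whether the two ``tails'' $B_0'$ and $B_0''$ have equal length.

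Applying the flip and the Dynkin automorphism exchanging $s_1$ and $s_2$ if necessary, I may assume $\mcD\in Z_{D3}^\circ(n)$ with $k\leq\ell$, where $B_0'=\{s_1\}\cup[s_3,s_k]$ and $B_0''=\{s_2\}\cup[s_3,s_\ell]$. By Lemma \ref{L:D_case2_saturation} there is a unique $B_0\in\mcD_{s_3}$ filling in $B_0'\prec B_0\prec B_0''$ as a saturated chain, and by Lemma \ref{L:staircase_trees} the chain $\mcD_{s_3}$ is in fact saturated in all of $\mcD$. Since the same lemma gives saturation of every $\mcD_{s_i}$ in $\mcD$, I record the key rigidity: $s_i\in B_0'\cap B_0''$ forces $B_0\in\mcD_{s_i}$, i.e., $s_i\in B_0$.

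Suppose $k=\ell$. Then both $B_0'\setminus\{s_1\}\subset B_0''$ and $B_0''\setminus\{s_2\}\subset B_0'$, so Lemma \ref{L:D_case2.2_chain} makes both $\mcD\setminus\{B_0'\}$ and $\mcD\setminus\{B_0''\}$ chains, and Lemma \ref{L:D_case2.2_base_case} then pins down $\mcD=(B_0'\prec B_0\prec B_0'')$ with $s_n\in B_0$, so $B_0=[s_3,s_n]$. By Lemma \ref{L:D_crit_pt_bound} the vertex $s_n$ is critical in $\mcD$, so $s_n\notin B_0'\cup B_0''$ and hence $k,\ell\leq n-1$. Elementarity of $\mcD$ requires each $s_i$ with $3\leq i<n$ to sit in at least two blocks, which forces $\max(k,\ell)\geq n-1$. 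Combined with $k=\ell$ this yields $k=\ell=n-1$, so $\mcD=\mcH_n$, contradicting the hypothesis; therefore $k<\ell$.

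Now Lemma \ref{L:D_case2.2_chain} gives that $\mcD\setminus\{B_0'\}$ is a chain, and since $|\mcD_{s_1}|=1$ its support is $\{s_2,\ldots,s_n\}$, a type $A_{n-1}$ path with leaves $s_2$ and $s_n$. For elementarity I check $|(\mcD\setminus\{B_0'\})_{s_i}|\geq 2$ for every $3\leq i\leq n-1$: if $s_i\notin B_0'$ the count is unchanged, and if $s_i\in B_0'$ then $i\leq k<\ell$ puts $s_i$ in $B_0''$ and the rigidity puts $s_i$ in $B_0$, so both $B_0$ and $B_0''$ survive after removing $B_0'$. Thus $\mcD\setminus\{B_0'\}$ is an elementary chain of type $A_{n-1}$.

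Finally I show $\mcD\setminus\{B_0''\}$ is not. If it fails to be a chain, we are done. Otherwise both removals are chains, so by Lemma \ref{L:D_case2.2_base_case} we are back in the three-block case $\mcD=(B_0'\prec B_0\prec B_0'')$ with $s_n\in B_0$, and $k<\ell$ together with the elementarity analysis above gives $\ell=n-1$ and $k<n-1$. Then every $s_i$ with $k<i<n$ lies only in $B_0$ within $\mcD\setminus\{B_0''\}=(B_0'\prec B_0)$ and is therefore critical; but the support $\{s_1,s_3,\ldots,s_n\}$ has leaves $s_1$ and $s_n$, so this produces a non-leaf critical point, violating elementarity. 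The main subtlety is that Lemma \ref{L:D_case2.2_chain} supplies only a sufficient condition for a removal to be a chain, so the residual case where both happen to be chains must be ruled out via the converse direction in Lemma \ref{L:D_case2.2_base_case} together with a direct criticality check.
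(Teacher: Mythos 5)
Your proposal is correct and takes essentially the same route as the paper's proof: both rest on Lemma \ref{L:D_case2.2_chain} to produce at least one chain and on Lemma \ref{L:D_case2.2_base_case} together with $\mcD\neq\mcH_n$ to rule out both removals being elementary. Your version merely adds an explicit parametrization of $B_0'$ and $B_0''$ by tail lengths and spells out the elementarity checks that the paper leaves implicit.
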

\begin{proof}Lemma \ref{L:D_case2.2_chain} implies that at least one of $\mcD\setminus\{B_0'\}$ or $\mcD\setminus\{B_0''\}$ is an elementary chain of type $A_{n-1}$.  If both are chains, then $\mcD=(B_0'\prec B_0\prec B_0'')$ with $s_n\in B_0$ by Lemma \ref{L:D_case2.2_base_case}.  Now only one of $(B_0'\prec B_0)$ or $(B_0\prec B_0'')$ can be elementary since $\mcD\neq \mcH_n$.
\end{proof}

Lemma \ref{L:D_case2.2_maptoA} implies that there is a well defined map
$$Z^{\circ}_{D3}(n)\setminus\{\mcH_n\}\rightarrow Z_A(n-1)$$
sending $\mcD$ to either $\mcD\setminus\{B_0'\}$ or $\mcD\setminus\{B_0''\}$.
\begin{example}\label{Ex:typeD2.2}
    The map $Z^{\circ}_{D3}(7)\setminus\{\mcH_n\}\rightarrow Z_A(6)$ sends the staircase diagrams
    \begin{equation*}
        \begin{tikzpicture}[scale=0.35]
            \planepartitionD{7}{
                {1,1,0,1,2,0},
                {0,1,1,1,1,0},
                {0,0,1,1,1,1}}
        \end{tikzpicture}\quad, \quad
        \begin{tikzpicture}[scale=0.35]
            \planepartitionD{7}{
                {0,0,1,1,2,0},
                {0,1,1,1,1,0},
                {1,1,0,1,1,1}}
        \end{tikzpicture}\quad, \quad \text{ and }\quad
        \begin{tikzpicture}[scale=0.35]
            \planepartitionD{7}{
                {0,1,1,1,2,0},
                {1,1,1,1,1,0},
                {0,0,0,1,1,1}}
        \end{tikzpicture}
    \end{equation*}
    to the diagrams
    \begin{equation*}
        \begin{tikzpicture}[scale=0.35]
            \planepartitionD{7}{
                {1,1,0,0,0,0},
                {0,1,1,1,1,0},
                {0,0,1,1,1,1}}
        \end{tikzpicture}\quad, \quad
        \begin{tikzpicture}[scale=0.35]
            \planepartitionD{7}{
                {0,0,1,1,2,0},
                {0,1,1,1,1,0},
                {1,1,0,0,0,0}}
        \end{tikzpicture}\quad,\quad \text{ and } \quad
        \begin{tikzpicture}[scale=0.35]
            \planepartitionD{7}{
                {0,1,1,1,2,0},
                {1,1,1,1,1,0},
                {0,0,0,0,0,0}}
        \end{tikzpicture}
    \end{equation*}
    respectively.
\end{example}
If $\mcD\mapsto\mcD\setminus\{B_0'\}$ then $\mcD\setminus\{B_0'\}\in Z_A^-(n-1)$, while if $\mcD\mapsto\mcD\setminus\{B_0''\}$ then $\mcD\setminus\{B_0''\}\in Z_A^+(n-1)$.  Moreover, the number of diagrams $\mcD$ which map to $Z_A^-(n-1)$ and $Z_A^+(n-1)$ are equal.  The bijection between these two sets is to reverse the partial order on $\mcD$ and then swap the sizes of $B_0'$ and $B_0''$ (see first two diagrams in Example \ref{Ex:typeD2.2}).  Set
$$Z^{\circ,+}_{D3}(n):=\{\mcD\in Z^{\circ}_{D3}(n)\ |\ \mcD\setminus\{B_0''\}\in Z_A^+(n-1)\}.$$
Note that $Z^{\circ,+}_{D3}(n)$ includes the staircase diagram $\mcH_n$ and hence
\begin{equation}\label{Eq:typeD2.2}
|Z^{\circ}_{D3}(n)|=2|Z^{\circ,+}_{D3}(n)|-1.
\end{equation}
If $s_n$ is contained in a maximal block $B_{\mcD}$ of $\mcD\in Z^{\circ,+}_{D3}(n)$, then
$\mathfrak{G}_p(\mcD)\subseteq Z^{\circ,+}_{D3}(n+p)$ is well defined as per Definition \ref{D:Cat_gen}. Moreover, we have $$|\mathfrak{G}_p(\mcD)|=|\mathfrak{G}_p(\mcD\setminus\{B_0''\})|,$$ where we consider $\mcD\setminus\{B_0''\}$ as an elementary diagram of type $A$.
\begin{example}\label{Ex:typeD2.2_frak_gen1}
    If $\mcD\in Z^{\circ,+}_{D3}(7)$ is the top diagram depicted below, then $\mathfrak{G}_1(\mcD)$
    contains two diagrams as shown.
\begin{equation*}
\begin{tikzpicture}[level distance=25mm,
level 1/.style={sibling distance=75mm},
level 2/.style={sibling distance=30mm}]
    \node {
        \begin{tikzpicture}[scale=0.35]
            \planepartitionD{7}{
                {0,0,1,1,2,0},
                {0,1,1,1,1,0},
                {1,1,0,1,1,1}}
        \end{tikzpicture}}
        child { node {
        \begin{tikzpicture}[scale=0.35]
            \planepartitionD{8}{
                {0,0,0,1,1,2,0},
                {0,1,1,1,1,1,0},
                {1,1,1,0,1,1,1}}
        \end{tikzpicture}}}
        child { node {
        \begin{tikzpicture}[scale=0.35]
            \planepartitionD{8}{
                {0,0,0,1,1,2,0},
                {0,0,1,1,1,1,0},
                {0,1,1,0,1,1,1},
                {1,1,0,0,0,0,0}}
        \end{tikzpicture}}};
\end{tikzpicture}
\end{equation*}
\end{example}

If $s_n$ is not contained in an extremal block of $\mcD\in Z^{\circ,+}_{D3}(n)$, then $\mathfrak{G}_1(\mcD)$ is not defined as per Definition \ref{D:Cat_gen}.  In this case, we make the following adjustments.  Since $s_n$ is not contained in an extremal block of $\mcD\in Z^{\circ,+}_{D3}(n)$, Lemma \ref{L:D_case2.2_base_case} implies that $\mcD=(B_0'\prec B_0\prec B_0'')$ with $B_0=[s_3,s_n]$ and $B_0'=[s_1,s_{n-1}]$. Let
\begin{equation*}
    P'(B_0):=\{B\in P(B_0)\ |\ \text{$B$ is not adjacent to $B_0''$}\}
\end{equation*}
where $P(B_0)$ is given in Definition \ref{D:Cat_gen}.  Define
\begin{equation*}
    \mathfrak{G}_1(\mcD):=\{\mcD^0\}\cup \bigcup_{B\in P'(B_0)}\{\mcD^B\},
\end{equation*}
where $\mcD^0:=(B_0'\cup\{s_n\}\prec B_0\cup\{s_{n+1}\}\prec B_0'')$, and, for any $B\in P'(B_0)$,
\begin{equation*}
    \mcD^B:=\mcD\cup\{B\cup\{s_{n+1}\}\}
\end{equation*}
with the additional covering relation $B_0\prec B\cup\{s_{n+1}\}$.  It is easy to see that if $\mcD\in Z^{\circ,+}_{D3}(n+1)$, then $\mathfrak{G}_1(\mcD)\subseteq Z^{\circ,+}_{D3}(n+1)$ and $\mathfrak{G}_p(\mcD)\subseteq Z^{\circ,+}_{D3}(n+p)$.

\begin{example}\label{Ex:typeD2.2_frak_gen2}
    Consider the diagram $\mcH_5$.   Then $\mathfrak{G}_p(\mcH_5)$ for $p=1,2$ are given by the following diagrams:

\begin{equation*}
\begin{tikzpicture}[level distance=23mm,
level 1/.style={sibling distance=10mm},
level 2/.style={sibling distance=70mm}]
    \node {
        \begin{tikzpicture}[scale=0.35]
            \planepartitionD{5}{
                {0,1,2,0},
                {1,1,1,0},
                {0,1,1,1}}
        \end{tikzpicture}}
        child { node {
        \begin{tikzpicture}[scale=0.35]
            \planepartitionD{6}{
                {0,1,1,2,0},
                {1,1,1,1,0},
                {0,0,1,1,1}}
        \end{tikzpicture}}
            child { node {
        \begin{tikzpicture}[scale=0.35]
            \planepartitionD{7}{
            {0,1,1,1,2,0},
            {1,1,1,1,1,0},
            {0,0,0,1,1,1}}
        \end{tikzpicture}}}
            child { node {
        \begin{tikzpicture}[scale=0.35]
            \planepartitionD{7}{
            {0,0,1,1,2,0},
            {0,1,1,1,1,0},
            {1,1,0,1,1,1}}
        \end{tikzpicture}
            }}};
\end{tikzpicture}
\end{equation*}
\end{example}


\begin{lemma}\label{L:D_case2.2_Catalan}
    For any $n\geq 3$, we have $|\mathfrak{G}_p(\mcH_n)|= \cc_p$.
\end{lemma}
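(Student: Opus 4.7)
The plan is to reduce $|\mathfrak{G}_p(\mcH_n)|$ to the type-$A$ Catalan count supplied by Proposition \ref{P:Catalan_gen}(3), via a block-stripping bijection. The case $p=0$ is trivial, so assume $p\geq 1$.

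First, I will show that $\mathfrak{G}_1(\mcH_n)$ is a singleton. In $\mcH_n$ the middle block is $B_0=[s_3,s_n]$ and the upper block is $B_0''=[s_2,s_{n-1}]$; every $B\in P(B_0)$ contains $s_n$, which is adjacent in the Dynkin graph to $s_{n-1}\in B_0''$. Hence $P'(B_0)=\emptyset$, and the adjusted definition of $\mathfrak{G}_1$ gives
\[
    \mathfrak{G}_1(\mcH_n) = \{\mcD^0\}, \quad\text{where}\quad \mcD^0 := ([s_1,s_n] \prec [s_3,s_{n+1}] \prec [s_2,s_{n-1}]).
\]
Consequently $|\mathfrak{G}_p(\mcH_n)| = |\mathfrak{G}_{p-1}(\mcD^0)|$.

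Next, let $\mcE := \{[s_n,s_{n+1}]\}$, viewed as a type-$A$ staircase diagram on the sub-path $s_n - s_{n+1} - \cdots$ (with distinguished vertex $s_n$), so that $|B_\mcE|=2$ and $s_{n+1}$ is a critical point. I will construct a map $\pi:\mathfrak{G}_{p-1}(\mcD^0)\to \mathfrak{G}_{p-1}(\mcE)$ by discarding $B_0'$ (always of the form $[s_1,s_*]$) and $B_0''=[s_2,s_{n-1}]$ from $\mcD$, truncating the middle block $[s_3,s_{n+q}]$ to $[s_n,s_{n+q}]$, and leaving all remaining blocks intact. Every further block appearing in $\mcD$ is descended through successive $\mathfrak{G}_1$ operations from some $B\in P'(\cdot)$ at an earlier stage, hence is contained in $[s_{n+1},s_{n+p-1}]$ by the adjacency constraint with $B_0''$; this guarantees $\pi(\mcD)$ is a genuine staircase diagram on the sub-path.

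The main technical step is to verify that $\pi$ intertwines $\mathfrak{G}_1$ on both sides. In the middle-extending regime, a direct calculation shows $P'([s_3,s_{n+q}])=\{[s_k,s_{n+q}] : n+1\leq k\leq n+q\}$, which after truncation coincides with $P([s_n,s_{n+q}])$ on the image side; the extend-option $\mcD^0$ matches the extend-option for $\pi(\mcD)$. Once the newest vertex lies in a top block instead, both sides apply the standard Definition \ref{D:Cat_gen} in parallel, and the identification is immediate. Hence $\pi(\mathfrak{G}_1(\mcD))=\mathfrak{G}_1(\pi(\mcD))$ for every $\mcD$, and since $\pi(\mcD^0)=\mcE$, induction on $q$ yields $\pi(\mathfrak{G}_q(\mcD^0))=\mathfrak{G}_q(\mcE)$; injectivity of $\pi$ follows from the fact that $B_0'$ and $B_0''$ can be recovered from $\pi(\mcD)$ together with the fixed data of $\mcH_n$. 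Proposition \ref{P:Catalan_gen}(3) then gives $|\mathfrak{G}_{p-1}(\mcE)|=\cc_{(p-1)+1}=\cc_p$, completing the argument. The principal obstacle is the careful verification of the intertwining in the middle-extending regime; the small case $n=3$, where the combinatorics of $\mcH_n$ degenerates, is handled by direct inspection.
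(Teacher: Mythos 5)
Your proposal is correct and follows essentially the same route as the paper: show $\mathfrak{G}_1(\mcH_n)$ is the singleton $\{([s_1,s_n]\prec[s_3,s_{n+1}]\prec[s_2,s_{n-1}])\}$, then transport all subsequent $\mathfrak{G}_1$-growth to a type-$A$ elementary diagram whose top block has size $2$ and apply Proposition \ref{P:Catalan_gen} (the paper uses the $A_3$ chain $([s_1,s_2]\prec[s_2,s_3])$ where you use the single block $[s_n,s_{n+1}]$, but the counting mechanism is identical). Your version merely spells out the intertwining and injectivity details that the paper asserts in one sentence.
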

\begin{proof}
The set $\mathfrak{G}_1(\mcH_n)$ contains a single diagram $$\mcD=([s_1,s_{n}]\prec[s_3,s_{n+1}]\prec[s_2,s_{n-1}]).$$  The definition of $\mathfrak{G}_1$ on $\mcD$ induces bijection between $\mathfrak{G}_p(\mcD)$ and $\mathfrak{G}_p(\mcD')$ where $\mcD'$ is the type $A_3$ elementary diagram $([s_1,s_2]\prec[s_2,s_3])$.  The lemma now follows from Proposition \ref{P:Catalan_gen}, part (2).
\end{proof}

\begin{proposition}\label{P:D_case2.2}
    $\displaystyle |Z^{\circ}_{D3}(n)|=-1+2\sum_{k=0}^{n-3}\cc_k$.
\end{proposition}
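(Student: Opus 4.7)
The plan is to invoke Equation~\eqref{Eq:typeD2.2}, which gives $|Z^{\circ}_{D3}(n)| = 2|Z^{\circ,+}_{D3}(n)| - 1$, and reduce to proving $|Z^{\circ,+}_{D3}(n)| = \sum_{k=0}^{n-3}\cc_k$. To count $Z^{\circ,+}_{D3}(n)$, I would establish the disjoint decomposition
\begin{equation*}
Z^{\circ,+}_{D3}(n) \;=\; \bigsqcup_{p=0}^{n-3} \mathfrak{G}_p(\mcH_{n-p}),
\end{equation*}
with the convention $\mathfrak{G}_0(\mcH_n) := \{\mcH_n\}$. Combined with Lemma~\ref{L:D_case2.2_Catalan}, which gives $|\mathfrak{G}_p(\mcH_{n-p})| = \cc_p$, this decomposition immediately yields the count and hence the formula via Equation~\eqref{Eq:typeD2.2}.

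I would prove the decomposition by induction on $n$, the base case $n = 3$ reducing to the direct verification that $Z^{\circ,+}_{D3}(3) = \{\mcH_3\}$. The inductive step amounts to showing
\begin{equation*}
Z^{\circ,+}_{D3}(n) \;=\; \{\mcH_n\} \;\sqcup\; \bigsqcup_{\mcD' \in Z^{\circ,+}_{D3}(n-1)} \mathfrak{G}_1(\mcD').
\end{equation*}
The right-to-left inclusion is immediate from the definitions of $\mathfrak{G}_1$ (both the standard form of Definition~\ref{D:Cat_gen} and the modified form defined after Lemma~\ref{L:D_case2.2_maptoA}), together with the observation that the relevant images lie in $Z^{\circ,+}_{D3}(n)$. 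For the reverse inclusion, given $\mcD \in Z^{\circ,+}_{D3}(n)$ with $\mcD \neq \mcH_n$, I would construct a unique predecessor $\mcD' \in Z^{\circ,+}_{D3}(n-1)$ with $\mcD \in \mathfrak{G}_1(\mcD')$. This splits into cases via Lemma~\ref{L:D_case2.2_base_case}: if $s_n$ lies in an extremal block of $\mcD$, then $\mcD'$ is recovered by deleting or shrinking the maximal block containing $s_n$, mirroring the argument of Lemma~\ref{L:Catalan_almostsurjective}; if $s_n$ is not in an extremal block, then Lemma~\ref{L:D_case2.2_base_case} forces $\mcD = (B_0' \prec B_0 \prec B_0'')$ with $B_0' = [s_1, s_{n-1}]$ and $B_0 = [s_3, s_n]$, and $\mcD'$ is obtained by reversing the modified $\mcD^0$ construction (i.e.\ shrinking $B_0'$ to $[s_1, s_{n-2}]$ and $B_0$ to $[s_3, s_{n-1}]$, leaving $B_0''$ untouched).

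The principal obstacle will be the disjointness, both of the predecessors within a single $\mathfrak{G}_1$-image and across distinct indices $p$ in the main decomposition. The first of these can be handled by adapting the argument of Lemma~\ref{L:Catalan_disjoint} to the type $D$ setting. The second relies on the key observation that the block $B_0''$ is preserved by every application of $\mathfrak{G}_1$, so that any diagram in $\mathfrak{G}_p(\mcH_{n-p})$ inherits from $\mcH_{n-p}$ the block $B_0'' = [s_2, s_{n-p-1}]$; distinct values of $p$ therefore yield distinct values of $B_0''$, precluding any overlap. Once the decomposition is established, combining with Lemma~\ref{L:D_case2.2_Catalan} gives $|Z^{\circ,+}_{D3}(n)| = \sum_{p=0}^{n-3}\cc_p$, and the stated formula follows from Equation~\eqref{Eq:typeD2.2}.
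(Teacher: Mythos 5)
Your proposal is correct and follows essentially the same route as the paper: the paper's proof also reduces via Equation~\eqref{Eq:typeD2.2}, decomposes $Z^{\circ,+}_{D3}(n)$ as $\bigcup_{k=3}^{n}\mathfrak{G}_{n-k}(\mcH_k)$ (your indexing $k=n-p$), and applies Lemma~\ref{L:D_case2.2_Catalan}. You simply make explicit the induction, the appeal to Lemma~\ref{L:Catalan_almostsurjective}, and the disjointness across $p$ (via the preserved block $B_0''$) that the paper leaves implicit.
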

\begin{proof}
    It follows from Lemma \ref{L:Catalan_almostsurjective} that
    $$\bigcup_{k=3}^{n}\mathfrak{G}_{n-k}(\mcH_k)=Z^{\circ,+}_{D3}(n).$$
    Lemma \ref{L:D_case2.2_Catalan} implies that
    $$|Z^{\circ,+}_{D3}(n)|= \sum_{k=0}^{n-3}\cc_k.$$  The proposition now follows from Equation \eqref{Eq:typeD2.2}.
\end{proof}
\subsection{The generating series of type $D$}
Define the generating series
$$D_Z(t):=\sum_{n=3}^{\infty}dz_n\, t^n,$$
where $dz_n:=|Z_{D}(n)|$ denotes the number of elementary staircase diagrams of type $D_n$.
\begin{proposition}
The coefficient $dz_3=11$, and if $n\geq 4$ then $$\displaystyle dz_n=-2\cc_{n-3}+8\sum_{k=0}^{n-2}\cc_k.$$
Consequently
$$D_Z(t)= -3t^3-8t^2+\left(\frac{2t^4-2t^3+8t^2}{1-t}\right)\Cat(t).$$
\end{proposition}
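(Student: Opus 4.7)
The plan is to sum the contributions of the three pieces in the partition $Z_D(n) = Z_{D1}(n) \sqcup Z_{D2}(n) \sqcup Z_{D3}(n)$ from Equation~\eqref{Eq:typeD_partition} and invoke Propositions~\ref{P:D_case1}, \ref{P:D_case2.1}, and \ref{P:D_case2.2}. For the base case $n=3$, I would directly compute $|Z_{D1}(3)| = 1$, $|Z_{D2}(3)| = 2|Z^{\circ}_{D2}(3)| = 6$, and $|Z_{D3}(3)| = 2 + 2|Z^{\circ}_{D3}(3)| = 2 + 2(-1 + 2\cc_0) = 4$ (using the special identity $|Z_{D3}(3)|=2+2|Z_{D3}^{\circ}(3)|$ that accounts for the staircase diagrams omitted by Lemma~\ref{L:D_case2_saturation} in the $n=3$ case), yielding $dz_3 = 11$.

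For $n \geq 4$, the three propositions give
\begin{align*}
|Z_{D1}(n)| &= 4\cc_{n-2} - 2\cc_{n-3}, \\
|Z_{D2}(n)| &= 2|Z^{\circ}_{D2}(n)| = 2 + 4\sum_{k=0}^{n-2}\cc_k, \\
|Z_{D3}(n)| &= 2|Z^{\circ}_{D3}(n)| = -2 + 4\sum_{k=0}^{n-3}\cc_k.
\end{align*}
Adding these, the constant terms $+2$ and $-2$ cancel, and using the identity $\sum_{k=0}^{n-3}\cc_k = \sum_{k=0}^{n-2}\cc_k - \cc_{n-2}$ the two Catalan sums combine into $8\sum_{k=0}^{n-2}\cc_k$ with a residual $-4\cc_{n-2}$ that cancels the $4\cc_{n-2}$ from $|Z_{D1}(n)|$. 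What remains is exactly $dz_n = -2\cc_{n-3} + 8\sum_{k=0}^{n-2}\cc_k$, as claimed.

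To produce the generating series, I would split $D_Z(t)$ into three pieces. The $n=3$ contribution is $11t^3$. For the $-2\cc_{n-3}$ term, summing over $n \geq 4$ gives $-2t^3(\Cat(t) - 1) = -2t^3\Cat(t) + 2t^3$. For the Catalan partial sums, the generating series of $\{\sum_{k=0}^n \cc_k\}_{n\geq 0}$ is $\Cat(t)/(1-t)$, so shifting indices yields
\begin{equation*}
\sum_{n=4}^{\infty}\Bigl(\sum_{k=0}^{n-2}\cc_k\Bigr)t^n = t^2\left(\frac{\Cat(t)}{1-t} - 1 - 2t\right),
\end{equation*}
and multiplying by $8$ contributes $\dfrac{8t^2\Cat(t)}{1-t} - 8t^2 - 16t^3$. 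Summing the three contributions, the constant polynomial terms combine as $11t^3 + 2t^3 - 16t^3 - 8t^2 = -3t^3 - 8t^2$, while the $\Cat(t)$ terms combine as $\Cat(t)\cdot \bigl(-2t^3 + 8t^2/(1-t)\bigr) = \Cat(t)\cdot(2t^4 - 2t^3 + 8t^2)/(1-t)$, producing the stated formula.

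There is no real obstacle here: the substantive combinatorics is already encapsulated in the three preceding enumerative propositions, and this final step is a routine bookkeeping calculation. The only minor point requiring care is the $n=3$ anomaly in $|Z_{D3}|$ from Lemma~\ref{L:D_case2_saturation}, which is why the base case is stated and handled separately from the uniform formula for $n \geq 4$.
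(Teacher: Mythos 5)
Your proof is correct and follows essentially the same route as the paper: summing the counts from Propositions \ref{P:D_case1}, \ref{P:D_case2.1}, and \ref{P:D_case2.2}, treating the $n=3$ anomaly in $|Z_{D3}|$ separately via $|Z_{D3}(3)|=2+2|Z^{\circ}_{D3}(3)|$, and then assembling the generating series from the Catalan series and its partial sums. The algebraic simplifications match the paper's computation exactly.
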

\begin{proof}
Propositions \ref{P:D_case1}, \ref{P:D_case2.1}, and \ref{P:D_case2.2} imply that
\begin{align*}
    dz_3 &= |Z_{D1}(3)|+|Z_{D2}(3)|+|Z_{D3}(3)|\\
    &= |Z_{D1}(3)|+2|Z^{\circ}_{D2}(3)|+2|Z^{\circ}_{D3}(3)|+2\\
    &= 1+6+4\cc_0= 11.
\end{align*}
For $n\geq 4$, we have
\begin{align*}
    dz_n &= |Z_{D1}(n)|+2|Z^{\circ}_{D2}(n)|+2|Z^{\circ}_{D3}(n)|\\
    &= (4\cc_{n-2}-2\cc_{n-3})+\left(2+4\sum_{k=0}^{n-2}\cc_k\right)+\left(-2+4\sum_{k=0}^{n-3}\cc_k\right)\\
    &= -2\cc_{n-3}+8\sum_{k=0}^{n-2}\cc_k.
\end{align*}
Now
\begin{align*}
    D_Z(t)&= 11t^3-2t^3\sum_{n=1}^{\infty}\cc_n t^n+8t^2\sum_{n=2}^{\infty}\left(\sum_{k=0}^{n-2}\cc_k\right)t^n\\
    &= 11t^3-2t^3(\Cat(t)-1)+8t^2\left(\frac{\Cat(t)}{(1-t)}-1-2t\right).
\end{align*}
\end{proof}
Define the generating series
$$\overline{D}(t):=\sum_{n=3}^{\infty} \overline{d}_n\, t^n$$
where $\overline{d}_n$ denotes the number of staircase diagrams of type $D_n$ with full support.

\begin{proposition}\label{P:typeD_genseries_full}
The generating series $$\overline{D}(t)=\frac{\overline{A}(t)D_Z(t)}{t}-\frac{2t^2(\overline{A}(t)-t)}{1-t},$$ where $\overline{A}(t)$ is the generating series for the number of fully supported staircase diagrams of type $A$.
\end{proposition}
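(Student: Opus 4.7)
The plan is to prove the identity by decomposing a fully supported staircase diagram of type $D_n$ into an elementary $D$-piece on $[s_1,s_k]$ containing the fork of the Dynkin graph and a fully supported $A$-piece on $[s_k,s_n]$ forming the tail, analogously to the proof of Proposition \ref{P:typeA_genseries_full}. Given $\mcD$ fully supported of type $D_n$, when $\mcD$ has at least one critical point in $[s_3,s_n]$, I would let $s_k$ be the \emph{smallest} such critical point and take $\mcD' = \mcD|_{[s_1,s_k]}$ (elementary of type $D_k$ with $s_k$ critical, since no critical points lie in $[s_3,s_{k-1}]$) and $\mcD'' = \mcD|_{[s_k,s_n]}$ (fully supported of type $A_{n-k+1}$, with $s_k$ automatically critical by the type-$A$ leaf bound). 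Gluing at $s_k$ merges the unique blocks $B'\in \mcD'_{s_k}$ and $B''\in \mcD''_{s_k}$ into $B = B' \cup B''$, and the usual argument shows restriction and gluing are mutually inverse, giving a bijection.

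Now $\overline{A}(t)D_Z(t)/t = \sum_n\bigl(\sum_{k=3}^n dz_k\,\overline{a}_{n-k+1}\bigr)t^n$ counts all pairs $(\mcD',\mcD'')$ of elementary $D_k$ and fully supported $A_{n-k+1}$ without the requirement that $s_k$ be critical in $\mcD'$. Writing $dz_k = dz_k^{\mathrm{crit}} + dz_k'$ where $dz_k'$ counts elementary $D_k$ diagrams in which $s_k$ is \emph{not} critical, and letting $b_n$ denote the number of fully supported $D_n$ diagrams with no critical point in $[s_3,s_n]$ (the diagrams missed by the bijection), I get
\begin{equation*}
    \overline{d}_n \;=\; [t^n]\frac{\overline{A}(t)D_Z(t)}{t} \;-\; \Bigl(\sum_{k=3}^n dz_k'\,\overline{a}_{n-k+1} - b_n\Bigr).
\end{equation*}
Since ``elementary with $s_k$ not critical'' and ``critical set $\subseteq\{s_1,s_2\}$'' are equivalent for type $D_k$ (both force the critical set into $\{s_1,s_2,s_k\}\cap\{s_j:j\neq k\}$), we get $dz_k' = b_k$ for all $k\geq 3$.

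The main structural lemma I would prove is that $b_k = 2$ for every $k\geq 3$. Here is the argument: if $\mcD$ is fully supported of type $D_k$ with no critical point at $s_3,\ldots,s_k$, then $|\mcD_{s_k}|=2$ by the leaf bound in Lemma \ref{L:D_crit_pt_bound}, and a connectivity and containment analysis (using Lemma \ref{L:staircaseprops}(b)) forces the two blocks in $\mcD_{s_k}$ to be $B_1 = \{s_1\}\cup[s_3,s_k]$ and $B_2=\{s_2\}\cup[s_3,s_k]$. The hard part is ruling out extra blocks $B_3\subseteq\{s_1,s_2,s_3,\ldots,s_{k-1}\}$. For any candidate extra block $B_3$ with $s_j\in B_3$ for some $3\leq j\leq k-1$, saturation of $\mcD_{s_k}\subseteq\mcD_{s_{k-1}}\cup\mcD_{s_k}$ forces $B_3\prec\min(B_1,B_2)$ or $B_3\succ\max(B_1,B_2)$, while saturation of $\mcD_{s_1}\subseteq\mcD_{s_1}\cup\mcD_{s_3}$ (or the analogous condition for $s_2$) forces $B_3$ to lie strictly between $B_1$ and $B_2$ in the chain $\mcD_{s_3}$. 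These conditions are incompatible, so no $B_3$ can be adjoined, leaving only the two orderings of $\{B_1,B_2\}$.

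Granting $b_k=2$, the correction term simplifies to
\begin{equation*}
    \sum_{k=3}^n b_k\,\overline{a}_{n-k+1} - b_n \;=\; 2\sum_{k=3}^n\overline{a}_{n-k+1}-2 \;=\; 2\sum_{m=1}^{n-2}\overline{a}_m - 2 \;=\; 2\sum_{j=2}^{n-2}\overline{a}_j,
\end{equation*}
using $\overline{a}_1=1$. Translating into generating series, $\sum_n\bigl(2\sum_{j=2}^{n-2}\overline{a}_j\bigr)t^n = 2t^2(\overline{A}(t)-t)/(1-t)$, which is exactly the correction term in the proposed identity. The main obstacle is the axiom-pushing argument that $b_k=2$; once that structural result is in place, the rest is a bookkeeping computation using Proposition \ref{P:labelparabolic}-style restriction and the bijection described above.
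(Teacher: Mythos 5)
Your proposal is correct and follows essentially the same route as the paper: split a fully supported type $D_n$ diagram at the critical point of smallest index $k\geq 3$ into an elementary $D_k$ piece containing the fork and a fully supported $A_{n-k+1}$ tail, with the two diagrams $([s_1,s_n]\prec[s_2,s_n])$ and its flip as the exceptional case. Your structural claim $dz_k'=b_k=2$ is exactly the content of Lemma \ref{L:D_crit_pt_bound}, which the paper invokes directly, and your ``full convolution minus correction'' bookkeeping is an equivalent rearrangement of the paper's recursion $\overline{d}_n=2+\sum_{k=3}^{n}\overline{a}_{n-k+1}(dz_{k}-2)$.
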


\begin{proof}
The proof is very similar to the proof of the type $A$ case in Proposition \ref{P:typeA_genseries_full}.  Let $\mcD$ be a staircase diagram of type $D_n$.  Let $s_k$ denote the critical point of $\mcD$ with largest index $k$, where $k> 2$.  If $\mcD$ has no such critical point, then Lemma \ref{L:D_crit_pt_bound} implies that $\mcD$ is either $([s_2,s_n]\prec[s_1,s_n])$ or $([s_1,s_n]\prec[s_2,s_n])$.  Otherwise we can write $\mcD$ as the union of $\mcD',\mcD''$ where we intersect each element of $\mcD$ with supports $\{s_1\}\cup[s_2,s_k]$ and $[s_k,s_n]$ respectively.  It is easy to see that $\mcD'$ is an elementary staircase diagram of type $D_k$ with $s_k$ a critical point, and $\mcD''$ is a fully supported staircase diagram of type $A_{n-k+1}$.  Hence
$$\overline{d}_n=2+\sum_{k=3}^{n}\overline{a}_{n-k+1}(dz_{k}-2).$$
This implies that
\begin{align*}
\overline{D}(t)&=\sum_{n=3}^{\infty}\left(2+\sum_{k=3}^n(\overline{a}_{n-k+1}(dz_{k}-2))\right)\, t^n\\
&=\sum_{n=3}^{\infty}\left(\sum_{k=3}^n \overline{a}_{n-k+1}dz_{k}\right)\, t^n -
2\sum_{n=3}^{\infty}\left(-1+\sum_{k=1}^{n-2} \overline{a}_k\right)\, t^n\\
&=\frac{\overline{A}(t)D_Z(t)}{t}-\frac{2t^2(\overline{A}(t)-t)}{1-t}.
\end{align*}
\end{proof}
Recall from the introduction that
$$D(t):=\sum_{n=3}^{\infty}d_n\, t^n,$$
where $d_n$ denotes the number of staircase diagrams of type $D_n$.

\begin{theorem}
The generating series $$D(t)=-2t-3t^2+(2t-t^2+t^3)A(t)+(tA(t)+1)\overline{D}(t),$$ where $A(t)$ is the generating series for the number of staircase diagrams of type $A$.
\end{theorem}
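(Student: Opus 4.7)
The plan is to partition staircase diagrams of type $D_n$ according to how $\supp(\mcD)$ sits near the branch of the Dynkin diagram, and then assemble the generating series from the pieces, in the spirit of Theorem~\ref{T:typeA_genseries}. The key observation is that every connected subgraph of the $D_n$ Dynkin diagram is either isomorphic to a type $A$ path (precisely when it does not contain both $s_1$ and $s_2$), or equal to $\{s_1, s_2, \ldots, s_k\}$ for some $3 \leq k \leq n$, in which case its fully supported staircase diagrams are counted by $\overline{d}_k$.

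Call $\mcD$ \emph{D-shaped} if some connected component of $\supp(\mcD)$ contains both $s_1$ and $s_2$. A D-shaped $\mcD$ decomposes uniquely as a fully supported $D_k$-diagram on $\{s_1, \ldots, s_k\}$ together with, when $k<n$, an arbitrary staircase diagram on the path $\{s_{k+2}, \ldots, s_n\}$ (note that $s_{k+1} \notin \supp(\mcD)$, as otherwise it would enlarge the D-component); in generating series this contributes $(1 + tA(t))\overline{D}(t)$. For $\mcD$ not D-shaped, I further split into the subcases $s_1 \notin \supp(\mcD)$, $s_2 \notin \supp(\mcD)$, and $\{s_1, s_2\} \subseteq \supp(\mcD)$ with $s_1, s_2$ in distinct components. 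The first two contribute $a_{n-1}$ each (the support lies in an $(n-1)$-vertex path), and their intersection contributes $a_{n-2}$, so inclusion-exclusion yields $2a_{n-1} - a_{n-2}$. In the third subcase, the components of $s_1$ and $s_2$ must be the singletons $\{s_1\}$ and $\{s_2\}$---otherwise one of them would contain $s_3$ and thereby merge with the other---so $s_3 \notin \supp(\mcD)$ and the remaining support forms a free type $A$ diagram on $\{s_4, \ldots, s_n\}$, contributing $a_{n-3}$.

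Summing the contributions gives
\begin{equation*}
    d_n = 2a_{n-1} - a_{n-2} + a_{n-3} + \overline{d}_n + \sum_{k=3}^{n-1} \overline{d}_k \, a_{n-k-1}.
\end{equation*}
A routine power series computation converts the first three summands into $(2t - t^2 + t^3)A(t) - 2t - 3t^2$, where the correction $-2t - 3t^2$ absorbs the low-order terms of $A(t)$ that do not arise under the constraint $n \geq 3$; combined with the D-shaped contribution, this yields the stated identity. The subtlety worth flagging is the third non-D-shaped subcase: it is easy to forget that $s_1$ and $s_2$ may both lie in $\supp(\mcD)$ without forming a D-shaped component, and precisely this oversight would delete the $t^3 A(t)$ term from the final polynomial.
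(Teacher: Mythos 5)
Your proof is correct and follows essentially the same route as the paper: the ``D-shaped'' versus ``not D-shaped'' split reproduces exactly the paper's four cases (support missing $s_1$ or $s_2$, support containing $s_1,s_2$ but not $s_3$, and a fully supported $D_k$-component followed by a gap and a free type $A$ tail), and the recurrence $d_n = 2a_{n-1}-a_{n-2}+a_{n-3}+\overline{d}_n+\sum_{k=3}^{n-1}\overline{d}_k\,a_{n-k-1}$ together with the low-order correction $-2t-3t^2$ matches the paper's computation.
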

\begin{proof}
We break staircase diagrams of type $D_n$ into four categories.

Diagrams with support contained in $[s_2,s_n]$ or $[s_1,s_n]$ are diagrams of type $A_{n-1}$.  Hence the sub-generating series for diagrams of this type in $D_n$ is $$2t(A(t)-1-2t)-t^2(A(t)-1).$$

Diagrams where both $s_1$ and $s_2$ are in the support and $s_3$ is not in the support are of type $A_{n-3}$.  Hence their generating series is $t^3A(t)$.

Diagrams with $s_1,s_2,s_3$ in the support, but which are not fully supported, are a disjoint union of a staircase diagram of type $A_k$ and a fully supported diagram of type $D_{n-k-1}$.  Hence their generating series is $tA(t)\overline{D}(t)$.

Diagrams not in the cases above are simply fully supported diagrams of type $D_n$ and have generating series $\overline{D}(t)$.
Thus
$$D(t)=((2t-t^2)A(t)-2t-3t^2)+t^3A(t)+tA(t)\overline{D}(t)+\overline{D}(t)$$
\end{proof}

\bibliographystyle{amsalpha}
\bibliography{palindromic}

\end{document}